\date{} 
\title{The scaling limits of near-critical and dynamical percolation}
\author{Christophe Garban \and G\'abor Pete \and Oded Schramm}
  \def\LabelFigloaded{\relax}
  \chardef\LabelFigCatAt\the\catcode`\@
 \let\LabelFigwlog@ld\wlog
 \def\wlog#1{\relax}
  \def\ms@g{\immediate\write16}
 \def\N@wif{\csname newif\endcsname }
 \def\Temp@ {\N@wif\ifIN@}
\let\Temp@\relax
  \def\IN@{\expandafter\INN@\expandafter}
  \long\def\INN@0#1@#2@{\long\def\NI@##1#1##2##3\ENDNI@
    {\ifx\m@rker##2\IN@false\else\IN@true\fi}%
     \expandafter\NI@#2@@#1\m@rker\ENDNI@}
  \def\m@rker{\m@@rker}
  \newtoks\Initialtoks@  \newtoks\Terminaltoks@
  \def\SPLIT@{\expandafter\SPLITT@\expandafter}
  \def\SPLITT@0#1@#2@{\def\TTILPS@##1#1##2@{%
     \Initialtoks@{##1}\Terminaltoks@{##2}}\expandafter\TTILPS@#2@}
 \def\Shifted@@#1#2#3{\setbox0=\hbox{#3}%
   \raise -\dp0\vbox {\kern-#2%
       \hbox {\kern#1\unhbox0\kern-#1}%
           \kern#2}}
 \newbox\auxGridbox@ \newbox\hGridbox@ \newbox\vGridbox@
 \newbox\Labelbox@ \newbox\auxLabelbox@
 \newbox\Coordinatebox@
 \newtoks\Labeltoks@
 \newdimen\Wdd@ \newdimen\Htt@
 \newdimen\Wddd@ \newdimen\Httt@
 \def\Wr@{\immediate\write16}
 \newdimen\GL@wd
 \def\GridLineWidth#1{\GL@wd=#1}
 \def\gobble#1{}
 \def\EdgeErr@{\Wr@{}%
      \Wr@{\string\Edges\space argument
      1, 10, 100 or 1000 please\string!}%
      }
 \def\Sweepup#1\endSweepup{}
 \def\SetEdges@{%
    \edef\Zr@@s{\expandafter\gobble\number\Edgect@\empty}%
        \count255=0\Zr@@s\relax
        \ifnum\count255=\z@\else\EdgeErr@\show\tailtest\fi
        \count255=1\Zr@@s\relax
        \ifnum\count255=\Edgect@\relax\else\EdgeErr@\show\leadtest\fi
    \EdgGl@b\edef\Zr@s{\expandafter\gobble\Zr@@s\empty}
    \ifnum\Edgect@>\@ne\relax\EdgGl@b\let\L@Dc\empty
        \else\EdgGl@b\edef\L@Dc{\string.}\fi
    \ifnum\Edgect@>\@ne\relax
        \EdgGl@b\edef\Edgescale@##1{\divide##1 by \Edgect@}%
        \else\EdgGl@b\edef\Edgescale@##1{}\fi
    }
 \def\Edges#1{\Edgect@=#1\relax
     \let\EdgGl@b\global \SetEdges@}
 \def\hhrule{\hrule height \GL@wd\vskip-.\GL@wd}
 \def\hRule@{%
   \advance\gridcount -2%
   \vfil\hhrule\vfil
   \llap{\smash{\raise -2.5pt
     \hbox{\L@Dc\number\gridcount\Zr@s\kern2pt}}}%
   \hhrule
   }
\def\vvrule{\vrule width \GL@wd \kern-\GL@wd}
 \def\vRule@{\advance\gridcount 2%
   \hfil\vvrule\hfil
   \setbox\auxGridbox@=\vbox to 0pt
      {\vskip \Htt@\vskip 2pt
        \hbox to 0pt{\hss\L@Dc\number\gridcount\Zr@s\hss}\vss}%
      \wd\auxGridbox@=0pt \box\auxGridbox@
   \vvrule
   }
 \def\PlaceGrid@@{\gridcount=10 
  \setbox\hGridbox@=\hbox{%
        \hbox{%
             \hskip-.4pt\vrule
             \vbox to \Htt@{%
               \offinterlineskip\parindent=\z@\relax
               \hbox to \Wdd@{\hfil}
               \hRule@\hRule@\hRule@\hRule@
               \vfil\hhrule\vfil}%
             \vrule\hskip-.4pt}
    }%
  \gridcount=0%
  \setbox\vGridbox@=\hbox{%
      \vbox{\offinterlineskip\parindent=0pt\hsize=0pt
         \vskip-.4pt\hrule%
         \hbox to \Wdd@{%
                 \vtop to \Htt@{\vfil}%
                 \vRule@\vRule@\vRule@\vRule@
                 \hfil\vvrule\hfil}%
         \hrule\vskip-.4pt}}%
  \wd\hGridbox@=0pt\ht\hGridbox@=0pt
  \wd\vGridbox@=0pt\ht\vGridbox@=0pt
  \hbox{\box\hGridbox@\box\vGridbox@}%
  }
 \def\LabelsGlobal{\def\LabGl@b{\global}}
 \def\LabelsLocal{\def\LabGl@b{}}
 \def\SetLabels#1\endSetLabels{%
   \LabGl@b\Labeltoks@={#1()\\}%
   }
 \def\ShowGrid{\LabGl@b\let\PlaceGrid@\PlaceGrid@@}
 \def\HideGrid{\LabGl@b\let\PlaceGrid@\relax}
 \def\Grids{\ShowGrid\LabGl@b\let\GridSwitch@\ShowGrid}
 \def\noGrids{\HideGrid\LabGl@b\let\GridSwitch@\HideGrid}
 \def\bAdjust@@{%
     \setbox\auxLabelbox@=\hbox{\raise \dp\auxLabelbox@
            \box\auxLabelbox@}}
 \def\bAdjust@{\let\vAdjust@\bAdjust@@}
 \def\eAdjust@@{\dimen0=-.5\ht\auxLabelbox@
     \advance\dimen0 by .5\dp\auxLabelbox@
     \setbox\auxLabelbox@=
            \hbox{\raise\dimen0\box\auxLabelbox@}}
 \def\eAdjust@{\let\vAdjust@\eAdjust@@}
 \def\tAdjust@@{%
     \setbox\auxLabelbox@=\hbox{\raise-\ht\auxLabelbox@
            \box\auxLabelbox@}}
 \def\tAdjust@{\let\vAdjust@\tAdjust@@}
 \let\vAdjust@\relax
 \def\lAdjust@{\let\hAdjust@\rlap}
 \def\rAdjust@{\let\hAdjust@\llap}
 \let\hAdjust@\relax\let\vAdjust@\relax
 \def\FetchLabel@#1(#2)#3\\{%
     \IN@0#2@@\ifIN@
        \setbox0=\hbox{\ignorespaces#1#3\unskip}%
        \ifdim\wd0>0pt
           \ms@g{}%
           \ms@g{ !!! Bad label(s)? !!!}%
           \message{ #1(#2)#3}%
        \fi
        \def\LabelMole@##1\endFetchLabel@{%
            \IN@0()\\@##1@%
            \ifIN@\def\Temp@{\FetchLabel@##1\endFetchLabel@}%
            \else\def\Temp@{}%
            \fi
            \Temp@
           }%
     \else
       \ignorespaces#1\unskip
       \setbox\auxLabelbox@=%
         \hbox to 0pt{\hss\ignorespaces\hAdjust@
          {\ignorespaces#3\unskip}\hss}%
       \vAdjust@
       \let\hAdjust@\relax\let\vAdjust@\relax
       \AugmentLabelBox@@{#2}%
       \ht\Labelbox@=0pt\dp\Labelbox@=0pt
       \let\LabelMole@\FetchLabel@%
     \fi\LabelMole@}
 \newtoks\XYSep@ 
 \def\SetXYSeparator#1{%
     \IN@0#1@@\ifIN@\XYSep@{*}%
     \else
     \XYSep@{#1}%
     \fi
     }
 \def\AugmentLabelBox@@#1{%
     \IN@0\the\XYSep@ @#1@\ifIN@
       \SPLIT@0\the\XYSep@ @#1@%
       \setbox\Labelbox@=\hbox to 0pt{%
         \unhbox\Labelbox@
         \Shifted@@{\the\Initialtoks@\Wddd@}%
         {\the\Terminaltoks@\Httt@}%
         {\box\auxLabelbox@}}%
     \else
         \ms@g{}%
         \ms@g{ !!! Bad insertion point. !!!}%
         \message{ (#1\ this point was rejected.)}%
     \fi
    }
 \def\FetchOption@#1[#2]#3\endFetchOption@{%
    \def\temp{#1}
    \ifx\temp\empty
       \Edgect@=#2\relax
       \let\EdgGl@b\relax
       \SetEdges@
       \Cleaner@#3%
    \fi}
 \def\Cleaner@#1[@]{\Labeltoks@{#1}}
 \def\PlaceLabels@@{\mathsurround=0pt
     \def\Cr@{\\}%
     \let\L\lAdjust@\let\R\rAdjust@
     \let\B\bAdjust@\let\E\eAdjust@\let\T\tAdjust@
     \expandafter\FetchOption@\the\Labeltoks@[@]\endFetchOption@
     \Wddd@=\Wdd@ \Edgescale@\Wddd@ 
     \Httt@=\Htt@ \Edgescale@\Httt@
     \expandafter\FetchLabel@\the\Labeltoks@\endFetchLabel@
     \box\Labelbox@
     }%
 \let \PlaceLabels@\PlaceLabels@@
 \def\AffixLabels#1{\setbox\Coordinatebox@=\hbox{#1}%
      \Wdd@=\wd\Coordinatebox@ \Htt@=\ht\Coordinatebox@
      \advance\Htt@ \dp\Coordinatebox@
      \hbox{\copy\Coordinatebox@\kern-\Wdd@ 
           \Shifted@@{0pt}{-\dp\Coordinatebox@}%
           {\PlaceLabels@\PlaceGrid@}%
           \kern\Wdd@}%
      \GridSwitch@ 
      \LabGl@b\Labeltoks@{()\\}%
      }
   \let\wlog\LabelFigwlog@ld   
 \def\FrameIt#1{\hbox{\vrule$\vcenter {\hrule\kern3pt%
             \hbox {\kern3pt #1\kern3pt}%
               \kern3pt\hrule}$\relax\vrule}}
 \def\Caption#1#2{\FrameIt{%
       \vtop {\hsize=#1\relax \parindent=0pt
         \leftskip=0pt \rightskip=0pt plus15pt
         \parfillskip=0pt
         \lineskip=1pt\baselineskip=0pt
         #2}}}
 \def\FirstQuadrant{\hbox to 100pt{\vrule\vbox to 100pt{%
        \hbox to 100pt{\hfil}\vfil\hrule}\hss}}
\newcolumntype{M}[1]{>{\centering}m{#1}}
\newcommand{\margin}[1]{\textcolor{magenta}{*}\marginpar{ \vskip -1cm \textcolor{magenta} {\it #1 }  }}
\newcommand{\note}[2]{ \hskip 2cm  \textcolor{blue}{\large \bf #1 :}   \vline\,\vline \hskip 0.5 cm \parbox{10 cm}{ #2}  }
\renewcommand{\margin}[1]{}
\renewcommand{\note}[2]{}{}
\numberwithin{equation}{section}
\numberwithin{figure}{section}
\newtheorem{theorem}{Theorem}
\numberwithin{theorem}{section}
\newtheorem{corollary}[theorem]{Corollary}
\newtheorem{lemma}[theorem]{Lemma}
\newtheorem{proposition}[theorem]{Proposition}
\newtheorem{conjecture}[theorem]{Conjecture}
\newtheorem{question}[theorem]{Question}
\theoremstyle{remark}\newtheorem{definition}[theorem]{Definition}   
\theoremstyle{remark}\newtheorem{remark}[theorem]{Remark}
\def\eqref#1{(\ref{#1})}
\let\qqed=\qed
\def\SLE{\mathrm{SLE}}
\def\gr{\mathrm{gr}}
\def\bl{\begin{lemma}}
\def\el{\end{lemma}}
\def\bth{\begin{theorem}}
\def\eth{\end{theorem}}
\def\bc{\begin{corollary}}
\def\ec{\end{corollary}}
\def\bcj{\begin{conjecture}}
\def\ecj{\end{conjecture}}
\def\bpr{\begin{proposition}}
\def\epr{\end{proposition}}
\def\bde{\begin{definition}}
\def\ede{\end{definition}}
\newcommand{\be}{\begin{eqnarray}}
\newcommand{\ee}{\end{eqnarray}}
\newcommand{\bes}{\begin{eqnarray*}}
\newcommand{\ees}{\end{eqnarray*}}
\def\nn{\nonumber}
\def\ni{\noindent}
\def\bi{\begin{itemize}}
\def\ei{\end{itemize}}
\def\bnum{\begin{enumerate}}
\def\enum{\end{enumerate}}
\def\QED{\qqed\medskip}
\let\qed=\QED
\newcommand{\R}{\mathbb{R}}
\newcommand{\DD}{\mathcal{D}}
\newcommand{\Q}{\mathbb{Q}}
\newcommand{\C}{\mathbb{C}}
\newcommand{\Z}{\mathbb{Z}}
\newcommand{\N}{\mathbb{N}}
\def\Hyp{\mathbb{H}}
\def\diam{\mathrm{diam}}
\def\dist{\mathrm{dist}}
\def\Im{{\rm Im}\,}
\def\cov{\mathrm{Cov}}
\def \eps {\epsilon}
\def \P {{\bf P}}
\def\md{\mid}
\def\Bb#1#2{{\def\md{\bigm| }#1\bigl[#2\bigr]}}
\def\BB#1#2{{\def\md{\Bigm| }#1\Bigl[#2\Bigr]}}
\def\Bs#1#2{{\def\md{\mid}#1[#2]}}
\def\Pb{\Bb\P}
\def\Eb{\Bb\E}
\def\PB{\BB\P}
\def\Ps{\Bs\P}
\def \p {{\partial}}
\def \E {{\bf E}}
\def\closure{\overline}
\def\ev#1{{\mathcal{#1}}}
\def\bl{\bigl}
\def\dbox{B_\eta}
\def\1{1\hspace{-2.55 mm}{1}}
\def\A{\mathcal{A}}
\def\Cov{\mathcal{C}}
\def\O{\mathcal{O}}
\def\r{\mathfrak{r}} 
\def\k{\mathfrak{k}}
\def\Tg{\mathbb{T}} 
\def\boxup{\boxdot}
\def\Quad{Q} 
\def\QUAD{\mathcal{Q}} 
\def\HH{\mathscr{H}}  
\def\T{\mathcal{T}}  
\def\nc{\mathsf{nc}}
\def\PPP{\mathsf{PPP}}
\def\switch{\mathcal{S}}
\def\M{\mathfrak{M}}
\def\Net{\mathsf{N}}
\def\rot#1{\overset{\curvearrowright}{#1} }
\def\cadlag{{c\`adl\`ag }}
\def\Sk{\mathsf{Sk}}
\def\Piv{\mathcal{P}}
\def\<#1{\langle #1\rangle}
\def\EK{MR838085}
\def\Proho{MR0084896}
\def\Tsirelson{MR2079671}
\def\SS{SSblacknoise}
\def\SchrammSmirnovNoise{\SS}
\def\SmirnovWerner{MR1879816}
\def\CamiaNewmanFull{MR2249794}
\def\CN{\CamiaNewmanFull}
\def\BeffaraDim6{MR2078552}
\def\GPS{arXiv:0803.3750}
\def\GPSa{GPS2a}
\begin{document}
\maketitle

\begin{abstract}
We prove that near-critical percolation and dynamical percolation on the triangular lattice $\eta \Tg$ have a scaling limit as the mesh $\eta \to 0$, 
in the ``quad-crossing'' space $\HH$ of percolation configurations introduced by Schramm and Smirnov. The proof essentially proceeds by ``perturbing'' the scaling limit of the critical model, using the pivotal measures studied in our earlier paper. Markovianity and conformal covariance of these new limiting objects are also established. 
\end{abstract}

\tableofcontents

\section{Introduction}

\subsection{Motivation}
 
Percolation is a central model of statistical physics, exceptionally simple and rich at the same time.
Indeed, edge-percolation on the graph $\Z^d$ is simply defined as follows: each edge $e\in \mathbb{E}^d$ (the set of edges $e=(x,y)$ such that $\|x-y\|_2=1$) is kept with probability $p\in[0,1]$ and is removed with probability $1-p$ independently of the other edges. This way, one obtains a random configuration $\omega_p \sim \P_p$ in $\{0,1\}^{\mathbb{E}^d}$. It is well-known (see, for example, \cite{Grimmett}) that for each dimension $d\geq 2$, there is a {\bf phase transition} at some critical point $0<p_c(\Z^d)<1$. One of the main focuses of percolation theory is on the typical behaviour of percolation at and near its phase transition. This problem is still far from being understood; for example, it is a long-standing conjecture that the phase transition for percolation is continuous in $\Z^3$. 

When the percolation model is {\bf planar}, a lot more is known on the phase transition. A celebrated theorem by Kesten \cite{KestenZ2} is that for edge-percolation on $\Z^2$, $p_c(\Z^2)=1/2$. Furthermore, the corresponding phase transition is continuous (to be more precise, it falls into the class of {second-order phase transitions}): the {\bf density function}
\[
\theta_{\Z^2}(p):= \P_p [0\text{ is connected to infinity}]
\] 
is continuous on $[0,1]$.

When one deals with a statistical physics model which undergoes such a continuous phase transition, it is natural to understand the nature of its phase transition by studying the behaviour of the system near its critical point, at $p=p_c+ \Delta p$.
In the case of percolation on $\Z^2$, it is proved in \cite{KestenZhang} that there exists an $\eps>0$ so that, as $p\to p_c(\Z^2)=\frac 1 2$, one has 
\begin{align}\label{e.K1}
\theta_{\Z^2}(p) \geq (p-p_c)^{1-\eps} 1_{p>p_c}\,.
\end{align}

In order to study such systems near their critical point, it is very useful to introduce the concept of {\bf correlation length} $L(p)$ for  $p\approx p_c$. Roughly speaking, $p\mapsto L(p)$ is defined in such a way that, for $p\neq p_c$, the system ``looks critical'' on scales smaller than $L(p)$, while the non-critical behaviour becomes ``striking'' above this scale $L(p)$. See for example \cite{PC,Nolin,Kesten} for a precise definition and discussion of $L(p)$ in the case of percolation (see also Subsection~\ref{ss.CL} in this paper). Kesten proved in \cite{Kesten} that the correlation length $L(p)$ in planar percolation is given in terms of the probability of the {\bf alternating four-arm} event {\bf at the critical point}: 
\begin{align}\label{e.K2}
L(p) \asymp \inf \left\{ R\geq 1,\, \text{s.t. } R^2 \alpha_{4}(R) \geq \frac 1 {|p-p_c|} \right\}\,,  
\end{align}
where $\alpha_4(R)=\alpha_{4,p_c}(R)$ stands for the probability of the alternating four-arm event up to radius $R$ at the critical point of the planar percolation model considered. See for example \cite{PC,Buzios}. In particular, the scale whose aim is to separate critical from non-critical effects at $p\approx p_c$ can be computed just by studying the {\bf critical geometry} of the system (here, the quantity $\alpha_4(R)$).  A detailed study of the near-critical system below its correlation length was given in \cite{BCKS}. Furthermore, Kesten's notion of correlation length enabled him to prove in \cite{Kesten} that, as $p>p_c$ tends to $p_c$, one has 
\begin{align}
\theta(p) &\asymp \P_p[ 0 \text{ is connected to } \p B(0,L(p))]  \nn \\
& \asymp  \P_{p_c}[ 0 \text{ is connected to } \p B(0,L(p))] \nn \\
&:= \alpha_1(L(p))\,. \label{e.K3}
\end{align}
In particular, the {density} $\theta(p)$ of the infinite cluster near its critical point can be evaluated just using quantities which describe the critical system: $\alpha_1(R)$ and $\alpha_4(R)$.

Such critical quantities are not yet fully understood on $\Z^2$ at $p_c(\Z^2)=1/2$ (this is why the behaviour for $\theta_{\Z^2}(p)$ given in~\eqref{e.K1} remains unprecise), but there is one planar percolation model for which such quantities can be precisely estimated: {\bf site percolation on the triangular grid} $\Tg$, for which one also has $p_c(\Tg)=\frac 1 2$. (See \cite{PC} for self-contained lecture notes on critical site percolation on $\Tg$). Indeed, a celebrated theorem by Smirnov  \cite{Smirnov} states that if one considers critical site percolation on $\eta \Tg$, the triangular grid with small mesh $\eta>0$, and lets $\eta\to 0$, then 
the limiting probabilities of crossing events are {\bf conformally invariant}. This conformal invariance enables one to rely on the so-called Stochastic Loewner Evolution (or SLE) processes introduced by the third author in \cite{Schramm}, which then can be used to obtain the following estimates:
\bi
\item[(i)] $\alpha_1(R)= R^{-5/48+o(1)}$ obtained in \cite{LSW}\,,
\item[(ii)] $\alpha_4(R)=R^{-5/4+o(1)}$ obtained in \cite{\SmirnovWerner}\,,
\item[(iii)] $L(p) = \left|\frac{1}{p-p_c}\right|^{4/3+o(1)}$ obtained in \cite{\SmirnovWerner}\,,
\item[(iv)] $\theta_\Tg(p) = (p-p_c)^{5/36+o(1)}1_{p>p_c}$ obtained in \cite{\SmirnovWerner}\,,
\ei
where the $o(1)$ are understood as $R\to \infty$ and $p\to p_c$, respectively. It is straightforward to check that items (iii) and (iv) follow from items (i), (ii) together with equations~\eqref{e.K2} and~\eqref{e.K3}.

Items (iii) and (iv) are exactly the type of estimates which describe the so-called {\bf near-critical} behaviour of a statistical physics model. To give another well-known example in this vein: for the Ising model on the lattice $\Z^2$, it is known since Onsager \cite{Onsager} that $\theta(\beta):=\P_{\beta}^+\bigl[ \sigma_0 = +\bigr] \asymp (\beta-\beta_c)^{1/8}1_{\beta>\beta_c}$, which is a direct analog of Item~(iv) if one interprets $\theta(\beta)$ in terms of its associated FK percolation ($q=2$). Also the correlation length $\beta \mapsto L(\beta)$ defined in the spirit of Kesten's paper \cite{Kesten} is known to be of order $\frac 1 {|\beta-\beta_c|}$, see \cite{DGP}. 

The question we wish to address in this paper is the following one: how does the system look below its correlation length $L(p)$? 
More precisely, let us redefine $L(p)$ to be exactly the above quantity $\inf \left\{ R\geq 1,\, \text{s.t. } R^2 \alpha_{4}(R) \geq \frac 1 {|p-p_c|} \right\}$; of course, the exact choice of the constant factor in $1/|p-p_c|$ is arbitrary here. Then, for each $p\neq p_c$, one may consider the percolation configuration $\omega_p$ in the domain $[-L(p),L(p)]^2$ and rescale it to fit in the compact window $[-1,1]^2$ (one thus obtains a percolation configuration on the lattice $L(p)^{-1}\Tg$ with parameter $p\neq p_c$). A natural question is to prove that as $p\neq p_c$ tends to $p_c$, one obtains a nontrivial scaling limit: the {\bf near-critical scaling limit}. Prior to this paper, subsequential scaling limits were known to exist. As such, the status for near-critical percolation was the same as for critical percolation on $\Z^2$, where subsequential scaling limits (in the space $\HH$ to be defined in Section~\ref{s.HH}) are also known to exist. The existence of such subsequential scaling limits is basically a consequence of the RSW theorem. Obtaining a (unique) scaling limit is in general a much harder task (for example, proved by \cite{Smirnov, MR2249794} for critical percolation on $\Tg$), and this is one of the main contributions of this paper:  we prove the existence of the scaling limit (again in the space $\HH$) for near-critical site percolation on the triangular grid $\Tg$ below its correlation length. We will state a proper result later; in particular, Corollary~\ref{c.BCL} says that one obtains two different scaling limits in the above setting as $p\to p_c$: $\omega_\infty^+$ and $\omega_\infty^-$ depending whether $p>p_c$ or not. One might think at this point that these near-critical scaling limits should be identical to the critical scaling limit $\omega_\infty$, since the correlation length $L(p)$ was defined in such a way that the system ``looks'' critical below $L(p)$. But, as it is shown in \cite{NolinWerner}, although any subsequential scaling limit of near-critical percolation indeed ``resembles'' $\omega_\infty$ (the interfaces have the same Hausdorff dimension $7/4$ for example), it is nevertheless singular w.r.t. $\omega_\infty$. (See also Subsection~\ref{ss.sing}). 
  
Now, in order to describe our results in more detail, we introduce {near-critical percolation} in a slightly different manner, via the so-called monotone couplings.

%
%
%

\subsection{Near-critical coupling}

It is a classical fact that one can couple site-percolation configurations $\{\omega_p\}_{p\in [0,1]}$ on $\Tg$ in such a way that for any $p_1<p_2$, one has $\omega_{p_1}\leq \omega_{p_2}$ with the obvious partial order on $\{0,1\}^\Tg$. 
One way to achieve such a coupling is to sample independently on each site $x\in \Tg$ a uniform random variable $u_x\sim \mathcal{U}([0,1])$, and then define 
$\omega_p(x):= 1_{u_x\leq p}$. 

\begin{remark}
Note that defined this way, the process $p\in [0,1] \mapsto \omega_p$ is a.s.~a \cadlag path in $\{0,1\}^\Tg$ endowed with the product topology. This remark already hints why we will later consider the Skorohod space on the Schramm-Smirnov space $\HH$. 
\end{remark}

One would like to rescale this monotone coupling on a grid $\eta \Tg$ with small mesh $\eta>0$ in order to obtain an interesting limiting coupling. If one just rescales space without rescaling the parameter $p$ around $p_c$, it is easy to see that the monotone coupling $\{\eta \omega_p\}_{p\in[0,1]}$ on $\eta \Tg$ converges as a coupling to a trivial limit except for the slice corresponding to $p=p_c$ where one obtains the Schramm-Smirnov scaling limit of critical percolation (see Section \ref{s.HH}). Thus, one should look for a monotone coupling $\{\omega_\eta^\nc(\lambda)\}_{\lambda\in\R}$, where $\omega_\eta^\nc(\lambda)=\eta\omega_p$ with $p=p_c + \lambda r(\eta)$,  and where the zooming factor $r(\eta)$ goes to zero with the mesh. On the other hand, if it tends to zero too quickly, it is easy to check that $\{\omega_\eta^\nc(\lambda)\}_\lambda$
will also converge to a trivial coupling where all the slices are identical to the $\lambda=0$ slice, i.e., the Schramm-Smirnov limit $\omega_\infty$.  From the work of Kesten \cite{Kesten} (see also \cite{NolinWerner} and \cite{\GPSa}), it is natural to fix once and for all the zooming factor to be:
\begin{align}\label{e.r}
r(\eta):= \eta^2 \alpha_4^\eta(\eta,1)^{-1}\,,
\end{align}
where $\alpha_4^\eta(r,R)$ stands for the probability of the {\bf alternating four-arm} event for critical percolation on $\eta \Tg$ from radius $r$ to $R$. See also \cite{\GPSa} where the same notation is used throughout. One disadvantage of the present definition of $\omega_\eta^\nc(\lambda)$ is that $\lambda \in \R \mapsto \omega_\eta^\nc(\lambda)$ is a time-{\it in}homogeneous Markov process. To overcome this, we change sightly the definition of $\omega_\eta^\nc(\lambda)$ as follows:

\begin{definition}\label{d.NCP}
In the rest of this paper, the {\bf near-critical coupling} $(\omega_\eta^\nc(\lambda))_{\lambda\in \R}$ will denote the following process: 
\bi
\item[(i)] Sample $\omega_\eta^\nc(\lambda=0)$ according to $\P_\eta$, the  law of critical percolation on $\eta \Tg$. We will sometimes represent this as a black-and-white colouring of the faces of the dual hexagonal lattice. 
\item[(ii)] As $\lambda$ increases, closed sites (white hexagons) switch to open (black) at exponential rate $r(\eta)$.
\item[(iii)] As $\lambda$ decreases, black hexagons switch to white at rate $r(\eta)$.
\ei
As such, for any $\lambda\in \R$, the near-critical percolation $\omega_\eta^\nc(\lambda)$ corresponds exactly to a percolation configuration on $\eta \Tg$ with parameter
\[
\begin{cases}
p=1-(1-p_c)e^{-\lambda\, r(\eta) } \quad \text{if } \lambda\geq 0 \\
p=p_c \, e^{-|\lambda|\, r(\eta) } \quad \text{if } \lambda< 0\,,  
\end{cases}
\]
thus making the link with our initial definition of $\omega_\eta^\nc(\lambda)$.

Let us note that the symmetry in (ii) and (iii) between increasing and decreasing $\lambda$ values is natural and leads to a time-homogeneous Markov process only because we have $p_c=1/2$ now. For general $p_c$, the correct definition would have different rates in (ii) and (iii), with a ratio of $p_c/(1-p_c)$.
\end{definition}

In this setting of monotone couplings, our goal in this paper is to prove the convergence of the monotone family $\{\omega_\eta^\nc(\lambda)\}_{\lambda\in \R}$  as $\eta\to 0$ to a limiting coupling $\{ \omega_\infty^\nc(\lambda) \}_{\lambda\in \R}$. See Theorems \ref{th.main1} and \ref{th.main2} for precise statements. In some sense, this limiting object captures the birth of the infinite cluster seen from the scaling limit. (Indeed, as we shall see in Theorem \ref{th.CL}, as soon as $\lambda>0$, there is a.s.~an infinite cluster in $\omega_\infty^\nc(\lambda)$).  

\subsection{Rescaled dynamical percolation} 

In \cite{HPS}, the authors introduced a natural reversible dynamics on percolation configurations called {\bf dynamical percolation}. This dynamics is very simple: each site (or bond in the case of bond-percolation) is updated independently of the other sites at rate one, according to the Bernoulli law $p \delta_1 + (1-p)\delta_0$. As such, the law $\P_p$ on $\{0,1\}^\Tg$ is invariant under the dynamics. Several intriguing properties like existence of exceptional times at $p=p_c$ where infinite clusters suddenly arise have been proved lately; see \cite{SchrammSteif, \GPS, HammPS}. 
It is a natural desire to define a similar dynamics for the Schramm-Smirnov scaling limit of critical percolation $\omega_\infty\sim \P_\infty$, i.e., a process $t\mapsto \omega_\infty(t)$ which would preserve the measure $\P_\infty$ of Section~\ref{s.HH}. Defining such a process is a much more difficult task and a natural approach is to build this process as the scaling limit of dynamical percolation on $\eta \Tg$ properly rescaled (in space as well as in time). Using similar arguments as for near-critical percolation (see the detailed discussion in \cite{\GPSa}), the right way of rescaling dynamical percolation is as follows:

\begin{definition}
\label{d.DP}
In the rest of this paper, for each $\eta>0$, the rescaled {\bf dynamical percolation} $t\mapsto \omega_\eta(t)$ will correspond to the following process: 
\bi
\item[(i)] Sample the initial configuration $\omega_\eta(t=0)$ according to $\P_\eta$, the law of critical site percolation on $\eta \Tg$. 
\item[(ii)] As time $t$ increases, each hexagon is updated independently of the other sites at exponential rate $r(\eta)$ (defined in equation~\eqref{e.r}). When an exponential clock rings, the state of the corresponding hexagon becomes either white with probability 1/2 or black with probability 1/2. (Hence the measure $\P_\eta$ is invariant). 
\ei
\end{definition}

Note the similarity between the processes $\lambda\mapsto \omega_\eta^\nc(\lambda)$ and $t\mapsto \omega_\eta(t)$. In particular, the second main goal of this paper is to prove that the rescaled dynamical percolation process $t\mapsto \omega_\eta(t)$, seen as a \cadlag process in the Schramm-Smirnov space $\HH$ has a scaling limit as the mesh $\eta\to 0$. See Theorem \ref{th.main3}. This answers Question~5.3 in~\cite{ICM}. 

\subsection{Links to the existing literature}\label{ss.links}

In this subsection, we wish to list a few related works in the literature.
\bi
\item As mentioned earlier, the near-critical coupling $\omega_\eta^\nc(\lambda)$ has been studied in \cite{NolinWerner}. They do not prove a scaling limit result for $\omega_\eta^\nc(\lambda)$ as $\eta\to 0$, but they show that any subsequential scaling limits (with $\lambda\neq 0$ fixed) for the interfaces $\gamma_\eta(\lambda)$ of near-critical percolation $\omega_\eta^\nc(\lambda)$ are {\bf singular} w.r.t. the $\SLE_6$ measure. This result was very inspiring to us at the early stage of this work since it revealed that if a near-critical scaling limit $\omega_\infty^\nc(\lambda)$ existed, then it would lead to a very different (and thus very interesting) object compared to the Schramm-Smirnov scaling limit of critical percolation $\omega_\infty\sim \P_\infty$ (which is defined in Section~\ref{s.HH}).

\item In \cite{CFN}, the authors suggested a conceptual framework to construct a candidate for the scaling limit of $\omega_\eta^\nc(\lambda)$ (their rescaling procedure is slightly different from our Definition \ref{d.NCP} as it does not take into account possible logarithmic corrections in quantities like $\alpha_4^\eta(\eta,1)$). In this work, we thus answer the two main problems raised by \cite{CFN}. First, we prove that their framework indeed leads to an object $\omega_\infty^\nc(\lambda)$ and second, we prove that this object is indeed the scaling limit of $\omega_\eta^\nc(\lambda)$ as $\eta\to 0$. 

\item In the announcement \cite{MS}, the authors discuss what should be the scaling limit of interfaces of near-critical models. They identify a family of processes called the {\bf massive SLEs} which are the candidates for such near-critical scaling limits. However, they have concrete candidates only for the special cases $\kappa=2,3,4,16/3,8$, where the models are related to harmonic functions directly or through fermionic observables.  For the case of percolation, we make a conjecture for massive $\SLE_6$'s in Subsection~\ref{ss.off}. Let us note here that massive SLE's are expected to be absolutely continuous w.r.t.~their standard version for $\kappa\leq 4$, and singular for $\kappa>4$.

\item In \cite{CGN2}, a similar kind of near-critical scaling limit is considered: namely the {\bf Ising model} on the rescaled lattice $\eta \Z^2$ at the critical inverse temperature $\beta_c$ and with exterior magnetic field $h_\eta:= h\,\eta^{15/8}$ with $h>0$ fixed. As the mesh $\eta$ tends to zero, it is proved using the limit of the magnetic field obtained in \cite{CGN1} (which also relies on \cite{CHI}) that this near-critical Ising model has a scaling limit. Obtaining such a near-critical scaling limit in that case is in some sense easier than here, since in compact domains, the above near-critical scaling limit of Ising model with vanishing magnetic field happens to be {\bf absolutely continuous} w.r.t.~the critical scaling limit (as opposed to what happens with near-critical percolation, see Subsection \ref{ss.sing}). In particular, in order to obtain the above existence of the near-critical scaling limit, it is enough to identify its Radon-Nikodym derivative w.r.t.~the critical measure.

\item It is well-known that there is a phase-transition at $p=1/n$ for the {\bf Erd{\H o}s-R\'enyi random graphs} $G(n,p)$. Similarly to the above case of planar percolation, it is a natural problem to study the geometry of these random graphs near the transition $p_c=1/n$. It turns out in this case that the meaningful rescaling is as follows: one considers near-critical random graphs with intensity $p=1/n + \lambda/ n^{4/3},\, \lambda\in \R$. Using notations similar to ours, if $R_n(\lambda)=(C_n^1(\lambda), C_n^2(\lambda),\ldots)$ denotes the sequence of clusters at $p=1/n + \lambda/ n^{4/3}$ (ordered in decreasing order of size, say), then it is proved in \cite{ABG} that as $n\to \infty$, the renormalized sequence $n^{-1/3}\, R_n(\lambda)$ converges in law to a limiting object $R_\infty(\lambda)$ for a certain topology on sequences of compact spaces which relies on the Gromov-Hausdorff distance.  This near-critical coupling $\{ R_\infty(\lambda)\}_{\lambda\in \R}$ has then been used in \cite{ABGM} in order to obtain a scaling limit as $n\to \infty$ (in the Gromov-Hausdorff sense) of the minimal spanning tree on the complete graph with $n$ vertices.  Our present paper is basically the Euclidean analog ($d=2$) of the mean-field case \cite{ABG}, and the near-critical coupling $\{ \omega_\infty^\nc(\lambda)\}_{\lambda\in \R}$ we build is used in the companion paper \cite{MSTreal}  to obtain the scaling limit of the Minimal Spanning Tree in the plane (see the report~\cite{MST} as well as ~\cite{CFN} where such a construction from the continuum had already been discussed). An important difference is  that in the mean-field case one is interested in the intrinsic metric properties (and hence works with the Gromov-Hausdorff distance between metric spaces), while in the Euclidean case one is first of all interested in how the graph is embedded in the plane.

\item In \cite{Wulff}, the author relies on our main result in his proof that the {\bf Wulff crystal} for supercritical percolation on the triangular lattice converges to a ball as $p>p_c$ tends to $p_c(\Tg)=1/2$. 

\item In \cite{Kiss} and \cite{vdBKN}, our results from the present paper and from \cite{GPS2a} are used to study Aldous' {\bf frozen percolation} for site percolation on the triangular lattice, where clusters are grown dynamically and get frozen when they reach large diameter or large volume, respectively.
\ei

\subsection{Main statements}

The first result we wish to state is that if $\lambda\in \R$ is fixed, then the near-critical percolation $\omega_\eta(\lambda)$ has a scaling limit as $\eta\to 0$. In order to state a proper theorem, one has to specify what the setup and the topology are. As it is discussed at the beginning of Section \ref{s.HH}, there are several very different manners to represent or ``encode'' what a percolation configuration is (see also the very good discussion on this in \cite{SSblacknoise}). In this paper, we shall follow the approach by the third author and Smirnov, which is explained in detail in Section~\ref{s.HH}. In this approach, each percolation configuration $\omega_\eta \in \{0,1\}^{\eta \Tg}$ corresponds to a point in the Schramm-Smirnov topological space $(\HH,\T)$ which has the advantage to be compact (see Theorem \ref{th.compact}) and Polish. From \cite{SSblacknoise} and \cite{\CN}, it follows that $\omega_\eta \sim \P_\eta$ (critical percolation on $\eta\Tg$) has a scaling limit in $(\HH,\T)$: i.e., it converges in law as $\eta\to 0$ under the topology $\T$ to a ``continuum'' percolation $\omega_\infty \sim \P_\infty$, where $\P_\infty$ is a Borel probability measure on $(\HH, \T)$. See Subsection~\ref{ss.scalinglimitSSsense}. We may now state our first main result.

\begin{theorem}\label{th.main1}
Let $\lambda\in \R$ be fixed. Then as $\eta\to 0$, the near-critical percolation $\omega_\eta^\nc(\lambda)$ converges in law (in the topological space $(\HH,\T)$) to a limiting random percolation configuration, which we will denote by $\omega_\infty^\nc(\lambda)\in \HH$. 
\end{theorem}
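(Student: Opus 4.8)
The plan is to realize $\omega_\eta^\nc(\lambda)$ as an explicit perturbation of the \emph{critical} configuration $\omega_\eta := \omega_\eta^\nc(0)$ --- whose scaling limit, together with its pivotal measures, is already available --- and then to control this perturbation uniformly in $\eta$, so that everything reduces to a stability estimate for pivotal points. First, since $(\HH,\T)$ is compact and metrizable (Theorem~\ref{th.compact}), the family of laws of $\omega_\eta^\nc(\lambda)$ is automatically tight, and it is enough to show that all subsequential limits coincide. By the definition of $\T$ this amounts to showing that, for every finite collection of quads $Q_1,\dots,Q_k$, the joint law of the crossing events $\boxup Q_1,\dots,\boxup Q_k$ converges as $\eta\to 0$. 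As in the critical case these events are not $\T$-continuous, so one first records the a priori RSW estimates --- which stay uniform in $\eta$ because, with the normalization \eqref{e.r}, a bounded $\lambda$ corresponds to observing the system at or below its correlation length --- to guarantee that with probability tending to $1$ no $Q_i$ is crossed only ``marginally'', which is what makes the finite-dimensional marginals determine the subsequential limit.

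Next I would set up the coupling. Through the monotone coupling, $\omega_\eta^\nc(\lambda)$ is obtained from $\omega_\eta$ by switching each closed hexagon independently with probability $1-e^{-\lambda\, r(\eta)}$, and such a switch affects a macroscopic event $\boxup Q_i$ only when it happens at a hexagon pivotal for $Q_i$. The essential input, from our earlier paper \cite{\GPSa}, is that the critical configuration together with its (normalized, importance-truncated) pivotal measures relative to the $Q_i$ converges jointly in law; this is what replaces a bare appeal to Smirnov's theorem. Fix a small step $\delta>0$, put $\lambda_j = j\delta$, and describe the passage from $\omega_\eta^\nc(\lambda_j)$ to $\omega_\eta^\nc(\lambda_{j+1})$ through the random set of currently-pivotal hexagons that switch in that step: since each pivotal hexagon switches with probability $\approx \delta\, r(\eta)$ and the normalized pivotal measures converge, this set converges as $\eta\to 0$ to a Poisson point process of intensity $\delta$ times the limiting pivotal measure of $\omega_\infty^\nc(\lambda_j)$. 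Iterating over $j$ yields, for each $\delta$, a candidate limiting object $\omega_\infty^{\nc,\delta}(\lambda)$ built by alternately resampling a Poisson set of pivotals and flipping them --- which is also the source of the Markov property and of the ``ODE driven by the pivotal measure'' picture.

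The step I expect to be the main obstacle is the \emph{uniform stability} of pivotal points under the perturbation, without which the iteration above would accumulate uncontrolled error both as $\delta\to 0$ and as many steps are composed. What is needed, uniformly in $\eta$ and for $\lambda$ in a bounded range, is that being pivotal --- i.e., the alternating four-arm event from a hexagon out to a macroscopic scale --- is not destroyed (nor spuriously created) by the near-critical switches except on an event whose probability is controlled by $\lambda$ times the corresponding four-arm probability; this is precisely the four-arm stability result (Lemma~\ref{l.main}), whose corollary is that the pivotal measure of $\omega_\eta^\nc(\lambda)$ stays comparable to that of $\omega_\eta$ and, in particular, does not blow up. Establishing this is the genuinely hard part: it is where quasi-multiplicativity of $\alpha_4$, RSW, exploration and gluing arguments, and second-moment estimates for the number of pivotals in the perturbed configuration all come in.

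Granting this stability, I would conclude as follows. For each fixed $\delta$, the convergence of pivotal measures together with a soft continuity argument gives convergence in law of $\omega_\eta^\nc(\lambda)$ to $\omega_\infty^{\nc,\delta}(\lambda)$ up to an error that is small uniformly in $\eta$, the smallness being quantified via the stability estimate since only pivotal switches matter. Separately, stability shows that the schemes with steps $\delta$ and $\delta/2$ differ by an error that is $o(1)$ as $\delta\to 0$, so $\omega_\infty^{\nc,\delta}(\lambda)$ is Cauchy in $\delta$ and converges to some $\omega_\infty^\nc(\lambda)$. Interchanging the limits $\eta\to 0$ and $\delta\to 0$ then identifies the (thus unique) subsequential limit of $\omega_\eta^\nc(\lambda)$ with $\omega_\infty^\nc(\lambda)$, proving the theorem; running the same scheme with $\lambda$ varying rather than frozen gives the monotone coupling statement of Theorem~\ref{th.main2} as well.
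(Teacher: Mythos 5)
Your overall philosophy --- perturb the critical scaling limit by Poisson clouds of pivotal points governed by the measures of \cite{\GPSa}, and control cascades from low-importance points via the four-arm stability Lemma~\ref{l.main} --- is indeed the one used here, and you correctly single out that stability lemma as a key input. But your actual scheme, a time-discretization of $[0,\lambda]$ into steps of size $\delta$ in which the set of pivotals is \emph{resampled} at every level $\lambda_j$, has a gap that stability does not cure: to identify the limit of the $j$-th switch set as a Poisson process of intensity $\delta\,\mu^\eps(\omega_\infty^\nc(\lambda_j))$ you need the joint convergence $(\omega_\eta^\nc(\lambda_j),\mu^\eps(\omega_\eta^\nc(\lambda_j)))\to(\omega_\infty^\nc(\lambda_j),\mu^\eps(\omega_\infty^\nc(\lambda_j)))$, i.e.\ a \emph{near-critical} version of the main theorem of \cite{\GPSa}. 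Lemma~\ref{l.main} only gives comparability of four-arm probabilities up to a constant $C_1(T)$, which cannot pin down the intensity measure; the actual identification requires near-critical RSW/arm estimates and the asymptotics $\beta_\lambda\sim\beta$ of the normalizing constants, which in this paper is exactly Theorem~\ref{th.PMNC} and Lemma~\ref{l.I1}, proved separately and only needed for the Markov property. The proof of Theorems~\ref{th.main1}--\ref{th.main2} deliberately avoids this circularity by cutting off in \emph{space} rather than in time: a single Poisson point process on $D\times[0,\lambda]$ with intensity $d\mu^\eps(\omega_\infty(0))\times d\lambda$, so that only the pivotal measure of the critical slice ever enters, and the error of ignoring everything else is controlled once and for all by the stability estimates of Section~\ref{s.stability}.

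The second, more basic, gap is that ``flipping a Poisson set of pivotals'' of $\omega_\infty\in\HH$ is not a defined operation: in the continuum there are no hexagons, and after a flip one must decide, for every quad $Q$, whether the perturbed configuration crosses $Q$, and then check that the resulting $0$--$1$ data on $\QUAD_\N$ is monotone and closed so that it determines a point of $\HH$ (Lemma~\ref{l.unique}). Making sense of this is the bulk of the construction here: the networks $\Net_Q(\omega_\infty,\PPP)$, the a.s.\ stabilization of the mesoscopic networks $\Net^r_Q$ as $r\to 0$ via six-arm and boundary three-arm estimates (Lemma~\ref{l.stab}, Theorem~\ref{th.BC}), their Booleanness (Corollary~\ref{c.boolean}), and the verification of the hypotheses of Lemma~\ref{l.unique}, which is itself carried out through a coupling with the discrete dynamics (Propositions~\ref{pr.compZ} and~\ref{pr.extension}). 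Without this step your candidate $\omega_\infty^{\nc,\delta}(\lambda)$ is simply not constructed, so there is nothing to be Cauchy in $\delta$. A smaller point: even granting convergence of the process, convergence of the fixed-$\lambda$ marginal does not follow from Skorohod convergence alone; one needs the no-jump-at-a-deterministic-level estimate of Proposition~\ref{pr.LC} (Theorem~\ref{th.MARGIN}), or, in your finite-dimensional formulation, a proof that $\p\boxminus_Q$ is null for every subsequential limit of the near-critical laws.
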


As pointed out earlier, the process $\lambda \in \R \mapsto \omega_\eta^\nc(\lambda)$ is a
\cadlag process in $(\HH, \T)$. One may thus wonder if it converges as $\eta\to 0$ to a limiting random \cadlag path. There is a well-known and very convenient functional setup for \cadlag paths with values in  a Polish metric spaces $(X,d)$: the Skorohod space introduced in Proposition \ref{pr.sko}. Fortunately, we know from Theorem \ref{th.compact} that the Schramm-Smirnov space $(\HH,\T)$ is metrizable. In particular, one can introduce a Skorohod space of \cadlag paths with values in $(\HH,d_\HH)$ where $d_\HH$ is some fixed distance compatible with the topology $\T$. This Skorohod space is defined in Lemma \ref{l.SKR} and is denoted by $(\Sk,d_\Sk)$. We have the following theorem:

\begin{theorem}\label{th.main2}
As the mesh $\eta\to 0$, the \cadlag process $\lambda \mapsto \omega_\eta^\nc(\lambda)$ converges in law under the topology of $d_\Sk$ to a limiting random \cadlag process $\lambda \mapsto \omega_\infty^\nc(\lambda)$. 
\end{theorem}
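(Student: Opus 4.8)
The plan is to upgrade the fixed-$\lambda$ convergence of Theorem \ref{th.main1} to convergence of the whole \cadlag path in the Skorohod space $(\Sk, d_\Sk)$. The standard route is to establish (a) convergence of all finite-dimensional marginals, i.e.\ joint convergence in law of $\bigl(\omega_\eta^\nc(\lambda_1),\dots,\omega_\eta^\nc(\lambda_k)\bigr)$ in $(\HH,\T)^k$ for every finite set $\lambda_1<\dots<\lambda_k$, and (b) tightness of the laws of $\lambda\mapsto\omega_\eta^\nc(\lambda)$ in $(\Sk,d_\Sk)$. Given (a) and (b), a standard argument (e.g.\ via the characterization of Skorohod-space weak convergence: tightness plus fdd-convergence at a dense set of times that includes the endpoints) identifies the limit uniquely as a \cadlag process whose marginals are the ones produced by (a), yielding the theorem. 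The joint convergence in (a) should follow from the same pivotal-measure perturbation machinery that underlies Theorem \ref{th.main1}: the increments of the near-critical coupling between consecutive parameter values $\lambda_i$ and $\lambda_{i+1}$ are driven by independent Poissonian switches of sites at rate $r(\eta)$, and conditionally on $\omega_\eta^\nc(\lambda_i)$ the configuration $\omega_\eta^\nc(\lambda_{i+1})$ is obtained by the same ``perturb the critical picture'' procedure; one applies the convergence result iteratively, using that the effect of the switches on macroscopic quad-crossing events is governed (in the scaling limit) by the pivotal measures, which themselves converge.

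The genuinely new work is the tightness statement (b). For tightness in the Skorohod topology on a compact metric space $(\HH,d_\HH)$ — and compactness of $(\HH,\T)$ from Theorem \ref{th.compact} already gives tightness of each one-parameter marginal for free — one needs to control the modulus of continuity that appears in Skorohod's criterion: roughly, for every $\varepsilon>0$ one must show
\[
\lim_{\delta\to 0}\ \limsup_{\eta\to 0}\ \P\Bigl[\,w'_{\omega_\eta^\nc}(\delta) \ge \varepsilon\,\Bigr]=0,
\]
where $w'$ is the cadlag modulus (the infimum, over partitions of the $\lambda$-axis into blocks of length $>\delta$, of the largest oscillation of $\omega_\eta^\nc$ within a block). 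Equivalently, one must rule out, uniformly in $\eta$, the scenario that $\omega_\eta^\nc$ changes ``macroscopically'' (in the sense of some finite collection of quad-crossing events used to metrize $\T$) more than a bounded number of times within any short $\lambda$-window, and that no two such macroscopic jumps are $\delta$-close. The mechanism here is that a single quad-crossing event can only flip when a \emph{pivotal} site is switched, and over a $\lambda$-window of length $\delta$ the expected number of sites switched that are pivotal for a fixed macroscopic quad at the critical point is, by the choice of $r(\eta)=\eta^2\alpha_4^\eta(\eta,1)^{-1}$ in \eqref{e.r}, of order $\delta$ — bounded, with no blow-up as $\eta\to 0$. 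Turning this heuristic into a genuine uniform bound requires the quantitative four-arm stability estimates (the ``Lemma~\ref{l.main}'' type statements referenced in the excerpt) to ensure that pivotal sites created or destroyed along the way do not conspire to produce extra macroscopic changes, and requires handling a countable family of quads simultaneously; this is the step I expect to be the main obstacle.

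Finally I would assemble the pieces: fix a countable collection of quads that determines the topology $\T$ and hence the metric $d_\HH$; use (a) to get joint convergence of the crossing-indicator processes for any finite subfamily, and (b) to get tightness of each such finite-dimensional functional in its own Skorohod space; conclude tightness of $\lambda\mapsto\omega_\eta^\nc(\lambda)$ in $(\Sk,d_\Sk)$ by a diagonal/projective-limit argument over the countable family (using that $d_\HH$ is dominated by a summable combination of the individual crossing pseudometrics); and identify the limit by matching finite-dimensional distributions with those of the process $\lambda\mapsto\omega_\infty^\nc(\lambda)$ from Theorem \ref{th.main1}. The continuity of $\lambda\mapsto\omega_\infty^\nc(\lambda)$ from the right, with left limits, is then inherited from the \cadlag property being closed under Skorohod convergence together with the tightness modulus bound, completing the proof.
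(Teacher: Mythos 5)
Your high-level frame (finite-dimensional convergence plus Skorohod tightness, then the standard Ethier--Kurtz identification) is genuinely different from the paper's, which never goes through finite-dimensional distributions at all: it constructs the limiting trajectory explicitly as the $\eps\to 0$ limit of cut-off processes $\omega_\infty^{\nc,\eps}(\cdot)$ built from the pivotal measures $\mu^\eps(\omega_\infty)$, Poisson point processes and the networks $\Net_Q$, couples these with the discrete cut-off dynamics (Corollary \ref{c.coupling}, Theorem \ref{th.etaskoro}), controls the gap between the cut-off and the full discrete dynamics \emph{uniformly in} $\eta$ by the four-arm stability estimate (Lemma \ref{l.main}, Proposition \ref{pr.stab}), and concludes by a triangle-inequality/Cauchy argument. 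The genuine gap in your proposal is step (a). You treat Theorem \ref{th.main1} and its multi-time extension as available inputs, but in the paper Theorem \ref{th.main1} is itself a \emph{consequence} of Theorem \ref{th.main2} (via Theorem \ref{th.MARGIN} and Proposition \ref{pr.LC}); there is no independent proof of the convergence in law of $(\omega_\eta^\nc(\lambda_1),\dots,\omega_\eta^\nc(\lambda_k))$. Since $\HH^k$ is compact, subsequential finite-dimensional limits are automatic; the entire difficulty is \emph{uniqueness} of the limit law, and that uniqueness is exactly what the pivotal-measure construction (the cut-off processes, the networks, the coupling of the Poisson clouds, and the no-cascades stability) is there to provide. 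So the sentence ``should follow from the same pivotal-measure perturbation machinery that underlies Theorem \ref{th.main1}'' defers the bulk of the proof to a result that, in the paper's logical order, does not yet exist; as written the argument is circular at its core. (Also, iterating the perturbation from a slice $\lambda_i\neq 0$ would require the pivotal-measure theorem for \emph{near-critical} configurations, i.e.\ Theorem \ref{th.PMNC}, another nontrivial ingredient which the paper needs only later, for the Markov property.)

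By contrast, your tightness half is essentially sound and corresponds to estimates the paper does prove: the probability that some site updated in a $\lambda$-window of length $\delta$ is ever pivotal at a macroscopic scale (within the space of updated configurations) is $O_k(\delta)$ uniformly in $\eta$, which is precisely Lemma \ref{l.main} as exploited in Proposition \ref{pr.LC}, and combined with the uniform-structure comparison of Proposition \ref{pr.CovrtoCovk} over the finite families $\QUAD^k$ it does yield the \cadlag modulus bound you need. But tightness alone produces only subsequential limits, so nothing closes without (a); and even granting an abstract limit, the compactness route would give you only a limit law, whereas the paper's constructive proof yields the limit as a measurable function of $(\omega_\infty,\PPP)$, which is what later powers the Markov property and conformal covariance.
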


\begin{remark}\label{}
Due to the topology given by $d_\Sk$, it is not a priori obvious  that the slice $\omega_\infty^\nc(\lambda)$ obtained from Theorem \ref{th.main2}  is the same object as the scaling limit $\omega_\infty^\nc(\lambda)$ obtained in Theorem \ref{th.main1}. Nonetheless, it is proved in Theorem \ref{th.MARGIN} that these two objects indeed coincide. 
\end{remark}

From the above theorem, it is easy to extract the following corollary which answers our initial motivation by describing how percolation looks below its correlation length; see Subsection~\ref{ss.CL} for the proof:

\begin{corollary}\label{c.BCL}
For any $p\neq p_c$, let 
\[
L(p):= \inf \left\{ R\geq 1,\, \text{s.t. } R^2 \alpha_{4}(R) \geq \frac 1 {|p-p_c|} \right\}\,.
\]
Recall that for any $p\in[0,1]$, $\omega_p$ stands for percolation on $\Tg$ with intensity $p$. Then as $p-p_c>0$ tends to zero, $L(p)^{-1} \omega_p$ converges in law in $(\HH,d_\HH)$ to $\omega_\infty^\nc(\lambda=2)$ while as $p-p_c<0$ tends to $0$, $L(p)^{-1} \omega_p$ converges in law in $(\HH,d_\HH)$ to $\omega_\infty^\nc(\lambda=-2)$. 
\end{corollary}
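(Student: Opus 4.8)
The plan is to derive Corollary~\ref{c.BCL} directly from Theorem~\ref{th.main2} (or more precisely from Theorem~\ref{th.main1} together with the identification of slices in Theorem~\ref{th.MARGIN}), by matching the two parametrizations of near-critical percolation: the original one in terms of $p = p_c + \lambda\, r(\eta)$, and the fixed-window rescaling of $\omega_p$ inside the box $[-L(p),L(p)]^2$. First I would fix $p > p_c$ and set $\eta = \eta(p) := L(p)^{-1}$, so that $L(p)^{-1}\omega_p$ is literally a percolation configuration on $\eta\Tg$ with parameter $p = p_c + (p-p_c)$. The content of the corollary is then exactly the statement that, along this one-parameter family, $\eta(p) \to 0$ as $p \to p_c$, and that the corresponding parameter offset $p - p_c$ equals $\lambda\, r(\eta(p))$ with $\lambda \to 1$; once this is checked, the convergence in law to $\omega_\infty^\nc(\lambda=1)$ follows from Theorem~\ref{th.main1} applied at $\lambda = 1$, after a routine argument that the limit is continuous in $\lambda$ so that $\lambda \to 1$ (rather than $\lambda = 1$ exactly) still gives the same limit. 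The case $p < p_c$ is identical with $\lambda \to -1$.

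The arithmetic core is the following. By definition of $r(\eta)$ in~\eqref{e.r}, $r(\eta) = \eta^2 \alpha_4^\eta(\eta,1)^{-1}$, and by scale invariance of critical percolation in the limit (quasi-multiplicativity of arm events, from~\cite{\GPSa}) one has $\alpha_4^\eta(\eta,1) \asymp \alpha_4(1/\eta) = \alpha_4(L(p))$, so that $r(\eta(p)) \asymp L(p)^{-2}\,\alpha_4(L(p))^{-1}$. On the other hand, the defining property of $L(p)$ is $L(p)^2\,\alpha_4(L(p)) \asymp 1/|p-p_c|$ (this is the standard consequence of the definition $L(p) = \inf\{R : R^2\alpha_4(R) \ge 1/|p-p_c|\}$ together with the fact that $R \mapsto R^2\alpha_4(R)$ grows polynomially and varies by bounded factors on bounded scale ratios --- i.e. $R^2\alpha_4(R)$ is regularly varying with a strictly positive exponent). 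Combining, $|p-p_c| \asymp r(\eta(p))$, i.e. $\lambda := (p-p_c)/r(\eta(p))$ stays bounded away from $0$ and $\infty$ as $p \to p_c$. To upgrade ``$\asymp$'' to ``$\to 1$'' one would replace the definition of $L(p)$ used here by the sharp one $L(p) := \inf\{R \ge 1 : R^2\alpha_4(R) \ge 1/|p-p_c|\}$ exactly as in the corollary statement, and note that at $R = L(p)$ the quantity $R^2\alpha_4(R)$ equals $1/|p-p_c|$ up to a factor tending to $1$ along the sequence (by the regular-variation / near-continuity of $R \mapsto R^2\alpha_4(R)$ on the relevant scales); the residual bounded factor can be absorbed by noting that the arbitrary constant in ``$1/|p-p_c|$'' was declared irrelevant, or, more honestly, by only claiming convergence to $\omega_\infty^\nc(\lambda)$ for the $\lambda$ produced by this bookkeeping and invoking continuity of $\lambda \mapsto \omega_\infty^\nc(\lambda)$ together with the fact that rescaling the box by a bounded factor corresponds to a bounded change of $\lambda$.

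The remaining ingredient is continuity of the limiting coupling in $\lambda$, which lets us pass from ``$\lambda = \lambda(p) \to \pm 1$'' to ``the limit is $\omega_\infty^\nc(\pm 1)$''. This is where Theorem~\ref{th.main2} enters: the convergence of the whole \cadlag path $\lambda \mapsto \omega_\eta^\nc(\lambda)$ in the Skorohod space $(\Sk,d_\Sk)$ implies, via the identification in Theorem~\ref{th.MARGIN}, that $\lambda \mapsto \omega_\infty^\nc(\lambda)$ is stochastically continuous at every fixed $\lambda$ (a monotone \cadlag process in the ordered space $\HH$ can have at most countably many fixed jump times in distribution, and by the Markov property and scale covariance --- Theorem~\ref{th.CL} and the conformal covariance statements --- there are in fact none; at the very least there are none at $\lambda = \pm 1$). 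Hence if $L(p)^{-1}\omega_p = \omega_{\eta(p)}^\nc(\lambda(p))$ with $\lambda(p) \to 1$, then for any subsequence we may further pass to a sub-subsequence along which, jointly, $\omega_{\eta}^\nc(\cdot) \Rightarrow \omega_\infty^\nc(\cdot)$ in $d_\Sk$ and $\lambda(p) \to 1$, and evaluating the path at the (converging) times $\lambda(p)$ and at $1$ and using stochastic continuity at $1$ gives $L(p)^{-1}\omega_p \Rightarrow \omega_\infty^\nc(1)$; since every subsequence has such a sub-subsequence, the full limit holds. The symmetric argument at $-1$ finishes the proof.

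The main obstacle --- modest, but the only non-formal point --- is the sharp asymptotic matching $|p - p_c| \sim r(\eta(p))$ (constant $1$, not just $\asymp$), which forces one to be careful about the exact normalization conventions in the definition of $L(p)$, in $r(\eta)$, and in the quasi-multiplicativity estimate $\alpha_4^\eta(\eta,1) \sim \alpha_4(1/\eta)$. The clean way to sidestep it, which I would adopt, is to observe that the corollary's conclusion is invariant under replacing $L(p)$ by any $c\,L(p)$ with $c$ fixed --- because rescaling the window by $c$ just reparametrizes the limiting coupling by a fixed, explicit change of $\lambda$ (a consequence of the Markovianity/covariance in Theorems~\ref{th.main1}--\ref{th.main2}) --- and the normalization in the definition of $L(p)$ was, as emphasized in the text, arbitrary; so one is free to choose the constant in ``$1/|p-p_c|$'' so that the target value is exactly $\lambda = \pm 1$. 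With that convention the proof reduces to the bookkeeping above plus the continuity-in-$\lambda$ argument, and no genuinely new estimate beyond Theorems~\ref{th.main1}, \ref{th.main2} and~\ref{th.MARGIN} and the standard arm-event quasi-multiplicativity is required.
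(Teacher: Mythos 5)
Your main route is indeed the intended one: the paper gives no separate argument for Corollary~\ref{c.BCL} beyond extracting it from Theorem~\ref{th.main2}, and the extraction you describe --- identify $L(p)^{-1}\omega_p$ in law with $\omega_{\eta(p)}^\nc(\lambda(p))$ for $\eta(p)=L(p)^{-1}$ and $\lambda(p)=-\log\bigl(1-|p-p_c|\bigr)/r(\eta(p))$ (Definition~\ref{d.NCP}), check $\lambda(p)\to\pm1$, and then upgrade Skorohod convergence of the path to convergence of the slice at a converging parameter via stochastic continuity of the limit at the fixed $\lambda$ --- is exactly the needed bookkeeping. The continuity input is precisely Theorem~\ref{th.MARGIN}/Proposition~\ref{pr.LC} (your alternative argument, countability of fixed discontinuities of a \cadlag process plus scale covariance, also works), and the full-plane setting is handled by the extension of Subsection~\ref{ss.extensionF}.

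Two corrections, one of which matters. The ``clean sidestep'' that you say you would adopt is wrong and should be dropped: the conclusion is \emph{not} invariant under replacing $L(p)$ by $c\,L(p)$. By Corollary~\ref{c.SCALE}, rescaling the window by $c$ changes the limit from $\omega_\infty^\nc(1)$ to $\omega_\infty^\nc(c^{-3/4})$, which is a genuinely different law (e.g.\ the correlation length of Theorem~\ref{th.CL} distinguishes them), so you cannot choose your own constant in the definition of $L(p)$ when the statement fixes both that constant and the target $\lambda=\pm1$. Fortunately the sharp matching you were trying to avoid is immediate and needs neither regular variation nor quasi-multiplicativity: by exact scaling of the lattice, $\alpha_4^{\eta}(\eta,1)=\alpha_4\bigl(1,1/\eta\bigr)=\alpha_4(L(p))$, so $r(\eta(p))=L(p)^{-2}\alpha_4(L(p))^{-1}$ by~\eqref{e.r}; and using only that $R\mapsto\alpha_4(R)$ is non-increasing, for $R<L(p)$ one has $R^2\alpha_4(L(p))\le R^2\alpha_4(R)<1/|p-p_c|$, while for $R\ge L(p)$ in the defining set one has $R^2\alpha_4(L(p))\ge R^2\alpha_4(R)\ge 1/|p-p_c|$; letting $R\to L(p)$ from each side yields $L(p)^2\alpha_4(L(p))=(1+o(1))/|p-p_c|$ (in fact equality, up to how $\alpha_4$ is defined at non-lattice radii). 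Hence $|p-p_c|=(1+o(1))\,r(\eta(p))$ and $\lambda(p)\to\pm1$ with the normalization as stated, after which your subsequence/stochastic-continuity argument finishes the proof.
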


We defined another \cadlag process of interest in Definition \ref{d.DP}: the rescaled dynamical percolation process $t\mapsto \omega_\eta(t)$. This process also lives in the Skorohod space $\Sk$ and we have the following scaling limit result:

\begin{theorem}\label{th.main3}
As the mesh $\eta\to 0$, rescaled dyamical percolation converges in law (in $(\Sk,d_\Sk)$) to a limiting stochastic process in $\HH$ denoted by $t\mapsto \omega_\infty(t)$. 
\end{theorem}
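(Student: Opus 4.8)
The plan is to run the same scheme as for the near-critical couplings of Theorems~\ref{th.main1}--\ref{th.main2}, starting from the elementary observation that rescaled dynamical percolation (Definition~\ref{d.DP}) is, exactly like $\lambda\mapsto\omega_\eta^\nc(\lambda)$, a \emph{local Poissonian perturbation} of critical percolation on $\eta\Tg$: each hexagon carries an independent Poisson clock of rate $r(\eta)=\eta^2\alpha_4^\eta(\eta,1)^{-1}$, and when its clock rings the hexagon is resampled by a fair coin (rather than flipped monotonically). In particular $\omega_\eta(t)\sim\P_\eta$ for every $t$, so each one-time marginal already converges to $\omega_\infty\sim\P_\infty$ by \cite{SSblacknoise,\CN}; the content of the theorem is to upgrade this to convergence of the \cadlag path in $(\Sk,d_\Sk)$, after which stationarity, reversibility and the Markov property are inherited from the discrete model.

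By Prokhorov's theorem (using that $(\Sk,d_\Sk)$ is Polish and that $\T$ is metrizable, Theorem~\ref{th.compact}) it suffices to establish (a) tightness of the laws of $t\mapsto\omega_\eta(t)$ in $\mathcal{P}(\Sk)$, and (b) that every subsequential limit coincides with one explicitly described process $t\mapsto\omega_\infty(t)$. For (a): since $\HH$ is compact, all one-time marginals are tight, so by the usual tightness criterion in the Skorohod space it is enough to control, for every quad $Q$ in a fixed countable family determining $\T$, the number $N_\eta^Q(T)$ of times in $[0,T]$ at which the crossing status of $Q$ flips. Such a flip can occur only when a hexagon pivotal for $Q$ is updated, so $\E[N_\eta^Q(T)]\le\tfrac12\,T\,r(\eta)\,\E_{\P_\eta}[\#\{\text{hexagons pivotal for }Q\}]\asymp T\,r(\eta)\,\eta^{-2}\alpha_4^\eta(\eta,1)$, which is $O(T)$ uniformly in $\eta$ (with a $Q$-dependent constant), and a second-moment bound of the same order follows from the four-arm stability estimate Lemma~\ref{l.main} and quasi-multiplicativity of $\alpha_4$. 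Hence $N_\eta^Q(T)$ is tight, and a union bound over a finite subfamily gives tightness of $t\mapsto\omega_\eta(t)$ in $\Sk$.

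For (b) I would use the pivotal-measure coarse-graining of \cite{\GPS}. Fix $T$ and a mesoscopic scale $\rho>0$ and let $t\mapsto\omega_\eta^\rho(t)$ be the process obtained from $\omega_\eta(\cdot)$ by retaining only those updates that occur at a hexagon pivotal on scale $\rho$ for the scale-$\rho$ picture of the configuration at that moment. On one hand, Lemma~\ref{l.main} together with quasi-multiplicativity and standard RSW and arm-gluing arguments shows that, \emph{uniformly} in $\eta$ and in $t\in[0,T]$, the $d_\HH$-distance between $\omega_\eta(t)$ and $\omega_\eta^\rho(t)$ is small with probability tending to $1$ as $\rho\to0$: an update which is pivotal only below scale $\rho$ cannot, through the macroscopic connectivity pattern, alter a macroscopic quad-crossing. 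On the other hand $\omega_\eta^\rho(\cdot)$ has only finitely many jumps on $[0,T]$, is constant between them, and at each jump is modified at a single mesoscopic spot; applying the convergence of the rescaled counting measure on scale-$\rho$ pivotals to the pivotal measure $\mu^\rho$ of \cite{\GPS} at each of these (finitely many) successive jump times, one gets that $\omega_\eta^\rho(\cdot)$ converges in $(\Sk,d_\Sk)$ as $\eta\to0$ to a pure-jump process $\omega_\infty^\rho(\cdot)$ started from $\omega_\infty(0)\sim\P_\infty$ and evolving by the Poissonian rule ``wait an exponential time at rate equal to the total mass of the pivotal measure $\mu^\rho$ of the current configuration, then update the configuration at a point sampled from $\mu^\rho$''. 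Finally, letting $\rho\to0$ and using the uniform approximation above, $\omega_\infty^\rho(\cdot)$ is Cauchy in law; its limit $\omega_\infty(\cdot)$ is the unique subsequential limit of $\omega_\eta(\cdot)$, and the Markov property passes to the limit exactly as in the near-critical case (cf.\ the discussion around Theorem~\ref{th.MARGIN}).

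The step I expect to be the main obstacle is the uniform-in-$\eta$ control of the coarse-graining error in (b): proving that, with probability close to $1$ and simultaneously for all $t\in[0,T]$, no update which is pivotal only at microscopic scales alters a macroscopic quad-crossing event. This is exactly the four-arm stability phenomenon quantified by Lemma~\ref{l.main}, together with the work needed to transfer it into a statement about the $\HH$-topology --- controlling finitely many quads at once, gluing arms across scales, and handling the supremum over $t\in[0,T]$. A secondary nuisance is that the driving intensity is a moving target, namely the pivotal measure of the configuration as it currently stands, which itself jumps; but at a fixed scale $\rho$ there are only finitely many jumps on $[0,T]$, so this reduces to finitely many applications of the static pivotal-measure convergence of \cite{\GPS} and causes no real difficulty.
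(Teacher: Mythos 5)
Your overall philosophy is indeed the paper's: a mesoscopic cut-off governed by the pivotal measure of the companion paper (note that this is \cite{\GPSa}, not \cite{\GPS}), with the four-arm stability Lemma~\ref{l.main} controlling the discarded microscopic updates; and the tightness-plus-identification packaging is an acceptable substitute for the paper's route (the paper never invokes Prokhorov: it builds $t\mapsto\omega_\infty(t)$ directly as an a.s.\ Cauchy limit of cut-off processes and deduces convergence from the uniform stability bound $\Eb{d_{\Sk_T}(\omega_\eta(\cdot),\omega_\eta^\eps(\cdot))}\le\psi(\eps)$ by a triangle inequality under explicit couplings, Theorem~\ref{th.MAIN}). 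The genuine gaps are in your identification step. First, your limiting cut-off dynamics is never actually defined: ``update the configuration at a point sampled from $\mu^\rho$'' is not an operation on $\HH$. One must decide, for all quads simultaneously and consistently, whether each is crossed after the flip, using only continuum data, and then check that the resulting family of $0$--$1$ processes really comes from a single \cadlag element of $\HH$; this is exactly the content of Sections~\ref{s.network}--\ref{s.cutoff} (the mesoscopic networks, their a.s.\ stabilization in Theorem~\ref{th.BC}, their Booleanness, and the extension Lemma~\ref{l.unique}/Theorem~\ref{th.extension}), which is a substantial part of the proof rather than a detail one may assume.

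Second, your censoring is a moving target (keep updates at sites $\rho$-pivotal \emph{in the current configuration}), whereas the paper freezes both the cut-off set and the intensity measure $\mu^\eps(\omega(0))$ at time zero; this difference is not cosmetic. (a) The uniform-in-$\eta$ comparison of $\omega_\eta(\cdot)$ with your censored process does not follow from Lemma~\ref{l.main} and Proposition~\ref{p.almostCross} as stated: their hypothesis is agreement of the two configurations at every site whose importance \emph{in the initial configuration} is at least $\eps$, while your censored process may skip an update at a site of large initial importance whose pivotality has been temporarily destroyed, so a modified stability argument is needed. (b) More seriously, ``finitely many applications of the static pivotal-measure convergence'' at the successive jump times is not legitimate: Theorem~\ref{th.PM} is a convergence in law for critical percolation, the map $\omega\mapsto\mu^\rho(\omega)$ is only measurable (not continuous) on $(\HH,\T)$, and at a jump time the post-jump configuration is size-biased by the total pivotal mass and then altered at the sampled point --- nothing in \cite{\GPSa} tells you that the discrete pivotal measures of such configurations converge to $\mu^\rho$ of the continuum one. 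The paper avoids precisely this by never re-reading the intensity during a cut-off epoch; even re-reading it once, at a single later deterministic time for the Markov property, requires Proposition~\ref{pr.PROJ}, and the near-critical analogue needs the genuine extension Theorem~\ref{th.PMNC}. So the step you call a ``secondary nuisance'' is, together with the missing continuum update rule, where the real work lies. (A smaller repairable point: tightness requires more than tightness of the number of flips per quad --- one needs the flip times of the quads in a finite determining family $\QUAD^k$ to be well separated with high probability, which your second-moment estimates can also give.)
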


By construction, $t \mapsto \omega_\eta(t)$ and $\lambda\mapsto \omega_\eta^\nc(\lambda)$ are Markov processes in $\HH$. There is no reason arising from general theory that the Markov property survives at the scaling limit. 
Our strategy of proof  for Theorems \ref{th.main2} and \ref{th.main3} (see below) in fact enables us to prove the following result (see Section \ref{s.markov}).

\begin{theorem}\label{th.main4}
\ni\bi
\item The process $t\mapsto \omega_\infty(t)$ is a {\bf Markov} process which is reversible w.r.t. the measure $\P_\infty$,  the scaling limit of critical percolation.
\item The process $\lambda \mapsto \omega_\infty^\nc(\lambda)$ is a {\bf time-homogeneous (but non-reversible) Markov}  process in $(\HH,d_\HH)$. 
\ei
\end{theorem}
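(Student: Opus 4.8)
The plan is to deduce the Markov property at the scaling limit from the convergence results (Theorems \ref{th.main2} and \ref{th.main3}) together with the \emph{way} these convergences are established, rather than trying to recover it abstractly from the limiting measure alone. For a fixed mesh $\eta$, both $t\mapsto\omega_\eta(t)$ and $\lambda\mapsto\omega_\eta^\nc(\lambda)$ are Markov by construction, and their transition mechanism is \emph{local and explicit}: between two ``times'' $s<t$ the configuration $\omega_\eta(t)$ is obtained from $\omega_\eta(s)$ by applying an independent Poissonian collection of switches/updates at rate $r(\eta)$ per hexagon. The key point is that our proof of convergence proceeds by showing that this updating step has a \emph{scaling limit as an operation on $\HH$}: given a configuration $\omega_\infty(s)$ (as a point of $\HH$), together with an independent marking of the ``pivotal'' locations — encoded through the pivotal measures of \cite{\GPSa} — one produces $\omega_\infty(t)$ via a measurable map. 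Concretely, I would first set up, for each pair $s<t$ (resp.\ $\lambda_1<\lambda_2$), a coupling on $\HH$ of the form $\omega_\infty(t)=\Phi_{s,t}\bigl(\omega_\infty(s),\,\xi_{s,t}\bigr)$, where $\xi_{s,t}$ is an auxiliary Poisson process (of switches in the near-critical case, of updates in the dynamical case) that is \emph{independent} of $\mathcal F_s:=\sigma\bigl(\omega_\infty(u):u\le s\bigr)$, and $\Phi_{s,t}$ is a measurable function on $\HH\times(\text{point configurations})$. This is exactly the structure that survives the limit: it is the continuum analogue of Definitions \ref{d.DP} and \ref{d.NCP}, and it is built during the proof of convergence precisely because the finite-$\eta$ updating step, expressed in terms of which macroscopic quad-crossing events flip, converges.

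Granting such a representation, the Markov property is immediate: for $s<t$, $\omega_\infty(t)$ is a measurable function of $\omega_\infty(s)$ and of $\xi_{s,t}\perp\mathcal F_s$, so $\mathbf E[f(\omega_\infty(t))\mid\mathcal F_s]=\mathbf E[f(\omega_\infty(t))\mid\omega_\infty(s)]$ for bounded measurable $f$ on $(\HH,d_\HH)$; the usual monotone-class argument upgrades this to the full (strong) Markov property on the Skorohod space, using right-continuity of paths in $\HH$. Time-homogeneity of $\lambda\mapsto\omega_\infty^\nc(\lambda)$ follows because Definition \ref{d.NCP} was arranged (via the exponential-clock reformulation) to make the finite-$\eta$ switching rates translation-invariant in $\lambda$, and this invariance passes to $\Phi_{\lambda_1,\lambda_2}$, which depends on $(\lambda_1,\lambda_2)$ only through $\lambda_2-\lambda_1$. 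Reversibility of $t\mapsto\omega_\infty(t)$ w.r.t.\ $\P_\infty$ follows from reversibility at finite $\eta$ (where $\P_\eta$ is invariant and the update dynamics is symmetric, white$\leftrightarrow$black with probability $1/2$): for any $0=t_0<\dots<t_n$ the law of $(\omega_\eta(t_0),\dots,\omega_\eta(t_n))$ equals that of $(\omega_\eta(t_n),\dots,\omega_\eta(t_0))$, this identity is preserved under weak convergence of finite-dimensional distributions in the compact space $\HH$, hence holds for $t\mapsto\omega_\infty(t)$; in particular $\P_\infty$ is stationary. Non-reversibility of the near-critical process is clear since it is strictly monotone in $\lambda$ and its one-dimensional marginals (e.g.\ crossing probabilities of a fixed quad) are strictly monotone, so its law cannot be invariant under time reversal.

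The main obstacle, and the part that genuinely requires the machinery of the paper, is constructing and controlling the continuum updating map $\Phi_{s,t}$ — i.e.\ showing that the limiting process really is a measurable function of its past and an independent Poissonian input, with the input living on the pivotal measure. Two issues must be handled carefully. First, one must show that the finite-$\eta$ switches that actually matter for macroscopic crossing events are, to leading order, supported on the (rescaled) four-arm pivotal points, and that the counting measure of these switches converges jointly with $\omega_\eta$ to the limiting configuration together with the pivotal measure of \cite{\GPSa}; this is where the stability estimate Lemma \ref{l.main} and the quad-crossing topology are essential, to guarantee that only finitely many ``pivotal switches'' affect any given finite family of quads on any finite time interval, so that $\Phi_{s,t}$ is well-defined and continuous on a set of full measure. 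Second, one must verify the \emph{independence} of the increments' driving noise from the past as a statement about the limit, not just about finite $\eta$: at finite $\eta$ the Poisson switch process on $[s,t]$ is manifestly independent of $\mathcal F_s^\eta$, and one needs this to be robust under the joint convergence, which follows once the joint law of $(\omega_\eta(s),\xi^\eta_{s,t})$ is shown to converge to a product-type law of $(\omega_\infty(s),\xi_{s,t})$ — again a consequence of the explicit Poissonian construction and the convergence of the pivotal measures. Once these two points are in place, the statements in Theorem \ref{th.main4} are formal consequences, as sketched above.
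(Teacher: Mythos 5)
Your treatment of the dynamical case follows essentially the paper's route for Theorem \ref{th.SMP}: rebuild the trajectory from the slice $\omega_\infty(s)$ (which is again $\P_\infty$-distributed by Proposition \ref{pr.PROJ}), drive the future and the past by Poisson processes of intensity $\mu^\eps(\omega_\infty(s))\times dt$ which are conditionally independent given the slice, let $\eps\to 0$, and identify the law with $\omega_\infty(\cdot)$ by uniqueness of the scaling limit; invariance of $\P_\eta$ and $\P_\infty$ then gives reversibility. Two caveats there: your parenthetical upgrade to the \emph{strong} Markov property is unjustified (right-continuity plus the simple Markov property does not suffice; the process is non-Feller, see Remark \ref{r.feller}, and the strong Markov property is left open), and deducing reversibility from ``weak convergence of finite-dimensional distributions'' requires ruling out deterministic jump times, i.e.\ the arguments of Theorem \ref{th.MARGIN} and Propositions \ref{pr.LC}, \ref{pr.PROJ}, since Skorohod convergence alone does not give convergence of fixed-time marginals.

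The genuine gap is in the near-critical half. Your representation $\omega_\infty^\nc(\lambda_2)=\Phi\bigl(\omega_\infty^\nc(\lambda_1),\xi\bigr)$ presupposes a pivotal measure that is a measurable function of a slice distributed as $\P_{\lambda_1,\infty}$ and that arises as the scaling limit of the discrete pivotal counting measures \emph{under the near-critical law}. The construction of \cite{\GPSa} is only available under the critical law $\P_\infty$, which is invariant for the dynamical process but not for the near-critical one; this is precisely why the dynamical argument does not transfer verbatim. One must prove the analogue of Theorem \ref{th.PM} for the laws $\P_{\lambda,\infty}$, and — this is exactly what yields time-homogeneity — that the limiting measure is given by the \emph{same} measurable map $\mu^\eps(\cdot)$ for every $\lambda$; this is Theorem \ref{th.PMNC}, whose proof requires the near-critical arm estimates of Proposition \ref{pr.ExNC} (near-critical RSW and quasi-multiplicativity up to the correlation length, the $6$-arm and half-plane $3$-arm bounds) and the asymptotics $\beta\sim\beta_\lambda\sim\hat\beta_\lambda$ of Lemma \ref{l.I1}, obtained by revisiting the induction of Lemma \ref{l.main} because the color-switching argument of \cite{\GPSa} fails for $\lambda\neq 0$. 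Your claim that homogeneity ``passes to the limit'' because the discrete switching rates are translation-invariant in $\lambda$ is circular: the continuum updating map is built out of the pivotal measure of the starting slice, and without the above one does not know that this object exists for a non-critical slice, let alone that it coincides with the critical-case map. The stability and network-construction steps must likewise be rechecked under $\P_{\lambda,\infty}$, again via Proposition \ref{pr.ExNC}. So the dynamical part of your proposal is sound and matches the paper, but the near-critical part is missing its key ingredient.
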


\begin{remark}
Thus we obtain a natural {\bf diffusion} on the Schramm-Smirnov space $\HH$. Interestingly, it can be seen that this diffusion is {\bf non-Feller}. See Remark \ref{r.feller}. 
\end{remark}

As we shall see in Section~\ref{s.CL}, the processes $\lambda \mapsto \omega_\infty^\nc(\lambda)$ and $t \mapsto \omega_\infty(t)$ are conformally covariant under the action of conformal maps. See Theorem \ref{th.cc} for a precise statement.  Roughly speaking, if $\tilde\omega_\infty(t) = \phi\cdot \omega_\infty(t)$ is the conformal mapping of a continuum dynamical percolation from a domain $D$ to a domain $\tilde D$, then the process $t\mapsto \tilde \omega_\infty(t)$ evolves very quickly (in a precise quantitative manner) in regions of $D'$ where $|\phi'|$ is large and very slowly in regions of $D'$ where $|\phi'|$ is small. This type of invariance  was conjectured in \cite{ICM}; it was even coined a ``relativistic'' invariance due to the space-time dependency. When the conformal map is a scaling $z\in \C\mapsto \alpha\cdot z\in \C$, the conformal covariance reads as follows (see Corollary \ref{c.SCALE}):

\begin{theorem}
For any scaling parameter $\alpha>0$ and any $\omega\in \HH$, we will denote by $\alpha\cdot \omega$ the image by $z\mapsto \alpha\, z$ of the configuration $\omega$. With these notations, we have the following identities in law:
\bnum
\item \[
\Bigl( \lambda \mapsto \alpha\cdot \omega_\infty^\nc(\lambda) \Bigr) \overset{(d)}{=} \Bigl( \lambda \mapsto \omega_\infty^\nc(\alpha^{-3/4} \lambda) \Bigr)
\]
\item
\[
\Bigl( t\geq 0 \mapsto \alpha\cdot \omega_\infty(t)\Bigr) \overset{(d)}{=} \Bigl( t \mapsto \omega_\infty(\alpha^{-3/4} t) \Bigr)
\]
\enum
\end{theorem}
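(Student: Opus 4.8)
The plan is to derive this scaling identity directly from the conformal covariance statement (Theorem~\ref{th.cc}) by specializing the conformal map $\phi$ to the linear map $z\mapsto \alpha z$, and from the explicit form of the zooming factor $r(\eta)$. Recall from~\eqref{e.r} that $r(\eta)=\eta^2\alpha_4^\eta(\eta,1)^{-1}$, and that by item~(ii) in the introduction, $\alpha_4(R)=R^{-5/4+o(1)}$; more precisely, the relevant exact statement is the quasi-multiplicativity of $\alpha_4$ together with the relation $\alpha_4(r,R)\asymp (r/R)^{5/4+o(1)}$. The key algebraic fact is that the exponent $5/4$ combines with the factor $\eta^2$ to give the exponent $2-5/4=3/4$: heuristically, under a spatial rescaling by $\alpha$, the natural ``speed'' at which the near-critical process moves is multiplied by $\alpha^{2}\alpha_4(\alpha)^{-1}\asymp \alpha^{2-5/4}=\alpha^{3/4}$. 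The content of the theorem is to promote this heuristic about $r(\eta)$ into an \emph{exact} identity in law at the level of the scaling limit, with no $o(1)$ corrections remaining.

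The cleanest route is as follows. First I would fix $\alpha>0$ and set up the discrete comparison: near-critical percolation $\omega_\eta^\nc(\lambda)$ on $\eta\Tg$ and near-critical percolation $\omega_{\eta'}^\nc(\mu)$ on $\eta'\Tg$ with $\eta'=\alpha^{-1}\eta$. The spatial map $z\mapsto \alpha z$ carries $\eta'\Tg$ onto $\eta\Tg$, so $\alpha\cdot\omega_{\eta'}^\nc(\mu)$ lives on the same lattice as $\omega_\eta^\nc(\lambda)$. Both are obtained from critical percolation by switching hexagons at exponential rates $r(\eta')$ and $r(\eta)$ respectively (Definition~\ref{d.NCP}), and the initial slices agree in law by conformal invariance of the critical scaling limit. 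Hence, for fixed $\eta$, the processes $\mu\mapsto\alpha\cdot\omega_{\eta'}^\nc(\mu)$ and $\lambda\mapsto\omega_\eta^\nc(\lambda)$ are the \emph{same} process run at different speeds: they coincide in law once we reparametrize time by the ratio $r(\eta')/r(\eta)$. Explicitly $r(\eta')/r(\eta)=\alpha^{-2}\,\alpha_4^\eta(\eta,1)/\alpha_4^{\eta'}(\eta',1)=\alpha^{-2}\,\alpha_4^\eta(\eta,1)/\alpha_4^{\eta}(\eta,\alpha)=\alpha^{-2}\alpha_4^\eta(\alpha^{-1},1)^{-1}\cdot(\ldots)$, which by quasi-multiplicativity of $\alpha_4$ equals $\alpha^{-2}\alpha_4(\alpha)(1+o(1))=\alpha^{-3/4+o(1)}$ as $\eta\to 0$. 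Then I would pass to the limit $\eta\to 0$ using Theorem~\ref{th.main2}: the left side converges in $(\Sk,d_\Sk)$ to $\mu\mapsto\alpha\cdot\omega_\infty^\nc(\mu)$ (using that the scaling map is a homeomorphism of $\HH$, so it is continuous on $\Sk$), and the right side, time-changed by $r(\eta')/r(\eta)\to\alpha^{-3/4}$, converges to $\lambda\mapsto\omega_\infty^\nc(\alpha^{-3/4}\lambda)$. This yields part~(1); part~(2) for dynamical percolation is identical, replacing Definition~\ref{d.NCP} by Definition~\ref{d.DP} and Theorem~\ref{th.main2} by Theorem~\ref{th.main3}, since the update rate is again $r(\eta)$ and the only change is that updates are i.i.d.\ resamplings rather than monotone switches.

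The main obstacle is the disappearance of the $o(1)$: a priori $r(\eta')/r(\eta)=\alpha^{-3/4+o(1)}$ only converges to $\alpha^{-3/4}$ if one knows $\alpha_4^\eta(\eta,1)/\alpha_4^\eta(\alpha^{-1}\eta,1)$ genuinely converges (not merely up to $\eta^{o(1)}$) as $\eta\to 0$, i.e.\ that the finite ratio $\alpha_4(\alpha^{-1}\eta,1)/\alpha_4(\eta,1)=\alpha_4(\alpha^{-1},1)^{-1}(1+o(1))$ has a true limit; this is exactly the statement that $\alpha_4$ is (up to the correct power of the argument) ``conformally correct'' in the limit, which is the content of the conformal covariance of the pivotal measure established in the companion paper~\cite{\GPSa} and invoked in the proof of Theorem~\ref{th.cc}. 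In other words, the honest way to phrase the argument is not via crude estimates on $r(\eta)$ but via Theorem~\ref{th.cc} itself: conformal covariance of $\omega_\infty^\nc$ under a general $\phi$ already encodes the exact local time-change factor (a power of $|\phi'|$), and applying it to $\phi(z)=\alpha z$, for which $|\phi'|\equiv\alpha$ is constant, collapses the relativistic space-time distortion into a single global time-change by the constant $\alpha^{-3/4}$. So the proof is genuinely a one-line specialization of Theorem~\ref{th.cc}, and the real work — controlling the arm-exponent ratios and the limiting pivotal/rate structure — has already been done there and in~\cite{\GPSa}. I would write the proof as: invoke Theorem~\ref{th.cc} with $\phi(z)=\alpha z$, observe $|\phi'|\equiv\alpha$, and read off the claimed identity, with a remark explaining the $\alpha^{3/4}=\alpha^{2-5/4}$ bookkeeping for the reader's intuition.
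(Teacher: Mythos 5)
Your proposal is correct and is essentially the paper's own route: the statement is exactly Corollary~\ref{c.SCALE}, which the paper deduces as an immediate specialization of the conformal covariance Theorem~\ref{th.cc} to $f(z)=\alpha z$, where $|f'|\equiv\alpha$ makes $\phi\equiv\alpha^{-3/4}$ constant and the inhomogeneous-rate process reduces to a global time change. Your discussion of why the naive discrete time-change argument leaves an unresolved $o(1)$ in the exponent, and why one should therefore lean on Theorem~\ref{th.cc} (whose proof, via the pivotal measure results of~\cite{\GPSa}, carries the exact covariance factor), matches the logic of the paper.
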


Note that this theorem is very interesting from a {\bf renormalization group} perspective. (For background, see the monographs \cite{Cardy, ZJ}, as well as the recent \cite{FKrenorm} which sheds new light on this topic). Indeed, the mapping
$F : \HH \to \HH$ which associates to a configuration $\omega\in \HH$ the ``renormalized'' configuration $\frac 1 2 \cdot \omega\in \HH$ is a very natural renormalization transformation on $\HH$. It is easy to check that the law $\P_\infty$ is a fixed point for this transformation. The above theorem shows that the one-dimensional line given by $\{ \P_{\lambda,\infty} \}_{\lambda\in \R}$, where $\P_{\lambda,\infty}$ denotes the law of $\omega_\infty^\nc(\lambda)$, provides an {\bf unstable variety} for the transformation $\omega\in \HH \mapsto \frac 1 2\cdot \omega\in \HH$. 

Finally, in Sections \ref{s.CL}, \ref{s.ET} and \ref{s.MIS}, we establish some interesting properties of the scaling limits of near-critical and dynamical processes as well as some related models like gradient percolation. Here is a concise list summarizing these results.
\bnum
\item In Theorem \ref{th.CL}, we show that if $\lambda>0$, then $\omega_\infty^\nc(\lambda)$ a.s.~has an infinite cluster and for each $\lambda > 0$, one can define a natural notion of {\bf correlation length} $L(\lambda)$, which is shown to satisfy $L(\lambda)=c \lambda^{-4/3}$. 
\item In Section \ref{s.ET}, we prove that the dynamics $t\mapsto \omega_\infty(t)$ is {\bf noise sensitive} and that there a.s.~exist exceptional times with an infinite cluster. This extends the results from \cite{\GPS} to the scaling limit of dynamical percolation. We wish to point out that this property is the only link with \cite{\GPS} throughout the whole paper. (In other words, all the sections besides Section \ref{s.ET} are completely independent of \cite{\GPS}).  
\item In Subsection \ref{ss.gradient}, we prove that the model of {\bf gradient percolation} considered in \cite{gradient} has a scaling limit, denoted by $\omega_\infty^\gr$. 
\item In Subsection \ref{ss.sing}, we prove that if $\lambda\neq 0$, then $\omega_\infty^\nc(\lambda)\sim \P_{\lambda,\infty}$ is singular w.r.t. $\omega_\infty\sim \P_\infty$, confirming in a weaker sense the main singularity result from \cite{NolinWerner}.
\enum

\subsection{Strategy of proof}
 
Let us end this introduction by explaining what will  be our strategy to build the processes  $\lambda \mapsto \omega_\infty^\nc(\lambda)$ and $t \mapsto \omega_\infty(t)$ and to show that they are the scaling limits of their discrete $\eta$-analogs. We will focus on the near-critical case, the dynamical case being handled similarly. Also, before giving a rather detailed strategy, let us start with a very rough one: in order to build a random \cadlag process $\lambda\mapsto \omega_\infty^\nc(\lambda)$, our strategy will be to start with the critical slice, i.e., the Schramm-Smirnov limit $\omega_\infty=\omega_\infty(\lambda=0)\sim \P_\infty$ and then as $\lambda$ will increase, we will randomly add in an appropriate manner some ``infinitesimal'' mass to $\omega_\infty(0)$. In the other direction, as $\lambda$ will decrease below 0, we will randomly remove some ``infinitesimal'' mass to $\omega_\infty(0)$. Before passing to the limit, when one still has discrete configurations $\omega_\eta$ on a lattice $\eta \Tg$, this procedure of adding or removing mass is straightforward and is given by the Poisson point process induced by Definition \ref{d.NCP}. At the scaling limit, there are no sites or hexagons any more, hence one has to find a proper way to perturb the slice $\omega_\infty(0)$. Even though there are no black or white hexagons anymore, there are some specific points in $\omega_\infty(0)$ that should play a significant role and are measurable w.r.t.~$\omega_\infty$: namely, the set of all {\bf pivotal points} of $\omega_\infty$. We shall denote this set by $\bar\Piv = \bar \Piv(\omega_\infty)$, which could indeed be proved to be measurable w.r.t.~$\omega_\infty$ using the methods of \cite[Section 2]{GPS2a}, but we will not actually need this. The ``infinitesimal'' mass we will add to the configuration $\omega_\infty(0)$ will be a certain random subset of $\bar \Piv$. Roughly speaking, one would like to define a mass measure $\bar \mu$ on $\bar \Piv$ and the infinitesimal mass should be given by a Poisson point process $\PPP$ on $(x,\lambda)\in \C\times \R$ with intensity measure $d\bar \mu \times d\lambda$. We would then build our limiting process $\lambda \mapsto \omega_\infty^\nc(\lambda)$ by ``updating'' the initial slice $\omega_\infty(0)$ according to the changes induced by the point process $\PPP$.  So far, the strategy we just outlined corresponds more-or-less to the conceptual framework from \cite{CFN}.

\medskip
The main difficulty with this strategy is the fact that the set of pivotal points $\bar \Piv(\omega_\infty)$ is a.s. a dense subset of the plane of Hausdorff dimension $3/4$ and that the appropriate mass measure $\bar \mu$ on $\bar \Piv$ would be of infinite mass everywhere. This makes the above strategy  too degenerate to work with. To overcome this, one introduces a small spatial cut-off $\eps>0$ which will ultimately tend to zero. Instead of considering the set of all pivotal points, the idea is to focus only on the set of pivotal points which are initially pivotal up to scale $\eps$. Let us denote by $\bar \Piv^\eps=\bar \Piv^\eps(\omega_\infty(\lambda=0))$ this set of $\eps$-pivotal points. The purpose of the companion paper \cite{\GPSa} is to introduce a measure $\bar \mu^\eps=\bar \mu^\eps(\omega_\infty)$ on this set of $\eps$-pivotal points. This limit corresponds to the weak limit of renormalized (by $r(\eta)$) counting measures on the set $\bar \Piv^\eps(\omega_\eta)$, and it can be seen as a ``local time'' measure on the pivotal points of percolation and is called the {\bf pivotal measure}. See Subsection~\ref{ss.pivmeasure}  or \cite{\GPSa} for more detail. (In fact, as we shall recall in Subsection \ref{ss.pivmeasure}, we will consider for technical reasons a slightly different set $\Piv^\eps$, with the corresponding measure $\mu^\eps$). Once such a spatial cut-off $\eps$ is introduced, the idea is to ``perturb'' $\omega_\infty(\lambda=0)$ using a Poisson point process $\PPP=\PPP(\mu^\eps)$ of intensity measure $d\mu^\eps \times d\lambda$ (we now switch to the actual measure $\mu^\eps$ used throughout and which is introduced in Definition~\ref{d.IP}). This will enable us to define a cut-off trajectory $\lambda \mapsto \omega_\infty^{\nc,\eps}(\lambda)$. (In fact the construction of this process will already require a lot of work, see the more detailed outline below). The main problem then is to show that this procedure in some sense stabilizes as the cut-off $\eps \to 0$. This is far from being obvious since there could exist ``cascades'' from the microscopic world which would have macroscopic effects as is illustrated in Figure \ref{f.cascade}.

\begin{figure}[htbp]
\begin{center}
\includegraphics[width=\textwidth]{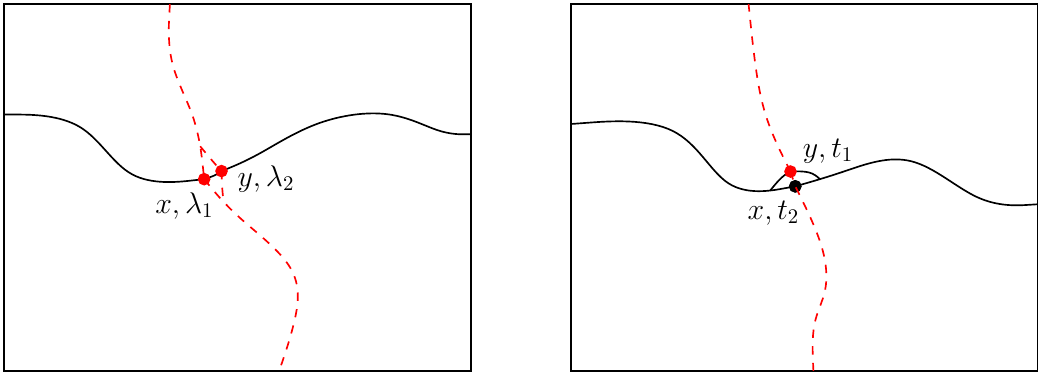}
\end{center}
\caption{Two ``cascade'' configurations: on the left at $\lambda=0$, there is no left-right crossing and both 
points $x$ and $y$ have low importance, but at the level $\lambda_2>\lambda_1$ there is a left-right crossing that we could not predict if we are not looking at low important points. Similarly, in dynamical percolation (on the right), with $t_1< t_2$, the low important point $y$ switches first, followed by the important one. If one does not look at low important points, one would wrongly predict that the left-right crossing has ceased to exist, although it still exists thanks to $y$. Note that the second configuration could occur only for dynamical percolation, which does not have monotonicity in its dynamics.}\label{f.cascade}
\end{figure}

Let us now outline our global strategy in more detail and with pointers to the rest of the text. We will from time to time use notations which will be properly introduced later in the text.
\bnum
\item {\bf Cut-off processes $\lambda\mapsto \omega_\infty^{\nc,\eps}(\lambda)$}

\ni
As discussed above, the first step is to define for each cut-off parameter $\eps>0$ a random cut-off process $\lambda\mapsto \omega_\infty^{\nc,\eps}(\lambda)$ out of a sample of $\omega_\infty\sim \P_\infty$. To achieve this, we will rely on the {\bf pivotal measure} $\mu^\eps=\mu^\eps(\omega_\infty)$ built in the companion paper \cite{\GPSa}. Then, we sample a Poisson point process $\PPP=\PPP(\mu^\eps)$ on $\C\times \R$ with intensity measure $d\mu^\eps \times d\lambda$. This Poisson point process is now a.s.~locally finite. Given $\omega_\infty(0)$ and $\PPP$, we would like to ``update'' $\omega_\infty(0)$ as $\lambda$ increases (or decreases) according to the information provided by the Poisson point process $\PPP$. This step which is straightforward in the discrete setting is more delicate at the scaling limit: indeed, due to the definition of the Schramm-Smirnov space $\HH$ (see Section \ref{s.HH}), updating a point $\omega_\infty(\lambda=0)$ in this space $\HH$ requires in principle to follow the status of all crossing events $\boxminus_Q$ for all quads $Q$ (see Section \ref{s.HH} for these notations). Under the consistency conditions of Lemma~\ref{l.unique} (which will indeed be satisfied), it is enough to follow the status of countably many quads $Q\in \QUAD_\N$. We are thus left with the following problem: given a fixed quad $Q\in \QUAD_\N$ and a level $\lambda \in \R$, can one decide based on $\omega_\infty(0)$ and $\PPP$ whether the process one is building should cross or not the quad $Q$ at level $\lambda$?  

\item {\bf Networks $\Net_Q=\Net_Q(\omega_\infty,\PPP)$}

\ni
To answer the above problem, to each level $\lambda\in \R$ and each quad $Q\in \QUAD_\N$, we will define a kind of graph structure (with  two types of edges, primal and dual ones), called a {\bf network}, whose vertices will be the points in $\PPP_\lambda = \PPP\cap (\C\times [0,\lambda])$ (we assume here that $\lambda>0$), see Definition \ref{d.network}. The purpose of this network is to represent the connectivity properties of the configuration $\omega_\infty(\lambda=0)$ within $Q \setminus \PPP_\lambda$.  This network $\Net_{Q,\lambda}=\Net_{Q,\lambda}(\omega_\infty, \PPP_\lambda)$ is obtained as a limit of {\bf mesoscopic networks} defined in Definition \ref{d.mesoN}. See Theorem \ref{th.BC}.  Once we have at our disposal such a structure $\Net_{Q,\lambda}$ which is furthermore measurable w.r.t. $(\omega_\infty, \PPP)$, one can answer the above question and obtain (assuming that the conditions of Lemma~\ref{l.unique} hold) a well-defined process $\lambda\mapsto \omega_\infty^{\nc,\eps}(\lambda)$ in the space $\HH$.

\item {\bf Convergence in law of $\omega_\eta^{\nc,\eps}(\cdot)$ towards $\omega_\infty^{\nc,\eps}(\cdot)$}

\ni
The convergence in law we wish to prove is under the topology of the Skorohod space $\Sk$ on the space $\HH$ introduced in Lemma \ref{l.SKR}. To prove this convergence, we couple the pairs $(\omega_\eta, \mu^\eps(\omega_\eta))$ and $(\omega_\infty, \mu^\eps(\omega_\infty))$ together so that with high probability the Poisson point processes $\PPP_\eta$ and $\PPP_\infty$ are sufficiently ``close''  so that we get identical networks for macroscopic quads $Q$. See Subsection \ref{ss.couplingPPP} for the coupling. To conclude that $d_\Sk(\omega_\eta^{\nc,\eps(\cdot)}, \omega_\infty^{\nc,\eps}(\cdot))$ tends to zero under this coupling as $\eta\to0$, there is one additional technicality which lies in the fact that $d_\Sk$ relies on the {\it metric} space $(\HH,d_\HH)$  and that the distance $d_\HH$ compatible with the {quad-crossing} topology is non-explicit. To overcome this, we introduce an explicit {\bf uniform structure} in Section~\ref{s.unif}. 

\item {\bf There are no cascades from the microscopic world: $\omega_\eta^\nc(\cdot) \approx \omega_\eta^{\nc,\eps}(\cdot)$}

\ni
This is the step which proves that scenarios like the ones highlighted in Figure~\ref{f.cascade} are unlikely to happen. This type of ``stability'' result is done in Section~\ref{s.stability}. In particular, it is proved that $\Eb{d_\Sk(\omega_\eta^\nc(\cdot), \omega_\eta^{\nc,\eps}(\cdot))}$ goes to zero uniformly as $0<\eta<\eps$ goes to zero. See Proposition~\ref{pr.stab}.

\item {\bf Existence of the limiting process $\lambda \mapsto \omega_\infty^\nc(\lambda)$ and weak convergence of $\omega_\eta^\nc(\cdot)$ to it}

\ni
Once the above steps are established, this last one is more of a routine work. It is handled in Section \ref{s.proof}.  
\enum

For the sake of simplicity, we will assume in most of this paper that our percolation configurations are defined in a bounded smooth simply-connected domain $D$ of the plane (i.e., we will consider the lattice $\eta \Tg \cap D$). Only in Section~\ref{s.proof} will we highlight how to extend our main results to the case of the whole plane, which will consist of a routine compactification technique.  
  
Finally, to make the reading easier,  we include a short list of notations which should be useful:

\section*{Notations}

\begin{small}

\begin{tabular}{ll}

\hspace{4 cm} & \\

$D$ & 
any fixed bounded, smooth domain in $\C$ \\

$\eta \Tg$ & 
the triangular grid with mesh $\eta>0$ \\

$\HH=\HH_D$ 
& Schramm-Smirnov space of percolation configurations in $D$ \\

\vspace{0.3 cm}

$d_\HH$ 
& a distance on $\HH$ compatible with the {quad-crossing} topology $\T$ \\

$\omega_\eta \in \HH \sim \P_\eta$ 
& a critical configuration on $\eta \Tg$ \\

$\omega_\infty \in \HH \sim \P_\infty$ 
& a continuum percolation in the sense of Schramm-Smirnov \\

$\omega_\eta(t),\,
 \omega_\infty(t)$ & 
rescaled dynamical percolation on $\eta \Tg$ and its scaling limit \\

$\omega_\eta^\nc(\lambda), \omega_\infty^\nc(\lambda)$ &
near-critical percolation on $\eta \Tg$ and its scaling limit
\end{tabular}

\ni
\begin{tabular}{ll}
\hspace{4 cm} & \\

$Q$ & a {quad} \\
$\QUAD_D$ & the space of all quads in $D$ \\
$d_\QUAD$ & a distance on $\QUAD_D$ \\ 
$\QUAD_\N= \bigcup \QUAD^k$&  a countable dense subset of $\QUAD_D$ \\
$\boxminus_Q$ & event of crossing the quad $Q$ \\
\end{tabular}

\ni
\begin{tabular}{ll}
\hspace{4 cm} & \\
$\alpha^\eta_1(r,R)$, $\alpha_4^\eta(r,R)$ & 
probabilities of the one and four-arms events for $\omega_\eta\sim\P_\eta$ \\
$r(\eta)$ & the renormalized rate $r(\eta):=\eta^2 \alpha_4^\eta(\eta,1)^{-1}=\eta^{3/4+o(1)}$
\end{tabular}

\ni
\begin{tabular}{ll}
\hspace{4 cm} & \\

$\eps$ & cut-off parameter in space \\
$\mu^\eps=\mu^\eps(\omega_\infty)$ &
pivotal measure on the $\eps$-pivotal points constructed in \cite{\GPSa} \\
$\PPP_T=\PPP_T(\mu^\eps)$ & Poisson point process on $D\times [0,T]$ with intensity measure $d\mu^\eps \times dt$ \\
$\omega_\eta^\eps(t),\, \omega_\infty^\eps(t)$ &
cut-off dynamical percolation and its scaling limit \\

$\omega_\eta^{\nc,\eps}(\lambda),\, \omega_\infty^{\nc,\eps}(\lambda)$ &
cut-off near-critical percolation and its scaling limit \\
$\Piv^\eps(\omega_\eta)$ & 
$\eps$-pivotal points for an $\eps\Z^2$ grid in the sense of Definition \ref{d.IP} \\

$\Sk$ &
space of \cadlag trajectories on $\HH$ \\

$d_\Sk$ & 
Skorohod distance on $\Sk$  \\


\end{tabular}

\end{small}

\section*{Acknowledgements}

We wish to thank Juhan Aru, Vincent Beffara, Itai Benjamini, Nathanael Berestycki, C\'edric Bernardin, Federico Camia, Hugo Duminil-Copin, 
Alan Hammond, Emmanuel Jacob, Milton Jara, Gr\'egory Miermont, Pierre Nolin, Leonardo Rolla, Charles Newman, Mika\"el de la Salle, Stanislav Smirnov, Jeff Steif, Vincent Tassion and Wendelin Werner for many fruitful discussions through the long elaboration of this work. We also wish to thank Alan Hammond as well as an anonymous referee for their very detailed comments.

\section{Space and topology for percolation configurations}\label{s.HH}

There are several ways to {\it represent} a configuration of percolation $\omega_\eta$. 
Historically, the first topological setup appeared with Aizenman in \cite{Aizenman} where the author introduced the concept of {\it percolation web}. The rough idea there is to think of a percolation configuration as the set of all its possible open paths and then to rely on a kind of Hausdorff distance on the space of collections of paths. 

Later on, in the setup introduced in \cite{AiBu,\CN}, one considers  a percolation configuration $\omega_\eta$ as a set of oriented loops (the loops represent interfaces between primal and dual clusters). 
The topology used in \cite{\CN} for the scaling limit of critical percolation $\omega_\eta$ on $\eta\,\Tg$ as the mesh $\eta$ goes to zero, is thus based on the way {\it macroscopic loops} look. See \cite{\CN} for more details. In this work, we will rely on a different representation of percolation configurations which leads to a different topology of convergence. The setup we will use was introduced by the third author and Smirnov in \cite{\SS}. It is now known as the {\it quad-crossing topology}.
We will only recall some of the main aspects of this setup here, so we refer to \cite{\CN, \SS, \GPSa} for a complete description.
(In fact some of the explanations below are borrowed from our previous work \cite{\GPSa}).

\subsection{The space of percolation configurations $\HH$}

The idea in \cite{\SS} is in some sense to consider a percolation configuration $\omega_\eta$ as the set of all the 
{\it quads} that are crossed (or traversed) by the configuration $\omega_\eta$. 
Let us then start by defining properly what we mean by a {\it quad}.


\begin{definition}\label{d.quad}
Let $D\subset \C$ be a bounded domain. 
A {\bf quad} in the domain $D$ can be considered as a homeomorphism $\Quad$ from $[0,1]^2$ into $D$.  A {\bf crossing} of a quad $\Quad$ is a connected closed subset of $[\Quad]:=\Quad([0,1]^2)$ that intersects both $\p_1 Q:= \Quad(\{0\}\times[0,1])$ and $\p_3 Q:=\Quad(\{1\}\times[0,1])$ (let us also define $\p_2 Q:=\Quad([0,1]\times \{0\})$ and $\p_4 Q:= \Quad([0,1]\times \{1\})$). The space of all quads in $D$, denoted by $\QUAD_D$, is equipped with the following metric: $d_Q(\Quad_1,\Quad_2):=\inf_{\phi }\sup_{z\in \p [0,1]^2} |\Quad_1(z)-\Quad_2(\phi(z))|$, where the infimum is over all homeomorphisms $\phi: [0,1]^2 \to [0,1]^2$ which preserve the 4 corners of the square. Note that we use a slightly different metric here as in \cite{\SS,\GPSa}, yet the results from \cite{\SS} still hold. 
\end{definition}

From the point of view of crossings, there is a natural partial order on $\QUAD_D$: we write $\Quad_1 \leq \Quad_2$ if any crossing of $\Quad_2$ contains a crossing of $\Quad_1$.  See Figure~\ref{f.poset}. Furthermore, we write $\Quad_1 < \Quad_2$ if there are open neighborhoods $\mathcal{N}_i$ of $\Quad_i$ (in the uniform metric) such that $ N_1\leq N_2$ holds for any $N_i\in \mathcal{N}_i$.  A subset $S\subset \QUAD_D$ is called {\bf hereditary} if whenever $\Quad\in S$ and $\Quad'\in\QUAD_D$ satisfies $\Quad' < \Quad$, we also have $\Quad'\in S$.

\begin{definition}[The space $\HH$]\label{d.spaceH}
We define the space $\HH=\HH_D$ to be the collection of all {\bf closed} hereditary subsets of $\QUAD_D$.
\end{definition}

Now, notice that any discrete percolation configuration $\omega_\eta$ of mesh $\eta>0$ can be viewed as a point in $\HH$ in the following manner.  Consider $\omega_\eta$ as a union of the topologically closed percolation-wise open hexagons in the plane. It thus naturally defines an element $S(\omega_\eta)$ of $\HH_D$: the set of all quads for which $\omega_\eta$ contains a crossing. By a slight abuse of notation, we will still denote by $\omega_\eta$ the point in $\HH$ corresponding to the configuration $\omega_\eta$.

Since configurations $\omega_\eta$ in the domain $D$ are now identified as points in the space $\HH=\HH_D$,
it follows that critical percolation induces a probability measure on $\HH_D$, which will be denoted by $\P_\eta$.

\begin{figure}[htbp]
\SetLabels
(.17*.02)$\Quad_1$\\
(.97*.74)$\Quad_2$\\
(-.05*.5)$\p_1\Quad_2$\\
(1.07*.4)$\p_3\Quad_2$\\
(.25*.45)$\p_1\Quad_1$\\
(.75*.6)$\p_3\Quad_1$\\
\endSetLabels
\centerline{
\AffixLabels{
\includegraphics[height=1.5 in]{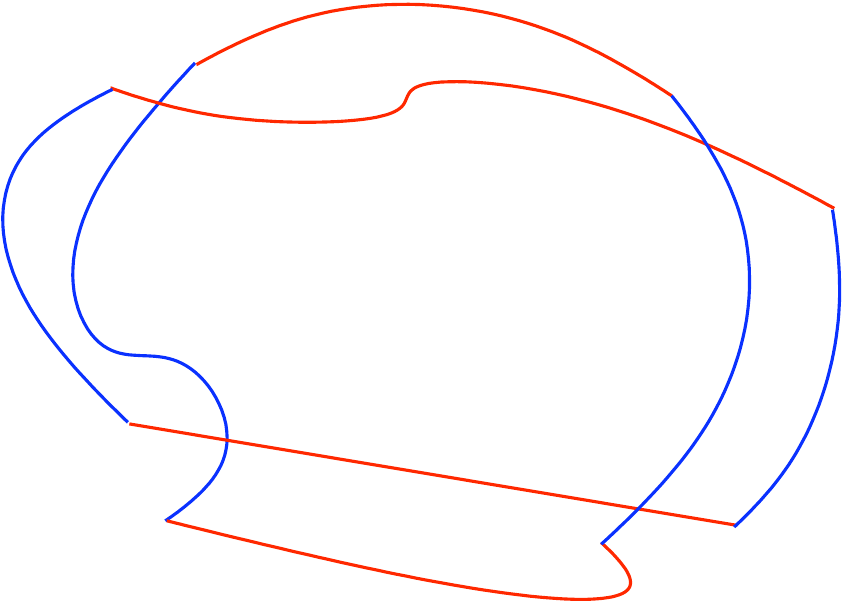}
}}
\caption{Two quads, $\Quad_1 \leq \Quad_2$.}
\label{f.poset}
\end{figure}

In order to study the {\it scaling limit} of $\omega_\eta\sim \P_\eta$, we need to define a topology on the space $\HH$ 
for which the measures $\P_\eta$ will converge weakly as $\eta \to 0$.

\subsection{A topology on percolation configurations: the quad-crossing topology $\T$}

Hereditary subsets can be thought of as Dedekind cuts in the setting of partially ordered sets (instead of totally ordered sets, as usual). It can be therefore hoped that by introducing a natural topology, $\HH_D$ can be made into a compact metric space. Indeed, let us consider the following subsets of $\HH_D$. For any quad $\Quad\in \QUAD_D$, let
\begin{equation}\label{e.boxminus}
\boxminus_\Quad:=\{\omega \in\HH_D:\Quad\in \omega\}\,,
\end{equation}
and for any open $U\subset \QUAD_D$, let 
\begin{equation}\label{e.boxup}
\boxup_U:=\{\omega \in\HH_D:  \omega \cap U=\emptyset\}\,.
\end{equation}

It is easy to see that these sets have to be considered closed if we want $\HH_D$ to be compact.
It motivates the following definition from \cite{\SS}.

\begin{definition}[The quad-crossing topology, \cite{\SS}]\label{d.quadtopology}
We define $\T=\T_D$ to be the minimal topology on $\HH$ that contains every $\boxminus_\Quad^c$ and $\boxup_U^c$ as open sets.
\end{definition}

The following result is proved in \cite{\SS}.
\begin{theorem}[Theorem 3.10 \cite{\SS}]\label{th.compact}

For any nonempty domain $D$, the topological space $(\HH_D,\T_D)$ is compact, Hausdorff, and metrizable.

Furthermore, for any dense $\QUAD_0 \subset \QUAD_D$, the events $\{\boxminus_\Quad : \Quad\in\QUAD_0\}$ generate the Borel $\sigma$-field of $\HH_D$. 

In particular, the space $\HH$ is a Polish space. 
\end{theorem}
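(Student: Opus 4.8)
The plan is to derive all four assertions from soft point-set topology, feeding in only a handful of structural facts about the metric space $\QUAD_D$ and the partial order $<$: that $\QUAD_D$ is separable (polygonal quads with rational data are dense); a \emph{shrinking lemma}, that every quad $Q$ admits quads $Q'<Q$ with $d_\QUAD(Q',Q)$ arbitrarily small; transitivity of $<$; and the observation, immediate from the definition of $<$, that $Q_1<Q_2$ forces an entire $d_\QUAD$-neighbourhood of $Q_2$ to consist of quads $>Q_1$. Combining the last two facts with density also yields a \emph{recovery identity}: for \emph{any} dense $\QUAD_0\subseteq\QUAD_D$ and any closed hereditary $\omega$, one has $Q\in\omega$ iff there are $Q_k\in\QUAD_0\cap\omega$ with $Q_k<Q$ and $Q_k\to Q$ (the ``only if'' direction shrinks $Q$, uses heredity, then approximates inside $\QUAD_0$; the ``if'' direction uses that $\omega$ is closed).

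\emph{Compactness.} I would show every net $(\omega_\alpha)$ in $\HH_D$ has a convergent subnet: pass to a universal subnet $(\omega_\beta)$ and set $\omega:=\{Q\in\QUAD_D:\text{for every neighbourhood }U\ni Q,\ \omega_\beta\cap U\neq\emptyset\text{ eventually}\}$, the dichotomy ``eventually meets $U$ / eventually misses $U$'' being exactly what universality supplies. A one-line diagonal argument shows $\omega$ is $d_\QUAD$-closed, and heredity of $\omega$ follows purely from the definition of $<$: if $Q'<Q\in\omega$, take neighbourhoods $\mathcal N'\ni Q'$, $\mathcal N\ni Q$ witnessing $Q'<Q$; eventually $\omega_\beta$ meets $\mathcal N$ at some $N_\beta$, and then $Q'<N_\beta\in\omega_\beta$, so $Q'\in\omega_\beta$ by heredity of $\omega_\beta$, i.e.\ $\omega_\beta\cap\mathcal N'\neq\emptyset$. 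That $\omega_\beta\to\omega$ in $\T$ is then read directly off the definitions of $\boxminus_Q^c$, $\boxup_U^c$ and of $\omega$. Hence $(\HH_D,\T)$ is compact.

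\emph{Hausdorffness, metrizability, Polishness, Borel field.} If $\omega_1\neq\omega_2$, say $Q\in\omega_1\setminus\omega_2$; since $\omega_2$ is $d_\QUAD$-closed and $Q\notin\omega_2$, some neighbourhood $U\ni Q$ misses $\omega_2$. Using the shrinking lemma and density twice, I find $Q_1,Q_2\in\QUAD_\N$ (a fixed countable dense set) with $Q_1<Q_2<Q$ and $Q_1,Q_2\in U$, so $Q_2\in\omega_1$ by heredity while $Q_1\notin\omega_2$; choosing $B_{n_0}\ni Q_2$ from a countable base of $\QUAD_D$ small enough that every quad in it is $>Q_1$, the set $\boxup_{B_{n_0}}^c$ is an open neighbourhood of $\omega_1$ all of whose members contain a crossing-quad $>Q_1$, hence contain $Q_1$ by heredity, so it is disjoint from the open neighbourhood $\boxminus_{Q_1}^c\ni\omega_2$. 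This argument in fact shows the topology $\T'$ generated by the \emph{countable} sub-base $\{\boxminus_{Q^*}^c:Q^*\in\QUAD_\N\}\cup\{\boxup_{B_n}^c:n\in\N\}$ is already Hausdorff; since $\T'\subseteq\T$ trivially and $(\HH_D,\T)$ is compact, the identity $(\HH_D,\T)\to(\HH_D,\T')$ is a continuous bijection onto a Hausdorff space, hence a homeomorphism, so $\T=\T'$ is second countable. A second-countable compact Hausdorff space is metrizable (Urysohn), and a compact metric space is complete and separable, so $(\HH_D,d_\HH)$ is Polish. Finally, the recovery identity gives $\boxup_{B_n}^c=\bigcup_{Q^*\in\QUAD_\N\cap B_n}\boxminus_{Q^*}$ (a countable union), and $\boxminus_{Q^*}^c$ is just the complement of $\boxminus_{Q^*}$; since these generate $\T=\T'$, we get $\sigma(\T)\subseteq\sigma(\{\boxminus_{Q_0}:Q_0\in\QUAD_0\})$ for any dense $\QUAD_0$ (pass to a countable dense subset), while the reverse inclusion holds because each $\boxminus_{Q_0}$ is $\T$-closed.

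\emph{Main obstacle.} The skeleton above is routine; the genuine work — and what \cite{\SS} carries out — is the geometry behind the shrinking lemma and the elementary behaviour of $<$ and $d_\QUAD$ on $\QUAD_D$: that a quad can be strictly shrunk inside an arbitrarily small $d_\QUAD$-ball and inside a prescribed dense family, and the compatibility of $<$ with neighbourhoods. The one-sidedness of the defining sub-base (only the complements of $\boxminus_Q$ and $\boxup_U$ are open, not these sets themselves) is what makes point separation delicate and is precisely why the two-nested-quads device is needed.
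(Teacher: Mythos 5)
The paper offers no proof of this statement: it is quoted directly from Schramm--Smirnov (\cite{\SS}, Theorem 3.10), so the only thing to compare against is the citation, and your write-up is in effect a correct self-contained reconstruction of the argument carried out there. Your skeleton is sound: compactness from the limit configuration extracted from a universal subnet (where \cite{\SS} work instead with a countable dense quad family and a diagonal/filter-type extraction, the content is the same); Hausdorffness via the two-nested-quads device $Q_1<Q_2<Q$, separating with $\boxup_{B_{n_0}}^c$ against $\boxminus_{Q_1}^c$; second countability by noting the countable subbase indexed by $\QUAD_\N$ and a countable base of $(\QUAD_D,d_\QUAD)$ is already Hausdorff, so the compact-to-Hausdorff continuous bijection forces it to generate $\T$; then Urysohn gives metrizability and compact metric gives Polish. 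Two remarks. First, as you acknowledge, all the genuinely nontrivial input is the deferred quad geometry (the shrinking lemma producing $Q'<Q$ arbitrarily $d_\QUAD$-close to $Q$, and the stability of $<$ under small perturbations); this is precisely what \cite{\SS} establish in their Section 3 (cf.\ their Lemma 3.7, which this paper also invokes for Lemma \ref{l.subbase}), and deferring it is consistent with the paper, which defers the entire theorem. Second, for the Borel statement with an \emph{arbitrary} dense $\QUAD_0$, the parenthetical ``pass to a countable dense subset'' needs one more line: either rerun your subbase construction with a countable dense $\QUAD_0'\subset\QUAD_0$ in place of $\QUAD_\N$, or apply your recovery identity once more to write each $\boxminus_{Q^*}$, $Q^*\in\QUAD_\N$, as a countable intersection of countable unions of events $\boxminus_{Q'}$ with $Q'\in\QUAD_0'$; with either fix the argument is complete.
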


This compactness property is very convenient since it implies right-away the existence of
subsequential scaling limits. Similarly the fact $(\HH,\T)$ is Polish will enable us to study the weak convergence of measures on $(\HH,\T)$ in the classical framework of probability measures on Polish spaces. We will come back to this 
in Subsection \ref{ss.scalinglimitSSsense}, but before this,  we discuss 
the {\it metrizable} aspect of $\HH$.

\subsection{On  the metrizability of the topological space $(\HH,\T)$}

As we discussed above, it is stated in \cite{\SS} that the topological space $(\HH,\T)$ is metrizable. 
It would be convenient for our later purposes to have at our disposal a natural and explicit metric on $\HH$ which would induce the topology $\T$. 
The following one, $\tilde d_{\HH}$, seems to be a good candidate since it is ``invariant'' under translations.

For any $\omega,\omega' \in \HH (=\HH_D)$, define
\begin{equation*}\label{e.tildemetric}
\tilde d_\HH(\omega,\omega'):= 
\inf_{\eps>0\text{ such that }}\left\{ \begin{array}{l}
\forall Q\in\omega,\,\exists Q'\in\omega'\,\textrm{ with }\, d_{\QUAD}(Q,Q')<\epsilon\\
\textrm{and}\\
\forall Q'\in\omega',\,\exists Q\in\omega\,\textrm{ with }\, d_{\QUAD}(Q,Q')<\epsilon
\end{array}\right\}\,.
\end{equation*}

As such $(\HH,,\tilde d_\HH)$ is clearly a metric space. It is not hard to check that the topology on $\HH$ induced by $\tilde d_\HH$ is finer than the topology $\T$, but unfortunately, it turns out to be {\it strictly } finer.

After careful investigations, we did not succeed in finding a natural and explicit metric compatible with the topology $\T$. (One possible way is to go through Urysohn's metrization theorem proof, but that does not lead to a nice and explicit metric).
We will thus rely in the remainder of this text on some non-explicit metric $d_\HH$.

\begin{definition}\label{d.metric}
We thus fix once and for all a metric $d_\HH$ on $\HH$ which induces the topology $\T$ on $\HH$. 
In particular, the space $(\HH,d_\HH)$ is a compact metric space. It is also a Polish metric space. 
Since by compactness, $\diam(\HH)<\infty$, we will assume without loss of generality that $\diam_{d_\HH}(\HH)=1$. 
\end{definition}

Since $d_\HH$ is not explicit, we will need to find some explicit and quantitative criteria which will tell us whenever two configurations $\omega, \omega'\in \HH$ are $d_\HH$-close or not. 
This will bring us to the notion of defining explicit {\bf uniform structures} on the topological space $(\HH, \T)$. This will be the purpose of the next section (Section \ref{s.unif}). 
\medskip

But before that, let us review some useful results from \cite{\SchrammSmirnovNoise} and \cite{\GPSa}. 

\subsection{Scaling limit of percolation in the sense of Schramm-Smirnov}\label{ss.scalinglimitSSsense}

This setup we just described allows us to think of $\omega_\eta \sim \P_\eta$ as a random point in the compact metric space $(\HH,d_\HH)$.  Now, since Borel probability measures on a compact metric space are always tight, we have subsequential scaling limits of $\P_\eta$ on $\HH$, as the mesh $\eta_k\to 0$, denoted by $\P_\infty=\P_\infty(\{\eta_k\})$.

One of the main results proved in \cite{\SS} is the fact that any subsequential scaling limit $\P_\infty$ is a {\it noise}
in the sense of Tsirelson (see \cite{\Tsirelson}). But it is not proved in \cite{\SS} that there is a {\bf unique} such subsequential scaling limit. As it is explained in Section 2.3 in \cite{\GPSa}, the uniqueness property follows from 
the work \cite{\CN}. More precisely, \cite{\CN} proves the uniqueness of subsequential scaling limits in a different topological space than $(\HH,d_\HH)$, but it follows from their proof that $\omega\in \HH$ is {\it measurable} with respect to their notion of scaling limit (where a percolation configuration, instead of being seen as a collection of {\it quads}, is seen as a collection of nested {\it loops}). See \cite{\GPSa}, Section 2.3, for a more thorough discussion.

\begin{definition}\label{}
In what follows, we will denote by $\omega_\infty \sim \P_\infty$ the scaling limit of discrete mesh percolations $\omega_\eta \sim \P_\eta$. (Recall $\P_\eta$ denotes the law of critical site percolation on $\eta \Tg$).
\end{definition}

Of course, as explained carefully in \cite{\SchrammSmirnovNoise, \GPSa}, the choice of the space $\HH=\HH_D$ (or any other setup for the scaling limit) already poses restrictions on what events one can work with.
Note, for instance, that $\A:=\{ \exists$ neighborhood $U$ of the origin $0\in\C$ s.t.~all quads $\Quad \subset U$ are crossed$\}$ is clearly in the Borel $\sigma$-field of $(\HH_D,\T_D)$, and it is easy to see that $\P_\infty[\A]=0$, but if the sequence of $\eta$-lattices is such that $0$ is always the center of an hexagonal tile, then $\P_\eta[\A]=1/2$. 


With such an example in mind, it is natural to wonder how to effectively measure crossing events, multi-arms events and so on. Since the crossing event $\boxminus_Q$ is a Borel set, it is measurable and $\P_\infty[\boxminus_Q]$ is thus well-defined. Yet, one still has to check that 
\[
\P_\eta[\boxminus_Q] \to \P_\infty[\boxminus_Q]\,, \text{ as }\eta \to 0\,,
\]
which will ensure that $\P_\infty[\boxminus_Q]$ is given by Cardy's formula. 
This property was proved in \cite{\SchrammSmirnovNoise}. More precisely they prove the following result. 

\begin{theorem}[\cite{\SchrammSmirnovNoise}, Corollary 5.2]\label{th.SScardy}
For any quad $Q\in \QUAD_D$, 
\[
\P_\infty[\p \boxminus_Q] =0\,.
\]
In particular, one indeed has 
\[
\P_\eta[\boxminus_Q] \to \P_\infty[\boxminus_Q]\,, 
\]
as $\eta\to 0$, by weak convergence of $\P_\eta$ to $\P_\infty$. 
\end{theorem}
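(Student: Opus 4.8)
The plan is to show that the boundary $\partial\boxminus_Q$ has zero measure under every subsequential scaling limit $\P_\infty$, since the second assertion follows immediately from the portmanteau theorem once we know $\P_\eta \Rightarrow \P_\infty$ and $\P_\infty[\partial\boxminus_Q]=0$. The key topological input is an explicit description of $\partial\boxminus_Q$ in terms of crossing and non-crossing events for quads near $Q$. Using the definition of the quad-crossing topology $\T$ and the partial order on $\QUAD_D$, one checks that if $Q' > Q$ then $\boxminus_{Q'} \subset \mathrm{int}(\boxminus_Q)$ (a crossing of $Q'$ forces, on a neighborhood, a crossing of $Q$), and dually if $Q'' < Q$ then $\boxup_{U}$ for a suitable neighborhood $U$ of $Q''$ is contained in the interior of the complement of $\boxminus_Q$. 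Consequently
\[
\partial\boxminus_Q \subset \bigcap_{Q' > Q}\Bigl(\boxminus_Q \setminus \boxminus_{Q'}\Bigr) \;\cup\; \Bigl(\text{non-crossing configurations arbitrarily close to } \boxminus_Q\Bigr),
\]
so a configuration $\omega \in \partial\boxminus_Q$ is, roughly, one that crosses $Q$ but crosses no strictly larger quad, and fails to cross $Q$ but crosses quads arbitrarily close to $Q$ from inside. The point is that such an $\omega$ has a crossing of $[Q]$ that is ``tight'' — it touches $\partial_1 Q$ and $\partial_3 Q$ but any perturbation of the boundary arcs destroys it.

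Next I would estimate $\P_\infty$ of this set of tight-crossing configurations by an RSW-type argument transported to the limit. Pick a monotone family of quads $Q_\delta < Q < Q^\delta$ with $d_\QUAD(Q_\delta, Q), d_\QUAD(Q, Q^\delta) \to 0$ as $\delta \to 0$; then
\[
\P_\infty[\partial\boxminus_Q] \le \P_\infty\bigl[\boxminus_{Q} \setminus \boxminus_{Q^\delta}\bigr] + \P_\infty\bigl[\boxminus_{Q_\delta} \setminus \boxminus_{Q}\bigr] \le \bigl(\P_\infty[\boxminus_{Q}] - \P_\infty[\boxminus_{Q^\delta}]\bigr) + \bigl(\P_\infty[\boxminus_{Q_\delta}] - \P_\infty[\boxminus_{Q}]\bigr),
\]
using $\boxminus_{Q^\delta} \subset \boxminus_Q \subset \boxminus_{Q_\delta}$. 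So it suffices to show that $\delta \mapsto \P_\infty[\boxminus_{Q_\delta}]$ and $\delta \mapsto \P_\infty[\boxminus_{Q^\delta}]$ are continuous at $\delta = 0$, i.e. that the limiting crossing probability $Q \mapsto \P_\infty[\boxminus_Q]$ is continuous in $d_\QUAD$. This in turn I would get from uniform discrete estimates: by RSW (Russo–Seymour–Welsh), the discrete crossing probability of a quad is controlled — with bounds uniform in $\eta$ — by the crossing probabilities of slightly fattened and slightly thinned quads, and the difference between a quad and its $\delta$-perturbation is dominated by the probability of a four-arm or three-arm event in an annulus of modulus $\asymp \log(1/\delta')$ for some $\delta' \to 0$, which tends to $0$ uniformly in $\eta$. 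Passing to the limit along the subsequence defining $\P_\infty$ (using Theorem~\ref{th.SScardy}'s statement that $\P_\eta[\boxminus_Q] \to \P_\infty[\boxminus_Q]$ for each fixed $Q$ — or, to avoid circularity, using only the one-sided weak-convergence inequalities $\limsup_\eta \P_\eta[F] \le \P_\infty[F]$ for closed $F$ and $\liminf_\eta \P_\eta[G]\ge\P_\infty[G]$ for open $G$) yields the continuity of $Q\mapsto\P_\infty[\boxminus_Q]$ and hence $\P_\infty[\partial\boxminus_Q]=0$.

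The main obstacle I expect is the interplay between the topology and the measure in making the inclusion $\partial\boxminus_Q \subset \boxminus_{Q_\delta}\setminus\boxminus_{Q^\delta}$ (for all small $\delta$) genuinely rigorous: one has to be careful that $\boxminus_Q$ is \emph{not} open, only closed, so $\boxminus_{Q^\delta}$ sits in its interior but $\boxminus_{Q_\delta}$ strictly contains its closure only in a limiting sense, and the construction of the monotone perturbing families $Q_\delta, Q^\delta$ with $Q_\delta < Q < Q^\delta$ and $d_\QUAD \to 0$ requires knowing that $Q$ can be approximated in $d_\QUAD$ by strictly smaller and strictly larger quads — true for the homeomorphic quads considered here but needs the observation that one may push the boundary arcs $\partial_1 Q, \partial_3 Q$ inward/outward by an arbitrarily small amount. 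Once these topological lemmas are in place (these are essentially Lemma~3.x in \cite{\SS}, adapted to the metric $d_\QUAD$ used here), the probabilistic content is the standard uniform RSW/arm-event control, so the heart of the matter is really the uniform bound $\sup_\eta\bigl(\P_\eta[\boxminus_{Q_\delta}] - \P_\eta[\boxminus_{Q^\delta}]\bigr) \to 0$ as $\delta \to 0$.
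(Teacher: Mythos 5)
This theorem is not proved in the paper at all — it is imported verbatim from \cite{\SchrammSmirnovNoise} (Corollary 5.2) — and your sketch reconstructs essentially the argument given there: the topological inclusions $\boxminus_{Q'}\subset\mathrm{int}(\boxminus_Q)$ for $Q'>Q$ (hence $\partial\boxminus_Q\subset\boxminus_{Q_\delta}\setminus\boxminus_{Q^\delta}$), the one-sided portmanteau inequalities to avoid circularity, and the uniform-in-$\eta$ RSW/boundary-arm estimate $\sup_\eta\bigl(\P_\eta[\boxminus_{Q_\delta}]-\P_\eta[\boxminus_{Q^\delta}]\bigr)\to 0$, which is precisely the content of Lemma 5.1 there. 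Your outline is correct (the only slip is cosmetic: since $\boxminus_Q$ is closed, $\partial\boxminus_Q\subset\boxminus_Q$, so the second clause of your union is vacuous), so the remaining work is exactly the uniform discrete estimate you identify as the heart of the matter.
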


In the next subsection, we define Borel sets in $(\HH,d_\HH)$ which correspond to the so called mutli-arms events.
They were introduced and studied in \cite{\GPSa} where an analog of the above Theorem \ref{th.SScardy} was proved. See Lemma \ref{l.meas4arm} below.

\subsection{Measurability of arms events (\cite{\GPSa})}\label{ss.armevents}


Following \cite{\GPSa}, if $A=(\p_1 A, \p_2 A)$ is any non-degenerate smooth annulus of the plane (see \cite{\GPSa}), one can define events $\A_1,\A_2, \A_3, \A_4, \ldots, \A_j$ which belong to the Borel sigma-field of $(\HH,d_\HH)$
and which are such that for the discrete percolation configurations $\omega_\eta\sim\P_\eta \in (\HH,d_\HH)$, $1_{\A_i}(\omega_\eta)$ coincides with the indicator function that $\omega_\eta$ has $j$ (alternate) arms in the annulus $A$.

We recall below the precise definition from \cite{\GPSa} in the case where $j=4$ (which is the most relevant case in this paper).

\begin{definition}[Definition of the 4-arm event]\label{d.fourarms}
Let $A=(\p_1 A, \p_2 A)\subset D$ be a piecewise smooth annulus.  We define the {\bf alternating 4-arm event} in $A$ as $\A_4=\A_4(A)=\bigcup_{\delta>0} \A_4^\delta$, where $\A_4^\delta$ is the existence of quads $\Quad_i \subset D$, $i=1,2,3,4$, with the following properties (See figure \ref{f.fourarm}):

\bi
\item[(i)] $Q_1$ and $Q_3$ (resp. $Q_2$ and $Q_4$) are disjoint and are at distance at least $\delta$ from each other. 
\item[(ii)] 
For all $i\in \{1,\ldots,4\}$, the paths $Q_i(\{0\}\times [0,1])$ (resp. $Q_i(\{1\}\times [0,1]) $ lie inside (resp. outside) $\p_1 A$ (resp. $\p_2 A$) and are at distance at least $\delta$ from the annulus $A$ and from the other $Q_j$'s.
\item[(iii)] The four quads are ordered cyclically around $A$ according to their indices.
\item[(iv)] For $i\in \{1,3\}$, $\omega\in \boxminus_{Q_i}$.
\item[(v)] For $i\in \{2,4\}$, $\omega\in  \boxminus_{\rot{Q_i}}^c$,
\ei
where if $Q$ is a {\it quad} in $D$ (i.e. an homeomorphism from, say, $[-1,1]^2$ into $D$), then $\rot{Q}$ denotes the rotated quad by $\pi/2$, i.e. 

\begin{equation}\label{}
\rot{Q}:=Q\circ e^{i\pi/2}\,.
\end{equation}
\end{definition}

\begin{remark}\label{r.open}
Note that by construction, $\A_4=\A_4(A)$ is a {\it measurable} event. In fact, it is easy to check that it is an open set for the quad-topology $\T$.  

Also, the definitions of general {\bf (mono-  or polychromatic) $k$-arm events} in $A$ are  analogous: see \cite{\GPSa} for more details. 

\end{remark}

\begin{figure}[!htp]
\begin{center}
\includegraphics[width=0.7\textwidth]{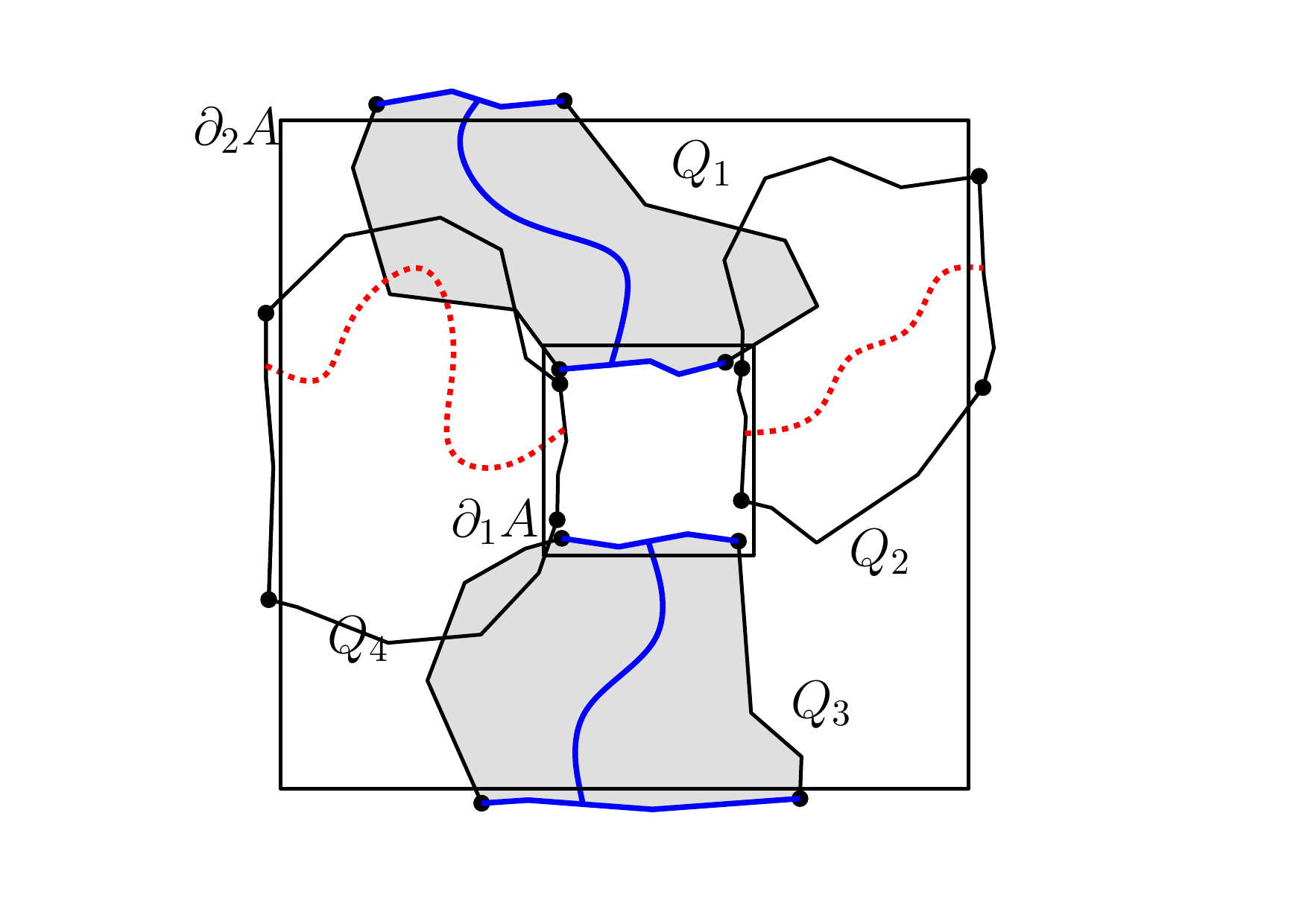}
\end{center}
\caption{Defining the alternating 4-arm event using quads crossed or not crossed.}\label{f.fourarm}
\end{figure}

We will need the following Lemma from \cite{\GPSa}, which is the analog of the above Theorem \ref{th.SScardy}:
 
\begin{lemma}[Lemma 2.4 and Corollary 2.10 in \cite{\GPSa}]\label{l.meas4arm}
Let $A\subset D$ be a piecewise smooth topological annulus (with finitely many non-smooth boundary points). Then the 1-arm, the alternating 4-arm and any polychromatic 6-arm event in $A$, denoted by $\A_1$, $\A_4$ and $\A_6$, respectively, are measurable w.r.t.~the scaling limit of critical percolation in $D$, and one has 
\[
\lim_{\eta \to 0}\P_\eta[\A_i]=\P_\infty[\A_i]\,.
\]
Moreover, in any coupling of the measures $\{\P_{\eta}\}$ and $\P_\infty$ on $(\HH_D,\T_D)$ in which $\omega_{\eta}\to\omega$ a.s.~as $\eta\to 0$,  we have 
\begin{align}\label{e.Delta}
\Pb{\{ \omega_\eta\in \A_i \} \Delta \{\omega \in \A_i\}} \to 0\qquad (\text{as }\eta\to 0)\,.
\end{align}

\ni
Finally, for any exponent $\gamma<1$, there is a constant $c=c_{A,\gamma}>0$ such that, 
for any $\delta>0$ and any $\eta>0$:
\begin{align}\label{e.delta}
\P_\eta \bigl[ \A_4^\delta \md \A_4\bigr] \geq 1- c\, \delta^\gamma\,.
\end{align}
\end{lemma}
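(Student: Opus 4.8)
The statement to establish is Lemma~\ref{l.meas4arm}, which bundles together three claims about the arm events $\A_1, \A_4, \A_6$ in a piecewise smooth annulus $A\subset D$: (a) measurability with respect to the scaling limit together with convergence $\P_\eta[\A_i]\to\P_\infty[\A_i]$ and the stronger coupling statement~\eqref{e.Delta}; and (b) the quasi-multiplicativity-flavoured quantitative estimate~\eqref{e.delta}, which says that conditioned on the arm event, it holds with a quantified $\delta$-margin with overwhelming probability. Since this is quoted verbatim from \cite{\GPSa}, the natural plan is to recall its proof from that paper rather than invent a new one; I would organise it around three steps.

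\emph{Step 1: reduce the arm event to countably many crossing events with a margin.} The key conceptual point is that $\A_4=\bigcup_{\delta>0}\A_4^\delta$ is an increasing union of open sets, each $\A_4^\delta$ being a finite union over a countable dense family of admissible quad-quadruples $(\Quad_1,\dots,\Quad_4)$ of the events $\boxminus_{\Quad_1}\cap\boxminus_{\Quad_3}\cap\boxminus_{\rot{\Quad_2}}^c\cap\boxminus_{\rot{\Quad_4}}^c$. By Theorem~\ref{th.compact} each $\boxminus_\Quad$ is Borel, and by Theorem~\ref{th.SScardy} each has $\P_\infty$-null boundary; hence $\A_4$ is Borel and, since it is open and $\A_4^\delta\uparrow\A_4$, it is measurable w.r.t.\ $\omega_\infty$. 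The analogous descriptions of $\A_1$ and $\A_6$ are handled the same way. For the convergence and the $\Delta$-statement~\eqref{e.Delta}, one wants $\P_\infty[\partial\A_4]=0$; this does \emph{not} follow formally from Theorem~\ref{th.SScardy} alone, because a countable union of events with null boundary can have positive boundary. This is exactly where~\eqref{e.delta} enters as a lemma: it shows that the "layers" $\A_4\setminus\A_4^\delta$ have probability $O(\delta^\gamma)\to 0$ uniformly in $\eta$ (including $\eta=0$), so $\A_4$ is, up to a null set, squeezed between the open set $\A_4^\delta$ and its closure, giving $\P_\infty[\partial\A_4]=0$ and then~\eqref{e.Delta} by the usual portmanteau argument applied along an a.s.-convergent coupling.

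\emph{Step 2: prove the quantitative margin estimate~\eqref{e.delta} at the discrete level.} For fixed $\eta>0$ this is a statement purely about critical percolation on $\eta\Tg$: conditionally on having four alternating arms across $A$, with probability at least $1-c\delta^\gamma$ one can find the four arms (and the separating quads) staying $\delta$-away from $\partial A$ and from each other. The standard tool is a \emph{separation-of-arms} / RSW argument: the probability that four arms cross $A$ but are "pinched together" in a $\delta$-neighbourhood of an inner or outer boundary component, or forced through a $\delta$-small window, is controlled by a product of arm-event probabilities in thin annuli of aspect ratio $\sim\log(1/\delta)$, which decays like $\delta^\gamma$ for any $\gamma<1$ once $\delta$ is small relative to the geometry of $A$, uniformly in $\eta$. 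Here one uses that $\alpha_j(\delta R, R)$-type quantities are polynomially small and the quasi-multiplicativity of arm events. The estimate must be uniform in $\eta$, which is fine since all the RSW and quasi-multiplicativity inputs are uniform in the mesh below the relevant scale; one also needs it at $\eta=0$, which follows by passing to the limit using Step~1 applied to the thin sub-annuli. The analogous estimates for $\A_1$ and $\A_6$ are identical in structure (and $\A_6$ uses the polychromatic six-arm exponent, which is $>2$, making that case if anything easier).

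\emph{Main obstacle.} The delicate point is the interplay between Steps 1 and 2: one needs~\eqref{e.delta} to hold already at $\eta=0$ in order to conclude $\P_\infty[\partial\A_4]=0$, but the cleanest route to~\eqref{e.delta} at $\eta=0$ goes through the discrete estimate plus the convergence of crossing probabilities for the thin sub-annuli — which is itself an instance of the kind of statement we are proving. The resolution is that for the \emph{sub-annuli} appearing in the separation argument one only needs convergence of plain crossing events (Theorem~\ref{th.SScardy}) and soft monotonicity/FKG bounds, not the full arm-event convergence, so there is no circularity; one bootstraps from crossings to arms. Making this hierarchy of scales explicit, and checking that all the RSW constants are genuinely uniform in $\eta$ down to the mesh, is the technical heart of the argument, and is carried out in detail in \cite{\GPSa}.
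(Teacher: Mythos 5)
First, a point of comparison: this paper does not actually prove Lemma~\ref{l.meas4arm} --- it is imported verbatim (Lemma 2.4 and Corollary 2.10) from the companion paper \cite{\GPSa} --- so there is no in-paper proof to measure your argument against; the closest in-paper relative of the estimate \eqref{e.delta} is the six-arm/three-arm bookkeeping in the proof of Lemma~\ref{l.stab}. Your Step 1 is the right skeleton and agrees with how \cite{\GPSa} (and Remark~\ref{r.open} here) proceed: each $\A_4^\delta$ is open in the quad-crossing topology, hence $\A_4=\bigcup_{\delta>0}\A_4^\delta$ is measurable; openness gives one direction of \eqref{e.Delta} along an a.s.\ convergent coupling, and the $\delta$-margin (together with closedness of $\boxminus_Q$ and slightly perturbed quads for the dual conditions) gives the other, with \eqref{e.delta} controlling the error. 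Your observation that Theorem~\ref{th.SScardy} alone does not suffice is correct. Note also that \eqref{e.delta} is a purely discrete statement, uniform in $\eta$, proved by discrete RSW and quasi-multiplicativity; the circularity you worry about in your last paragraph therefore does not really arise.

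The genuine gap is in your Step 2. The bound $\P_\eta\bigl[\A_4\setminus\A_4^\delta\bigr]\le c\,\delta^\gamma$ is not obtained from ``products of arm-event probabilities in thin annuli of aspect ratio $\sim\log(1/\delta)$'': that phrase does not produce any exponent, and separation-of-arms/RSW by itself only shows the conditional probability tends to $1$, with no rate. The actual argument is a union bound over $\delta$-squares: if the four arms cannot be witnessed by $\delta$-separated quads overshooting $\p A$, then either two arms of the same colour pinch inside some interior $\delta$-square, which forces (because the alternating structure pushes an arm of the opposite colour through the pinch) a polychromatic six-arm event from scale $\delta$ to a macroscopic scale --- exponent $35/12>2$, so the $O(\delta^{-2})$ squares contribute a positive power of $\delta$ --- or an arm comes $\delta$-close to $\p A$, forcing a half-plane three-arm event (exponent $2$), so the $O(\delta^{-1})$ boundary squares contribute $\delta^{1-o(1)}$, with the finitely many corners handled by the corresponding wedge exponents; quasi-multiplicativity is what allows one to combine these local events with the global four-arm event, uniformly in the mesh. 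This is exactly the computation carried out in this paper in the proof of Lemma~\ref{l.stab}, with quantitative inputs of this type recorded in Proposition~\ref{pr.ExNC}; without it, your sketch asserts but does not justify the quantitative rate in \eqref{e.delta}.
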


\subsection{Pivotal measures on the set of pivotal points (\cite{\GPSa})}\label{ss.pivmeasure}

In what follows, $A=(\p_1 A, \p_2 A)$ will be a piecewise smooth annulus with inside face denoted by $\Delta$. The purpose of \cite{\GPSa} is to study the scaling limit of suitably renormalized counting measures on the set of $A$-important points where these latter points are defined as follows:
\begin{definition}\label{d.AIP}
 For any $\eta>0$, 
a point $x\in \eta \Tg \cap \Delta$ is $A$-important for the configuration $\omega_\eta$ if one can find four alternating arms in $\omega_\eta$ from $x$ to the exterior boundary $\p_2 A$. See figure \ref{f.PM}
\end{definition}

\begin{figure}[!htp]
\begin{center}
\includegraphics[width=\textwidth]{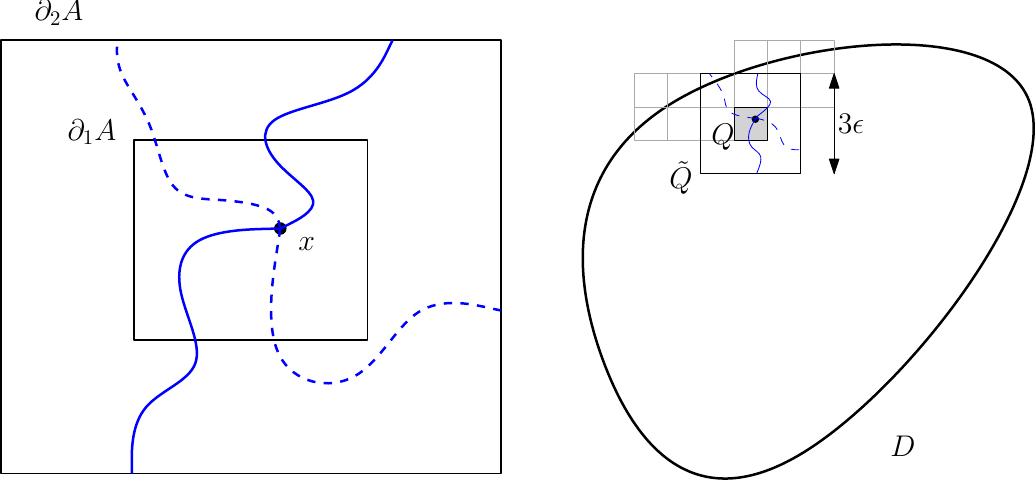}
\end{center}
\caption{On the left picture, a point $x$ which is $A$-important for the annulus $A=(\p_1 A, \p_2 A)$. On the right, a point which is $\eps$-important, i.e. in $\Piv^\eps$.}\label{f.PM}
\end{figure}

\begin{definition}[Pivotal measure $\mu^A$]\label{}
  Let us introduce the following counting measure on the set of $A$-important points:
 \[
 \mu^A=\mu^A(\omega_\eta):= \sum_{\begin{array}{c} x\in \eta \Tg \cap \Delta \\ x \text{ is $A$-important} \end{array}  } \,\, \delta_x \, \eta^2 \alpha_4^\eta(\eta,1)^{-1}\,. 
 \]
\end{definition}

The main Theorem in \cite{\GPSa} may be stated as follows:
\begin{theorem}[Theorem 1.1 in \cite{\GPSa}]\label{th.PM}
For any annulus $A$ as above, there is a measurable map $\mu^A$ from $(\HH,d_\HH)$ into the space $\M$ of finite Borel measures on $\bar \Delta$ such that  
\[
(\omega_\eta, \mu^A(\omega_\eta)) \overset{(d)}{\longrightarrow} (\omega_\infty, \mu^A(\omega_\infty))\,,
\]
as the mesh $\eta\to 0$. The topology on $\M$ is the topology of weak convergence (see the Prohorov metric $d_\M$ in~\eqref{e.Proho}) and the above convergence in law holds under the product topology of $d_\HH$ by $d_\M$. 
\end{theorem}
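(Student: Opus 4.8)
\textbf{Proof strategy for Theorem~\ref{th.PM}.}
The plan is to reduce the theorem to a statement about mesoscopic approximations of the pivotal measure and then to control the error as the mesoscopic scale goes to zero. First I would fix a small scale $\rho>0$ and partition $\bar\Delta$ into a grid of $\rho$-squares. For each such square $B$, one defines a mesoscopic version $\mu^A_\rho$ of the counting measure: instead of recording individual $A$-important points, one records the renormalized number of $A$-important points inside $B$, or, even better, replaces it by the quantity $r(\eta)^{-1}\eta^2\,\#\{x\in B: x \text{ has four arms from a scale-}\rho \text{ neighbourhood to } \p_2 A\}$ times an indicator of a four-arm event inside $B$ at scale $\rho$. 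The key point is that this mesoscopic quantity is, up to a bounded multiplicative factor governed by quasi-multiplicativity of $\alpha_4$, a \emph{continuous} (or almost surely continuity-point) functional of the configuration $\omega_\eta$ in the quad-crossing topology, because four-arm events in a fixed annulus are open events for $\T$ by Remark~\ref{r.open} and have boundary of measure zero under $\P_\infty$ by Lemma~\ref{l.meas4arm}. Hence, by the weak convergence $\omega_\eta\to\omega_\infty$ and the mapping theorem, $(\omega_\eta,\mu^A_\rho(\omega_\eta))$ converges in law to $(\omega_\infty,\mu^A_\rho(\omega_\infty))$ for each fixed $\rho$, where $\mu^A_\rho(\omega_\infty)$ is defined purely in terms of $\omega_\infty$ via these macroscopic arm events.

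Second, I would establish that $\mu^A_\rho$ is a Cauchy sequence in an appropriate $L^1$ or $L^2$ sense as $\rho\to 0$, uniformly in $\eta$. The natural tool here is a second-moment computation: one computes $\E_\eta[\mu^A(B)^2]$ and compares it with $\E_\eta[\mu^A_\rho(B)^2]$, using the standard arm-exponent machinery — quasi-multiplicativity of $\alpha_4$, the a-priori bound $\alpha_4(r,R)\asymp (r/R)^{5/4+o(1)}$ strictly below dimension $2$, and the gluing/separation-of-arms lemmas — to show that the $L^2$ distance between $\mu^A$ and $\mu^A_\rho$ is bounded by $C\rho^{\xi}$ for some $\xi>0$, uniformly in $\eta<\rho$. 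This is exactly the type of estimate that makes the total mass converge: the exponent $5/4 < 2$ guarantees that the contribution of any single $\rho$-square is negligible and that the measure has no atoms. Combining the $\rho$-by-$\rho$ convergence in law from the first step with this uniform-in-$\eta$ Cauchy estimate, a standard $3\varepsilon$-argument (first send $\eta\to 0$ with $\rho$ fixed, then $\rho\to 0$) yields the joint convergence $(\omega_\eta,\mu^A(\omega_\eta))\to(\omega_\infty,\mu^A(\omega_\infty))$, where the limiting object is defined as $\mu^A(\omega_\infty):=\lim_{\rho\to 0}\mu^A_\rho(\omega_\infty)$, the limit existing in $L^2(\P_\infty)$ by the same Cauchy estimate transported to the limit through the $\rho$-by-$\rho$ convergence.

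Third, the measurability of the map $\omega_\infty\mapsto\mu^A(\omega_\infty)$ follows from its construction as an $L^2$-limit of the measurable maps $\mu^A_\rho$: each $\mu^A_\rho$ is measurable because it is a bounded combination of indicators of the Borel arm events $\A_4$ in finitely many sub-annuli, and an a.s.\ (along a subsequence) limit of measurable maps into the Polish space $(\M,d_\M)$ is measurable. One should also verify tightness of $\{\mu^A(\omega_\eta)\}_\eta$ in $(\M,d_\M)$, which is immediate since all these measures are supported on the fixed compact set $\bar\Delta$ and have uniformly bounded expected total mass (again by the arm-exponent bound), so Prohorov applies.

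\textbf{Main obstacle.} The delicate point is the uniform-in-$\eta$ second-moment control of $\mu^A - \mu^A_\rho$, i.e.\ proving that the discrepancy between counting genuine $A$-important points and counting $\rho$-mesoscopically-important points is small in $L^2$ with a quantitative rate $\rho^{\xi}$. This requires careful use of quasi-multiplicativity and the separation-of-arms / arm-gluing lemmas to handle the cross terms $\E_\eta[\#\{x,y \text{ both important}\}]$ when $x,y$ are at various mutual distances, and to show that requiring the four arms only down to scale $\rho$ near $x$ (rather than down to the lattice scale) changes the first and second moments by a factor $1+o_\rho(1)$. One also has to rule out, with the same estimates, the possibility of a ``phantom'' mass created by configurations that are $\rho$-important but not genuinely important once one zooms in — this is controlled by the one-sided bound $\P_\eta[\A_4^\delta\mid\A_4]\geq 1-c\delta^\gamma$ of~\eqref{e.delta}, which is precisely why that lemma is quoted. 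Everything else — the $\rho$-fixed weak convergence, the measurability, the tightness — is comparatively routine once this quantitative stability estimate is in hand.
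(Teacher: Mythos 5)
First, a point of orientation: Theorem~\ref{th.PM} is not proved in this paper at all — it is quoted as Theorem~1.1 of the companion paper \cite{\GPSa} — so the only meaningful comparison is with the argument given there. Your sketch does capture the broad architecture of that argument: approximate the renormalized counting measure by a mesoscopic functional, prove an $L^2$ stability estimate uniform in $\eta$ using quasi-multiplicativity and separation of arms, pass to the limit by a three-epsilon argument, and inherit measurability of $\omega_\infty\mapsto\mu^A(\omega_\infty)$ from measurability of macroscopic arm events, with tightness being immediate from the first-moment bound.

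Two points, however, are genuine gaps rather than routine details. (1) Your mesoscopic approximant is not a functional of the continuum limit: it still counts lattice points $x$ (those with four arms from a scale-$\rho$ neighbourhood), which has no meaning for $\omega_\infty\in\HH$, and its normalization $r(\eta)^{-1}\eta^2=\alpha_4^\eta(\eta,1)$ is off, since the measure carries weight $r(\eta)=\eta^2\,\alpha_4^\eta(\eta,1)^{-1}$ per point. In \cite{\GPSa} the proxy is $\beta(\eta,\rho)\,Y_\rho$, where $Y_\rho$ counts the $\rho$-squares having the four-arm event to $\p_2 A$ — a finite combination of the Borel events of Lemma~\ref{l.meas4arm}, hence genuinely a function of $\omega_\infty$ — and $\beta(\eta,\rho)$ is a deterministic conditional expectation of the number of $A$-important points in a square given its mesoscopic arm event. (2) The heart of the proof is not a comparison of $\E\bigl[\mu^A(B)^2\bigr]$ with $\E\bigl[\mu^A_\rho(B)^2\bigr]$: it is the statement that, conditionally on the mesoscopic four-arm event of a $\rho$-square \emph{and on an arbitrary configuration outside it}, the number of $A$-important points inside concentrates around the same constant $\beta$, uniformly in the exterior. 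This requires a coupling/strong-separation argument (plus the issue of the further-conditioned constant $\hat\beta$ versus $\beta$, i.e.\ Lemma~4.7 of \cite{\GPSa}, whose colour-switching proof is discussed in Section~\ref{s.markov} of the present paper when the result is extended off criticality); plain quasi-multiplicativity together with \eqref{e.delta} does not deliver it, and without it the cross-terms in your second-moment bound cannot be shown to yield the required factor $1+o_\rho(1)$.
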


For each $\eps>0$, let us consider the grid $\eps \Z^2 \cap D$. Since the domain $D$ is assumed to be bounded, there are finitely many $\eps$-squares in this grid. To each such square $Q$ (if $Q$ intersects $\p D$, we still consider the entire $\eps$-square), we associate the square $\tilde Q$ of side-length $3\eps$ centered around $Q$ and we consider the annulus $A_Q$ so that $\p_1 A_Q = \p Q$ and  $\p_2 A_Q = \p \tilde Q$. See figure \ref{f.PM}. 

\begin{definition}\label{d.IP}
For any $\eta>0$, we define the set $\Piv^\eps=\Piv^\eps(\omega_\eta)$ to be the set of points $x\in \eta \Tg \cap D$, which are such that $x$ belongs to an $\eps$-square $Q$ in the grid $\eps \Z^2$ and $x$ is $A_Q$-important for the configuration $\omega_\eta$. The points in $\Piv^\eps$ are called {\bf $\eps$-important points}. 

Furthermore, we will denote by $\mu^\eps=\mu^\eps(\omega_\eta)$ the pivotal measure on these $\eps$-important points, namely:
\[
\mu^\eps=\mu^\eps(\omega_\eta):= \sum_{x\in \Piv^\eps(\omega_\eta)} \, \delta_x\, \eta^2 \alpha_4^\eta(\eta,1)^{-1}\,. 
\] 
\end{definition}


Theorem \ref{th.PM} above clearly implies the following result on the scaling limit of $\mu^\eps(\omega_\eta)$:
\begin{corollary}\label{c.PM}
For any $\eps>0$,
there is a measurable map $\mu^\eps$ from $(\HH,d_\HH)$ into the space of finite Borel measures on $\bar D$, such that 
\[
(\omega_\eta, \mu^\eps(\omega_\eta)) \overset{(d)}{\longrightarrow} (\omega_\infty, \mu^\eps(\omega_\infty))\,,
\]
under the above product topology. 
\end{corollary}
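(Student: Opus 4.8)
\textbf{Proof proposal for Corollary~\ref{c.PM}.}
The plan is to derive the statement directly from Theorem~\ref{th.PM} by expressing $\mu^\eps$ as a finite sum of the annulus-indexed pivotal measures $\mu^{A_Q}$, and then invoking the fact that a finite sum of jointly converging random measures converges jointly. First I would fix $\eps>0$. Since $D$ is bounded, the grid $\eps\Z^2\cap D$ meets only finitely many $\eps$-squares; enumerate them as $Q_1,\dots,Q_N$ (including those that hit $\p D$, treated as full squares, as in Definition~\ref{d.IP}), and let $A_j:=A_{Q_j}$ be the associated annulus with $\p_1 A_j=\p Q_j$ and $\p_2 A_j = \p\tilde Q_j$. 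By construction of $\Piv^\eps$ and of $\mu^\eps$ in Definition~\ref{d.IP}, for every $\eta>0$ we have the exact identity
\[
\mu^\eps(\omega_\eta) \;=\; \sum_{j=1}^{N} \mu^{A_j}(\omega_\eta)\bigl|_{\bar\Delta_j}\,,
\]
where $\Delta_j$ is the inside face of $A_j$ (i.e.\ the open $\eps$-square $Q_j$), because a point $x\in\eta\Tg\cap D$ is $\eps$-important exactly when it lies in some $Q_j$ and is $A_j$-important, and the weight $\eta^2\alpha_4^\eta(\eta,1)^{-1}$ attached to each such point is the same in both definitions. The squares $Q_j$ tile $D$ up to a Lebesgue-null set, so there is no double counting once we restrict each $\mu^{A_j}$ to $\bar\Delta_j$; and on the scaling-limit side we \emph{define} $\mu^\eps(\omega_\infty):=\sum_{j=1}^N \mu^{A_j}(\omega_\infty)\big|_{\bar\Delta_j}$, which is a measurable function of $\omega_\infty$ since each $\mu^{A_j}$ is by Theorem~\ref{th.PM} and finite sums and restrictions to fixed Borel sets of measurable measure-valued maps are measurable.

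Next I would upgrade the individual convergences of Theorem~\ref{th.PM} to a joint one. Apply Theorem~\ref{th.PM} to each annulus $A_j$: for each $j$ we have $(\omega_\eta,\mu^{A_j}(\omega_\eta))\Rightarrow(\omega_\infty,\mu^{A_j}(\omega_\infty))$. Since all these couplings involve the \emph{same} configuration $\omega_\eta$ and its \emph{same} limit $\omega_\infty$, I would invoke a Skorohod-coupling argument: by the Skorohod representation theorem (valid since $(\HH,d_\HH)$ and $(\M,d_\M)$ are Polish), realize all the $\omega_\eta$ on one probability space so that $\omega_\eta\to\omega_\infty$ a.s.\ in $d_\HH$; then, because each $\mu^{A_j}$ is an a.s.-continuity-point measurable map in the sense guaranteed by Theorem~\ref{th.PM} (the joint law $(\omega_\eta,\mu^{A_j}(\omega_\eta))$ converges to $(\omega_\infty,\mu^{A_j}(\omega_\infty))$, which forces $\mu^{A_j}(\omega_\eta)\to\mu^{A_j}(\omega_\infty)$ in probability along this coupling), we get $\mu^{A_j}(\omega_\eta)\to\mu^{A_j}(\omega_\infty)$ in probability in $d_\M$ for each $j$ simultaneously. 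Weak convergence of measures on $\bar D$ is preserved under restriction to a fixed Borel set whose boundary is null for the limit — here $\p Q_j$ is a line segment, and $\mu^{A_j}(\omega_\infty)(\p Q_j)=0$ a.s.\ since pivotal points are $A_j$-important hence strictly inside $\p_1 A_j=\p Q_j$ (four arms from $x$ to $\p_2 A_j$ forces $x\in\Delta_j$) — so $\mu^{A_j}(\omega_\eta)|_{\bar\Delta_j}\to\mu^{A_j}(\omega_\infty)|_{\bar\Delta_j}$ as well. Summing the finitely many such convergences, addition of measures being continuous for weak convergence, yields $\mu^\eps(\omega_\eta)\to\mu^\eps(\omega_\infty)$ in $d_\M$ along the coupling, hence $(\omega_\eta,\mu^\eps(\omega_\eta))\Rightarrow(\omega_\infty,\mu^\eps(\omega_\infty))$ in the product topology $d_\HH\times d_\M$.

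The only genuinely delicate point — and the one I would be most careful about — is the null-boundary issue for the restrictions $|_{\bar\Delta_j}$: one must make sure that no mass of the limiting pivotal measure $\mu^{A_j}(\omega_\infty)$ escapes onto the shared boundaries $\p Q_j$ of adjacent squares (which would break both the identity $\mu^\eps=\sum_j\mu^{A_j}|_{\bar\Delta_j}$ at the limit and the continuity of restriction under weak convergence). This follows from the geometry of $A_j$-importance as indicated above, but it is worth recording; everything else (finiteness of $N$, measurability of finite sums, the Skorohod-coupling transfer of joint weak convergence, and continuity of addition of measures) is routine. I would also note that the $o(1)$ in $\alpha_4^\eta(\eta,1)$ plays no role here since the normalizing factor $\eta^2\alpha_4^\eta(\eta,1)^{-1}=r(\eta)$ is identical in Definition~\ref{d.IP} and in Theorem~\ref{th.PM}.
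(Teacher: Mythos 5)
Your proposal is correct and is essentially the argument the paper has in mind: Corollary~\ref{c.PM} is stated as an immediate consequence of Theorem~\ref{th.PM}, obtained exactly by writing $\mu^\eps=\sum_{j}\mu^{A_{Q_j}}$ over the finitely many $\eps$-squares of the grid and passing the (joint) convergence through this finite sum. Your only extra complication is the restriction $|_{\bar\Delta_j}$ and the attendant null-boundary discussion, which is redundant: by Theorem~\ref{th.PM} each $\mu^{A_j}$ already takes values in measures on $\bar\Delta_j$ and the discrete identity $\mu^\eps(\omega_\eta)=\sum_j\mu^{A_j}(\omega_\eta)$ holds with no restriction, so one only needs that finite sums of weakly converging measures converge, plus the standard Skorohod/measurable-limit argument you invoke (the same device as in Lemma~\ref{l.meas4arm}) to make the convergences hold jointly over the finitely many annuli.
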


Furthermore, in the Proposition below, we list some properties on the pivotal measure $\mu^\eps$ from \cite{\GPSa}.

\begin{proposition}[Proposition 4.4 in \cite{\GPSa}]\label{th.mu}

There is a universal constant $C>0$ (which does not depend on $\eps>0$) such that 
%
\bi
\item[(i)] for any smooth bounded open set $U\subset \bar D$, 
\[
\Eb{\mu^\eps(U)}< C\, \eps^{-5/4}\, \mathrm{area}(U)\,,
\]

\item[(ii)] for any $r$-square $S_r=(x,y)+[0,r]^2$ included in $\bar D$, 
\[
\Eb{\mu^\eps(S_r)^2}< C\, \eps^{-5/4}\, r^{11/4} = C\, \eps^{-5/4} \mathrm{area}(S_r)^{11/8}\,.
\]
\ei
\ni
(In fact this second moment estimate does not hold for all shapes of open sets $U$). 

\end{proposition}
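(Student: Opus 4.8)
The plan is to prove both estimates first at the discrete level --- uniformly in the mesh $\eta<\eps$ --- for the measures $\mu^\eps(\omega_\eta)$, and then to transfer them to $\mu^\eps(\omega_\infty)$ using the joint convergence in law $(\omega_\eta,\mu^\eps(\omega_\eta))\to(\omega_\infty,\mu^\eps(\omega_\infty))$ of Theorem~\ref{th.PM} (see also Corollary~\ref{c.PM}). For the first moment this transfer is just lower semicontinuity of $\omega\mapsto\mu^\eps(\omega)(U)$ together with Fatou's lemma; for the second moment one additionally needs uniform integrability of $\mu^\eps(\omega_\eta)(S_r)^2$, which follows either from a uniform-in-$\eta$ bound on a slightly higher moment (obtained by the same method applied to triples of points) or from the $L^2$-control on $\mu^\eps$ already established in~\cite{\GPSa}. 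Note that only the upper bounds in (i)--(ii) are needed, hence only upper bounds on the one- and two-point functions below.

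For (i): by linearity of expectation,
\[
\E_\eta\bigl[\mu^\eps(\omega_\eta)(U)\bigr]=\eta^2\,\alpha_4^\eta(\eta,1)^{-1}\sum_{x\in\eta\Tg\cap U}\P_\eta[x\in\Piv^\eps]\,.
\]
For a typical $x$, the event $\{x\in\Piv^\eps\}$ is a four-arm event from the lattice scale out to $\p\tilde Q$, where $\tilde Q$ is the $3\eps$-square around the $\eps$-square $Q\ni x$; since $\dist(x,\p\tilde Q)\in[\eps,2\eps]$, Kesten's quasi-multiplicativity (separation of arms) gives $\P_\eta[x\in\Piv^\eps]\asymp\alpha_4^\eta(\eta,\eps)$. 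Using $\#(\eta\Tg\cap U)\asymp\mathrm{area}(U)\,\eta^{-2}$ and the quasi-multiplicativity relation $\alpha_4^\eta(\eta,1)\asymp\alpha_4^\eta(\eta,\eps)\,\alpha_4^\eta(\eps,1)$ yields $\E_\eta[\mu^\eps(\omega_\eta)(U)]\asymp\mathrm{area}(U)\,\alpha_4^\eta(\eps,1)^{-1}$, and (i) follows from the four-arm estimate $\alpha_4^\eta(\eps,1)\ge c\,\eps^{5/4}$ (the Smirnov--Werner four-arm exponent, in the polynomial form recorded in~\cite{\GPSa}).

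For (ii): expand $\E_\eta[\mu^\eps(\omega_\eta)(S_r)^2]=\bigl(\eta^2\alpha_4^\eta(\eta,1)^{-1}\bigr)^2\sum_{x,y\in\eta\Tg\cap S_r}\P_\eta[x,y\in\Piv^\eps]$. Writing $\rho=|x-y|$, the key input is the standard four-arm ``gluing'' bound for the two-point function: for $\rho\le\eps$, $\P_\eta[x,y\in\Piv^\eps]\asymp\alpha_4^\eta(\eta,\rho)^2\,\alpha_4^\eta(\rho,\eps)$ (four arms around each of $x,y$ up to scale $\rho$, then a single four-arm configuration from scale $\rho$ out to scale $\eps$), while for $\rho>\eps$ the two events live in essentially disjoint regions, so $\P_\eta[x,y\in\Piv^\eps]\le C\,\alpha_4^\eta(\eta,\eps)^2$. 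Grouping the pairs into dyadic shells $|x-y|\asymp\rho$ (there are $\asymp r^2\rho^2\eta^{-4}$ of them), substituting quasi-multiplicativity, and inserting $\alpha_4(\rho,1)\asymp\rho^{5/4}$, the dominant ($\rho\le\eps$) part of the sum collapses to
\[
r^2\,\alpha_4(\eps,1)^{-1}\sum_{\eta\le\rho\le r}\frac{\rho^2}{\alpha_4(\rho,1)}\asymp r^2\,\eps^{-5/4}\sum_{\eta\le\rho\le r}\rho^{3/4}\asymp r^2\,\eps^{-5/4}\,r^{3/4}=\eps^{-5/4}\,r^{11/4}\,,
\]
the geometric sum over dyadic scales being dominated by its top term precisely because the exponent $2-5/4=3/4$ is positive. (One checks that the relevant regime here is $r\lesssim\eps$; this is also the reason, flagged by the remark after the Proposition, that no such bound can hold for general shapes --- at scales $\gg\eps$ the measure $\mu^\eps$ behaves essentially like a deterministic density of mass $\asymp\eps^{-5/4}$ per unit area, so $\E[\mu^\eps(S_r)^2]\asymp r^4\eps^{-5/2}$ there.)

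The main obstacle is the two-point estimate $\P_\eta[x,y\in\Piv^\eps]\asymp\alpha_4^\eta(\eta,\rho)^2\,\alpha_4^\eta(\rho,\eps)$, uniformly in $\eta,\rho,\eps$ and in the position of $x,y$ relative to the fixed $\eps\Z^2$ grid. Two issues need care: first, the separation-of-arms surgery that merges the two local four-arm pictures at scale $\rho$ into a single global four-arm picture --- this is exactly where the arm count $4$ (rather than $5$) and the quasi-multiplicativity of $\alpha_4$ are used in an essential way; second, the boundary effects when $x$ and $y$ lie in distinct adjacent $\eps$-squares, or near the boundary of their own square, so that the annuli $A_{Q_x},A_{Q_y}$ are different and are not centered at $x,y$. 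The latter I would handle by sandwiching $\{x\in\Piv^\eps\}$ between four-arm events from scale $\eta$ to radii $c\eps$ and $C\eps$ around $x$ and absorbing the $O(1)$ discrepancy via RSW and quasi-multiplicativity. Once the uniform-in-$\eta$ higher-moment bound is in hand, the passage to the scaling limit is comparatively routine.
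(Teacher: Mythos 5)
The paper contains no proof of this statement---it is imported verbatim from Proposition~4.4 of \cite{GPS2a}---and your argument is essentially the one used there: uniform-in-$\eta$ discrete one- and two-point four-arm estimates obtained from independence on disjoint annuli plus quasi-multiplicativity, summed over dyadic scales, then transferred to $\mu^\eps(\omega_\infty)$ via the convergence of Theorem~\ref{th.PM} (for which Fatou alone suffices, since only upper bounds are claimed; neither uniform integrability nor any separation-of-arms gluing is actually needed, the gluing being relevant only for the matching lower bound you do not use). Your observation that the second-moment bound can only hold with an $\eps$-independent constant in the regime $r\lesssim\eps$ (for $r\gg\eps$ the square of the first moment already forces $\E[\mu^\eps(S_r)^2]\gtrsim \eps^{-5/2}r^4$) is correct and consistent with how the proposition is applied to microscopic dyadic squares in Proposition~\ref{pr.PPP}.
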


\begin{remark}\label{r.PIV}
As mentioned in the introduction, it may seem easier or more natural to consider the set $\bar{\Piv}^\eps=\bar{\Piv}^\eps(\omega_\eta)$ of all points $x\in \eta \Tg$ which are such that $\omega_\eta$ satisfies a  four-arms event in the euclidean
ball $B(x,\eps)$.  But the techniques in \cite{\GPSa} would not provide a scaling limit for the corresponding pivotal measures $\bar \mu^\eps$. Yet, it is easy to check that for any $\eps>0$, one always has
\[
\bar{\Piv}^{2\sqrt{2} \eps} \subset \Piv^\eps \subset \bar{\Piv}^\eps\,,
\] 
which will be a useful observation later in Section \ref{s.stability}. 
\end{remark}

%
%
%
%

 \section{Notion of uniformity on the space $\HH$}\label{s.unif}

 When dealing with \cadlag processes on a topological space $(X,\tau)$, one needs a way to compare two different \cadlag trajectories on $X$. But in general, just having a topology $\tau$ on $X$ is not enough for such a task. A notion of {\em uniformity} is needed and this brings us to the notion of {\bf uniform structure}.
(Part of this section, in particular item $(ii)$ and its proof in Proposition \ref{pr.CovrtoCovk} are borrowed from \cite{\GPSa} and are included here for completeness).
 
\subsection{Uniform structure on a topological space}

A {\bf uniform structure} on a topological space $(X, \tau)$ is a given family $\Phi$ of {\em entourages}, which are subsets of $X\times X$. The uniform structure $\Phi$ needs to satisfy a few properties (like symmetry, a certain type of associativity and so  on) and needs to generate in a certain sense the topology $\tau$. See \cite{Tukey} for example for an introduction on uniform spaces. 
If $\tau$ is generated by a metric $d_X$, then the canonical uniform structure on the metric space $(X,d_X)$ is generated by the {\em entourages} of the form $U_a:= \{ (x,y)\in X\times X, \, d_X(x,y)< a \}, a>0$. 
Furthermore, the following fact is known (see for example\cite{Tukey}).

\begin{proposition}\label{pr.Bourb}
If $(X,\tau)$ is a compact Hausdorff topological space, then there is a {\bf unique} uniform structure on $(X,\tau)$ compatible with the topology $\tau$. 
\end{proposition}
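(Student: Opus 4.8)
The statement to prove is Proposition~\ref{pr.Bourb}: a compact Hausdorff topological space carries a unique uniform structure compatible with its topology.

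\medskip

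\textbf{Plan of proof.} The plan is to prove existence and uniqueness separately, both by exploiting compactness in an essential way. For \emph{existence}, the cleanest route is to take the family $\Phi$ of all neighbourhoods of the diagonal $\Delta = \{(x,x) : x \in X\}$ in the product space $X \times X$, and to check that this family satisfies the axioms of a uniform structure inducing the topology $\tau$. The only axiom that is not immediate is the one requiring that for every entourage $V \in \Phi$ there is an entourage $W \in \Phi$ with $W \circ W \subset V$ (where $W \circ W = \{(x,z) : \exists y,\ (x,y),(y,z) \in W\}$). This is exactly where compactness enters: given a neighbourhood $V$ of $\Delta$, one covers $\Delta$ by finitely many ``basic'' open rectangles $U_i \times U_i'$ contained in $V$, uses the tube lemma / compactness of $X$ to produce, for each $x$, an open $G_x \ni x$ with $G_x \times G_x \subset V$, extracts a finite subcover $G_{x_1}, \dots, G_{x_n}$ by compactness of $X$, and sets $W = \bigcup_i G_{x_i} \times G_{x_i}$; then $W \circ W \subset V$ by a short pigeonhole argument. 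Symmetry is arranged by replacing any entourage $V$ by $V \cap V^{-1}$. That $\Phi$ induces $\tau$ follows because in a compact Hausdorff (hence normal, hence regular) space the neighbourhoods of a point $x$ obtained as $\{y : (x,y) \in V\}$ for $V$ a neighbourhood of the diagonal are cofinal in the neighbourhood filter of $x$.

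\medskip

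For \emph{uniqueness}, let $\Phi'$ be any uniform structure compatible with $\tau$. One inclusion is trivial: every entourage of $\Phi'$ is, by definition of a uniform structure, a neighbourhood of the diagonal (since $\{y : (x,y) \in V'\}$ is a $\tau$-neighbourhood of $x$ for each $x$, and a uniform structure's topology agreeing with $\tau$ forces $V'$ to contain an open set around each diagonal point — more precisely, each $V' \in \Phi'$ contains the $\tau$-open set $\bigcup_x \mathrm{int}(V'[x]) \times \{x\}$-type set, and a standard argument upgrades this to: $V'$ contains a neighbourhood of $\Delta$, in fact $V'$ itself is such). Hence $\Phi' \subset \Phi$. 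For the reverse inclusion $\Phi \subset \Phi'$, take a neighbourhood $V$ of $\Delta$; we must find $V' \in \Phi'$ with $V' \subset V$. Suppose not: then for every symmetric $V' \in \Phi'$ the set $V' \setminus V$ is nonempty and closed in $X \times X$ (after passing to closed entourages, which exist: $\overline{W'} \subset V'$ when $W' \circ W' \circ W' \subset V'$). The family $\{\overline{W'} \setminus \mathrm{int}\,V : W' \in \Phi'\}$ has the finite intersection property (the intersection of finitely many entourages is an entourage), so by compactness of $X \times X$ it has a common point $(x,y)$. But $(x,y) \in \bigcap_{W'} \overline{W'} = \Delta$ (this last equality is the standard fact that a Hausdorff uniformity's entourages have diagonal intersection), so $x = y$, contradicting $(x,y) \notin \mathrm{int}\,V \supset \Delta$. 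Therefore $\Phi = \Phi'$.

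\medskip

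\textbf{Main obstacle.} The genuinely delicate point is the second uniformity axiom in the existence part — producing, for a given neighbourhood $V$ of the diagonal, a ``half-size'' entourage $W$ with $W \circ W \subset V$. This is the step that fails for non-compact spaces (e.g.\ the topology of $\mathbb{R}$ admits many incompatible uniformities) and it is where the finite-subcover argument must be set up carefully, in particular invoking the tube lemma to get, for each fixed $x$, a \emph{square} neighbourhood $G_x \times G_x \subset V$ before taking the finite subcover. Everything else (symmetry, the filter axioms, the fact that this $\Phi$ generates $\tau$, and the uniqueness argument via the finite intersection property on the compact space $X \times X$) is routine once compactness and Hausdorffness are in hand. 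I would also remark that an alternative, perhaps shorter, existence proof embeds $(X,\tau)$ into a product of compact metric spaces (or uses that $X$ is completely regular and hence embeds in a cube $[0,1]^I$, whose product uniformity restricts to one on $X$); but the diagonal-neighbourhood construction is the most self-contained.
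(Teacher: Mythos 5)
The paper does not actually prove this proposition (it is quoted from Tukey's book and explicitly not relied upon), so your attempt can only be measured against the standard argument — and it has a genuine gap at exactly the step you yourself single out as the crux. Defining $W=\bigcup_i G_{x_i}\times G_{x_i}$, where each $G_{x_i}\times G_{x_i}\subset V$ and the $G_{x_i}$ form a finite cover, does \emph{not} give $W\circ W\subset V$: if $(x,y)\in G_{x_i}\times G_{x_i}$ and $(y,z)\in G_{x_j}\times G_{x_j}$ with $i\neq j$, all you know is that $G_{x_i}$ and $G_{x_j}$ meet at $y$, and $(x,z)\in G_{x_i}\times G_{x_j}$ need not lie in $V$. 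Concretely, on $X=[0,1]$ with $V=\{(x,y):|x-y|<0.1\}$ and a cover by intervals of length $0.09$, the set $W\circ W$ contains pairs at distance nearly $0.2$. No pigeonhole argument rescues this; the point is that in a general topological space there is no way to ``halve'' the squares, which is precisely why the classical proof (Bourbaki/Kelley) proceeds differently: take a finite cover $U_1,\dots,U_n$ with $U_i\times U_i\subset V$, use normality of the compact Hausdorff space to choose a closed shrinking $F_i\subset U_i$ still covering $X$, and set $W=\bigcap_{i}\bigl[(U_i\times U_i)\cup\bigl((X\setminus F_i)\times(X\setminus F_i)\bigr)\bigr]$; then for $(x,y),(y,z)\in W$ one picks $i$ with $y\in F_i$, deduces $x,y\in U_i$ and $y,z\in U_i$, hence $(x,z)\in U_i\times U_i\subset V$.

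The rest of your proposal is essentially sound. The uniqueness half — closed entourages are cofinal because $\closure{W'}\subset W'\circ W'\circ W'$, the sets $\closure{W'}\setminus \mathrm{int}\,V$ have the finite intersection property in the compact space $X\times X$, and their common point must lie on the diagonal by Hausdorffness, a contradiction — is correct, modulo the small inaccuracy that ``every entourage is a neighbourhood of the diagonal'' is not true by definition but follows from the standard observation that $W[a]\times W[b]\subset V$ whenever $W$ is symmetric, $W\circ W\circ W\subset V$ and $(a,b)\in W$. Note also that your closing alternative (embed $X$ into a cube $[0,1]^I$ by complete regularity and restrict the product uniformity) does give existence without the problematic lemma, so combining it with your uniqueness argument would yield a complete proof; but as written, the main existence construction fails and must be replaced by the shrinking argument above.
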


We will not rely explicitly on this Proposition nor on the exact definition of uniform structures, but we state these in order to show the intuition underlying the setup to come. 

\subsection{Two useful coverings of $(\HH,\T)$}

\subsubsection{The first covering : with metric balls}

For any radius $r>0$, one can cover $\HH$ by $\{ B_{d_\HH}(\omega,r),\, \omega\in\HH \}$. Since $(\HH,d_\HH)$ is compact,
there is a finite subcover 
\begin{equation}\label{}
\Cov_{d_\HH}^r:=\{ B(\omega_i^r,r),\, i=1 \ldots N_r\}\,.
\end{equation}

\subsubsection{The second covering : with $\boxup_U^c$ and $\boxminus_Q^c$ open sets}

In order to introduce an interesting covering of $\HH$ consisting of open sets as the complements of closed sets \eqref{e.boxminus}, \eqref{e.boxup}, let us first introduce one particular dense countable family of quads in $\QUAD(=\QUAD_D)$.
\medskip

\begin{definition}\label{d.kquads}(A dyadic family of quads)

For any $k\geq 1$, let $(Q_{n}^{k})_{1\le n \le N_{k}}$ be the family of all quads which are {\it polygonal quads} in $D\cap 2^{-k} \Z^{2}$, i.e. their boundaries $\p Q_n^k$ are included in $D\cap 2^{-k} \Z^{2}$ and the four marked vertices are vertices of $D\cap 2^{-k} \Z^{2}$. (For fixed $k$, there are finitely many such quads since the domain $D$ is assumed to be bounded). We will denote by $\QUAD^k=\QUAD^k_D$ this family of quads. Notice that $\QUAD^k\subset \QUAD^{k+1}$. 
\end{definition}

Clearly, the family $\QUAD_\N:= \bigcup_k \QUAD^k$ is dense in the space of quads $(\QUAD_D,d_\QUAD)$. In particular, Theorem~\ref{th.compact} implies that  the events $\{\boxminus_\Quad : \Quad\in \QUAD_\N\}$ generate the Borel $\sigma$-field of $\HH_D$.


In order to use the open sets of the form $\boxup_U^c$, where $U$ is an open set of $\QUAD_D$, we will associate to each $Q\in \QUAD^k,\, (k\geq 1)$, the open set $\hat Q_k:= B_{d_\QUAD}(Q,2^{-k-10})$, and with a slight abuse of notation, we will  write $\boxup_{\hat Q}^c$ for the open set $\boxup_{\hat Q_k}^c$. 

Also, to each quad $Q\in \QUAD^k$, we will associate the quad $\bar Q_k\in \QUAD^{k+10}$ which among all quads $Q'\in\QUAD^{k+10}$ satisfying $Q'>Q$ is the smallest one. Even though $>$ is not a total order, it is not hard to check that $\bar Q_k$ is uniquely defined. See Figure \ref{f.barQ} for an illustration. Note furthermore that $\bar Q_k$ satisfies $d_\QUAD(\bar Q_k, Q)\in [2^{-k-10},2^{-k-5}]$. Since, by definition $\bar Q_k > Q$, one has that $\boxminus_{Q}^c \subset \boxminus_{\bar Q_k}^c$. With a slight abuse of notation, we will write $\boxminus_{\bar Q}^c$ for the open set $\boxminus_{\bar Q_k}^c$.

\begin{figure}[!htp]
\begin{center}
\includegraphics[width=0.8\textwidth]{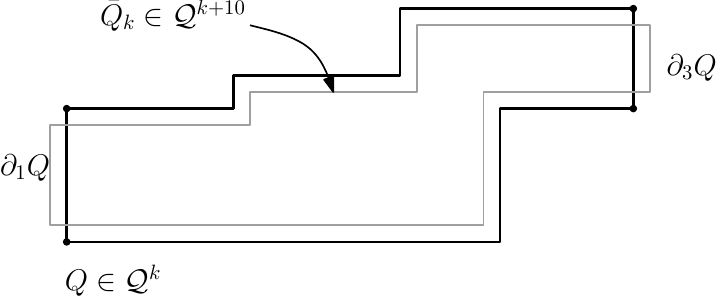}
\end{center}
\caption{Definition of $\bar Q_k>Q$}\label{f.barQ}
\end{figure}

\begin{definition}[A family of neighborhoods]\label{d.open}
For each $k\geq 1$ and each point $\omega \in \HH$, let $\O_k(\omega)$ be the following open set 
\begin{equation}\label{}
\O_k(\omega):= \big( \bigcap_{Q\in \QUAD^k,\,s.t. \, Q\notin \omega }\, \boxminus_{\bar Q}^c \big) \bigcap 
\big( \bigcap_{Q \in \QUAD^k \,s.t. \, Q\in \omega } \boxup_{\hat Q}^c\big)\,.
\end{equation}
Let also $\O_0(\omega)$ be the space $\HH$ for any $\omega\in \HH$. Intuitively, one should think of $\O_k(\omega)$ as the set of percolation configurations which share similar crossing properties with $\omega$ for all quads in $\QUAD^k$ possibly with some small $2^{-k}$-deformations. 
\end{definition}

\begin{remark}\label{}
Since for any $Q\in \QUAD^k$, $\boxminus_Q^c$ is already an open set, one might wonder why we have chosen here to relax $\boxminus_Q^c$ into $\boxminus_{\bar Q}^c$. This choice will make the statements and proofs to come more symmetric and easier to handle. 
\end{remark}

\begin{remark}\label{}
Let us point out that for any $\omega\in \HH$ and any $k\geq 0$, we have that $\O_{k+1}(\omega) \subset \O_{k}(\omega)$. 
This illustrates the fact that the {\bf finite} coverings $\Cov^k:= \{  \O_k(\omega)\, \omega\in \HH \}$ are finer and finer as $k\to \infty$. 
\end{remark}


This family of open sets is useful thanks to the following property.

\begin{lemma}\label{l.subbase}
The collection of open sets $\{\O_k(\omega),\, k\geq 1, \omega\in \HH \} $ is a (countable) subbase 
for the topological space $(\HH, \T)$.
\end{lemma}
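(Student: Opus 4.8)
The goal is to show that $\{\O_k(\omega): k\ge 1,\ \omega\in\HH\}$ is a countable subbase for $(\HH,\T)$. Countability is immediate, since for each fixed $k$ the family $\QUAD^k$ is finite, hence there are only finitely many distinct sets $\O_k(\omega)$ as $\omega$ ranges over $\HH$ (each $\O_k(\omega)$ depends on $\omega$ only through the finite data ``which quads of $\QUAD^k$ lie in $\omega$''); taking the union over $k$ gives a countable collection. So the real content is: the topology $\T'$ generated by this family (i.e.\ having these sets as a subbase, so that finite intersections form a base) coincides with $\T$. I would prove the two inclusions $\T'\subseteq\T$ and $\T\subseteq\T'$ separately.

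The inclusion $\T'\subseteq\T$ is the easy direction: each $\O_k(\omega)$ is a finite intersection of sets of the form $\boxminus_{\bar Q}^c$ and $\boxup_{\hat Q}^c$, and by Definition~\ref{d.quadtopology} these are open in $\T$ (they are among the generating open sets, since $\bar Q\in\QUAD_D$ and $\hat Q$ is an open subset of $\QUAD_D$). Hence every $\O_k(\omega)\in\T$, and therefore $\T'\subseteq\T$.

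For the reverse inclusion $\T\subseteq\T'$, it suffices to show that every subbasic open set of $\T$, namely every $\boxminus_Q^c$ with $Q\in\QUAD_D$ and every $\boxup_U^c$ with $U\subseteq\QUAD_D$ open, is a union of sets $\O_k(\omega)$ — equivalently, that for every $\omega$ in such a set there is some $k$ and some $\omega'$ with $\omega\in\O_k(\omega')\subseteq$ (the set). Concretely: given $\omega\in\boxminus_Q^c$, i.e.\ $Q\notin\omega$, I would use that $\omega$ is a closed hereditary set to find, for $k$ large, a quad $Q''\in\QUAD^k$ with $Q'' < Q$ and $Q''\notin\omega$ (approximating $Q$ from inside by dyadic polygonal quads, using the definition of $<$ and density of $\QUAD_\N$; here one wants $Q'' \notin \omega$, which follows because $\omega$ is closed and $Q\notin\omega$, so a whole neighborhood of $Q$ in $\QUAD_D$ avoids $\omega$). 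Then $\omega\in\O_k(\omega)$, and for any $\omega'\in\O_k(\omega)$ one has (by the $\boxminus_{\bar{Q''}}^c$ factor, since $Q''\notin\omega$) that $\bar{Q''}_k\notin\omega'$; arranging $Q'' < Q$ together with $\bar{Q''}_k$ close enough to $Q''$ that still $\bar{Q''}_k < Q$ (possible by the bound $d_\QUAD(\bar{Q''}_k, Q'')\le 2^{-k-5}$ for $k$ large), heredity of $\omega'$ forces $Q\notin\omega'$, i.e.\ $\O_k(\omega)\subseteq\boxminus_Q^c$. The case of $\boxup_U^c$ is symmetric: if $\omega\cap U=\emptyset$ then for $k$ large there is $Q^*\in\QUAD^k\cap\omega$ with $\hat{Q^*}_k=B_{d_\QUAD}(Q^*,2^{-k-10})\subseteq U$ — wait, that is backwards; rather, if $\omega\in\boxup_U^c$ then $\omega\cap U\ne\emptyset$, so pick $Q^*\in\omega\cap U$, approximate it by a dyadic $Q^{**}\in\QUAD^k$ with $Q^{**}\in\omega$ (using that $\omega$ is hereditary and $Q^*\in\omega$, taking $Q^{**}<Q^*$) and with $\hat{Q^{**}}_k\subseteq U$; then the $\boxup_{\hat{Q^{**}}}^c$ factor of $\O_k(\omega)$ ensures every $\omega'\in\O_k(\omega)$ meets $\hat{Q^{**}}_k\subseteq U$, so $\O_k(\omega)\subseteq\boxup_U^c$.

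\textbf{Main obstacle.} The delicate point is the approximation step: turning the abstract facts ``$\omega$ is closed hereditary'' and ``$Q\notin\omega$'' (resp.\ ``$Q^*\in\omega$, $Q^*\in U$'') into a \emph{dyadic} quad $Q''\in\QUAD^k$ with the correct strict-order relation and the correct $d_\QUAD$-proximity, uniformly enough that the inflated quads $\bar{Q''}_k$ (resp.\ the balls $\hat{Q''}_k$) still land on the right side of $<$ (resp.\ inside $U$). This requires knowing that the partial order $<$ and the metric $d_\QUAD$ interact well with dyadic polygonal approximation of quads and with the ``$\pm 10$ levels'' buffer built into the definitions of $\bar Q_k$ and $\hat Q_k$, together with the quantitative bound $d_\QUAD(\bar Q_k,Q)\in[2^{-k-10},2^{-k-5}]$ noted before Definition~\ref{d.open}; closedness of $\omega$ in $\HH$ is exactly what converts $Q\notin\omega$ into ``a $d_\QUAD$-neighborhood of $Q$ is disjoint from $\omega$'', which is the fact that makes the dyadic approximation legitimate. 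Everything else is bookkeeping with finite intersections and the definition of a subbase.
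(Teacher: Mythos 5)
Your proposal is correct, but it takes a genuinely different route from the paper: the paper's proof of Lemma~\ref{l.subbase} is essentially a citation (it invokes the proof of Lemma~3.7 in \cite{\SS} together with the fact that the balls $\hat Q_k$, $Q\in\QUAD^k$, $k\geq 1$, form a countable basis of $(\QUAD_D,d_\QUAD)$), whereas you carry out a self-contained verification. Your skeleton is right: countability because $\O_k(\omega)$ depends on $\omega$ only through the finite trace $\omega\cap\QUAD^k$; the inclusion $\T'\subseteq\T$ because each $\O_k(\omega)$ is a finite intersection of the generating open sets $\boxminus_{\bar Q}^c$ and $\boxup_{\hat Q}^c$; and for $\T\subseteq\T'$ you correctly reduce to showing that every $\boxminus_Q^c$ and every $\boxup_U^c$ contains, around each of its points $\omega$, a set $\O_k(\omega)$ (note $\omega\in\O_k(\omega)$, since $\bar Q_k>Q$ plus heredity handles the quads not in $\omega$, and $Q\in\hat Q_k$ handles the others), using closedness of $\omega\subset\QUAD_D$ to turn $Q\notin\omega$ into a $d_\QUAD$-neighbourhood of $Q$ disjoint from $\omega$, and heredity of $\omega'$ to transfer $\bar{Q''}_k\notin\omega'$ (resp.\ $\omega'\cap\hat{Q''}_k\neq\emptyset$) into $Q\notin\omega'$ (resp.\ $\omega'\cap U\neq\emptyset$). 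The one ingredient you flag but do not prove --- approximating $Q$ in $d_\QUAD$ by dyadic quads $Q''\in\QUAD^k$ with $Q''<Q$, with enough margin that the inflated quad $\bar{Q''}_k$ is still $<Q$ --- is legitimate: since $Q''<Q$ is by definition stable under small perturbations of both quads, one first picks $R<Q$ close to $Q$ with its margin $\delta$, then a dyadic $Q''$ within $\min(\delta,2^{-k-5})$-distance of $R$ for $k$ large, and this inside-approximation fact is exactly what the paper itself uses without proof elsewhere (e.g.\ in the proof of Lemma~\ref{l.unique}). What each approach buys: the paper's citation keeps the argument short by leaning on the work already done in \cite{\SS}, while your direct proof makes explicit which structural features (closedness, heredity, dyadic density of $\QUAD_\N$, and the $2^{-k-10}$/$2^{-k-5}$ buffers built into $\hat Q_k$ and $\bar Q_k$) actually drive the lemma; before writing it up you should excise the mid-proof self-correction in the $\boxup_U^c$ case and state the inside-approximation lemma cleanly once, since both cases use it.
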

The Lemma follows from the proof of Lemma 3.7 in \cite{\SS} and the fact that the collection of open sets $\{ \hat Q_k, \, Q\in \QUAD^k, k\geq 1 \}$ is a countable basis for the topological space $(\QUAD, d_\QUAD)$.

In some sense, the purpose of the next subsection is to see how these two different coverings of $(\HH,\T)$ relate to each other.

\subsection{Two uniform structures on $(\HH,\T)$}

The first natural uniform structure on $(\HH, \T)$ is of course given by the metric $d_\HH$. 
We now wish to give an {\em explicit} uniform structure on $(\HH,\T)$ which in the end shall be the same as the one given by $d_\HH$. 
This uniform structure will be defined using a {\bf pseudometric} on $\HH\times \HH$. 

\begin{definition}[An explicit uniform structure on $(\HH,\T)$]\label{d.K}
Let us start by introducing the following quantity on $\HH\times \HH$:
\begin{equation}\label{}
K_\HH(\omega,\omega'):= 
\sup \{k\geq 0  ,\, s.t.\, \omega' \in \O_k(\omega) \text{ or } \omega \in \O_k(\omega') \}
\end{equation}
It is easy to check that $(K_\HH)^{-1}$ defines a pseudometric on $\HH$. 
For each $k\geq 0$, let 
\[
U_k := \{ (\omega,\omega') \in \HH \times \HH, \,s.t.\, K_\HH(\omega,\omega')\geq  k\}\,.
\]
\end{definition}

Even though we will not need this fact, the following proposition holds.
\begin{proposition}\label{pr.K}
The family $\{ U_k, k\geq 0 \}$ defines a uniform structure on $\HH$ compatible with the topology $\T$.
\end{proposition}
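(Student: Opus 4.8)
\textbf{Proof proposal for Proposition \ref{pr.K}.}

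The plan is to verify directly the three axioms defining a uniform structure for the filter base $\{U_k\}_{k\geq 0}$, and then check that the induced topology coincides with $\T$. First I would record that each $U_k$ contains the diagonal $\Delta_\HH=\{(\omega,\omega)\}$: indeed $\omega\in\O_k(\omega)$ for every $k$ since $\O_k(\omega)$ is by construction a neighborhood of $\omega$, so $K_\HH(\omega,\omega)=\infty\geq k$. Symmetry $U_k=U_k^{-1}$ is immediate from the symmetric ``or'' in the definition of $K_\HH$. For the filter-base property, $U_k\cap U_\ell=U_{\max(k,\ell)}$ because $K_\HH(\omega,\omega')\geq k$ and $\geq\ell$ together say exactly $K_\HH\geq\max(k,\ell)$; this uses the nesting $\O_{k+1}(\omega)\subset\O_k(\omega)$ noted in the Remark after Definition \ref{d.open}, which guarantees $K_\HH$ behaves like a genuine ``agreement depth''.

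The one nontrivial axiom is the existence, for every $k$, of some $\ell$ with $U_\ell\circ U_\ell\subset U_k$, i.e.\ a ``triangle inequality up to a loss'': if $K_\HH(\omega,\omega'')\geq\ell$ and $K_\HH(\omega'',\omega')\geq\ell$ then $K_\HH(\omega,\omega')\geq k$. Here the choice of the relaxed events $\boxminus_{\bar Q}^c$ and $\boxup_{\hat Q}^c$ in Definition \ref{d.open} (rather than $\boxminus_Q^c$ and the event for $Q$ itself) is exactly what pays off, as flagged in the Remark: the $10$-step ``buffer'' in passing from $Q\in\QUAD^k$ to $\bar Q_k\in\QUAD^{k+10}$ and to $\hat Q_k=B_{d_\QUAD}(Q,2^{-k-10})$ lets us transfer crossing/non-crossing information across an intermediate point with only a bounded (here: $+10$, say) loss in the level. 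Concretely, I would show $U_{k+10}\circ U_{k+10}\subset U_k$ (or some explicit constant in place of $10$): given $Q\in\QUAD^k$ with $Q\notin\omega$, the relation $\omega''\in\O_{k+10}(\omega)$ or $\omega\in\O_{k+10}(\omega'')$ forces, via the definitions of $\bar Q$ and $\hat Q$ at level $k+10$, that $Q'\notin\omega''$ for the appropriate slightly-larger $Q'\in\QUAD^{k+\text{const}}$; chaining once more through $\omega'$ and unravelling the $d_\QUAD$-metric estimates $d_\QUAD(\bar Q_k,Q)\in[2^{-k-10},2^{-k-5}]$ yields $Q\notin\omega'$ with the relaxation to $\boxminus_{\bar Q}^c$ at level $k$, and symmetrically for $Q\in\omega$ using $\boxup_{\hat Q}^c$. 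I expect this bookkeeping of how polygonal quads in $2^{-k}\Z^2$ sit inside their neighbors at finer dyadic scales to be the main obstacle — it is not deep but it is where one must be careful that the constants line up.

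Finally, to see that $\{U_k\}$ is \emph{compatible with} $\T$: the uniform topology has as a neighborhood base at $\omega$ the sets $U_k[\omega]=\{\omega':K_\HH(\omega,\omega')\geq k\}$. By definition of $K_\HH$, $U_k[\omega]\supset\O_k(\omega)$, and conversely $U_k[\omega]\subset\O_{k'}(\omega')$-type inclusions follow from the subbase property. Since by Lemma \ref{l.subbase} the family $\{\O_k(\omega):k\geq 1,\ \omega\in\HH\}$ is a (countable) subbase for $(\HH,\T)$, the neighborhoods $\O_k(\omega)$ generate $\T$ at each point, hence so do the $U_k[\omega]$; thus the uniform topology equals $\T$. (Alternatively, and more cheaply, one may invoke Proposition \ref{pr.Bourb}: $(\HH,\T)$ is compact Hausdorff, so it carries a \emph{unique} compatible uniform structure, and it suffices to check that each $U_k$ is a $\T\times\T$-neighborhood of the diagonal and that the $U_k$ separate points — the latter because $K_\HH(\omega,\omega')=\infty$ for $\omega\neq\omega'$ would contradict Lemma \ref{l.subbase} together with Hausdorffness. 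This shortcut avoids re-deriving the $U_\ell\circ U_\ell\subset U_k$ axiom from scratch, though the explicit bound is still worth stating for later use.)
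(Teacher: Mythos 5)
Your main route is genuinely different from what the paper does: the paper never verifies the axioms for $\{U_k\}$ directly, but instead remarks that the only delicate axiom (composition) is ``implicitly proved'' in the proof of Proposition \ref{pr.CovrtoCovk}; concretely, items $(i)$ and $(ii)$ there sandwich the $U_k$'s between $d_\HH$-entourages ($U_{\k(r)}\subset\{d_\HH\le r\}$ and $\{d_\HH\le\r(k)\}\subset U_k$), so composition follows from the triangle inequality for $d_\HH$ and compatibility from the fact that $d_\HH$ induces $\T$. You instead verify the axioms intrinsically, which is exactly the alternative the paper alludes to but does not carry out; what it buys is an explicit loss of levels in $U_{k+c}\circ U_{k+c}\subset U_k$ (the paper's route only yields non-explicit moduli $\k(r),\r(k)$ because of compactness arguments), at the price of redoing the quad bookkeeping that the paper performs inside the proof of Proposition \ref{pr.CovrtoCovk}. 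Your treatment of the easy axioms (diagonal via heredity and $Q\in\hat Q_k$, symmetry, nesting) is fine.

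Two concrete soft spots. First, for composition note that $K_\HH(\omega,\omega')\ge\ell$ is a disjunction, so a two-step chain comes in four orientation patterns (directed chains, both outer points in $\O_\ell(\omega'')$, and $\omega''$ in both $\O_\ell(\omega)$ and $\O_\ell(\omega')$); your sketch reads as a single directed chain. All four patterns do go through with the same buffer mechanism (the paper's cases (a)--(d) and (1)--(2) are precisely this bookkeeping), but the paper's chaining uses a gap of $20$ levels, and it is not clear that $10$ suffices, so keep the constant flexible as you already suggest. Second, the compatibility step is where your write-up is weakest. The easy direction ($U_k[\omega]\supset\O_k(\omega)$, so uniformly open sets are $\T$-open) is fine, but the converse needs that each subbasic set $\O_m(\omega_0)$ is open for the uniform topology, i.e.\ that for $\omega\in\O_m(\omega_0)$ some $U_k[\omega]$ (with $k$ depending on $\omega$) sits inside it; this is again a chaining statement in which both orientations of the ``or'' must be handled, and it does not follow formally from Lemma \ref{l.subbase} alone. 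Moreover, the ``cheap'' shortcut via Proposition \ref{pr.Bourb} is not valid as stated: the unique compatible uniformity on the compact Hausdorff space $(\HH,\T)$ is the filter of \emph{all} neighborhoods of the diagonal, and for $\{U_k\}$ to be a base of it you need every neighborhood of the diagonal to contain some $U_k$; separation of points ($\bigcap_k U_k=\Delta_\HH$) gives this only if the $U_k$ are closed (so that compactness applies), which they are not obviously, and checking that each $U_k$ is even a $\T\times\T$-neighborhood of the diagonal already requires the very chaining estimate (of the type $\O_{k+20}(\bar\omega)\times\O_{k+20}(\bar\omega)\subset U_k$) that the shortcut was supposed to avoid. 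So drop the shortcut, or replace it by that estimate, and carry out the compatibility chaining explicitly.
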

The non-straightforward part in the proof of this proposition would be to show that the family $\{ U_k\}$ indeed satisfies to the transitivity condition needed for uniform structures. This step will be implicitly proved along the proof of Proposition \ref{pr.CovrtoCovk}.

\subsection{How these two different uniform structures relate to each other}

The purpose of this subsection is the following result.

\begin{proposition}\label{pr.CovrtoCovk}
One can define two functions 
\[
\begin{cases}
r>0\, &\mapsto \;\;  \k(r) \in \N^* \\
& \text{and} \\
k\in \N^* & \mapsto \;\;\r(k)>0
\end{cases}
\]

which are such that the following properties hold:
\bi
\item[(i)] For any $r>0$ and any $\omega,\omega'\in \HH$,
if $K_\HH(\omega,\omega')\geq \k(r)$ then 
\[
d_\HH(\omega,\omega')\le r\,.
\]
In other words, if two configurations share the same crossing properties for all quads in $\QUAD^{\k(r)}$ (up to a small perturbation of about $2^{-k-10}$), then these two configurations are necessarily $r$-close for the $d_\HH$ metric. 
\item[(ii)]
For any $k\geq 1$ and any $\omega,\omega'\in \HH$. If  $d_\HH(\omega,\omega')\le \r(k)$, then 
$\omega'\in \O_k(\omega)$ and $\omega\in \O_k(\omega')$. (Note that it implies $K_\HH(\omega,\omega')\geq k$.) 

In other words, if $\omega$ and $\omega'$ are sufficiently close ($\r(k)$-close), then (up to a small perturbation of $2^{-k-10}$) they share the same crossing properties for all quads in $\QUAD^k$.
\ei

\end{proposition}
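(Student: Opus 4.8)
The two assertions are logically dual, so I would treat them in parallel, each time exploiting the only structural fact available about the non-explicit metric $d_\HH$: it induces the topology $\T$, and the family $\{\O_k(\omega):k\ge 1,\omega\in\HH\}$ from Lemma~\ref{l.subbase} is a countable subbase for $\T$. The key observation to set up is that for each fixed $k$ the finite collection $\Cov^k=\{\O_k(\omega):\omega\in\HH\}$ is an open cover of the compact space $(\HH,\T)$, and likewise each $\Cov^r_{d_\HH}$ is a finite open cover by $d_\HH$-balls. The whole argument is then a comparison of these two families of finite covers, made quantitative by compactness (Lebesgue-number type arguments).

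For part (ii): fix $k\ge 1$. For each $\omega\in\HH$, the set $\O_k(\omega)$ is an open neighbourhood of $\omega$ in $\T=$ the topology of $d_\HH$, so there is a radius $\rho_\omega>0$ with $B_{d_\HH}(\omega,2\rho_\omega)\subset\O_k(\omega)$. Cover $\HH$ by the balls $B_{d_\HH}(\omega,\rho_\omega)$; extract a finite subcover indexed by $\omega_1,\dots,\omega_m$ and set $\r(k):=\min_i\rho_{\omega_i}>0$. Now if $d_\HH(\omega,\omega')\le\r(k)$, pick $i$ with $\omega\in B_{d_\HH}(\omega_i,\rho_{\omega_i})$; then both $\omega,\omega'\in B_{d_\HH}(\omega_i,2\rho_{\omega_i})\subset\O_k(\omega_i)$. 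This is not quite what we want — we need $\omega'\in\O_k(\omega)$, not $\omega'\in\O_k(\omega_i)$ — so I would instead argue directly: the map $(\omega,\omega')\mapsto\mathbf 1[\omega'\in\O_k(\omega)]$ fails lower semicontinuity issues, so it is cleaner to use that $\O_k(\omega)$ is determined by the finitely many crossing/non-crossing statuses $\{\mathbf 1_{\boxminus_Q}(\omega):Q\in\QUAD^k\}$, which take only finitely many joint values; partition $\HH$ into the (finitely many) nonempty ``status classes'' $\HH_s$, $s\in\{0,1\}^{\QUAD^k}$, and note $\O_k(\omega)$ depends only on the class of $\omega$. Then $\omega'\in\O_k(\omega)$ is equivalent to a finite conjunction of conditions of the form $\omega'\in\boxminus_{\bar Q}^c$ or $\omega'\in\boxup_{\hat Q}^c$; since each of $\boxminus_{\bar Q}^c,\boxup_{\hat Q}^c$ is open in $\T$ and \emph{contains} the corresponding status class's closure issues aside, one checks $\overline{\HH_s}\subset\O_k(\omega)$ for every $\omega\in\HH_s$ by a direct boundary argument using $d_\QUAD(\bar Q_k,Q)\ge 2^{-k-10}$ and $\hat Q_k=B_{d_\QUAD}(Q,2^{-k-10})$ (the ``slack'' built into $\bar Q$ and $\hat Q$ is exactly there to absorb the passage to the closure). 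Having $\overline{\HH_s}\subset\bigcap_{\omega\in\HH_s}\O_k(\omega)$ for each of finitely many $s$, compactness gives a uniform $\r(k)>0$ such that $d_\HH(\omega,\omega')\le\r(k)$ forces $\omega,\omega'$ into a common $\overline{\HH_s}$, hence $\omega'\in\O_k(\omega)$ and symmetrically $\omega\in\O_k(\omega')$, as claimed.

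For part (i): here I would argue by contradiction and compactness. Suppose no $\k(r)$ works for some $r>0$: then for every $k$ there are $\omega_k,\omega_k'\in\HH$ with $K_\HH(\omega_k,\omega_k')\ge k$ yet $d_\HH(\omega_k,\omega_k')>r$. By compactness of $(\HH,d_\HH)$, pass to a subsequence so that $\omega_k\to\omega$ and $\omega_k'\to\omega'$ in $d_\HH$; then $d_\HH(\omega,\omega')\ge r>0$, so $\omega\ne\omega'$. Since $\HH$ is Hausdorff and $\{\O_k(\cdot)\}$ is a subbase, there is some $k_0$ and some $\zeta$ with, say, $\omega\in\O_{k_0}(\zeta)$ but $\omega'\notin\overline{\O_{k_0}(\zeta)}$ — more usefully, there is a single $k_0$ and a crossing event $\boxminus_Q$ with $Q\in\QUAD^{k_0}$ on which $\omega$ and $\omega'$ ``disagree with room to spare'' (using Theorem~\ref{th.compact}, the $\boxminus_Q$, $Q\in\QUAD_\N$, generate the Borel field and separate points; together with the open sets $\boxminus_{\bar Q}^c,\boxup_{\hat Q}^c$ one gets an honest separation at some finite level). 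For $k$ large, $K_\HH(\omega_k,\omega_k')\ge k> k_0$ forces $\omega_k$ and $\omega_k'$ to lie in the same level-$k_0$ status class, hence (for $k$ large, using $\omega_k\to\omega$, $\omega_k'\to\omega'$ and that $\boxminus_{\bar Q}^c$, $\boxup_{\hat Q}^c$ are open, plus the geometric slack in $\bar Q,\hat Q$) to agree with $\omega$ resp.\ $\omega'$ on that separating quad — contradiction. This yields the existence of $\k(r)$; monotonicity of $K_\HH$ in its ``level'' (i.e.\ $\O_{k+1}\subset\O_k$) makes the statement clean.

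\textbf{Main obstacle.} The routine part is the compactness/finite-cover bookkeeping; the genuinely delicate point is controlling the interplay between $d_\HH$-convergence and the events $\boxminus_Q,\boxup_U$ at the \emph{boundary} — i.e.\ showing that $\omega_k\to\omega$ together with ``$\omega_k$ crosses $Q$ for all $k$'' lets one conclude something about $\omega$ and a slightly larger quad $\bar Q$. This is exactly why the definitions inflate $Q$ to $\bar Q_k$ and $\hat Q_k$ by $2^{-k-10}$: the slack is what makes $\boxminus_{\bar Q}^c$ and $\boxup_{\hat Q}^c$ behave well under limits, and verifying this ``closure absorption'' cleanly (essentially re-deriving the relevant half of Schramm--Smirnov's Lemma~3.7 from \cite{\SS} in quantitative form) is where the real work lies.
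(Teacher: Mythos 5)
Your part (ii) takes a genuinely different route from the paper (which works at the finer level $k'=k+20$, covers $\HH$ by $d_\HH$-balls sitting inside sets of the form $\O_{k'}(\bar\omega)$, and then runs a four-case quad analysis showing that joint membership in $\O_{k'}(\bar\omega)$ forces $\omega'\in\O_k(\omega)$), and its two main ingredients are sound: $\O_k(\omega)$ indeed depends only on the level-$k$ status class of $\omega$, and the closure-absorption inclusion $\overline{\HH_s}\subset\O_k(\omega)$ for $\omega\in\HH_s$ is correct (for $s(Q)=1$ use that $\boxminus_Q$ is closed; for $s(Q)=0$ use heredity together with the strict relation $Q<\bar Q_k$, so every quad in a small $d_\QUAD$-neighbourhood of $\bar Q_k$ dominates $Q$). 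But your final deduction is broken as written: the assertion that $d_\HH$-closeness forces $\omega,\omega'$ into a common $\overline{\HH_s}$ is not a consequence of compactness (it fails for general finite partitions of a compact space), and even if granted it would not give the conclusion, because your inclusion was proved only for $\omega\in\HH_s$, not for $\omega\in\overline{\HH_s}$: an $\omega\in\overline{\HH_s}\setminus\HH_s$ may cross some $Q\in\QUAD^k$ with $s(Q)=0$, while an $\omega'\in\HH_s$ may cross nothing in $\hat Q_k$, so $\omega'\notin\O_k(\omega)$. The repair is short and uses exactly what you proved: since $\O_k(\cdot)$ is constant on each class, write $\O^s$ for its common value on $\HH_s$; then $\overline{\HH_s}$ is a compact subset of the open set $\O^s$, hence $\dist(\HH_s,\HH\setminus\O^s)>0$, and $\r(k)$ can be taken as half the minimum of these finitely many distances, applied once to $s(\omega)$ and once to $s(\omega')$.

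For part (i) the gap is genuine. The pivotal step ``$K_\HH(\omega_k,\omega'_k)\ge k>k_0$ forces $\omega_k,\omega'_k$ to lie in the same level-$k_0$ status class'' is false: $\omega'\in\O_k(\omega)$ only yields the perturbed statements ($\omega'$ avoids $\bar Q_k$ when $\omega$ avoids $Q$, and $\omega'$ crosses \emph{something} in $\hat Q_k$ when $\omega$ crosses $Q$); a configuration that barely crosses a dyadic $Q\in\QUAD^{k_0}$ and one that barely fails to (but crosses a slightly smaller nearby quad) can have $K_\HH$ arbitrarily large, even infinite, while their level-$k_0$ statuses differ at $Q$. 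Moreover the subsequent passage to the limit is precisely where the work lies: the events $\boxup_U^c$ are open, not closed, so from ``$\omega'_k$ crosses some quad near $Q$ for all large $k$'' you cannot conclude the same for $\omega'=\lim_k\omega'_k$ without first downgrading to the crossing of a \emph{fixed}, slightly smaller quad via heredity; this transfer of approximate crossing information across two levels of slack (the interpolating quads $R$ between $Q$, $\bar Q$, $\hat Q$ at scales $k$, $k+10$, $k+20$) is exactly the content of the paper's case analysis, and it is absent from your sketch. Note also that the paper proves (i) directly rather than by contradiction: using Lemma~\ref{l.subbase} it picks, for each $\omega$, a level $k_\omega$ with $\O_{k_\omega}(\omega)$ inside an $r/2$-ball, extracts a finite cover by the sets $\O_{k_j+10}(\omega_j)$, sets $\k(r)=\max_j k_j+20$, and then shows that $K_\HH(\omega,\omega')\ge\k(r)$ together with $\omega\in\O_{k_j+10}(\omega_j)$ forces $\omega'\in\O_{k_j}(\omega_j)$; any completion of your argument would have to prove essentially this same transfer lemma.
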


Let us point out here that one could prove this Proposition by first proving Proposition \ref{pr.K} and the existence of these two functions would then follow from Proposition \ref{pr.Bourb}. Nevertheless, since it does not make the proof much longer, we will give a self-contained proof here which thus bypasses the notion of {\em uniform structure} as well as its axioms. Also, as mentioned at the beginning of this section, item $(ii)$ corresponds exactly to the content of Lemma 2.5. in \cite{\GPSa} but is included here for completeness.
 
\medskip
\ni
{\bf Proof of the proposition.}
Let us start with the proof of $(ii)$.
Let us fix some integer $k\geq 0$.
Let $k'$ be a slightly larger integer, say $k+20$.  
Notice that since there are finitely many quads in $\QUAD^{k'}$, there are only finitely many possible open sets of the form $\O_{k'}(\omega)$ and the union of these covers $\HH$. Note also that if $\omega\in \HH$ then necessarily the open set $\O_{k'}(\omega)$ is non-empty since it contains the point $\omega$.  It follows from these easy observations that to any point $\omega\in \HH$, one can associate a radius $r_\omega>0$ so that the ball $B_{d_\HH}(\omega,2 \,r_\omega)$  is included at least in one of the open sets $\O_{k'}(\omega)$. 
Consider now the covering $\{ B_{d_\HH}(\omega, r_\omega), \omega\in \HH \}$ from which one can extract a finite covering 
\[
\{ B_{d_\HH}(\omega_i, r_i), i=1,\ldots, N_{k'} \}\,.
\]
Let us define $\r(k):= \min_{1\le i \le N_{k'}} \{ r_i \}$ and let us check that it satisfies the desired properties.
Let $\omega,\omega'$ be any points in $\HH$ such that $d_\HH(\omega, \omega') \le \r(k)$. By our choice of $\r(k)$, one can find at least one ball $B_{d_\HH}(\omega_i, r_i)$ in the above covering such that both $\omega$ and $\omega'$ lie in the ball $B_{d_\HH}(\omega_i, 2\, r_i)$. In particular this means that one can find some $\bar \omega \in \HH$ such that both $\omega$ and $\omega'$ lie in $\O_{k'}(\bar \omega)$. Let us now prove that $\omega' \in \O_k(\omega)$, the other condition is proved similarly. 
Consider any quad $Q\in \QUAD^k$. We will distinguish the following cases:

\bi
\item[(a)] Suppose $Q \in \bar\omega$ and $Q\in \omega$. Since $\omega'\in \O_{k'}(\bar \omega)$, we have that $\omega'\in \boxup_{\hat Q_{k'}}^c \subset \boxup_{\hat Q_k}^c$.

\item[(b)] Suppose $Q \in \bar\omega$ and $Q\notin \omega$. We need to show that $\bar Q_k \notin \omega'$. For this, note that one can find a quad $R$ in $\QUAD^{k'}$ which is such that $\bar Q_k > \bar R_{k'}$ and $\hat R_{k'}>Q$ (in the sense that all the quads in the open set $\hat R_{k'}$ are larger than $Q$). If $R$ happened to be in $\bar \omega$, then since $\omega\in \O_{k'}(\bar \omega)$, $Q$ would necessarily belong to $\omega$. Hence $R\notin \bar \omega$ and thus $\bar R_{k'}\notin \omega'$ which implies $\bar Q_k \notin \omega'$. 

\item[(c)] Suppose $Q\notin \bar \omega$ and $Q\in \omega$. We need to show that $\omega' \in \boxup_{\hat Q_k}^c$. As in case $(b)$, note that one can find a quad $R\in \QUAD^{k'}$ such that $Q>\bar R_{k'}$ and $\hat R_{k'} \subset \hat Q_k$. If $R$ was not in $\bar \omega$, then $Q$ would not be in $\omega$ either. Hence $R\in \bar \omega$ and thus $\omega'\in \boxup_{\hat R_{k'}}^c \subset  \boxup_{\hat Q_k}^c$.

\item[(d)] Finally, suppose $Q \notin \bar \omega$ and $Q\notin \omega$. Note that  $\bar Q_k > \bar Q_{k'}$. Since $\omega' \in \O_{k'}(\bar \omega)$, $\bar Q_{k'}$ is not in $\omega'$ and thus $\omega' \in  \boxminus_{\bar Q_k}^c$, which ends the proof of $(ii)$. 
\ei

\medskip
Let us now turn to the proof of $(i)$. 
Fix some radius $r>0$ and let $\Cov_{d_\HH}^r:=\bigcup_{i=1}^{N_r} B(\omega_i^r,r/2)$ be a finite covering of $\HH$ by balls of radii $r/2$. For any point $\omega\in \HH$, we claim that there exists a large enough integer $k_\omega$ such that the open set $\O_{k_\omega}(\omega)$ is included in at least one of the open balls $B(\omega_i^r,r/2), 1\le i \le N_r$.  This follows from  Lemma \ref{l.subbase}.

Let us consider the following covering of $\HH$: $\{  \O_{k_\omega+10}(\omega), \, \omega\in \HH \}$ from which one can extract a finite covering $\{ \O_{k_j+10}(\omega_j),\, j=1,\ldots, M_r \}$. 
We define 
\[
\k(r):= \max_{1\le j \le M_r} k_j + 20 \,.
\]
Let us check that it satisfies the desired property. 
Let thus $\omega,\omega'$ be two configurations in $\HH$ such that $K(\omega,\omega')\geq \k(r)$ and 
suppose we are in the case where $\omega' \in \O_{\k(r)}(\omega)$. 
There is at least one $j\in [1, M_r]$ such that $\omega\in \O_{k_j+10}(\omega_j)\subset \O_{k_j}(\omega_j)$.
In order to conclude, we only need to check that $\omega' \in \O_{k_j}(\omega_j)$ since this would imply that both $\omega,\omega'$ belong to $\O_{k_j}(\omega_j)$ which itself is contained in a $d_\HH$-ball of radius $r/2$. 

Let $Q$ be any quad in $\QUAD^{k_j}$. We distinguish two cases:
\begin{enumerate}
\item Suppose $Q\notin \omega_j$. Since we assumed $\omega \in \O_{k_j+10}(\omega_j)$, we have that $R:=\bar Q_{k_j+10}\notin \omega$. Now notice that $R\in \QUAD^{k_j+20}\subset \QUAD^{\k(r)}$ and that $\bar Q_{k_j} > \bar R_{\k(r)}$ (this uses the fact that $2^{-k_j-10-5}+2^{-\k(r)-5}<2^{-k_j-10}$). Since $\omega'\in \O_{\k(r)}(\omega)$, we have that $\bar R_{\k(r)} \notin \omega'$ and thus $\bar Q_{k_j} \notin \omega'$. 
\item
 Suppose $Q\in \omega_j$.
 Similarly to the above cases, one can find a quad $R\in \QUAD^{\k(r)}$ such that $R< \hat Q_{k_j+10}$ and $\hat R_{\k(r)} \subset \hat Q_{k_j}$ (this uses the fact that $2^{-k_j-20}+ 2^{-\k(r)-10}<2^{-k_j-10}$).
 Since $\omega\in \O_{k_j+10}(\omega_j)$ and $R< \hat Q_{k_j+10}$, necessarily $R\in \omega$ and since $\omega'\in O_{\k(r)}(\omega)$, $\omega'\in \boxup^c_{\hat R_{\k(r)}}\subset \boxup^c_{\hat Q_{k_j}}$ which concludes our proof.
\end{enumerate}
\QED

\section{Space and topology for \cadlag paths of percolation configurations}\label{s.skorohod}

As we mentioned earlier, both in the cases of dynamical percolation ($t\mapsto \omega_\eta(t)$) and near-critical percolation ($\lambda \mapsto \omega_\eta^\nc(\lambda)$), our processes will be considered as \cadlag processes with values in the metric space $\HH$. 

Recall from Theorem \ref{th.compact} and Definition \ref{d.metric} that $(\HH,d_\HH)$ is a Polish space. 
It is a classical fact that if $(X,d)$ is a Polish space and if $\DD_X=\DD_X[0,1]$ denotes the space of \cadlag functions from $[0,1]$ to $X$, then one can define a metric $d_\Sk$ on $\DD_X$ for which $(\DD_X, d_\Sk)$ is a Polish space. This metric is usually known under the name of {\bf Skorohod metric}. Let us summarize these facts in the following Proposition.
\begin{proposition}[See for example \cite{\EK}, Chapter 3.5]\label{pr.sko}
Let $(X,d)$ be a Polish metric space (i.e. a complete separable metric space). Let $\DD_X=\DD_X[0,1]$ be the space of \cadlag functions $[0,1]\to X$. Then $\DD_X$ is a Polish metric space under the Skorohod metric $d_\Sk$ defined as follows: 
for any \cadlag processes $x,y : [0,1]\to X$, define
\begin{align}\label{}
d_\Sk(x,y)& : = \inf_{\lambda \in \Lambda} \left\{ \| \lambda \| \vee \sup_{0\le u \le 1} d_X(x(u), y(\lambda(u)))\right\}\,, \nn
\end{align}
where the infimum is over the set $\Lambda$ of all strictly increasing continuous mappings of $[0,1]$ onto itself and where 
\begin{equation}\label{}
\|\lambda\|: = \sup_{0\leq s<t\leq 1} |\log \frac{\lambda(t)-\lambda(s)}{t-s}|\,.
\end{equation}
\end{proposition}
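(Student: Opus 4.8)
This is the classical fact that the Skorohod space over a Polish target, equipped with Billingsley's ``logarithmic'' metric $d_\Sk$, is itself Polish; since it is standard (see \cite{\EK}, Ch.~3.5), I will only describe the route. The plan is to verify in turn that $d_\Sk$ is a finite metric on $\DD_X$, that $(\DD_X,d_\Sk)$ is separable, and that it is complete --- the last being the only substantial point. Throughout I would use the \emph{c\`adl\`ag modulus} criterion: for $x:[0,1]\to X$ and $\delta>0$ set
\[
w'_x(\delta) := \inf \Big\{ \max_{1 \le i \le r}\ \sup_{s,t \in [t_{i-1},t_i)} d_X(x(s),x(t)) \ :\ 0 = t_0 < \cdots < t_r = 1,\ \min_i(t_i - t_{i-1}) > \delta \Big\};
\]
a map $x$ is \cadlag iff $w'_x(\delta)\to 0$ as $\delta\to 0$. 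In particular a \cadlag path has, for each $\epsilon>0$, only finitely many $\epsilon$-jumps and a totally bounded (hence bounded) image, so $d_\Sk(x,y) \le \sup_u d_X(x(u),y(u)) < \infty$ by taking $\lambda=\mathrm{id}$.

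\textbf{Metric axioms and separability.} Symmetry would follow from $\lambda\in\Lambda \iff \lambda^{-1}\in\Lambda$, $\|\lambda^{-1}\|=\|\lambda\|$, and the substitution $v=\lambda(u)$; the triangle inequality from composition of time changes, using $\|\mu\circ\lambda\| \le \|\mu\|+\|\lambda\|$ (the logarithm converts the chain rule for difference quotients into a sum) together with the triangle inequality in $X$. If $d_\Sk(x,y)=0$, there are $\lambda_n\to\mathrm{id}$ uniformly with $y(\lambda_n(u))\to x(u)$ for all $u$; evaluating at continuity points of $y$ (a co-countable set) gives $x=y$ there, and right-continuity of both then forces $x=y$. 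For separability I would fix a countable dense $X_0\subset X$ and, given $x$ and $\epsilon>0$, choose $\delta$ with $w'_x(\delta)<\epsilon$, replace $x$ on the intervals of a realizing partition by values in $X_0$ within $\epsilon$, and move the partition points to nearby rationals; since the partition mesh is bounded below, the induced piecewise-linear time change has $\|\cdot\|<\epsilon$, so these countably many step functions are $d_\Sk$-dense.

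\textbf{Completeness --- the main point.} Let $(x_n)$ be $d_\Sk$-Cauchy; it is enough to extract a convergent subsequence. I would pass to $(x_{n_k})$ with $d_\Sk(x_{n_k},x_{n_{k+1}})<2^{-k}$ and pick $\mu_k\in\Lambda$ with $\|\mu_k\|<2^{-k}$ and $\sup_t d_X(x_{n_k}(t),x_{n_{k+1}}(\mu_k(t)))<2^{-k}$. The key is to take the \emph{tail} compositions $\gamma_{k,m}:=\mu_{k+m-1}\circ\cdots\circ\mu_{k+1}\circ\mu_k$: from $\|\mu_j\|<2^{-j}$ one gets $\sup_t|\mu_j(t)-t|=O(2^{-j})$, so $(\gamma_{k,m})_m$ is uniformly Cauchy and converges to some $\gamma_k\in\Lambda$ with $\|\gamma_k\|\le\sum_{j\ge k}\|\mu_j\|<2^{-k+1}$, satisfying the consistency relation $\gamma_k=\gamma_{k+1}\circ\mu_k$. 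Setting $y_k:=x_{n_k}\circ\gamma_k^{-1}$ and using $\gamma_k^{-1}=\mu_k^{-1}\circ\gamma_{k+1}^{-1}$ plus the substitution $u=\gamma_{k+1}^{-1}(t)$, one finds $\sup_t d_X(y_k(t),y_{k+1}(t)) = \sup_u d_X(x_{n_k}(\mu_k^{-1}(u)),x_{n_{k+1}}(u)) < 2^{-k}$, so $(y_k)$ is uniformly Cauchy with uniform limit $y$. Then $y$ is \cadlag (uniform limits are, via $w'_y(\delta)\le w'_{y_k}(\delta)+2\|y-y_k\|_\infty$ with $w'_{y_k}(\delta)\to 0$ for fixed $k$), so $y\in\DD_X$. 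Finally $y_k(\gamma_k(v))=x_{n_k}(v)$, hence with the reparametrization $\gamma_k$ we get $d_\Sk(x_{n_k},y) \le \|\gamma_k\| \vee \sup_v d_X(x_{n_k}(v),y(\gamma_k(v))) \le \|\gamma_k\| \vee \|y_k-y\|_\infty \to 0$. Thus $x_{n_k}\to y$, and a Cauchy sequence with a convergent subsequence converges.

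\textbf{Where the difficulty is.} The delicate part is completeness, and specifically controlling the time changes: one must keep the infinite compositions \emph{inside} $\Lambda$ (the limit a strictly increasing homeomorphism of $[0,1]$ with finite log-slope) \emph{and} make sure the reparametrization that certifies $d_\Sk(x_{n_k},y)\to 0$ is close to the identity --- which is exactly why one composes the \emph{tails} $\gamma_k$, whose norms $\|\gamma_k\|\le 2^{-k+1}$ vanish, rather than partial compositions accumulated from the start (those would remain bounded away from $0$). This is where the logarithmic slope modulus $\|\lambda\|=\sup_{s<t}\big|\log\frac{\lambda(t)-\lambda(s)}{t-s}\big|$ earns its definition: it is subadditive under composition and bounds reparametrizations from both sides, so the summability $\sum_k\|\mu_k\|<\infty$ yields both the uniform convergence of the $\gamma_{k,m}$ and the non-degeneracy of their limits. (With the simpler modulus $\sup_t|\lambda(t)-t|$ one gets the same topology but loses completeness, so this refinement is essential to obtain a Polish --- hence Skorohod-amenable --- structure.)
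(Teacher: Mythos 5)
Your proof sketch is correct, and the paper itself gives no argument for this proposition: it simply cites Ethier--Kurtz, Chapter 3.5, where exactly this classical route (the logarithmic modulus $\|\lambda\|$, separability via step functions over a countable dense subset with rational partition points, and completeness via the tail compositions $\gamma_k=\lim_m \mu_{k+m-1}\circ\cdots\circ\mu_k$ with $\|\gamma_k\|\le\sum_{j\ge k}\|\mu_j\|\to 0$) is carried out. So your proposal coincides with the standard proof the paper defers to, and no gaps are apparent.
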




This discussion motivates the following definition:
\begin{definition}\label{d.skorohod}
For any $T>0$, let $\Sk_T:= \DD_\HH [0,T]$ be the space of \cadlag processes from $[0,T]$ to $\HH$ and following Proposition \ref{pr.sko}, let 

\begin{align}\label{}
d_{\Sk_T}(\omega(t), \tilde \omega(t)) & : = \inf_{\lambda \in \Lambda_T} \left\{ \| \lambda \| \vee \sup_{0\le u \le T} d_\HH(\omega(u), \tilde \omega(\lambda(u)))\right\}\,, \nn
\end{align}
Here we used the same notations as in Proposition \ref{pr.sko}  (at least their natural extensions to $[0,T]$). When the context is clear, we will often omit the subscript $T$ in the  notation $d_{\Sk_T}$. 
\end{definition}

We will also need the following extension to $\R_+$ and $\R$:
\begin{lemma}\label{l.SKR}
Let $\Sk_{(-\infty,\infty)}$ (resp. $\Sk_{[0,\infty)}$) be the space of \cadlag  processes from $\R$ (resp. $[0,\infty)$) to $\HH$. 
Then, if we define 
\begin{equation}\label{}
d_{\Sk_{(-\infty,\infty)}}(\omega(\lambda), \tilde\omega(\lambda)) := 
\sum_{k\geq 1} \frac 1 {2^k} d_{\Sk_{[-k,k]}}(\omega, \tilde \omega) \,,
\end{equation}
this gives us a Polish space $(\Sk_{(-\infty,\infty)}, d_{\Sk_{(-\infty,\infty)}} )$ 
(and analogously for $\Sk_{[0,\infty)}$). 
\end{lemma}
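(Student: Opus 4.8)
The plan is to reduce the statement about $\Sk_{(-\infty,\infty)}$ and $\Sk_{[0,\infty)}$ to the already-stated fact (Proposition~\ref{pr.sko}) that each $\Sk_{[-k,k]} = \DD_\HH[-k,k]$ is a Polish space under its Skorohod metric $d_{\Sk_{[-k,k]}}$. So the real content is: (a) $d_{\Sk_{(-\infty,\infty)}}$ is a genuine metric; (b) it induces the right notion of convergence, namely a sequence $\omega_n \to \omega$ in $\Sk_{(-\infty,\infty)}$ iff its restrictions converge in $\Sk_{[-k,k]}$ for every $k$; (c) it is separable; (d) it is complete. I would treat the $\R$ case in detail and remark that $[0,\infty)$ is identical with $[-k,k]$ replaced by $[0,k]$.

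First, for the metric axioms: $d_{\Sk_{(-\infty,\infty)}}$ is visibly symmetric and nonnegative, and finite because each term is at most $2^{-k}\diam_{d_\HH}(\HH)^{?}$ — more precisely each $d_{\Sk_{[-k,k]}}$ is bounded (the $\sup d_\HH$-part is $\le \diam(\HH)=1$ and the $\|\lambda\|$-part can be taken $0$ by comparing with the identity time-change, so each summand is $\le 1$) hence the series converges. The triangle inequality follows termwise from the triangle inequality for each $d_{\Sk_{[-k,k]}}$ (which holds since those are metrics by Proposition~\ref{pr.sko}). For the separation axiom one needs: if $d_{\Sk_{(-\infty,\infty)}}(\omega,\tilde\omega)=0$ then $\omega=\tilde\omega$ as \cadlag\ paths on $\R$. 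Vanishing of the sum forces $d_{\Sk_{[-k,k]}}(\omega|_{[-k,k]},\tilde\omega|_{[-k,k]})=0$ for all $k$, hence $\omega|_{[-k,k]}=\tilde\omega|_{[-k,k]}$ for all $k$, hence $\omega=\tilde\omega$ on $\R$.

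Next I would identify the topology. The elementary inequalities $2^{-k}d_{\Sk_{[-k,k]}} \le d_{\Sk_{(-\infty,\infty)}}$ and, conversely, $d_{\Sk_{(-\infty,\infty)}} \le 2^{-K} d_{\Sk_{[-K,K]}} + 2^{-K}$ for every $K$ (using that the tail $\sum_{k>K}2^{-k}\le 2^{-K}$ and each $d_{\Sk_{[-k,k]}}\le 1$) show that $d_{\Sk_{(-\infty,\infty)}}(\omega_n,\omega)\to 0$ iff $d_{\Sk_{[-k,k]}}(\omega_n|_{[-k,k]},\omega|_{[-k,k]})\to 0$ for every fixed $k$. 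Here one mild subtlety must be addressed: convergence of the restrictions for all $k$ is the standard notion of convergence for \cadlag\ paths on an unbounded interval, and I would note that a \cadlag\ path on $[-k,k]$ that arises as the restriction of a \cadlag\ path on $\R$ is automatically \cadlag\ at the right endpoint $k$ too, so no information is lost in the restriction; conversely a compatible family of \cadlag\ paths on $[-k,k]$ glues to a \cadlag\ path on $\R$. With this, separability is immediate: take a countable dense set $D_k\subset \Sk_{[-k,k]}$ for each $k$ (exists since each is Polish), and observe that paths which are eventually constant outside some $[-k,k]$ and whose restriction to $[-k,k]$ lies in $D_k$ form a countable set that is dense for $d_{\Sk_{(-\infty,\infty)}}$ (given $\omega$ and $\varepsilon$, pick $K$ with $2^{-K}<\varepsilon/2$, approximate $\omega|_{[-K,K]}$ within $\varepsilon/2$ in $\Sk_{[-K,K]}$, and extend constantly).

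For completeness, let $(\omega_n)$ be $d_{\Sk_{(-\infty,\infty)}}$-Cauchy. Then for each fixed $k$ the sequence of restrictions is $d_{\Sk_{[-k,k]}}$-Cauchy, hence converges to some limit $\omega^{(k)}\in\Sk_{[-k,k]}$ by completeness of $\Sk_{[-k,k]}$ (Proposition~\ref{pr.sko}). These limits are compatible: $\omega^{(k+1)}|_{[-k,k]} = \omega^{(k)}$, because restriction $\Sk_{[-k-1,k-1]}\to\Sk_{[-k,k]}$ is continuous in the Skorohod metrics (a time-change on $[-k-1,k-1]$ with small $\|\cdot\|$ restricts, after a further harmless correction near the endpoints, to a time-change on $[-k,k]$ with small norm — this is the one routine point I would spell out, and it is the main technical obstacle, since restricting a Skorohod time-change requires a small reparametrization argument to keep the endpoints fixed). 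Gluing the compatible family yields a \cadlag\ path $\omega_\infty$ on $\R$ with $\omega_\infty|_{[-k,k]}=\omega^{(k)}$, and then the topology identification above gives $d_{\Sk_{(-\infty,\infty)}}(\omega_n,\omega_\infty)\to 0$. Hence $(\Sk_{(-\infty,\infty)},d_{\Sk_{(-\infty,\infty)}})$ is a complete separable metric space. The case of $\Sk_{[0,\infty)}$ is verbatim the same with the exhausting intervals $[0,k]$ in place of $[-k,k]$. The only genuinely non-formal ingredient, as noted, is the compatibility/continuity of restriction maps between Skorohod spaces on nested intervals; everything else is bookkeeping with the defining series.
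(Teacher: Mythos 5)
The decisive step is the one you yourself flag as ``routine'': the compatibility of the limits $\omega^{(k)}$, which you derive from continuity of the restriction maps $\Sk_{[-k-1,k+1]}\to\Sk_{[-k,k]}$. These maps are \emph{not} continuous: the time changes entering $d_{\Sk_{[-k,k]}}$ must fix the endpoints $\pm k$, so a path with a jump exactly at time $k$ cannot be approximated by paths jumping slightly after $k$, even though on the larger interval they are Skorohod-close. This is not a fixable presentational point; the metric displayed in the lemma is in fact \emph{not complete}. Take two distinct points $a\neq b$ of $\HH$ and let $\omega_n$ be the \cadlag path equal to $a$ on $(-\infty,1+\tfrac1n)$ and to $b$ on $[1+\tfrac1n,\infty)$. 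For every $m\geq 2$ one has $d_{\Sk_{[-m,m]}}(\omega_n,\omega_{n'})\leq 2|\tfrac1n-\tfrac1{n'}|$ uniformly in $m$ (move the single jump by a piecewise linear time change fixing $\pm m$), and the restrictions to $[-1,1]$ are all identically $a$; hence $(\omega_n)$ is Cauchy for $d_{\Sk_{(-\infty,\infty)}}$. But any limit $\omega$ would have to satisfy $\omega|_{[-1,1]}\equiv a$, so $\omega(1)=a$, while $\omega|_{[-2,2]}$ would have to be the $d_{\Sk_{[-2,2]}}$-limit of the restrictions, namely the path equal to $a$ on $[-2,1)$ and to $b$ on $[1,2]$, so $\omega(1)=b$ --- a contradiction. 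So your gluing argument breaks exactly at the restriction step, and no argument can rescue completeness of this particular metric. The same false monotonicity under restriction appears in your inequality $d_{\Sk_{(-\infty,\infty)}}\leq 2^{-K}d_{\Sk_{[-K,K]}}+2^{-K}$ (the correct bound is $\max_{k\leq K}d_{\Sk_{[-k,k]}}+2^{-K}$, using that each term is at most $1$) and in the separability argument: approximating $\omega|_{[-K,K]}$ in $d_{\Sk_{[-K,K]}}$ does not control $d_{\Sk_{[-k,k]}}$ for $k<K$. Separability itself is true and recoverable, but by approximating \emph{uniformly} (say by piecewise constant paths with values in a countable dense subset of the compact space $\HH$ and with the integers kept among the partition points), since uniform closeness bounds every $d_{\Sk_{[-k,k]}}$ simultaneously with $\lambda=\mathrm{id}$.

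Note that the paper offers no proof to compare with (it calls the lemma classical), and the classical statement it alludes to --- Polishness of the space of \cadlag paths on an unbounded time interval, as in \cite{\EK}, Chapter 3.5 --- is proved for a \emph{different} metric: one uses time changes of the whole (half-)line together with a weighted or integrated supremum over truncation levels, or equivalently one only requires convergence of restrictions at continuity points of the limit; this is designed precisely to avoid the endpoint-jump phenomenon exhibited above. So an honest proof must either replace the displayed sum-of-restrictions metric by such a metric (after which the reduction to Proposition~\ref{pr.sko} does go through along the lines you outline), or keep the displayed metric and establish complete metrizability of its topology by a separate argument (e.g.\ exhibiting the space as a suitable subset of $\prod_k\Sk_{[-k,k]}$ under $\omega\mapsto(\omega|_{[-k,k]})_k$ and proving it is $G_\delta$), which is genuinely more than bookkeeping. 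For the way the lemma is used in the paper the issue is benign (the limiting processes have no deterministic jump times), but as written your proposal does not establish the statement.
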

The proof of this lemma is classical. Note that since $\HH$ is compact, one always has  $d_{\Sk_{[-T,T]}}(\omega,\tilde \omega) \leq \diam(\HH)$ for any $\omega,\tilde\omega\in \HH,\,T>0$. This way we do not need to rely on more classical expressions such as $\sum_k 2^{-k} \frac {d_k}{1+d_k}$.

\section{Poisson point processes on the set of pivotal points}\label{s.poisson}

In this section, we will fix the bounded domain $D$ as well as a cut-off scale $\eps>0$. 
Our aim in this section is to define a Poisson point process on $D$ with intensity measure $d \mu^\eps(x) \times dt$ and to study some of its properties.

\subsection{Definition}
Recall from subsection \ref{ss.pivmeasure} that in \cite{\GPSa}, we defined for any fixed  $\eps>0$, a measure 
$\mu^\eps=\mu^\eps(\omega_\infty)$ (in the sense that it is measurable w.r.t. $\omega_\infty \sim \P_\infty$) which is such that  
\begin{equation}\label{}
(\omega_\eta, \mu^\eps(\omega_\eta)) \overset{(d)}{\longrightarrow} (\omega_\infty, \mu^\eps(\omega_\infty))\,,
\end{equation}
as $\eta\to 0$. 
 
\begin{definition}\label{d.PPP}
Let $T>0$ and $\eps>0$ be fixed. We will denote by $\PPP_T=\PPP_T(\mu^\eps(\omega_\infty))$  the Poisson point process 
\[
\PPP_T = \{ (x_i,t_i), 1\le i \le N \}
\]
in $D\times [0,T]$ of intensity measure $d\mu^\eps(x)\times 1_{[0,T]}\, dt $.
Furthermore, we will denote by $\switch_T$ the random set of switching times $\switch_T:=\{t_1,\ldots, t_N\}\subset [0,T]$.
\end{definition}

\subsection{Properties of the point process $\PPP_T$}

We list below some useful a.s. properties for $\PPP_T$.

\begin{proposition}\label{pr.PPP}
Let $T>0$ and $\eps>0$ be fixed. 
Then the cloud of points $\PPP_T=\PPP_T(\mu^\eps(\omega_\infty))$ a.s. satisfies the following properties:
\bi
\item[(i)] The set of points $\PPP_T$ is finite. This justifies the notation $\PPP_T = \{ (x_i,t_i), 1\le i \le N \}$. 
In particular the set of switching times $\switch_T=\{t_1,\ldots,t_N\}\subset [0,T]$ is finite. 
\item[(ii)] The points in $\PPP_T$ are at positive distance from each other, i.e.
\[
\inf_{i\neq j} \{ |x_i-x_j| \} >0
\]
\item[(iii)] Similarly, the switching times in $\switch_T$ are at positive distance from each other. 
\item[(iv)] For any quad $Q\in \QUAD_\N$, the set $\PPP_T$ remains at a positive distance from $\p Q$, i.e. 
\[
\dist(\p Q, \PPP_T) >0
\]
\ei
\end{proposition}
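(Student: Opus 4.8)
The plan is to verify the four almost-sure properties of $\PPP_T$ one at a time, each being an elementary consequence of the fact that $\mu^\eps = \mu^\eps(\omega_\infty)$ is a.s.\ a finite Borel measure on $\bar D$ (Corollary~\ref{c.PM}), together with standard facts about Poisson point processes with a $\sigma$-finite intensity. First I would condition on $\omega_\infty$ and hence on the measure $\mu^\eps$, working on the full-measure event that $\mu^\eps(\bar D) < \infty$; all statements are then about a Poisson point process on $\bar D \times [0,T]$ with finite intensity measure $\mu^\eps \times (1_{[0,T]}\,dt)$.

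For (i): since the intensity $\mu^\eps \times 1_{[0,T]}dt$ has total mass $T\,\mu^\eps(\bar D) < \infty$ a.s., the number of points $N$ is a Poisson random variable with this finite mean, hence a.s.\ finite; this immediately also gives that $\switch_T = \{t_1,\dots,t_N\}$ is finite. For (ii): conditionally on $N$, the spatial coordinates $x_1,\dots,x_N$ are i.i.d.\ samples from the normalized measure $\mu^\eps/\mu^\eps(\bar D)$, so it suffices to show that two independent such samples are a.s.\ distinct, i.e.\ $(\mu^\eps\times\mu^\eps)(\{(x,y): x=y\}) = \sum_{x} \mu^\eps(\{x\})^2 = 0$; this holds because a finite Borel measure has at most countably many atoms and the diagonal mass equals the sum of squares of atom weights, which vanishes unless $\mu^\eps$ has atoms. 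Here I would invoke that $\mu^\eps$ is a.s.\ non-atomic: this follows from the second-moment bound in Proposition~\ref{th.mu}(ii), $\Eb{\mu^\eps(S_r)^2} \le C\eps^{-5/4}r^{11/4}$, which by Markov and a covering/Borel--Cantelli argument forces $\mu^\eps(\{x\})=0$ simultaneously for all $x$ a.s.\ (alternatively one can quote from \cite{\GPSa} that $\mu^\eps$ is a.s.\ non-atomic). Given finitely many distinct points, $\inf_{i\ne j}|x_i-x_j|>0$ is automatic. For (iii): identically, conditionally on $N$ the times $t_1,\dots,t_N$ are i.i.d.\ uniform on $[0,T]$, hence a.s.\ pairwise distinct, and finitely many distinct reals are at positive distance.

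For (iv): fix a quad $Q \in \QUAD_\N$; its boundary $\p Q$ is a compact subset of $D$, and I would argue that $\mu^\eps(\p Q) = 0$ a.s. Since $\p Q$ is a piecewise-smooth curve it has zero Lebesgue area, and by Proposition~\ref{th.mu}(i), $\Eb{\mu^\eps(U)} \le C\eps^{-5/4}\mathrm{area}(U)$ for smooth open $U$; taking a decreasing sequence of smooth open neighborhoods $U_n \downarrow \p Q$ with $\mathrm{area}(U_n)\to 0$ gives $\Eb{\mu^\eps(\p Q)} = 0$, hence $\mu^\eps(\p Q)=0$ a.s. Consequently the intensity measure assigns no mass to $(\p Q)\times[0,T]$, so a.s.\ $\PPP_T$ has no point on $\p Q$; since $\PPP_T$ is a.s.\ finite by (i) and $\p Q$ is compact, $\dist(\p Q,\PPP_T)>0$ a.s. Finally, since $\QUAD_\N$ is countable, intersecting over all $Q\in\QUAD_\N$ preserves the a.s.\ statement, and intersecting the full-measure events from (i)--(iv) with the full-measure event $\{\mu^\eps(\bar D)<\infty\}$ yields the proposition.

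The only genuinely non-routine input is the a.s.\ non-atomicity and boundary-null property of $\mu^\eps$; everything else is bookkeeping about finite Poisson processes. I expect the main (minor) obstacle to be making the non-atomicity clean: the cleanest route is simply to cite the relevant statement in \cite{\GPSa}, but if one wants a self-contained argument, the second-moment estimate of Proposition~\ref{th.mu}(ii) combined with a dyadic covering and Borel--Cantelli does the job — one shows $\P[\mu^\eps(S) \ge \delta \text{ for some dyadic } \eps\text{-relevant square } S \text{ of side } 2^{-n}] \to 0$ fast enough, which rules out atoms of any fixed size and then all atoms by taking $\delta\to 0$ along a sequence.
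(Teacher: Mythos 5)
Your proof is correct, and it uses the same two inputs as the paper (the first- and second-moment bounds of Proposition~\ref{th.mu}), but packages them differently. The paper works quantitatively and directly on $\PPP_T$: for (ii) it conditions on $\mu^\eps$, bounds $\Pb{|\PPP_T\cap S|\geq 2\md \mu^\eps}\leq T^2\mu^\eps(S)^2$ for each square $S$ of a dyadic covering, integrates using Proposition~\ref{th.mu}(ii) and takes a union bound, obtaining the polynomial tail $\Pb{\inf_{i\neq j}|x_i-x_j|<r}\leq c\,r^{3/4}$; for (iv) it bounds $\Eb{\mu^\eps(U_r)}\leq C_\eps K r$ for the $r$-neighbourhood $U_r$ of $\p Q$ and gets $\Pb{\dist(\p Q,\PPP_T)<r}\leq C_\eps K r T$. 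Your route instead reduces (ii) to a.s.\ non-atomicity of $\mu^\eps$ plus the elementary fact that i.i.d.\ samples from a non-atomic law are distinct, and (iv) to $\mu^\eps(\p Q)=0$ a.s.; this proves exactly the stated qualitative proposition, and your sketch of non-atomicity via the second moment and a dyadic covering is sound (with the same small caveat as in the paper's own proof: Proposition~\ref{th.mu}(ii) is stated for squares contained in $\bar D$, so squares straddling $\p D$ need the same ``translation invariance'' gloss the paper uses). The one thing your version does not deliver is the explicit tail bound $\Pb{r^*<r}\leq c\,r^{3/4}$, which the paper extracts as a byproduct of its proof and then invokes later (see the parenthetical remark in Definition~\ref{d.r*}); if your argument replaced the paper's, that quantitative statement would need a separate justification, essentially by re-running the paper's union-bound computation. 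Your treatment of (i) is identical to the paper's, and for (iii) your observation that the time marginal is Lebesgue (hence trivially non-atomic) is if anything slightly cleaner than the paper's ``proved in the same way''.
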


%

\ni
{\bf Proof of the proposition:}

\ni
\bi
\item The first property $(i)$ follows directly from item $(i)$ in Proposition \ref{th.mu} applied to $U=D$ (we use here our assumption that $D$ is bounded). 
\item To prove the second property, notice that for any $k\in \N$, if $\inf_{i\neq j} \{ |x_i-x_j| \} <2^{-k}$, this means that one can find at least one dyadic square of the form $[i 2^{-k+2}, (i+1)2^{-k+2}]\times [j 2^{-k+2}, (j+1)2^{-k+2}]$ which contains at least two points of $\PPP_T$. 
Since $D$ is bounded, one can cover $D$ with $O(1) 2^{2k}$ such dyadic squares (where $O(1)$ depends on $D$). If $S=S_k$ is any dyadic square of the above form, one has 
\[
\Pb{|\PPP_T \cap S| \geq 2 \md \mu^\eps} \leq T^2 \, \mu^\eps(S)^2\,.
\]
Integrating w.r.t. $\mu^\eps$ gives 

\begin{align}\label{}
\Pb{|\PPP_T \cap S| \geq 2 } & \le T^2 \,  \Eb{\mu^\eps(S)^2} \\
&= T^2 \, \Eb{\mu^\eps([0,2^{-k+2}]^2)^2}\,,
\end{align}
by translation invariance of the measure $\mu^\eps$ in the plane. Now, from item $(ii)$ in Proposition \ref{th.mu}, we have 
\begin{align}\label{}
\Pb{|\PPP_T \cap S| \geq 2 } & \le O(1) T^2\,  2^{- 11k/4}\,.
\end{align}
By union bound, we thus obtain 
\begin{align}\label{}
\Pb{\inf_{i\neq j} \{ |x_i-x_j| \} <2^{-k}} & \leq O(1) T^2\, 2^{-3k/4}\,,
\end{align}
where $O(1)$ depends only on the size of $D$ as well as on $\eps>0$. This gives us the following estimate on the lower tail of the random variable $\rho:=\inf_{i\neq j} \{ |x_i-x_j| \}$. There is a constant $c=c(D,\eps,T)>0$ such that 
\[
\Pb{\rho<r}< c\, r^{3/4}\,.
\]

\item The third property $(iii)$ is proved in the same way.

\item For the fourth property $(iv)$, since $\QUAD_\N$ is countable, it is enough to check the property on any fixed quad $Q\in \QUAD_\N$. For such a quad $Q$,
there is a constant $K=K(U)<\infty$ such that for any $0<r<1$, if $U_r$ is the $r$-neighbourhood of $\p Q$, then $|U_r|< K  r$.  Now from item $(i)$ in Proposition \ref{th.mu}, one has $\Eb{\mu^\eps(U_r)}< C_\eps\,K\, r$ which readily implies 
\[
\Pb{\dist(\p Q, \PPP_T)<r} < 1-e^{-C_\eps\,K\, r\,T}<C_\eps\,K\,r\,T
\]
\ei
\qed


\section{Networks associated to marked percolation configurations}\label{s.network}

In this section, we will fix some quad $Q\in \QUAD_\N$. 
Roughly speaking, our goal in this section will be to associate to any configuration $\omega\in \HH$ and any \textbf{finite} set of points $X=\{x_1,\ldots, x_p\} \subset D$ a combinatorial object that we will call a \textbf{network} 
which will be designed in a such a way that it will represent the connectivity properties of the configurations $\omega$ within the domain $Q\setminus \{x_1,\ldots, x_n\}$.
 This network denoted by $\Net_Q=\Net_Q(\omega,X)$ will be a certain graph on the set of vertices $V= X \cup \{ \p_1, \p_2,\p_3, \p_4 \}$ with two types of edges connecting these vertices: \textbf{primal}  and \textbf{dual} edges.

Let us start with a formal definition of our combinatorial object (network), which for the moment will not depend on a configuration $\omega\in \HH$. 

\subsection{Formal definition of network}


\begin{definition}[\textbf{Network}]\label{d.network}
Suppose we are given a polygonal quad $Q\in \QUAD_\N=\QUAD_\N(D)$ and a finite set of points  $X=\{x_1,\ldots,x_p\}\subset D$.
(This subset $X$ will later correspond to the set of pivotal points in $\PPP_T$).
We will assume that the points in $X$ are all at positive distance from the boundary $\p Q$. 

A {\bf network} for the pair $(Q,X)$ will be an undirected graph $\Net=(V,E ,\tilde E)$ with vertex set $V= X\cup \{\p_1,\p_2,\p_3,\p_4\}$ and with two types of edges (the primal edges $e\in E$ and the dual edges $\tilde e\in \tilde E$) and which satisfies to the following constraints:
\begin{enumerate}
\item Any edge connected to $\p_1$ and/or $\p_3$ is a primal edge.
\item Any edge connected to $\p_2$ and/or $\p_4$ is a dual edge.
\item There are no multiple edges.
\end{enumerate}
\end{definition}

Here are a few properties on networks that we shall need:

\begin{definition}\label{d.connected}
We will say that a network  $\Net=(V,E ,\tilde E)$ is  {\bf connected} if there is a primal path connecting $\p_1$ to $\p_3$ or a dual path connecting $\p_2$ to $\p_4$.  
\end{definition}

\begin{definition}\label{d.boolean}
We will say that a network  $\Net=(V,E ,\tilde E)$ is  \textbf{Boolean} if for any assignment $\phi : X=\{x_1,\ldots, x_p \} \to \{0,1\}$ which induces a \textbf{site}-percolation on the graph $(V,E,\tilde E)$, the following is satisfied: 
\bi
\item[(i)] Either there is at least one open primal path from $\p_1$ to $\p_3$, (i.e. a primal path from $\p_1 $ to $\p_3$ which only uses sites $x\in X$ for which $\phi(x)=1$) and there are no closed dual paths from $\p_2$ to $\p_4$ (i.e. dual paths from  $\p_2$ to $\p_4$ which only use sites $x\in X$ for which $\phi(x)=0$).
\item[(ii)] Or, there is a closed dual path from $\p_2$ to $\p_4$ and there are no open primal path connecting $\p_1$ to $\p_3$.
\ei
As such one can associate a Boolean function $f_\Net: \{0,1\}^X \to \{0,1\}$ to a Boolean network $\Net$ as follows : for any $\phi \in \{0,1\}^X$, let $f_\Net(\phi):=1$ if we are in case $(i)$ and $0$ otherwise.  
\end{definition}

\begin{remark}\label{}
Note that a Boolean network is necessarily connected. 
\end{remark}

\subsection{Mesoscopic network}

In the previous subsection, we defined a combinatorial structure associated to a finite cloud of points (these points will later correspond to pivotal switches in $\PPP_T(\omega_\infty)$). When one deals with the continuum limit $\omega_\infty \sim \P_\infty$, it is easier to work with mesoscopic squares rather than points.  Whence the following definition.

\begin{definition}\label{d.mnetwork}
Let $Q$ be a fixed quad in $\QUAD_\N$. 
For any dyadic $r>0$ in $2^{-\N}$, and any family of disjoint $r$-squares $B=\{B_1^r,\ldots, B_p^r\}$ where the squares are taken from the grid $r\, \Z^2 - (\frac r 4, \frac r 4)$, a {\bf network} for the pair $(Q,B)=(Q,B^r_1,\ldots,B^r_p)$ will be the same combinatorial structure as in Definition \ref{d.network}, where the set of vertices is replaced here by $B \cup \{\p_1,\p_2,\p_3,\p_4\}$.
\end{definition}

The purpose of the next subsections will be to define a network attached to a cloud of points with the help of a nested sequence of mesoscopic networks, where the dyadic squares will shrink towards the cloud of points. This motivates the following definition. (See also Figure \ref{f.network} for an illustration of this procedure).

\begin{figure}[!htp]
\begin{center}
\includegraphics[width=0.8\textwidth]{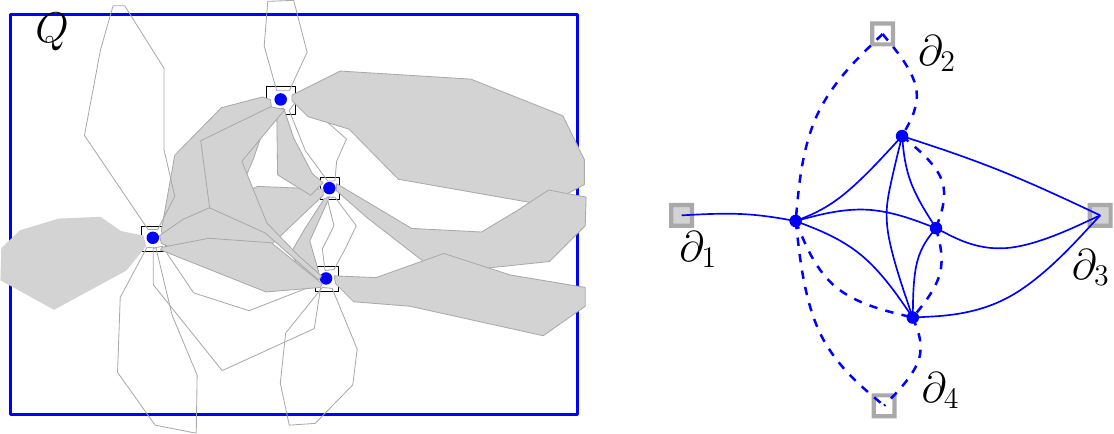}
\end{center}
\caption{This figure illustrates how to extract a network (Definition \ref{d.network}) out of a given configuration $\omega \in \HH$}\label{f.network}
\end{figure}

\begin{definition}[A nested family of dyadic coverings]\label{d.nested}
For any $b>0$ in $2^{-\N}$, let $G_b$ be a disjoint covering of $\R^2$ using $b$-squares of the form $[0,b)^2$ along the lattice $b\, \Z^2$. 
Now for any $r\in 2^{-\N}$ and any finite subset $X=\{x_1,\ldots, x_p\} \subset D$, one can associate uniquely $r$-squares 
$B_{x_1}^r,\ldots,B^r_{x_p}$ in the following manner: for all $1\le i \le p$, there is a unique square $\tilde B_{x_i} \in G_{r/2}$ which contains $x_i$ and we define $B_{x_i}^r$ to be the $r$-square in the grid $r\Z^2 - (r/4,r/4)$ centered around the $r/2$-square $\tilde B_{x_i}$. (This explains the above translation by $(r/4,r/4)$). 
We will denote by $B^r(X)$ this family of $r$-squares. The family of $r$-squares has the following two properties:
\bi
\item[(i)] The points $x_i$ are at distance at least $r/4$ from $\p B_{x_i}^r$. 
\item[(ii)] For any set $X$, $\{ B^r(X) \}_{r\in 2^{-\N}}$ forms a nested family of squares in the sense that for any $r_1<r_2$ in $2^{-\N}$, and any $x\in X$, we have
\[
B^{r_1}_{x} \subset B^{r_2}_{x}\,.
\]
\ei
\end{definition}

\subsection{How to associate a mesoscopic network to a configuration $\omega\in \HH$}
Given a quad $Q\in \QUAD_\N$ and a dyadic positive number $r\in 2^{-\N}$, the purpose of this subsection is to associate in a useful manner a mesoscopic network $\Net_Q^r$ to a finite set $X\subset D$ and a configuration $\omega\in \HH$. In other words, we wish to construct a map $\Net_Q^r : (\omega,X) \mapsto \Net_Q^r(\omega,X)$. 
\medskip

Let us start with a technical definition which quantifies by how much points in $X$ are away from each other and from $\p Q$. 

\begin{definition}\label{d.r*}
Let $Q\in \QUAD_\N$ be a fixed quad and $X=\{x_1,\ldots,x_p\}\subset D$ a finite subset. Let us define the quantity $r^*=r^*(X,Q)>0$ to be the supremum over all $u\geq 0$ such that for any $1\le i \le p$, $x_i$ is at distance at least $10\cdot u$ from the other points $x_j$ and from the boundary $\p Q$. 

In the particular case where the set $X$ is the random set $\PPP_T=\PPP_T(\mu^\eps(\omega_\infty))$ defined earlier in Definition \ref{d.PPP} we will consider, with a slight abuse of notation, the random variable $r^*=r^*(\PPP_T,Q)$. 

It follows from Proposition \ref{pr.PPP} that this random variable $r^*=r^*(\PPP_T,Q)$ is positive a.s. (more precisely one has from Proposition \ref{pr.PPP} that there is a constant $c=c_{Q,D,\eps,T}<\infty$ such that $\Pb{r^*<r}<c\,  r^{3/4}$. This follows form the fact that the contribution $r^{3/4}$ is much larger than the boundary contribution of order $r$). 
\end{definition}

%
%
%
%
%
%

We are now ready to define what a {\it mesoscopic network} is. 

\begin{definition}[Mesoscopic network]\label{d.mesoN}
Let $Q\in \QUAD_\N$ be a fixed quad. And let $X\subset D$ be a finite subset with $r^*(X,Q)>0$.
For any $r \in 2^{-\N}$, 
the $r$-{\bf mesoscopic network}
$\Net_Q^r=\Net_Q^r(\omega,X)$ associated to the set $X$ will be the following network:

\bi
\item The set of vertices of $\Net_Q^r$ will be $B^r(X)\cup \{\p_1,\ldots,\p_4\}$, where 
$B^r(X)$ is the family of $r$-squares defined in Definition \ref{d.nested}. With a slight abuse of notation, we will denote the vertices $B_{x_i}^r$ simply by $x_i$. 
\item If $r\geq r^*(\PPP_T,Q)$, for convenience we define the edge structure of $\Net_Q^r$ to be empty. Otherwise, if $r<r^*(\PPP_T,Q)$, the edge structure is defined as follows:
\item 
The primal edge $e=\<{x_i, x_j}$ will belong to $\Net_Q^{r}(\omega,X)$ if and only if one can find a quad $R$ such that:
\bi
\item $\p_1 R$ and $\p_3 R$ remain respectively strictly inside $B_{x_i}^r$ and $B_{x_j}^r$ \item $R$ remains strictly away from the squares $B_{x_k}^{r}, k \notin \{i,j\}$ as well as $r$-away from the boundary $\p Q$
\item  $\omega\in \boxminus_R$. 
\ei

\item The dual edge $\tilde e=\<{x_i,x_j}$ will belong to $\Net_Q^{r}(\omega,X)$ if and only if one can find a quad $R$ satisfying the same conditions as in the above item, except $\omega \in \boxminus_{\rot{R}}^c$.  

\item The primal edge $e=\<{\p_1,\p_3}$ will belong to $\Net_Q^{r}(\omega,X)$
if and only if one can find a quad $R$ which is larger than any quad in $B_{d_\Quad}(Q,r)$, which remains strictly away from all squares $B_{x_i}^r$ and for which $\omega\in \boxminus_R$. 
\item Idem for the dual edge $\tilde e = \<{\p_2, \p_4}$. 
\item The edge $e=\<{\p_1, x_i}$ will belong to $\Net_Q^r$ if and only if one can find a quad $R$ for which $\p_3 R$ remains inside $B_{x_i}^r$, for which $\p_1 R$ remains $r$-outside of $Q$, for which $R$ remains strictly away from the squares $B_{x_k}^r, k\neq i$ and $r$-away from $\p_2,\p_3,\p_4$ and for which $\omega\in \boxminus_R$. 
 (Analogous conditions for the edges $\<{x_j,\p_2}$ and their dual siblings). 
\ei  
Note that $\Net^r_Q$ is defined in such a way that for any combinatorial network $\Net\neq \emptyset$, the event $\{\Net_Q^r=\Net\}$ is an open set of the quad-topology $\T$ (this requires all the above conditions to be ``strict'' conditions as opposed to conditions of the type $\geq r$-away from the boundary).  
\end{definition}

It is easy to check that the above conditions define a mesoscopic network in the sense of Definition \ref{d.mnetwork}.

\subsection{Comparison of $\Net_Q^r(\omega_\infty)$ and $\Net_Q^r(\omega_\eta)$}\label{ss.comparison}

One has the following Proposition:
\begin{proposition}\label{p.discrete}
Assume $(\omega_\eta)_{\eta>0}$ and $\omega_\infty$ are coupled together so that 
$\omega_\eta \sim \P_\eta$ converge pointwise towards $\omega_\infty\sim \P_\infty$. 
For any $Q\in \QUAD_\N$, and for any subset $X\subset D$ with $r^*(X,Q)>0$, we have for all $r< r^*(X,Q)$ in $2^{-\N}$:
\begin{equation}\label{}
 \Pb{\Net_Q^r(\omega_\eta,X)=\Net_Q^r(\omega_\infty,X)} \to 1\,,
\end{equation}
as $\eta\to 0$.
\end{proposition}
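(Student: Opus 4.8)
The plan is to reduce the event $\{\Net_Q^r(\omega_\eta,X)=\Net_Q^r(\omega_\infty,X)\}$ to the joint occurrence or non-occurrence of finitely many crossing and non-crossing events of the form $\boxminus_R$ and $\boxminus_{\rot{R}}^c$, and then to use the measurability of these events together with Theorem~\ref{th.SScardy} to pass to the limit. First I would fix $r<r^*(X,Q)$ in $2^{-\N}$, so that the squares $B^r(X)$ and $\p Q$ are all at mutual distance bounded below by a constant depending only on $r$ and $r^*(X,Q)$. For each ordered pair of vertices in $V=X\cup\{\p_1,\dots,\p_4\}$ there is, by Definition~\ref{d.mesoN}, a primal (resp.\ dual) edge between them in $\Net_Q^r(\omega,X)$ if and only if $\omega$ crosses (resp.\ dually crosses) some quad $R$ subject to finitely many geometric constraints; the set of admissible quads is an \emph{open} family in $(\QUAD_D,d_\QUAD)$. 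Since $\QUAD_\N$ is dense, the existence of such a crossing quad $R$ is equivalent to the existence of such a quad in $\QUAD_\N$ (a crossing of a slightly larger polygonal quad is a crossing of the smaller one, and conversely one can find a polygonal approximant strictly inside the same constraints). Thus for each edge slot, $\{e\in\Net_Q^r(\omega,X)\}$ is a countable union of events $\boxminus_R$ with $R\in\QUAD_\N$, hence Borel measurable with respect to $\omega$.

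The next step is a uniform-approximation argument to make the union effectively finite. The point is that a primal edge $e=\langle x_i,x_j\rangle$ is present iff $\omega$ crosses \emph{some} admissible $R$; by an RSW/quad-gluing argument (exactly of the type used to establish Lemma~\ref{l.meas4arm} and Theorem~\ref{th.SScardy} in \cite{\SS,\GPSa}), the probability that $\omega$ crosses an admissible quad but crosses no admissible quad drawn from a fixed finite sub-family $\F_\delta\subset\QUAD_\N$ can be made smaller than any prescribed $\delta$, uniformly in $\eta$ (including $\eta=0$); the analogous statement holds for dual crossings and for the boundary edges $\langle\p_1,\p_3\rangle$, $\langle\p_2,\p_4\rangle$, $\langle\p_1,x_i\rangle$, etc. Having chosen such a finite family $\F_\delta$, the (approximate) edge structure of $\Net_Q^r$ is a deterministic function of the finite vector of indicators $\bigl(1_{\boxminus_R}(\omega)\bigr)_{R\in\F_\delta}$ and $\bigl(1_{\boxminus_{\rot{R}}^c}(\omega)\bigr)_{R\in\F_\delta}$. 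By Theorem~\ref{th.SScardy}, each $\boxminus_R$ satisfies $\P_\infty[\p\boxminus_R]=0$, so in any coupling with $\omega_\eta\to\omega_\infty$ a.s.\ one has $1_{\boxminus_R}(\omega_\eta)\to 1_{\boxminus_R}(\omega_\infty)$ a.s., and likewise for the rotated non-crossing events (these are open sets whose boundary is again $\p\boxminus_{\rot{R}}$, of null $\P_\infty$-measure). Therefore the finite indicator vector for $\omega_\eta$ converges a.s.\ to that for $\omega_\infty$, hence is eventually equal to it, and the two approximate mesoscopic networks agree with probability tending to $1$.

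Finally I would combine the two steps. Let $\Net_{Q,\delta}^r(\omega,X)$ denote the network built from the finite family $\F_\delta$. By the uniform approximation we have $\P\bigl[\Net_{Q,\delta}^r(\omega_\eta,X)\neq\Net_Q^r(\omega_\eta,X)\bigr]\le C\delta$ for all $\eta\ge 0$ (a bounded number of edge slots, each contributing $\le\delta$), and by the previous paragraph $\P\bigl[\Net_{Q,\delta}^r(\omega_\eta,X)\neq\Net_{Q,\delta}^r(\omega_\infty,X)\bigr]\to 0$ as $\eta\to 0$. A triangle inequality then gives
\[
\limsup_{\eta\to 0}\,\P\bigl[\Net_Q^r(\omega_\eta,X)\neq\Net_Q^r(\omega_\infty,X)\bigr]\le 2C\delta,
\]
and letting $\delta\to 0$ concludes. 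The main obstacle is the middle step: one must be careful that the ``open admissible family'' of quads really is well approximated from inside by a finite subfamily, uniformly over the discrete scales $\eta$, so that no crossing can hide in arbitrarily thin admissible quads; this is precisely where one invokes RSW-type bounds and the quad-crossing estimates of \cite{\SS,\GPSa} rather than soft topology. A secondary point of care is that the geometric constraints in Definition~\ref{d.mesoN} are strict, which is what guarantees $\{\Net_Q^r=\Net\}$ is open and hence that the finite indicator description is consistent across $\eta=0$ and $\eta>0$.
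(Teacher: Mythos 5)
Your proposal is correct, and its hard core is the same as the paper's: everything reduces to the measurability/openness of quad-crossing events in $(\HH,\T)$ plus a quantitative ``no crossing hides arbitrarily close to an obstacle'' estimate, which both you and the paper delegate to the boundary-effect arguments behind Theorem~\ref{th.SScardy} (Corollary 5.2 of \cite{\SchrammSmirnovNoise}) and Lemma~\ref{l.meas4arm}. The packaging differs slightly. The paper is asymmetric: one inclusion ($\Net_Q^r(\omega_\infty,X)\subseteq\Net_Q^r(\omega_\eta,X)$, say) comes for free from the fact that each edge-presence event, and hence $\{\Net_Q^r=\Net\}$, is open for $\T$ (this is exactly why Definition~\ref{d.mesoN} imposes \emph{strict} constraints), so only the reverse inclusion requires the quantitative control, and even there it is easier than~\eqref{e.Delta} of \cite{\GPSa} because the witnessing quads for different edges need not be disjoint, so only half-plane three-arm--type estimates along the (macroscopic) obstacle boundaries enter. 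You instead symmetrize: a finite family $\F_\delta$ of $\rho$-robust quads approximates each edge slot uniformly in $\eta$, and then a.s.\ convergence of the finitely many indicators follows from $\P_\infty[\p\boxminus_R]=0$. This buys a clean finite-dimensional reduction, at the price of needing the approximation bound for \emph{both} $\omega_\eta$ and $\omega_\infty$; your ``including $\eta=0$'' claim is the one place requiring the same extra (standard) care as in \cite{\SchrammSmirnovNoise,\GPSa}, namely transferring the discrete pinching estimate (two open arms plus a dual arm near a forbidden square, the offset of $\p Q$, or the target squares, from scale $\rho$ to the fixed scale $r$) to the continuum via open/closed sandwiching, since one cannot count arms directly in $\omega_\infty$. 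With that caveat made explicit, your argument is a valid alternative write-up of the same proof.
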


\begin{corollary}\label{c.boolean}
In particular, for almost all $\omega_\infty\sim \P_\infty$, 
all the networks $\Net_Q^r(\omega_\infty,X)$ with $X\subset D$ and $0<r\in2^{-\N}<r^*(X,Q)$ are Boolean networks. 
(Note that this was not obvious at all to start with, since it is easy to construct points $\omega \in \HH$ which do not correspond to planar percolation configurations). 
\end{corollary}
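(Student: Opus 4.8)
The plan is to deduce this from the comparison Proposition \ref{p.discrete} together with the elementary fact that for a \emph{discrete} percolation configuration the mesoscopic network is automatically Boolean; since "being Boolean" is a property of the abstract combinatorial object, it is then forced on the scaling limit, even though a priori $\omega_\infty$ need not resemble a planar configuration. So first I would check the discrete case. Fix a finite $X=\{x_1,\dots,x_p\}\subset D$ and $r\in 2^{-\N}$ with $r<r^*(X,Q)$; then, by Definition \ref{d.r*}, the squares of $B^r(X)$ are pairwise disjoint, well separated from one another, and at distance $\geq r$ from $\p Q$. For $\omega_\eta\sim\P_\eta$ with $\eta\ll r$ and a colouring $\phi:X\to\{0,1\}$, let $\omega_\eta^\phi$ agree with $\omega_\eta$ off $\bigcup_i B^r_{x_i}$ and be all-open (resp.\ all-closed) inside $B^r_{x_i}$ when $\phi(x_i)=1$ (resp.\ $0$). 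By planar self-duality of site percolation on $\eta\Tg$, exactly one of the following holds for $\omega_\eta^\phi$: an open crossing of $Q$ from $\p_1 Q$ to $\p_3 Q$, or a closed dual crossing from $\p_2 Q$ to $\p_4 Q$. Since $\omega_\eta^\phi=\omega_\eta$ off the squares, an open crossing of $Q$ in $\omega_\eta^\phi$ is an alternating concatenation of open $\omega_\eta$-arcs, each joining two of the $B^r_{x_i}$'s (or a square and a side of $Q$, or two sides) while avoiding the other squares and staying away from $\p Q$; each such arc can be enclosed in a quad $R$ of the kind appearing in Definition \ref{d.mesoN}, hence is recorded by a primal edge of $\Net_Q^r(\omega_\eta,X)$, and conversely every primal edge comes from such an arc. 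The same holds with "open/primal" replaced by "closed/dual". Hence an open crossing of $Q$ in $\omega_\eta^\phi$ exists iff there is a $\phi$-open primal path $\p_1\to\p_3$ in the abstract graph $\Net_Q^r(\omega_\eta,X)$, and similarly for closed crossings and dual paths; so $\Net_Q^r(\omega_\eta,X)$ satisfies Definition \ref{d.boolean}, i.e.\ it is Boolean.

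Next I would pass to the limit with $(X,r)$ still fixed. Fix a coupling of $(\omega_{\eta_n})_n$ with $\omega_\infty$ along a sequence $\eta_n\downarrow 0$ in which $\omega_{\eta_n}\to\omega_\infty$ pointwise in $(\HH,\T)$, which exists by Skorohod's representation theorem since $(\HH,d_\HH)$ is Polish and $\P_{\eta_n}\Rightarrow\P_\infty$. Then Proposition \ref{p.discrete} gives $\Pb{\Net_Q^r(\omega_{\eta_n},X)=\Net_Q^r(\omega_\infty,X)}\to 1$, so by the previous paragraph
\[
\Pb{\Net_Q^r(\omega_\infty,X)\text{ is Boolean}}\ \geq\ \Pb{\Net_Q^r(\omega_{\eta_n},X)=\Net_Q^r(\omega_\infty,X)}\ \xrightarrow[n\to\infty]{}\ 1\,,
\]
hence $\Net_Q^r(\omega_\infty,X)$ is Boolean almost surely for each fixed such pair $(X,r)$.

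To obtain the conclusion simultaneously for all $X$, I would use that, in the regime $r<r^*(X,Q)$, the network $\Net_Q^r(\omega,X)$ depends on $X$ only through the finite family of $r$-squares $B^r(X)$, and for a fixed $r\in 2^{-\N}$ and a bounded $D$ only finitely many such families can occur (subsets of the finitely many squares of the grid $r\Z^2-(r/4,r/4)$ meeting $D$). Choosing one finite set $X$ realizing each such family and letting $r$ range over the countable set $2^{-\N}$ yields a countable list of pairs $(X,r)$ that exhausts all cases of the statement; intersecting the corresponding almost sure events from the previous paragraph produces a single almost sure event on which every $\Net_Q^r(\omega_\infty,X)$ of the statement is Boolean. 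Finally, since $\{\Net_Q^r=\Net\}$ is $\T$-open by Definition \ref{d.mesoN}, this event is a Borel subset of $\HH$ of full $\P_\infty$-measure, so the statement is intrinsic to the law $\P_\infty$ and independent of the chosen coupling; intersecting further over the countable set $\QUAD_\N$ gives it uniformly in $Q$ as well.

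The step I expect to be the real obstacle is the first (discrete) one: one must verify that the primal and dual edges of Definition \ref{d.mesoN} capture \emph{every} crossing of $Q$ that can be completed through the recoloured squares — that no open (or closed) crossing can slip through a portion of $Q$ not seen by the network — and that the resulting equivalence between crossings of $\omega_\eta^\phi$ and $\phi$-open/closed paths in the abstract graph is exact uniformly for $\eta\ll r$ (so that taking $\eta\to 0$ causes no loss). The remaining ingredients — the convergence-in-probability argument of the second paragraph and the countability bookkeeping of the third — are soft.
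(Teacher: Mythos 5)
Your overall architecture is the same as the paper's intended deduction: discrete networks are Boolean by planar duality, Proposition \ref{p.discrete} transfers this to $\Net_Q^r(\omega_\infty,X)$ with probability tending to $1$, and the finiteness of the possible square-families $B^r(X)$ for fixed $r$ plus countability of $2^{-\N}$ and $\QUAD_\N$ handles the quantifiers. The second and third paragraphs of your argument are fine.

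The genuine gap is exactly where you predicted it: the discrete step. The asserted equivalence ``an open crossing of $Q$ in $\omega_\eta^\phi$ exists iff there is a $\phi$-open primal path $\p_1\to\p_3$ in $\Net_Q^r(\omega_\eta,X)$'' is \emph{not} exact, because the edges of Definition \ref{d.mesoN} are deliberately robust: a bulk edge $\<{x_i,x_j}$ requires a crossed quad staying $r$-away from $\p Q$, a boundary edge $\<{\p_1,x_i}$ requires the open connection to reach distance $r$ \emph{outside} $Q$ through $\p_1 Q$, and the edge $\<{\p_1,\p_3}$ requires crossing a quad larger than every quad in $B_{d_\QUAD}(Q,r)$ (so, in effect, an over-crossing that also stays $r$-clear of $\p_2 Q,\p_4 Q$). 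Hence a crossing of $Q$ in $\omega_\eta^\phi$ whose excursion hugs $\p Q$ within distance $r$, or whose attachment to $\p_1 Q$ does not extend $r$ beyond $Q$, is invisible to the network; duality then also forbids any $\phi$-closed dual network path, so the ``at least one of (i),(ii)'' half of Definition \ref{d.boolean} fails for that $\phi$. This is not a null phenomenon at fixed $r$: for instance (already with $X=\emptyset$) a closed vertical crossing of $[0,2r]\times[r/2,1-r/2]$ together with an open horizontal crossing of the thin strip $[r/2,1-r/2]\times[0,r/4]$ kills both the primal $\<{\p_1,\p_3}$ and the dual $\<{\p_2,\p_4}$ edges, and this configuration has probability bounded below uniformly in $\eta$. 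So the discrete network is \emph{not} deterministically Boolean, and one cannot get the conclusion by pure duality plus Proposition \ref{p.discrete} at a fixed mesoscopic scale $r$.

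The correct handling of this mismatch is probabilistic, and it is the same control the paper uses elsewhere: the discrepancy between ``crossing of $Q$'' and ``network path'' is governed by boundary three-arm events along $\p Q$ at scale $r$ and six-arm events near the squares (compare the proof of Lemma \ref{l.stab}, the first case in the proof of Proposition \ref{pr.compZ}, and the reference to controlling boundary effects as in Corollary 5.2 of \cite{\SchrammSmirnovNoise}), whose probabilities tend to $0$ as $r\to 0$ but not as $\eta\to 0$ for fixed $r$. What this yields, combined with Proposition \ref{p.discrete} and the stabilization of Theorem \ref{th.BC}, is Booleanness almost surely for all sufficiently small $r$ (equivalently for the limiting network $\Net_Q$), which is what the construction of $Z_Q$ in Definition \ref{d.ZQ} actually uses. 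Your write-up should therefore replace the claimed deterministic equivalence by an estimate showing that, on the complement of these boundary/six-arm events, every crossing of $\omega_\eta^\phi$ can be upgraded to a robust one recorded by the network, and then let $r\to 0$; as it stands, the first paragraph of your proposal asserts a false statement and the argument does not go through at fixed $r$.
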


\ni
{\bf Proof:}
\ni
This is proved in the same fashion as equation \ref{e.Delta} in Lemma \ref{l.meas4arm} (see \cite{\GPSa}), namely one direction uses the fact that $\{ \Net_Q^r = \Net\}$ is an open set, as mentioned above. The other direction is in fact easier than ~\eqref{e.Delta} which is proved in \cite{\GPSa} since one does not require the quads for different edges to be disjoint. Hence it is only a matter of controlling boundary effects as in the proof of Corollary 5.2 in \cite{\SchrammSmirnovNoise} (stated in Theorem \ref{th.SScardy}). \qed

%

\subsection{Almost sure stabilization as $r\to 0$ of the $r$-mesoscopic networks}

Let $\PPP_T = \PPP_T(\mu^\eps) \subset D$ be sampled according to the intensity measure $d\mu^\eps(x) \times 1_{[0,T]}dt$. As we have seen in Section \ref{s.poisson}, $\PPP_T$ is almost surely finite and is such that for all quads $Q\in \QUAD_\N$, one has a.s. $r^*=r^*(\PPP_T,Q)>0$ (see Definition \ref{d.r*}). 

\begin{definition}\label{}
For any $r\in 2^{-\N}$, we will denote by $\Net_Q^r(\omega_\infty, \PPP_T)$ the $r$-mesoscopic network associated to $\PPP_T$ as defined in Definition \ref{d.mesoN}. 
\end{definition}

We start with the following lemma on the measurability of this network.

\begin{lemma}\label{l.mesNET}
The network  $\Net^r_Q(\omega_\infty,\PPP_T)$ is measurable w.r.t. $(\omega_\infty, \PPP_T)$.
\end{lemma}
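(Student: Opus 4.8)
The plan is to reduce measurability of $\Net^r_Q(\omega_\infty,\PPP_T)$ with respect to the pair $(\omega_\infty,\PPP_T)$ to the fact that each individual "edge-present" event is Borel in $\omega_\infty$ once the positions of the points in $\PPP_T$ are frozen, and then to handle the dependence on $\PPP_T$ by decomposing according to the (a.s.\ finite) number of points $N$ and the value of $r^*(\PPP_T,Q)$. First I would recall that, by Proposition \ref{pr.PPP}, almost surely $\PPP_T=\{(x_i,t_i):1\le i\le N\}$ is finite, the $x_i$ are distinct, they are at positive distance from $\partial Q$, and hence $r^*=r^*(\PPP_T,Q)>0$; so on a set of full measure the object $\Net^r_Q(\omega_\infty,\PPP_T)$ is genuinely well-defined by Definition \ref{d.mesoN}. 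The target $\sigma$-field is generated by the events $\{\Net^r_Q = \Net\}$ as $\Net$ ranges over the (countable) set of possible combinatorial networks on at most $N$ marked vertices plus the four boundary vertices, so it suffices to show each such event is measurable w.r.t.\ $(\omega_\infty,\PPP_T)$.

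Next I would fix the combinatorial type $\Net$ and analyze $\{\Net^r_Q(\omega_\infty,\PPP_T)=\Net\}$. Conditionally on the configuration of points $X=\{x_1,\dots,x_p\}$ of $\PPP_T$ (with $r<r^*(X,Q)$), the edge structure of $\Net^r_Q(\omega,X)$ is determined by finitely many events of the form $\omega\in\boxminus_R$ or $\omega\in\boxminus_{\rot R}^c$, where $R$ ranges over quads constrained to lie strictly inside certain $r$-squares, strictly away from others, and $r$-away from $\partial Q$. Each such "some admissible quad $R$ is crossed" event is a countable union (over a dense countable family of admissible quads, using the openness noted at the end of Definition \ref{d.mesoN} and Theorem \ref{th.compact}) of sets $\boxminus_R$, hence Borel in $\omega_\infty$; similarly for the dual events. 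Therefore, for each fixed finite point configuration, $\{\Net^r_Q=\Net\}$ is a Borel subset of $\HH$. The squares $B^r(X)$ themselves are measurable functions of $X$ (they are obtained by a deterministic snapping rule, Definition \ref{d.nested}), and $r^*(X,Q)$ is a measurable function of $X$; so the whole map $(\omega,X)\mapsto\Net^r_Q(\omega,X)$ is jointly measurable on the Borel set $\{(\omega,X): r<r^*(X,Q)\}$. Formally I would make this a joint-measurability statement by writing, for each $N$ and each ordered tuple, $\{\Net^r_Q=\Net\}=\bigcup_{N\ge 0}\{|\PPP_T|=N\}\cap\{x_1,\dots,x_N \text{ lie in a prescribed Borel region of }D^N\text{ and }\omega_\infty\in(\text{Borel set depending measurably on }x_1,\dots,x_N)\}$, which exhibits it as a countable union of sets in the product $\sigma$-field; a clean way to package the inner Borel dependence is via a Fubini/section argument or by noting that the indicator $(\omega,x_1,\dots,x_N)\mapsto \1_{\{\Net^r_Q=\Net\}}$ is a limit of continuous (indeed, indicators of open) functions.

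The main obstacle I expect is purely a bookkeeping one: making the joint measurability in $(\omega,X)$ rigorous rather than just the measurability in $\omega$ for frozen $X$. The subtlety is that the admissible family of quads $R$ entering each edge-condition depends on the positions of \emph{all} the points $x_k$ and on $r$ relative to $r^*(X,Q)$, so one has to check that "there exists an admissible quad $R$ with $\omega\in\boxminus_R$" varies measurably as the $x_k$ move. This is handled exactly as in the proof of Proposition \ref{p.discrete} and of Corollary 5.2 in \cite{\SchrammSmirnovNoise}: one uses a fixed countable dense family of polygonal quads, observes that the constraints "$\partial_1 R,\partial_3 R$ strictly inside $B^r_{x_i},B^r_{x_j}$", "$R$ strictly away from the other squares", "$R$ is $r$-away from $\partial Q$" cut out, for each fixed quad from the dense family, a \emph{Borel} (in fact relatively open) set of admissible point-configurations $X$, and then takes the countable union over the dense family. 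Since only finitely many points and finitely many constraints are ever involved (on the full-measure event of Proposition \ref{pr.PPP}), no further input is needed, and the lemma follows.
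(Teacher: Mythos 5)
Your proof is correct, and its skeleton --- measurability of $r^*(\PPP_T,Q)$, conditioning on the point configuration, and Borel-ness in $\omega_\infty$ of each edge-presence event (which, because all the geometric constraints in Definition \ref{d.mesoN} are strict, is exactly an event of the form $\boxup_U^c$ for an open family $U$ of admissible quads, rather than a union of the closed sets $\boxminus_R$, which is the cleaner way to phrase your ``countable union over a dense family'' step) --- is the same as the paper's. Where you genuinely diverge is in the joint measurability in $(\omega,X)$, which you single out as the main obstacle and propose to handle by a Fubini/section argument plus a countable dense family of quads whose admissibility varies measurably with the points $x_k$. The paper short-circuits all of this with one observation you did not exploit: the squares $B^r_{x_i}$ are taken from the \emph{fixed} grid $r\Z^2-(r/4,r/4)$, the domain $D$ is bounded, and $r^*(X,Q)>r$ bounds $|X|$, so the map $X\mapsto B^r(X)\,1_{\{r^*(X,Q)>r\}}$ has \emph{finite} range. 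Hence on $\{r<r^*\}$ one simply decomposes $\{\Net_Q^r=(B^r,E)\}$ over the finitely many possible vertex sets $B^r$ and edge structures $E$; each piece is the intersection of the $\PPP_T$-measurable event $\{B^r(\PPP_T)=B^r\}$ with an event depending on $\omega_\infty$ alone, since the edge conditions only see the squares and not the exact positions of the points inside them, so product-measurability is immediate, and the remaining case $\{r\geq r^*\}$ (empty edge structure) is measurable because $r^*$ is a deterministic function of the finite cloud $\PPP_T$. Your route is a valid general-purpose argument, but the finite-range reduction is what makes the paper's proof essentially three lines.
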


\ni
{\bf Proof:}
\ni
First of all, the scale $r^*(\PPP_T, Q)$ is measurable with respect to $\PPP_T$ as a deterministic geometric quantity which depends on the finite cloud $\PPP_T$. This implies that the event $\{ r\geq r^*\} \subset \{ \Net_Q^r =\emptyset \}$ is measurable (where by $\Net_Q^r=\emptyset$ we mean that the edge structure of $\Net_Q^r$ is empty).

In what follows $r\in 2^{-\N}$ is fixed. We already dealt with the case $\{ r\geq r^*\}$. Now, on the event $\{r< r^*\}$, notice that since $D$ is bounded, there are finitely many possible vertex sets $B^r(X), X\subset D$ since these consists in families of dyadic squares and the constraints $r^*(X,Q)>r$ imply that $|X|$ is bounded. In other words, the map $X\subset D \mapsto B^r(X) 1_{\{r^*(X,Q)>r\}}$ is piecewise constant and its image is a finite set. Letting $B^r$ be an arbitrary such family of $r$-squares. 
First, let us notice that the event $\{ B^r(\PPP_T) = B^r\}$ is clearly measurable w.r.t. $\PPP_T$. 
Now, let $E$ be any edge structure on $B^r \cup \{ \p_1,\ldots, \p_4 \}$, it follows easily from our definition of the edge structure of $\Net_Q^r$ that on the event $\{ B^r(\PPP_T)=B^r\}\cap \{r<r^*\}$, the event $\{ \Net_Q^r =(B^r,E)\}$ is measurable w.r.t. the percolation configuration $\omega_\infty$. \qed


We wish to prove that almost surely the random sequence of networks $\Net_Q^r(\omega_\infty,\PPP_T)$ is eventually stationary as $r\searrow 0$. 

For this, it will be enough to prove the following lemma.

\begin{lemma}\label{l.stab}
Let $Q$ be a fixed quad in $\QUAD_\N$. Let $T>0$ and $\eps>0$ be fixed. 
There is a constant $A=A_Q<\infty$ 
such that 
for any $\bar r>0$ and any $r\in 2^{-\N}$ with $r \leq \bar r$,

\begin{align}\label{e.UBstab}
\Pb{\Net^{r/2}_Q(\omega_\infty,\PPP_T) \neq \Net^{r}_Q(\omega_\infty,\PPP_T)\,, r^*(\PPP_T, Q) >\bar r }
& \le A_Q \, \frac {r^{1/3}}{\bar r^{4/3}}\,.
\end{align}
Note that the constant $A_Q$ only depends on the shape of $Q$ and not on $T$ nor on $\eps$. 
\end{lemma}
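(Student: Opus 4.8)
The plan is to bound the probability that passing from the $r$-mesoscopic network to the $(r/2)$-mesoscopic network changes the combinatorial structure, on the event that all points of $\PPP_T$ are $10\bar r$-separated from each other and from $\p Q$. The key observation is that, conditionally on $\PPP_T$, whether a primal (or dual) edge is present or absent in $\Net^r_Q$ versus $\Net^{r/2}_Q$ is governed entirely by the percolation configuration $\omega_\infty$ in a bounded number of ``transition annuli'': for each ordered pair of points $x_i,x_j$ in $\PPP_T$ (plus the boundary pieces $\p_1,\dots,\p_4$), the status of the edge can only differ between scales $r$ and $r/2$ if $\omega_\infty$ exhibits an atypical connection/disconnection pattern inside the dyadic annulus around $x_i$ (resp. $x_j$) between radius $r/2$ and radius $r$, or in the annulus of that scale near a portion of $\p Q$. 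More precisely, if no edge changes then in particular: (a) for every $x_i$, the ``four-arm'' configuration of $\omega_\infty$ in the annulus $B(x_i,r)\setminus B(x_i,r/2)$ must be such that the crossings realizing edges at scale $r$ can be pushed in to scale $r/2$ and vice versa; the obstruction to this is precisely the occurrence of a low-probability arm event. So the first step is to reduce the event in \eqref{e.UBstab} to a union, over the $O(\bar r^{-2})$ relevant scale-$r$ dyadic boxes that could contain a point of $\PPP_T$, of an event of the form ``$\PPP_T$ has a point in that box AND $\omega_\infty$ has a certain arm-type pattern in the surrounding annulus of modulus $2$.''

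The second step is to estimate each such term. On a fixed scale-$r$ box $S$, the event that $\PPP_T\cap S\neq\emptyset$ has probability at most $T\,\E[\mu^\eps(S)]\le C\,T\,\eps^{-5/4}\,r^2$ by Proposition \ref{th.mu}(i). Given that, the relevant ``bad'' event for $\omega_\infty$ is an arm event (four arms, or a crossing/non-crossing discrepancy which is of four-arm type, in the annulus of radius $\asymp r$ around the box), and one needs to combine: the conditional law of $\omega_\infty$ given $\PPP_T$ — here one should use that $\mu^\eps$ is $\omega_\infty$-measurable but the Poisson thinning only constrains $\omega_\infty$ through the value of $\mu^\eps$ on $S$, which by the first-moment bound forces $\mu^\eps(S)$ to be typically of order $r^2\eps^{-5/4}$ and hence does not significantly bias the four-arm probability in the annulus. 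The net effect is a bound of the form (prob. of a point in $S$) $\times$ (prob. of the arm pattern) $\le C T\eps^{-5/4} r^2 \cdot \alpha_4(r,\bar r)\cdot(\text{correction})$, but in fact the cleanest route is to not condition at all and directly estimate, via the first-moment/second-moment properties of $\mu^\eps$, the expected number of scale-$r$ boxes that simultaneously carry a point and sit inside a scale-$r$ four-arm event — this expectation is $O(\bar r^{-2})\cdot O(T\eps^{-5/4} r^2)\cdot(\text{arm factor})$. Summing over the $O(\bar r^{-2})$ boxes and using quasi-multiplicativity of $\alpha_4$ together with $\alpha_4(r)\asymp r^{5/4+o(1)}$ yields, after bookkeeping, a bound of order $r^{1/3}\bar r^{-4/3}$; the exponent $1/3$ (rather than something larger) comes from being deliberately lossy so as to get a clean power with a constant $A_Q$ depending only on the shape of $Q$, and the $\bar r^{-4/3}$ reflects the near-critical scaling $r(\eta)=\eta^{3/4+o(1)}$ relation between space and ``mass'' $\mu^\eps$-scale. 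One must also handle separately the boundary edges involving $\p_1,\dots,\p_4$: there the transition events live in annuli near $\p Q$, their total area is $O(\bar r\cdot r)$ by the smoothness of $Q$, and the same arm estimates apply, giving a contribution of strictly smaller order than $r^{1/3}\bar r^{-4/3}$.

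The third step is to upgrade the probabilistic bound \eqref{e.UBstab} into almost sure stabilization as $r\searrow0$ (which is the stated purpose of the lemma): fixing $\bar r$ and summing \eqref{e.UBstab} over $r=2^{-j}\le\bar r$ gives a convergent geometric series $\sum_j A_Q 2^{-j/3}\bar r^{-4/3}<\infty$, so by Borel--Cantelli, on the event $\{r^*(\PPP_T,Q)>\bar r\}$ the network $\Net^r_Q(\omega_\infty,\PPP_T)$ is a.s. eventually constant; since $r^*>0$ a.s. and $\bar r$ can be taken along a sequence decreasing to $0$, this holds a.s. outright. I expect the main obstacle to be Step 2, specifically making rigorous the claim that conditioning $\omega_\infty$ on the Poisson point process $\PPP_T$ having a point in a small box $S$ does not inflate the probability of a four-arm event in the surrounding annulus by more than a constant factor — equivalently, controlling the correlation between the pivotal measure $\mu^\eps$ being positive at a given location and the occurrence of a disjoint arm event at a slightly larger scale. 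This should follow from the construction of $\mu^\eps$ in \cite{\GPSa} (its first and second moment estimates in Proposition \ref{th.mu}, together with the fact that $\mu^\eps$-mass in $S$ is carried by $A_Q$-type four-arm configurations to $\p\tilde Q$, which are essentially those one is trying to control), but assembling this correlation bound cleanly is the delicate point; everything else is geometry of quads and standard arm-exponent arithmetic.
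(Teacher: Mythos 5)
There is a genuine gap, and it lies exactly where you flagged it, but the resolution is not a correlation estimate — it is a different identification of the bad event. Your Step 1 reduces the discrepancy $\{\Net^{r/2}_Q\neq\Net^r_Q\}$ to ``a Poisson point in a scale-$r$ box AND a four-arm-type pattern in the surrounding annulus''. This cannot work: every point of $\PPP_T$ is, by construction, an $\eps$-pivotal point (a.s.\ $\PPP_T\subset\Piv^\eps(\omega_\infty)$), so a four-arm event around each such point is automatic and carries no smallness; pricing it as an independent factor $\alpha_4(r,\bar r)$ is both unjustified (the point is there \emph{because} of the four arms, which is the conditioning problem you admit you cannot resolve) and insufficient (the resulting exponent bookkeeping does not produce $r^{1/3}\bar r^{-4/3}$). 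Moreover, your Step 2 prices the presence of a point in a box by $\Eb{\mu^\eps(S)}\le C T\eps^{-5/4}r^2$ from Proposition \ref{th.mu}, which necessarily drags factors of $T$ and $\eps^{-5/4}$ into the constant — but the lemma explicitly asserts $A_Q$ is independent of $T$ and $\eps$, so any proof routed through moments of $\mu^\eps$ is structurally off target.

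The paper's argument avoids both issues. It first transfers to the coupled discrete configurations $\omega_\eta$ via Proposition \ref{p.discrete} and Lemma \ref{l.meas4arm} (the arm estimates are discrete statements), and uses the four-arm events around the points of $\PPP_T$ not as a probabilistic cost but as a \emph{deterministic hypothesis} available on the event $\{B^{r/2}(\PPP_T)=V\}$: each vertex square carries four disjoint arms from scale $r/2$ out to $\bar r$ (since $r^*>\bar r$ keeps the annuli disjoint from everything else). The combinatorial heart of the proof is then to show that any edge discrepancy between $\Net^r_Q$ and $\Net^{r/2}_Q$ forces, on top of those four arms, two additional disjoint arms in some annulus $A(B,r,\bar r)$ — i.e.\ a \emph{six-arm} event — in the bulk, or a (corner) \emph{three-arm} event in an $r$-square along $\p Q$. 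The final bound is a pure union bound with no reference to $\mu^\eps$, $T$ or $\eps$: $O(r^{-2})$ bulk squares times $\alpha_6(r,\bar r)\lesssim(r/\bar r)^{35/12}$ plus $O(r^{-1})$ boundary squares times the corner three-arm probability $\lesssim(r/\bar r)^{4/3}$, which is where $r^{1/3}\bar r^{-4/3}$ comes from (it is not a ``deliberately lossy'' four-arm computation). So the missing idea in your plan is the six-arm/boundary-three-arm dichotomy extracted from the geometry of the network definition; once you have it, the delicate conditioning you worried about simply never arises. Your Step 3 (Borel--Cantelli to get a.s.\ stabilization) is fine, but it is the content of Theorem \ref{th.BC} rather than of the lemma itself.
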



Indeed the Lemma implies the following result.
\begin{theorem}\label{th.BC}
There is a measurable scale $r_Q=r_Q(\omega_\infty, \PPP_T(\mu^\eps))\in 2^{-\N}$ which satisfies 

\begin{equation}\label{}
\left
\lbrace
\begin{array}{l}
0< r_Q<r^*(\omega_\infty, \PPP_T) \; a.s. \\
\Net_Q^r(\omega_\infty,\PPP_T) = \Net^{r_Q}(\omega_\infty,\PPP_T)\;\; \forall r<r_Q\, \in 2^{-\N}
\end{array}
\right.
\end{equation}

In particular, the $r$-mesoscopic network $\Net_Q^r(\omega_\infty,\PPP_T)$ a.s. has a limit as $r\to 0$, which is measurable w.r.t. $(\omega_\infty,\PPP_T)$ and which we will denote by $\Net_Q=\Net_Q(\omega_\infty,\PPP_T)$.
\end{theorem}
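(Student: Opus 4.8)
The plan is to deduce the theorem from Lemma \ref{l.stab} via a Borel--Cantelli argument, exactly as the sentence ``Indeed the Lemma implies the following result'' suggests. First I would fix the quad $Q\in\QUAD_\N$, as well as $T>0$ and $\eps>0$, and work on the event $\{r^*(\PPP_T,Q)>\bar r\}$ for a dyadic $\bar r\in 2^{-\N}$; by Proposition \ref{pr.PPP} (and Definition \ref{d.r*}) we know $r^*=r^*(\PPP_T,Q)>0$ almost surely, so it suffices to produce, on each such event, a scale $r_Q<\bar r$ past which the mesoscopic networks no longer change. For $r\in 2^{-\N}$ with $r\le\bar r$, Lemma \ref{l.stab} gives
\[
\Pb{\Net^{r/2}_Q(\omega_\infty,\PPP_T)\neq\Net^{r}_Q(\omega_\infty,\PPP_T)\,,\ r^*>\bar r}\le A_Q\,\frac{r^{1/3}}{\bar r^{4/3}}\,.
\]
Summing over the dyadic scales $r=2^{-n}\le\bar r$, the geometric series $\sum_n A_Q\,2^{-n/3}\bar r^{-4/3}$ converges, so by Borel--Cantelli only finitely many of the ``jump'' events $\{\Net^{r/2}_Q\neq\Net^{r}_Q\}$ occur on $\{r^*>\bar r\}$. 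Hence on that event there is a (random, dyadic) scale below which the sequence $r\mapsto\Net^r_Q(\omega_\infty,\PPP_T)$ is constant. Letting $\bar r$ run through a sequence of dyadic numbers tending to $0$ and using $r^*>0$ a.s., we obtain an almost surely defined $r_Q=r_Q(\omega_\infty,\PPP_T)\in 2^{-\N}$ with $0<r_Q<r^*(\omega_\infty,\PPP_T)$ and $\Net^r_Q(\omega_\infty,\PPP_T)=\Net^{r_Q}_Q(\omega_\infty,\PPP_T)$ for all $r<r_Q$ in $2^{-\N}$.

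Next I would address measurability. The random variable $r_Q$ can be taken to be the largest dyadic scale below which stabilization has occurred, i.e.
\[
r_Q:=\sup\bigl\{r\in 2^{-\N}:\ \Net^{r'}_Q(\omega_\infty,\PPP_T)=\Net^{r}_Q(\omega_\infty,\PPP_T)\ \text{for all dyadic }r'\le r\bigr\}\,,
\]
which is a countable combination of the events $\{\Net^{r'}_Q=\Net^{r}_Q\}$; each of the latter is measurable w.r.t.\ $(\omega_\infty,\PPP_T)$ by Lemma \ref{l.mesNET}, so $r_Q$ is measurable w.r.t.\ $(\omega_\infty,\PPP_T)$. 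The limiting network $\Net_Q:=\Net^{r_Q}_Q(\omega_\infty,\PPP_T)$ is then, on the event $\{r_Q=r\}$, equal to $\Net^r_Q(\omega_\infty,\PPP_T)$, hence measurable w.r.t.\ $(\omega_\infty,\PPP_T)$ as well; and by construction $\Net^r_Q(\omega_\infty,\PPP_T)\to\Net_Q$ as $r\to 0$ along $2^{-\N}$.

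The only genuine input here is Lemma \ref{l.stab} itself, which we are permitted to assume; granting it, the argument above is routine. The one point to be slightly careful about — and I expect this to be the main (minor) obstacle — is the uniformity in $\bar r$: Lemma \ref{l.stab} is a statement on the event $\{r^*>\bar r\}$ with a bound that blows up like $\bar r^{-4/3}$, so one must first restrict to such an event, run Borel--Cantelli there, and only afterwards take $\bar r\downarrow 0$, rather than trying to sum over all scales at once. Since $\{r^*>0\}$ has full measure and is the increasing union of the events $\{r^*>\bar r\}$ over dyadic $\bar r$, this causes no difficulty, and the conclusion of the theorem follows.

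\QED
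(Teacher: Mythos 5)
Your proposal is correct and follows essentially the same route as the paper: Borel--Cantelli applied to the events of Lemma \ref{l.stab} restricted to $\{r^*>\bar r\}$, then letting $\bar r$ tend to $0$ using $r^*>0$ a.s., with measurability of $r_Q$ and of the limiting network coming from countably many applications of Lemma \ref{l.mesNET}. The only cosmetic difference is your explicit choice of $r_Q$ as a supremum of stabilized dyadic scales (one should cap it below $r^*$ to guarantee $r_Q<r^*$), whereas the paper takes $r_Q=2^{-\bar k_0-K(\bar k_0,\omega_\infty,\PPP_T)}$; both are routine measurable constructions.
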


Let us start by deriving the theorem using Lemma \ref{l.stab}. This is a straightforward use of Borel-Cantelli. 
For any $k\in \N$ and any $\bar k\in \N$, define the event 
\begin{align*}\label{}
A_k^{\bar k}:= \{ \Net^{2^{-\bar k - k-1}}_Q(\omega_\infty,\PPP_T) \neq \Net^{2^{-\bar k - k}}_Q(\omega_\infty,\PPP_T)\,, r^*(\PPP_T, Q) >2^{-\bar k} \}\,.
\end{align*}
Lemma \ref{l.stab} implies that for any $\bar k \in \N$, $\sum_{k\geq 0} \Pb{A_k^{\bar k}} <\infty$. 
Therefore by Borel-Cantelli, there is a measurable $K=K(\bar k, \omega_\infty, \PPP_T)<\infty$ a.s.  such that $A_{K-1}^{\bar k}$ is satisfied and all $A_k^{\bar k}, k\geq K$ are not (we define here $A^{\bar k}_{-1}$ to be of full measure). 
Let $\bar k_0:= \inf\{ \bar k \in \N; 2^{-\bar k} \leq r^{*}(\PPP_T,Q) \}$ which is measurable w.r.t. $\PPP_T$ and is $<\infty$ a.s. (see Definition \ref{d.r*}). With our above notations, the scale 
\[
r_Q:= 2^{-\bar k_0 - K(\bar k_0, \omega_\infty, \PPP_T)}
\]
satisfies the desired conditions of Theorem \ref{th.BC}.  \qed


\medskip
We now turn to the proof of Lemma \ref{l.stab} which will use in a crucial manner the comparison with the discrete setting $\omega_\eta \sim \P_\eta$ established in Proposition \ref{p.discrete}. 
\medskip

\ni
{\bf Proof of Lemma \ref{l.stab}:}
Let us fix $\bar r>0$ and $r<\bar r \in 2^{-\N}$. Since the upper bound in~\eqref{e.UBstab} relies on arm exponents, it will be convenient if not necessary to rely on the discrete setting. In order to work with an actual discrete mesh $\eta \Tg$, we will thus rely on Proposition \ref{p.discrete}. 
\smallskip

As in the proof of Lemma \ref{l.mesNET}, we use the fact that on the event
$\{ r^* >\bar r\}$, there are finitely many possible vertex sets for $\Net_Q^{r/2}(\omega_\infty, \PPP_T)$. In other words, the range of families of $r/2$-squares obtained as $B^{r/2}(\PPP_T)$ with $\PPP_T$ such that $r^*(\PPP_T,Q)>\bar r$ is a finite set $\Gamma$. We will denote by $V = \{B^{r/2}_1,\ldots, B^{r/2}_{N_V}\}$ the elements of $\Gamma$. Note that each $V\in \Gamma$ needs to satisfy $r^*(V,Q)>\bar r$ (with the obvious extension of the quantity $r^*$ defined in Defintion \ref{d.r*} to family of $r$-squares). 
\smallskip

As in Proposition \ref{p.discrete}, let us consider a coupling of $(\omega_\eta)_{\eta>0}$ and $\omega_\infty$ so that $\omega_\eta \sim \P_\eta$ and $\omega_\eta$ converges pointwise in $(\HH,d_\HH)$ towards $\omega_\infty\sim \P_\infty$. Since $\PPP_T$ is a Poisson point process with intensity measure $\mu^\eps(\omega_\infty)$, we have that a.s. $\PPP_T \subset \Piv^\eps(\omega_\infty)$.  In particular if $B^{r/2}(\PPP_T)= V= \{ B^{r/2}_1,\ldots, B^{r/2}_{N_V}\} \in \Gamma$ and if for all $i\in [1,N_V]$, we denote by $A_i^\eps$ the annulus centered around the $r/2$-square $B^{r/2}_i$ of exterior  side-length $\eps$, then $\omega_\infty$ must satisfy a four-arm event in each annuli $A^\eps_i$. Let us then introduce the following event
\[
\A_V=\A_V^\eps = \bigcap_{1\leq i \leq N_V} \A_4(A_i^\eps)\,,
\]
where the events $\A_4(A_i^\eps)$ denote the four-arms events in each annuli $A_i^\eps$. As such, on the event $\{ B^{r/2}(\PPP_T)=V\}$, we must have  $\omega_\infty \in \A_V^\eps$. 

Lemma \ref{l.meas4arm} implies  that for any $V\in \Gamma$
\begin{align}\label{e.DDD1}
\Pb{\omega_\eta \in \A_V^\eps  \md B^{r/2}(\PPP_T(\omega_\infty))=V } \underset{\eta \to 0}{\longrightarrow} 1
\end{align}

Furthermore using Proposition \ref{p.discrete} we have that for any $V\in \Gamma$, 

\begin{align}\label{}
\lim_{\eta\to 0}\Pb{\Net_Q^{r/2}(\omega_\eta, V) = \Net_Q^{r/2}(\omega_\infty, V) } 
= \lim_{\eta\to 0}\Pb{\Net_Q^{r}(\omega_\eta, V) = \Net_Q^{r}(\omega_\infty, V) } =1 \nn
\end{align}
This asymptotic equality enables us to write 
\begin{align}\label{e.DDD3}
\Pb{\Net^{r/2}_Q(\omega_\infty, \PPP_T)\neq \Net^{r}_Q(\omega_\infty, \PPP_T), \bar r < r^*} & \nn \\
& \hskip - 7 cm = \sum_{V\in \Gamma} 
\Pb{\Net^{r/2}_Q(\omega_\infty, \PPP_T)\neq \Net^{r}_Q(\omega_\infty, \PPP_T), B^{r/2}(\PPP_T)=V} \nn \\
& \hskip - 7 cm = \sum_{V\in \Gamma}  \lim_{\eta\to 0} \Pb{\Net^{r/2}_Q(\omega_\eta, V)\neq \Net^{r}_Q(\omega_\eta, V), B^{r/2}(\PPP_T) = V} \nn \\
& \hskip - 7 cm = \sum_{V\in \Gamma}  \lim_{\eta\to 0} \Pb{\Net^{r/2}_Q(\omega_\eta, V)\neq \Net^{r}_Q(\omega_\eta, V),  B^{r/2}(\PPP_T) = V,  \omega_\eta \in \A_V^\eps} \nn \\
& \hskip - 7 cm \le \lim_{\eta\to 0} \Pb{\exists V \in \Gamma \text{ s.t. } \, \Net^{r/2}_Q(\omega_\eta, V)\neq \Net^{r}_Q(\omega_\eta, V)\text{ and }  \omega_\eta \in \A_V^\eps} \,,
\end{align}
where in the third equality we used equation~\eqref{e.DDD1} and for the last inequality we used the fact that all the events $\{ \Net^{r/2}_Q(\omega_\eta, V)\neq \Net^{r}_Q(\omega_\eta, V),  B^{r/2}(\PPP_T) = V,  \omega_\eta \in \A_V^\eps \}$ where $V$ ranges over the set $\Gamma$ are mutually disjoint. Note that in the last probability, one does not average anymore over $(\omega_\infty, \PPP_T)$. 
\medskip

It remains to bound~\eqref{e.DDD3}. 
For this matter, let us analyse the event 
\[
W=W_\eta:= \{ \omega_\eta\in \HH \text{ s.t. }\exists V \in \Gamma \text{ s.t. } \, \Net^{r/2}_Q(\omega_\eta, V)\neq \Net^{r}_Q(\omega_\eta, V)\text{ and }  \omega_\eta \in \A_V^\eps \}\,.
\]
If $\omega_\eta \in W$, then one can find a set of $r/2$-dyadic squares $V=\{ B^{r/2}_1, \ldots, B^{r/2}_{N_V}\}\in \Gamma$ which satisfies the above property. Let us simplify the notation and write instead $V=\{B_1,\ldots, B_N\}$. Let us collect what $V$ and $\omega_\eta$ need to satisfy. From the definition of $r^*$ (Definition \ref{d.r*}) and since $V\in \Gamma$, recall that 
\begin{align}\label{e.C1}
\begin{cases}
\text{for any } i\neq j \in [1,N],\, \dist(B_i,B_j)>5\, r^* > 5\bar r  \\
\text{for any }i \in [1,N],\, \dist(B_i, \p Q) > 5\, r^* > 5 \bar r 
\end{cases}
\end{align}

Furthermore since $\omega_\eta \in W$, and since we assumed the set $V$ realised that event, we have
\begin{align}\label{e.C2}
\begin{cases}
\Net^{r/2}_Q(\omega_\eta, V)\neq \Net^{r}_Q(\omega_\eta, V) \\
\text{for any }i \in [1,N],\, \omega_\eta \in \A_4(A_i^\eps) \subset \A_4(A_i(r/2, \bar r))
\end{cases}\,,
\end{align}
where $A_i(r/2, \bar r)$ is the annulus centred around the $r/2$-square $B_i$ of exterior radius $\bar r$. 
Let us analyse what may happen in order to cause $\Net^{r/2}_Q(\omega_\eta, V)\neq \Net^{r}_Q(\omega_\eta, V)$. For this difference to happen, at least one edge (dual or primal) must differ in the edge structure of $\Net_Q^{r/2}(\omega_\eta, V)$ and $\Net_Q^r(\omega_\eta, V)$. These two networks share the same vertex set, namely $V \cup \{ \p_1,\ldots, \p_4 \}$. Without loss of generality, let us assume that there is at least one primal edge which differs (the dual scenario being treated similarly). There are three types of such primal edges: bulk to bulk ($B_i$ to $B_j$), boundary to bulk ($\p_1$ or $\p_3$ to $B_i$) and boundary to boundary ($\p_1$ to $\p_3$). We will analyse in detail only the first case: bulk to bulk. The two other cases are analysed similarly by taking care of boundary issues. 
\smallskip

\ni
\underline{Bulk to bulk case:}

\ni
Let us assume that a difference occurs on the edge $e=\<{B_i,B_j}$ for some $i\neq j\in [1,N]$. 
I.e. $\omega_\eta \in \{ e=\<{i,j} \in \Net^{r/2}\}\Delta \{e=\<{i,j}\in \Net^r\}$. We still need to distinguish two cases.
\bi
\item Suppose we are in the case where $e\in \Net^r$ and $e\notin \Net^{r/2}$. 
From our Definition \ref{d.mesoN} of $\Net^{u}_Q(\omega,V)$, this implies that one can find a quad $R$ which strictly avoids all $B_k^r, k\notin \{i,j\}$ (as well as the $r$-neighborhood of $\p Q$) and it connects $B_i^r$ with $B_j^r$. Here $B_i^r$ denote the $r$-square which surrounds $B_i=B_i^{r/2}$, in particular $B_i^r \supset B_i^{r/2}$. Since we are on a discrete mesh, this is the same as finding an open path which connects $B_i^r$ and $B_j^r$ and avoids all $B_k^r$. Since $e\notin \Net^{r/2}(\omega_\eta, V)$ and since this is easier to avoid all $B_k, k\notin \{i,j\}$ (as well as the $r/2$-neighbourhood of $\p Q$), it necessarily implies that the open cluster in $\omega_\eta$ which connects $B_i^r$ with $B_j^r$ does not connect $B_i^{r/2}$ with $B_j^{r/2}$. Let us call this cluster $C_{i,j}$. Let us suppose without loss of generality that the loss of connection happens nearby $B_i$. Recall that $\omega_\eta\in \A_4(A_i(r/2,\bar r))$ which means that there are two disjoint open clusters connecting $B_i^{r/2}$ to the square $B_i^{\bar r}$ of radius $\bar r$ centered around $B_i$. Let us call these clusters $C_1$ and $C_2$. To be more precise these two clusters are disjoint only if restricted inside $B_i^{\bar r}$ and $C_1$ and $C_2$ should denote these clusters inside $B_i^{\bar r}$. The three clusters $C_1,C_2,C_{i,j}$ must be disjoint inside $B_i^{\bar r}$ in order that $e \notin \Net^{r/2}$. This implies that in the present case, $\omega_\eta$ needs to satisfy a 6-arm event in the annulus $A_i(r,\bar r)$.

\item Suppose on the other hand that $e\in \Net^{r/2}$ and $e\notin \Net^{r}$. The situation in this case is quite different. Indeed since $e\in \Net^{r/2}$, one can find an open path which connects $B_i^{r/2}$ with $B_j^{r/2}$ and it avoids all $B_k^{r/2}$ as well as the $r/2$-neighborhood of $\p Q$. Let us call $C_{i,j}$ the cluster (not the path) in $\omega_\eta$ which connects these two squares. Since $e\notin \Net^r$ and since besides the restriction coming from $\p Q$ and the other squares $B_k, k\notin \{i,j\}$ it is easier to connect $B_i^r$ with $B_j^r$, it means that at least one of the following scenarios must happen:
\bnum
\item Either there is at least one $k\notin \{i,j\}$ such that restricted on the $\bar r$-square $B_k^{\bar r}$, the cluster $C_{i,j}$ minus the square $B_k^r$ is disconnected. 
\item Or, the cluster $C_{i,j}$ restricted in $Q^{(r)}:=\{x\in Q, \dist(x, \p Q)\geq r\}$ gets disconnected. 
\enum
Let us analyze the first scenario. Since $C_{i,j}$ minus $B_k^r$ gets disconnected in $B_k^{\bar r}$, this means that there is at least one dual cluster $\tilde C$ connecting $B_k^r$ to $B_k^{\bar r}$. Now recall that $\omega_\eta\in \A_4(A_k(r/2, \bar r))$. This means in particular that there are at least two disjoint dual clusters $\tilde C_1$ and $\tilde C_2$ which connect $B_k^{r/2}$ to $B_k^{\bar r}$. We claim that inside the annulus $B_k^{\bar r}\setminus B_k^{r/2}$, the dual clusters $\tilde C, \tilde C_1$ and $\tilde C_2$ must be disjoint. Indeed if one had for example $\tilde C =\tilde C_1$, this would force the cluster $C_{i,j}$ to pass through $B_k^{r/2}$ I.e. restricted on $B_k^{\bar r}$, the cluster $C_{i,j}$ minus $B_k^{r/2}$ would be disconnected as well which would prevent the existence of a path within $C_{i,j}$ which would connect $B_i^{r/2}$ to $B_j^{r/2}$ away from $B_k^{r/2}$. Hence $\tilde C, \tilde C_1$ and $\tilde C_2$ are three disjoint dual clusters. In particular in the present case, $\omega_\eta$ needs to satisfy a 6-arm event in the annulus $A_k(r, \bar r)$.
\medskip

The second scenario is easier to analyze. Let us cover the $r$-neighborhood of $\p Q$ with $N_r=O(1/r)$ $r$-squares $S_1,\ldots, S_{N_r}$. It is easy to check that there must be at least one square $S_k$ such that inside $Q$, $C_{i,j}\setminus S_k$ gets disconnected (proceed for example by removing squares $S_i$ one at a time until $C_{i,j}$ gets disconnected). This can happen only if there is a three arm event for $\omega_\eta$ in $( S_k^{\bar r} \setminus S_k^r ) \cap Q$ (where at this point it should be clear what the notation $S_k^{\bar r}$ stands for). 
\ei
Let us summarize the Bulk to bulk case as follows. If $\omega_\eta \in W$ and falls in this bulk to bulk situation, then 
\bi
\item[(i)] Either one may find a dyadic $r$-square $B_i^r$ inside $Q$ such that $\omega_\eta$ satisfies a 6 arm events in $A_i(r,\bar r)$. 
\item[(ii)] Or one may find a dyadic $r$-square along $\p Q$ for which $\omega_\eta$ satisfies a boundary three-arm event up to distance $\bar r$. 
\ei
The two other cases (boundary to bulk and boundary to boundary) would reach to the same conclusion. 
We will see below that the probability of $(i)\cup (ii)$ is small.

%

Let us then introduce the following event:
\begin{align*}\label{}
G_{r,\bar r} := & \{\omega \in \HH\text{ s.t. } \exists \text{ a dyadic $r$-square } B \text{ s.t. } \omega\in \A_6(B^{\bar r}\setminus B) \} \\
& \cup  
 \{\omega \in \HH\text{ s.t. } \exists \text{ a dyadic $r$-square } B \text{ along $\p Q$ s.t. }\omega \in \A_3((B^{\bar r}\setminus B)\cap Q)\} \\
 := & G_{r, \bar r}^{\mathrm{bulk}} \cup G_{r,\bar r}^{\mathrm{boundary}}\,.
\end{align*}

Indeed, using \ref{e.DDD3} as well as the above analysis, we end up with the following upper bound
\begin{align}\label{}
\Pb{\Net^{r/2}_Q(\omega_\infty, \PPP_T)\neq \Net^{r}_Q(\omega_\infty, \PPP_T), \bar r < r^*}  & \\
& \hskip -5 cm \le \lim_{\eta\to 0} \left( \Pb{\omega_\eta \in  G_{r, \bar r}^{\mathrm{bulk}} } + \Pb{ \omega_\eta \in G_{r, \bar r}^{\mathrm{boundary}}}\right)
\end{align}

\medskip
Let us start with the first term:
\begin{align}\label{}
 \lim_{\eta\to 0} \Pb{\omega_\eta \in  G_{r, \bar r}^{\mathrm{bulk}} } & \le 
 O(r^{-2}) \limsup_{\eta \to 0} \alpha_6^\eta(r,\bar r) \nn \\
 & \le B_Q\, r^{-2}\, (r/\bar r)^{35/12}\,,
\end{align}
using the 6-arm exponent for the triangular lattice obtained in \cite{\SmirnovWerner}. 
\medskip

The second term may be handled as follows: without loss of generality we may assume that $\bar r$ is smaller than $2^{-k}$ where $k$ is such that our quad $Q\in \QUAD_\N$ is in $\QUAD^k$. Following the above notations, there are $O(1/r)$ squares $r$-squares $S_k$ along $\p Q$. The squares for which the probability of a three arm event is the higher are squares near corners of $\p Q$ which have a angular sector inside $Q$ of angle $3\pi /2$. Again using \cite{\SmirnovWerner}, we have that the limsup as $\eta\to 0$ of having a three arm event in such a sector from $r$ to $\bar r$ is bounded above by $C\, (r/\bar r)^{4/3}$. It thus follows that 
\begin{align}\label{}
 \lim_{\eta\to 0} \Pb{\omega_\eta \in  G_{r, \bar r}^{\mathrm{boundary}} } & \le 
 O(r^{-1}) \limsup_{\eta \to 0} \alpha_3^{+,\eta}(r,\bar r, \theta=3\pi/2) \nn \\
 & \le C_Q\, r^{-1}\, (r/\bar r)^{4/3}\,,
\end{align}

All together, we obtain the upper bound 
\begin{align*}\label{}
 \lim_{\eta\to 0} \Pb{\omega_\eta \in  G_{r, \bar r}^{\mathrm{bulk}} } & \le 
 B_Q\, r^{-2}\, (r/\bar r)^{35/12} + C_Q\, r^{-1}\, (r/\bar r)^{4/3} \\
 &\le A_Q \; \frac {r^{1/3}}{\bar r^{4/3}}\,,
\end{align*}
which ends the proof of Lemma \ref{l.stab}.
\qed

Conditioned on $(\omega_\infty, \PPP_T)$, we are now able for each quad $Q\in \QUAD_\N$, to associate in a measurable manner a combinatorial structure, the network $\Net_Q(\omega_\infty, \PPP_T)$, whose aim is to represent how the points in $\PPP_T$ are linked together via $\omega_\infty$ inside $Q$. The purpose of the next section is to use this combinatorial graph in order to define a \cadlag process with values in $\HH$.

\section{Construction of a continuum $\eps$-cut-off dynamics}\label{s.cutoff}

Let $\eps>0$ be a fixed cut-off parameter and $T>0$ be some fixed time-range. We wish to construct a certain cut-off dynamics $\omega_\infty^\eps(t)$ on $\HH$ which will be shown (in Subsection 
\ref{ss.convinSK}) to be the scaling limit in the Skorohod space $\Sk_T$ (see Section \ref{s.skorohod}) of the discrete cut-off dynamics
$\omega_\eta^\eps(t)$.  
The present section is divided as follows: the first five subsections deal with dynamical percolation. More precisely in Subsection \ref{ss.epsdefine} we define the stochastic \cadlag process $t\in [0,T]\mapsto \omega_\infty^\eps(t)$ and in Subsections \ref{ss.remainsH} to \ref{ss.convinSK}, we prove that 
a.s. the process $\omega_\infty^\eps(t)$ remains in $\HH$ for all times $t\in [0,T]$ and is the limit in the Skorohod space $\Sk_T$ of the \cadlag process $t\mapsto \omega_\eta^\eps(t)$. Finally in Subsection \ref{ss.DNC}, we extend the construction to the near-critical case in order to build an $\eps$-cutoff near-critical process $\lambda \in [-L,L]\mapsto \omega_\infty^{\nc,\eps}(\lambda)$. 

\subsection{Construction of the processes}\label{ss.epsdefine}

If one considers the well-defined dynamics $t\mapsto \omega_\eta(t)$, the evolution on the time interval $[0,T]$ of the state of sites which are initially in $\Piv^\eps(\omega_\eta(t=0))$ is governed by the Poisson point process $\PPP_T(\mu^\eps(\omega_\eta))$. When such sites are updated, they become open (or remain open if they were already so) with probability $1/2$ and closed with probability $1/2$. Let us then decompose $\PPP_T$ as follows:
\[
\PPP_T = \PPP_T^+ \cup \PPP_T^-\,,
\]
where $\PPP_T^+$ ($\PPP_T^-$) is the Poisson cloud of points which turn open (closed). 
It is easy to check that $\PPP_T^{\pm}$ are two independent copies of a Poisson point process of intensity measure $\frac 1 2 d\mu^\eps(\omega_\eta)(x) \times 1_{[0,T]}dt$. This motivates the following definition

\begin{definition}[Discrete cut-off dynamics]\label{d.dcutoff}
For any fixed $\eps>0$ and $T>0$, one defines a \cadlag process $t\mapsto \omega_\eta^\eps(t)$ by starting at the initial time $t=0$ with $\omega_\eta^\eps(t=0)=\omega_\eta \sim \P_\eta$ and by updating this initial configuration as time $t\in[0,T]$ increases using the above Poisson point process $\PPP_T(\mu_\eta^\eps(\omega_\eta))=\PPP_T^+\cup \PPP_T^-$ in the obvious manner. 
\end{definition}

Back to the continuum case, 
let us sample $\omega_\infty \sim \P_\infty$ as well as two independent Poisson point processes  $\PPP_T^+$ and $\PPP_T^-$ both with same intensity measure $\frac 1 2 d\mu^\eps(\omega_\infty)(x) \times 1_{[0,T]}dt$. To be consistent with the previous sections, we will still call $\PPP_T= \PPP_T^+ \cup \PPP_T^-$.
From these two sources of randomness, we wish to build, similarly as in Definition \ref{d.dcutoff}, a \cadlag trajectory $t\in [0,T] \mapsto \omega_\infty^\eps(t)\in \HH$.
\medskip

Since the family of quads $\QUAD_\N$ defined earlier in \ref{d.kquads} is dense in $\QUAD_D$, by Theorem \ref{th.compact}, it should in principle be sufficient to define the trajectory $\omega_\infty^\eps(t)$ through the processes (or projections)  $t\mapsto 1_{\boxminus_{Q}}(\omega_\infty^\eps(t))$ for all quads $Q\in \QUAD_\N$.
In the present subsection, we will construct such \cadlag processes simultaneously for all quads $Q\in \QUAD_\N$. We will denote these 0-1 valued processes  by $t\mapsto Z_Q(t)=Z_Q(t,\omega_\infty,\PPP_T)\in \{0,1\}$. (Note that $\eps$ is implicit in the later expression via $\PPP_T$ which depends on the regularization $\eps$). 
The aim of the next subsection will be to show that these projected processes a.s. characterize uniquely a well defined \cadlag process $t \mapsto \omega_\infty^\eps \in \HH$. In other words, we will show that a.s. there exists a unique \cadlag process $t\mapsto \omega_\infty^\eps(t)\in \HH$ such that for all $Q\in \QUAD_\N$, and all 
$t\in [0,T]$, one has 
\[
1_{\boxminus_Q}(\omega_\infty^\eps(t)) = Z_Q(t) \in \{0,1\}\,.
\]

\begin{definition}\label{d.ZQ}
Let $T>0$ and $\eps>0$ be fixed. 
For any $q\in \Q\cap [0,T]$, let $\PPP_q$ be the Poisson point process $\PPP_q(\omega_\infty, \mu^\eps(\omega_\infty))$ defined in Definition \ref{d.PPP}. Note that since $q\leq T$, $\PPP_q$ can be obtained simply by projecting $\PPP_T$ on $1_{\bar D}(x) \times 1_{[0,q]}(t)$.
From Theorem \ref{th.BC}, for each $Q\in \QUAD_\N$, we have at our disposal the network 
$\Net_{Q,q}=\Net_Q(\omega_\infty, \PPP_q)$. Let us define the events 
\begin{align}\label{}
V_1&:= \{ \forall q\in \Q\cap [0,T] \text{ and } \forall Q\in \QUAD_\N,\, \text{the network }\Net_{Q,q}\text{ is  Boolean} \} \\
V_2&:= \{ |\switch_T| <\infty \}\,,
\end{align}
where recall that $\switch_T$ denotes the set of switching times defined in Definition \ref{d.PPP}.
Since $\Q$ and $\QUAD_\N$ are countable, it follows from Corollary \ref{c.boolean} that $\Pb{V_1}=1$. It also follows from Proposition \ref{pr.PPP} that $\Pb{V_2}=1$. As such if $V:=V_1\cap V_2$, one has $\Pb{V}=1$.  
On this event $V$, we denote by $f_{Q,q}$ the  Boolean functions associated to the Boolean networks $\Net_{Q,q}$
 (as in Definition \ref{d.boolean}) and still on the event $V$, for each $Q\in \QUAD_\N$ and each $q\in \Q\cap [0,T]$, we define
\[
Z_Q(q):= f_{Q,q}(\phi_q)\,,
\]
where $\phi_q \in \{0,1\}^{\PPP_q}$ is just the configuration $1_{\PPP_q^+}$. 
Note that the possible discontinuities of the process $q\in \Q\cap [0,T] \mapsto Z_Q(q)$ are necessarily included in $\switch_T$. Since we are on the event $V\subset V_2$, the later process is piecewise constant. 
This enables us to define the random \cadlag process 
 $t\in [0,T] \mapsto Z_Q(t)\in \{0,1\}$ as the unique \cadlag extension of  $q\in \Q\cap [0,T] \mapsto Z_Q(q)$. 
 
On the negligible event $V^c$, we arbitrarily define $Z_Q(q)\equiv 0$. 

In the end, we thus defined for each quad $Q\in\QUAD_\N$ a piecewise constant process $t\in [0,T] \mapsto Z_Q(t)\in \{0,1\}$. Furthermore the set of discontinuities of these processes is always included inside $\switch_T$ whatever $Q\in\QUAD_N$ is (note that this is also the case on $V^c$). 
\end{definition}


\begin{theorem}\label{th.extension}
One can define a \cadlag process $\omega_\infty^\eps : t\in [0,T] \mapsto \HH$ which starts from $\omega_\infty^\eps(t=0)=\omega_\infty \sim \P_\infty$ \margin{Added this} and which is such that a.s. one has for all 
quads $Q\in \QUAD_\N$ and all $t\in [0,T]$:
\[
1_{\boxminus_Q}(\omega_\infty^\eps(t)) = Z_Q(t, \omega_\infty, \PPP_T)\,.
\]
The set of discontinuities of the process $t\mapsto \omega_\infty^\eps(t)\in \HH$ is included in the a.s. finite set $\switch_T\subset [0,T]$.
Furthermore, this process is measurable w.r.t. $(\omega_\infty, \PPP_T)$ and is unique up to indistinguishability among \cadlag processes. 
\end{theorem}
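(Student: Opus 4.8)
The plan is to construct $\omega_\infty^\eps(t)$ for each $t$ as the unique element of $\HH$ whose crossing events are prescribed by the family $\{Z_Q(t)\}_{Q\in\QUAD_\N}$, and then to check that the resulting map $t\mapsto\omega_\infty^\eps(t)$ is \cadlag. First I would recall the consistency lemma (Lemma~\ref{l.unique} in the text, referred to as the ``consistency conditions'') which says that if a $\{0,1\}$-valued function $Q\mapsto z_Q$ on $\QUAD_\N$ satisfies the monotonicity and topological-closure compatibility conditions coming from the partial order on quads, then there is a \emph{unique} $\omega\in\HH$ with $1_{\boxminus_Q}(\omega)=z_Q$ for all $Q\in\QUAD_\N$. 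So the first real step is to verify that, on the full-measure event $V=V_1\cap V_2$, for every fixed $t\in[0,T]$ the family $\{Z_Q(t)\}_{Q}$ does satisfy those consistency conditions. This is where Booleanness of the networks is used: if $Q_1\leq Q_2$ are two quads in $\QUAD_\N$, then a primal crossing of $Q_1$ is ``implied by'' one of $Q_2$ at the level of the associated networks $\Net_{Q_1,q}$ and $\Net_{Q_2,q}$ (one can compare quads $R$ witnessing edges), hence $Z_{Q_1}(q)\geq Z_{Q_2}(q)$ for rational $q$, and then for all $t$ by right-continuity; the analogous statement for the dual crossings handles the $\boxup$-type closure condition. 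I would also note here that $\omega_\infty^\eps(0)=\omega_\infty$: at $t=0$ the point process $\PPP_0$ is empty, the network $\Net_{Q,0}$ has only the boundary vertices, and $Z_Q(0)=f_{Q,0}=1_{\boxminus_Q}(\omega_\infty)$ by the definition of $\Net_Q^r(\omega_\infty,\emptyset)$ together with Theorem~\ref{th.SScardy}.

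The second step is the \cadlag property. Since the processes $q\mapsto Z_Q(q)$ are piecewise constant with discontinuities contained in the a.s.\ finite set $\switch_T$ (uniformly in $Q$, by construction in Definition~\ref{d.ZQ}), the map $t\mapsto\omega_\infty^\eps(t)$ is constant on each of the finitely many open intervals between consecutive switching times, and right-continuity at a switching time $t_i$ follows because $Z_Q(t_i)=\lim_{q\downarrow t_i}Z_Q(q)$ for every $Q$, so $\omega_\infty^\eps(t_i)$ and $\lim_{q\downarrow t_i}\omega_\infty^\eps(q)$ have the same crossing indicators on $\QUAD_\N$ and hence coincide by uniqueness. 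For the existence of left limits one argues the same way, using that $Z_Q(q)$ stabilizes as $q\uparrow t_i$; the left limit is again pinned down in $\HH$ by uniqueness of the configuration realizing the limiting indicator family. Thus $t\mapsto\omega_\infty^\eps(t)$ is a genuine \cadlag path with discontinuity set $\subset\switch_T$.

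Next, measurability: each $Z_Q(t,\omega_\infty,\PPP_T)$ is, by Definition~\ref{d.ZQ}, a measurable function of $(\omega_\infty,\PPP_T)$ (it is built from the networks $\Net_{Q,q}$, which are measurable by Lemma~\ref{l.mesNET} and Theorem~\ref{th.BC}, composed with the measurable coordinate $\phi_q=1_{\PPP_q^+}$), and the countably many maps $(\omega_\infty,\PPP_T)\mapsto Z_Q(q)$, $Q\in\QUAD_\N$, $q\in\Q\cap[0,T]$, jointly determine $\omega_\infty^\eps(\cdot)$. Since the Borel $\sigma$-field of $\HH$ is generated by $\{\boxminus_Q:Q\in\QUAD_\N\}$ (Theorem~\ref{th.compact}) and the Skorohod space $\Sk_T$ has its Borel structure generated by the evaluation-type maps through a countable dense set of times, the measurability of $\omega_\infty^\eps(\cdot)$ as a $\Sk_T$-valued random variable follows. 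Uniqueness up to indistinguishability is then immediate: any two \cadlag processes satisfying $1_{\boxminus_Q}(\cdot)=Z_Q(\cdot)$ for all $Q\in\QUAD_\N$ agree at every rational time by the uniqueness part of the consistency lemma, and agreeing on a dense set of times forces two \cadlag paths to be indistinguishable.

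The main obstacle I expect is the first step — checking that $\{Z_Q(t)\}_Q$ satisfies the consistency conditions of Lemma~\ref{l.unique}, i.e.\ that the network construction is genuinely ``coherent across quads.'' Booleanness (Corollary~\ref{c.boolean}) gives, for a \emph{single} $Q$, that $f_{Q,q}$ is well defined, but one must additionally check the cross-quad monotonicity $Q_1<Q_2\Rightarrow(Z_{Q_1}\geq Z_{Q_2})$ and the corresponding statement controlling the $\boxup_U$ side, and verify that no new pathology is introduced by the $r$-mesoscopic approximation limit. The clean way is to do this at the discrete level: for $\omega_\eta$ these coherence relations hold trivially because $Z_Q$ is literally $1_{\boxminus_Q}(\omega_\eta^\eps(q))$ for an honest percolation configuration; then one transfers them to the limit using Proposition~\ref{p.discrete} (stability of networks under $\omega_\eta\to\omega_\infty$) together with the four-arm event bookkeeping from Lemma~\ref{l.meas4arm}, exactly as in the proof of Lemma~\ref{l.stab}. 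Once coherence is established, everything else is routine topology on $\HH$ and $\Sk_T$.
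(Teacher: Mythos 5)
Your overall skeleton --- verify the consistency conditions of Lemma~\ref{l.unique} for the family $\{Z_Q(t)\}_{Q\in\QUAD_\N}$, extend at each time to a configuration in $\HH$, then deduce the \cadlag property, measurability and uniqueness from piecewise constancy in $\switch_T$ and from the fact that $\{\boxminus_Q:Q\in\QUAD_\N\}$ generates the Borel $\sigma$-field --- is the same as the paper's. The genuine gap is in your main argument for the key step. Your claim that monotonicity ($Q<Q'$ and $Z_{Q'}(q)=1$ imply $Z_Q(q)=1$) follows by ``comparing quads $R$ witnessing edges'' of $\Net_{Q',q}$ and $\Net_{Q,q}$ is precisely the naive argument the paper singles out as fallacious: a quad realising an open edge in the network of the larger quad $Q'$ need not satisfy the constraints defining the network of the smaller quad $Q$ (it can interact with the excluded squares or with $\p Q$ in a forbidden way; this is exactly the troublesome quad $R_3^r$ of Figure~\ref{f.careful}), so no direct implication between the Boolean functions $f_{Q',q}$ and $f_{Q,q}$ is available at the continuum level.

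Your last paragraph correctly identifies the fix --- prove the coherence for the discrete cut-off dynamics, where it is trivial, and transfer to the limit --- and this is indeed the paper's route, but the transfer requires more than Proposition~\ref{p.discrete} and Lemma~\ref{l.meas4arm}. Since $Z_Q(q)$ is a functional of $(\omega_\infty,\PPP_q(\mu^\eps(\omega_\infty)))$ while the discrete crossing indicator is a functional of $(\omega_\eta,\PPP_q(\mu^\eps(\omega_\eta)))$, one must first couple the two pairs so that the Poisson clouds share their switching times and their dyadic squares (Lemma~\ref{l.Pcoupling}, Corollary~\ref{c.coupling}), and then establish the comparison $\Pb{\{\omega_\eta^\eps(q)\in\boxminus_Q\}\,\Delta\,\{Z_Q(q)=1\}}\to 0$ (Proposition~\ref{pr.compZ}, whose proof uses the stabilization Theorem~\ref{th.BC} together with six-arm estimates); only then does monotonicity follow, by writing $\Pb{Z_q(Q')=1,\,Z_q(Q)=0}$ as a limit of discrete probabilities that vanish because $\omega_\eta^\eps(q)$ is an honest percolation configuration. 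Moreover, condition~(ii) of Lemma~\ref{l.unique} is not the ``dual-crossing analogue'' of (i): in the paper it needs a separate quantitative estimate, namely $\Pb{Z_q(\tilde Q_k)=1,\,Z_q(Q)=0}\le C_Q\,2^{-k}$ for the inner approximating quads $\tilde Q_k$, proved via a boundary three-arm event and then summed over $k$ to obtain the almost sure statement; your sketch does not account for this step.
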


The difficulty in proving this Theorem is to prove that one can simultaneously ``extend'' these projected processes into a single trajectory $t\mapsto \omega_\infty^\eps (t)$ which remains consistent with the processes $Z_q$ and at the same time remains in the space $\HH$. The proof of this Theorem is postponed to Subsection \ref{ss.extension}. Meanwhile we introduce an extension tool which will allow us to prove Theorem \ref{th.extension}. 

\subsection{An extension lemma}\label{ss.remainsH}


In order to define a process as in Theorem \ref{th.extension}, we will need to simultaneously extend  the information provided by the collection of all the processes $Z_Q(\cdot)$. The following Lemma gives sufficient conditions for the existence of a such an extension.

\begin{lemma}\label{l.unique}
Suppose we are given a map $\phi: \QUAD_\N\to \{0,1\}$ satisfying the following constraints:
\bi
\item[(i)] Monotonicity : If $Q,Q'\in \QUAD_\N$, $Q<Q'$ and $\phi(Q')=1$, then $\phi(Q)=1$ as well.
\item[(ii)] Closeness : for any sequence $(Q_n)_{n\geq 0} \in \QUAD_\N$ and any quad  $Q \in \QUAD_\N$ with $Q_n<Q$ for all $n\geq 0$ and $Q_n\to Q$,  if one has $\limsup \phi(Q_n) = 1$, then $\phi(Q)=1$. 
\ei
Then, there exists is a unique element $\omega =\omega_\phi \in \HH$ which extends $\phi$ in the following sense:
for any $Q\in \QUAD_\N$, $Q\in \omega$ if and only if $\phi(Q)=1$. 
\end{lemma}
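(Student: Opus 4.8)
The plan is to construct the candidate $\omega_\phi$ explicitly from $\phi$ and then verify it is a closed hereditary subset of $\QUAD_D$ which restricts to $\phi$ on $\QUAD_\N$, and finally argue uniqueness. First I would define, for a general quad $Q \in \QUAD_D$, the rule $Q \in \omega_\phi$ if and only if there exists $Q' \in \QUAD_\N$ with $Q' < Q$ and $\phi(Q') = 1$. This is the only reasonable definition: heredity forces that if $Q \in \omega_\phi$ and $Q' < Q$ with $Q' \in \QUAD_\N$ then $\phi(Q') = 1$, and closedness (together with density of $\QUAD_\N$ and the fact that $Q'' < Q' < Q$ for a suitable approximating sequence $Q'' \in \QUAD_\N$) forces the converse implication. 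So the content is to check that this $\omega_\phi$ (a) is hereditary, (b) is closed in the topology $\T$ (equivalently, is a closed subset of $\QUAD_D$ for $d_\QUAD$ in the sense required for membership in $\HH$), and (c) satisfies $Q \in \omega_\phi \iff \phi(Q)=1$ for $Q \in \QUAD_\N$.

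For heredity: if $Q \in \omega_\phi$ and $Q'' < Q$ with $Q'' \in \QUAD_D$ arbitrary, pick $Q' \in \QUAD_\N$ with $Q' < Q$ and $\phi(Q')=1$; I need a quad $R \in \QUAD_\N$ with $R < Q''$ and $\phi(R) = 1$. Here one uses that $<$ is an open relation: since $Q'' < Q$ there are neighborhoods witnessing it, and since $\QUAD_\N$ is dense one can slip a dyadic quad $R \in \QUAD_\N$ with $Q' \le R$ (hence $\phi(R)=1$ by monotonicity, property (i)) and $R < Q''$. The elementary geometric fact needed is that if $Q' < Q$ and $Q'' < Q$ then one can find $R \in \QUAD_\N$ with $Q' \le R < Q''$ — this is a routine "interpolation between quads" argument, comparing the annular regions between boundary arcs, and is the kind of lemma already implicit in the definitions of $\bar Q_k$ and $\hat Q_k$ in Section~\ref{s.unif}. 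For the restriction property on $\QUAD_\N$: if $\phi(Q)=1$ then picking an approximating sequence $Q_n \to Q$, $Q_n < Q$, $Q_n \in \QUAD_\N$, property (ii) (closeness) is not quite what gives $\phi(Q_n)=1$ — rather I need the reverse: I want some $Q' \in \QUAD_\N$, $Q' < Q$, with $\phi(Q')=1$; take $Q' = Q_n$ for large $n$, and note $\phi(Q_n) = 1$ follows because $Q_n < Q$ and heredity-type monotonicity within $\QUAD_\N$ would need $\phi(Q) \Rightarrow \phi(Q_n)$, which is exactly monotonicity (i) read with $Q' = Q_n < Q = Q$. Conversely if some $Q' \in \QUAD_\N$ has $Q' < Q$ and $\phi(Q')=1$, then $\phi(Q)$... no: I need $\phi(Q)=1$ when $Q \in \omega_\phi$, i.e. from $\exists Q' < Q$ in $\QUAD_\N$ with $\phi(Q')=1$; take a sequence $Q_n \uparrow Q$ in $\QUAD_\N$ with $Q' \le Q_n < Q$, so $\phi(Q_n)=1$ by (i), and then $Q_n \to Q$ with $\limsup \phi(Q_n)=1$ gives $\phi(Q)=1$ by (ii). Good — that is precisely where closeness (ii) is used.

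For closedness of $\omega_\phi$ in $\HH$: I must show that if $Q_n \to Q$ in $\QUAD_D$ with $Q_n \in \omega_\phi$ and $Q_n < Q$ for all $n$ (the relevant notion of limit for hereditary sets is via $<$, cf. the definition of $\boxup_U$ and the closedness requirement in Definition~\ref{d.spaceH}), then $Q \in \omega_\phi$. For each $n$ pick $R_n \in \QUAD_\N$ with $R_n < Q_n$ and $\phi(R_n)=1$. If one of the $Q_n$ already satisfies $Q_n < Q$ strictly in a uniform way this is immediate, but in general one uses the interpolation lemma again: from $R_n < Q_n < Q$ select $R \in \QUAD_\N$ with $R_n \le R < Q$ for some fixed large $n$ — wait, this needs $R_n < Q$, which follows from $R_n < Q_n$ and $Q_n < Q$ by transitivity of $<$ (itself elementary: crossings compose). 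Then $\phi(R) = 1$ by monotonicity and $R < Q$, so $Q \in \omega_\phi$. Actually even more directly: $R_n < Q_n < Q$ already gives $R_n < Q$, and $R_n \in \QUAD_\N$, $\phi(R_n)=1$, so $Q \in \omega_\phi$ by definition, no interpolation needed. So closedness is in fact the easy part, given transitivity of $<$. Uniqueness: any $\omega \in \HH$ extending $\phi$ must, by heredity, contain every $Q$ with a $\QUAD_\N$-predecessor of $\phi$-value $1$, so $\omega \supseteq \omega_\phi$; and by the closedness plus heredity of $\omega$ together with density of $\QUAD_\N$, any $Q \in \omega$ has a $\QUAD_\N$ quad $Q' < Q$ with $Q' \in \omega$, hence $\phi(Q')=1$, hence $Q \in \omega_\phi$; so $\omega = \omega_\phi$.

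The main obstacle I anticipate is making the "interpolation between quads" and "transitivity/openness of $<$" steps rigorous with the metric $d_\QUAD$ of Definition~\ref{d.quad}: one must argue that between a strictly-dominated quad and a dominating quad there is genuine room, in the uniform-on-boundary metric, to insert a polygonal dyadic quad of $\QUAD_\N$ while preserving the strict order relations — this is geometrically clear but fiddly to write, and is the only place where density of $\QUAD_\N$ and the structure of the partial order $<$ (as opposed to $\le$) really get used. Everything else is bookkeeping with the definitions of hereditary and closed subsets.
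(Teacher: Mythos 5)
There is a genuine gap, and it is at the very first step: your explicit candidate has the partial order the wrong way around. With the paper's convention ($Q_1\le Q_2$ when every crossing of $Q_2$ contains a crossing of $Q_1$, so membership in a configuration propagates \emph{downward}), the set $\{Q:\exists\, Q'\in\QUAD_\N,\ Q'<Q,\ \phi(Q')=1\}$ is an \emph{up}-set, not a hereditary (down-closed) set, and it does not even agree with $\phi$ on $\QUAD_\N$: take $\phi$ to be the restriction to $\QUAD_\N$ of an honest discrete configuration; a short middle rectangle $Q'<Q$ may be crossed while the long rectangle $Q$ is not, yet your rule declares $Q\in\omega_\phi$. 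The place where this hides in your verification is the claim that from one witness $Q'<Q$ with $\phi(Q')=1$ you can produce $Q_n$ with $Q'\le Q_n<Q$, $Q_n\to Q$ and $\phi(Q_n)=1$ ``by (i)'': monotonicity (i) transfers the value $1$ from a harder quad to an easier one, never upward, so this step fails, and so does the assertion that closedness plus density ``forces the converse implication''. The same reversed reading of heredity appears in your uniqueness argument (``must contain every $Q$ with a $\QUAD_\N$-predecessor of $\phi$-value $1$''), and the auxiliary interpolation claim (an $R\in\QUAD_\N$ with $R<Q'$ and $R<Q''$, or $Q'\le R<Q''$, for two arbitrary quads) is false in general: if $[Q']$ and $[Q'']$ are disjoint, no quad lies below both, since a crossing of one cannot contain a crossing of the other.

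The fix is to reverse your quantifier: define $Q\in\omega_\phi$ iff $\phi(Q')=1$ for \emph{every} $Q'\in\QUAD_\N$ with $Q'<Q$; equivalently (this is what the paper does) extend $\phi$ to $\QUAD_D$ by taking the limit of $\phi(Q_n)$ along increasing sequences $Q_n\in\QUAD_\N$, $Q_n<Q$, $Q_n\to Q$, checking by an interleaving argument that the limit does not depend on the sequence. With this definition heredity is immediate from transitivity of $<$; agreement with $\phi$ on $\QUAD_\N$ uses (i) in one direction and (ii), applied to a sequence $Q_n<Q$, $Q_n\to Q$, in the other; and closedness of $\omega_\phi$ in $\QUAD_D$ is obtained by choosing, for $\bar Q_n\to Q$ with $\bar Q_n\in\omega_\phi$, quads $Q_n\in\QUAD_\N$ with $Q_n<\bar Q_n$, $Q_n<Q$ and $Q_n\to Q$ (note that closedness must be proved for arbitrary convergent sequences in $\QUAD_D$, not only those with $\bar Q_n<Q$). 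Uniqueness is likewise argued from below: if $\omega,\omega'$ both extend $\phi$ and one contains some $Q$, heredity forces $\phi(Q_n)=1$ for all approximating $Q_n<Q$ in $\QUAD_\N$, and then closedness of the other configuration forces $Q$ into it as well.
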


\ni
{\bf Proof:}
Note that the uniqueness of $\omega_\phi$ follows from Theorem \ref{th.compact}. 
Yet, since the argument for the uniqueness part is straightforward, let us prove it without relying on Theorem \ref{th.compact}: suppose there exist two different configurations $\omega,\omega'\in \HH$ compatible with the map $\phi$ in the above sense. This means that one can find at least one quad $Q \notin \QUAD_\N$ for which one has $\{ \omega\in \boxminus_Q \} \Delta  \{ \omega'\in \boxminus_Q \}$. Since $\QUAD_\N$ is dense in $\QUAD_D$, one can find a sequence $(Q_n)_{n\geq 0}$ in $\QUAD_\N$ which is such that $Q_n \to Q$ and $Q_n<Q$ for all $n\geq 0$. Since either $\omega$ or $\omega'$ belongs to $\boxminus_Q$, 
by assumption $(i)$ we necessarily have $\phi(Q_n)=1$ for all $n\geq 0$. This implies that for all $n\geq 0$, $\omega$ and $\omega'$ both belong to $\boxminus_{Q_n}$. Since $\omega,\omega'$ are in $\HH$, they are closed hereditary subsets which implies that both $\omega$ and $\omega'$ satisfy $\boxminus_Q$ which thus leads to a contradiction.
\medskip

We now turn to the existence of such a configuration $\omega=\omega_\phi$. 
We build the configuration $\omega$ as follows. 
For any $Q\in \QUAD_D\setminus \QUAD_\N$, we can find an increasing sequence of quads $Q_n \in \QUAD_\N$, such that $Q_n < Q$ and $Q_n\to Q$. From assumption $(i)$, the sequence $\phi(Q_n)$ has a limit $l\in \{0,1\}$. We need to check that this limit does not depend on the chosen subsequence. This is straightforward: if $(Q_n')$ is another sequence of quads in $\QUAD_\N$ with $Q_n'<Q$ and $Q_n\to Q$, then it is easy to check that for any $n\geq 0$, there exists $N_n, N'_n \geq 0$ so that 
\begin{align*}\label{}
\begin{cases}
Q_n < Q'_{N'_n} \\
Q'_n < Q_{N_n}
\end{cases}\,,
\end{align*}
which implies by assumption $(i)$ that $\lim \phi(Q_n) = \lim \phi(Q'_n)=l$. Therefore we are able to extend in a consistent way the map $\phi:\QUAD_\N \to \{0,1\}$ to a map $\phi : \QUAD_D \to \{0,1\}$. It remains to show that the configuration $\omega_\phi$ defined by
\[
\omega_\phi:= \{ Q\in \QUAD_D,\, \phi(Q)=1 \}\,,
\]
is a closed hereditary subset and is thus in $\HH$. 
The fact that $\omega_\phi$ is an hereditary subset is straightforward from our construction and from assumption $(i)$. 
To see that it is closed, let $\bar Q_n\in \QUAD_D $ which converges towards $Q\in \QUAD_D$ and suppose all $\bar Q_n$ are in $\omega$. It is easy to check that one can define a sequence of quads $Q_n\in \QUAD_\N$ such that $Q_n<\bar Q_n$ as well as $Q_n<Q$ for all $n\geq 0$ and $Q_n \to Q$. Since $\omega$ is hereditary, we have that $\phi(Q_n)=1$ for all $n\geq 0$. Now, if the limit $Q$ is itself in $\QUAD_\N$, assumption $(ii)$ guarantees that $Q\in \omega$ as well and if $Q\in \QUAD_D\setminus \QUAD_\N$, then by our extension of $\phi$ which does not depend on the chosen sequence of quads, we must have $\phi(Q)=1$ and thus $Q\in \omega$.  \qed

\medskip

At this point, proving Theorem \ref{th.extension} essentially boils down to proving that for almost all $(\omega_\infty, \PPP_T)$, the map
\[ Z_q :  
\begin{array}{ccc}
\QUAD_\N & \longrightarrow  &\{0,1\} \\
 Q & \mapsto  & Z_Q(q)
\end{array}
\]
 satisfies the conditions $(i)$ and $(ii)$ of Lemma \ref{l.unique} for all $q\in \Q\cap [0,T]$ (see the statement of Proposition \ref{pr.extension}).  
Nevertheless, the proof of Theorem \ref{th.extension} will be postponed to Subsection \ref{ss.extension}. Indeed, even though the monotonicity property $(i)$ seems very intuitive, it appears to be quite delicate to prove without relying on a coupling with the discrete 
dynamics $(\omega_\eta^\eps(q))$.
 Indeed, one natural approach is to use the fact that there is an open path in $\Net_{Q',q}$ from $\p_1Q'$ to $\p_3 Q'$. For all $r\in 2^{-\N}$ small enough, one can thus find a set of quads $R_1^r,\ldots R_k^r$ which realise this open path. One might be tempted to claim that such a set of quads also realises an open path in $\Net^r_{Q,q}$ for the smaller quad $Q$ but as is illustrated in Figure \ref{f.careful}, this is not always the case. This is why we postpone the proof of this Proposition to Subsection \ref{ss.extension} and in the mean time, we introduce a coupling between the process $\omega_\eta^\eps(t)$ and what will be the process $\omega_\infty^\eps(t)$. 

 \begin{figure}
 \begin{center}
 \includegraphics[width=0.7\textwidth]{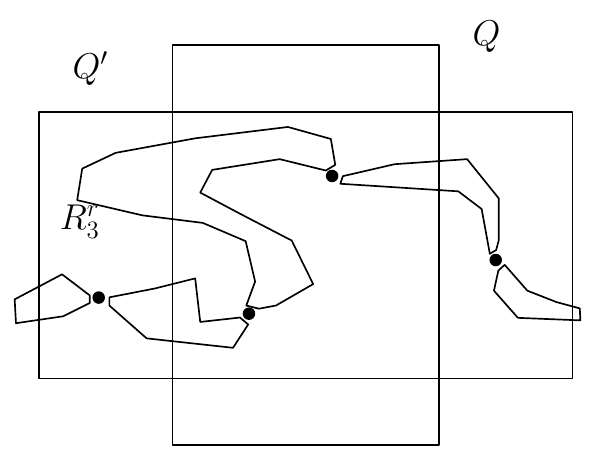}
 \end{center}
 \caption{For the monotony property, the quad $R_3^r$ is troublesome}\label{f.careful}
 \end{figure}
 
 \bigskip

\subsection{A coupling of $\omega_\eta^\eps(t)$ with $(\omega_\infty, \PPP_T(\mu^\eps(\omega_\infty)))$}\label{ss.couplingPPP}

In this subsection, we wish to couple 
$\omega_\eta^\eps(t)$ and $\omega_\infty^\eps(t)$ so that with high probability, they will remain close to each other.
(There is a slight abuse of notation here since we did not yet prove Theorem \ref{th.extension} and thus did not yet define properly $\omega_\infty^\eps(t)$. This will be handled in Subsection \ref{ss.extension}.)  
Recall our main result from \cite{\GPSa}, i.e. Theorem \ref{th.PM} and Corollary \ref{c.PM}.
We have for all fixed $\eps>0$:
\[
(\omega_\eta, \mu^\eps(\omega_\eta)) \overset{(d)} {\longrightarrow} (\omega_\infty, \mu^{\eps}(\omega_\infty))\,.
\] 

The measures $\mu^\eps(\omega_\eta)$ and $\mu^\eps(\omega_\infty)$ are a.s. finite measures on the compact set $\bar D$. 
It is well-known that  the topology of weak-convergence for measures on $\bar D$ is metrizable, 
the so-called Prohorov's metric being one of the possible choices. 
Recall the Prohorov metric on the space $\M= \M(\bar D)$ of finite measures on $\bar D$ is defined as follows:
 for any $\mu,\nu \in \M(\bar D)$, let 
\begin{equation}\label{e.Proho}
d_\M(\mu,\nu):=
\inf \left\{ \eps>0\,, \text{ s.t } \forall \text{ closed set } A \subset \bar D\,, 
\begin{array}{c}
\mu(A^\eps)\le \nu(A)+\eps \\
\textrm{and}\\
\nu(A^\eps) \le \mu(A)+\eps
\end{array}\right\}\,.
\end{equation}

It is well-known (see \cite{\Proho}) that the metric space $(\M(\bar D), d_\M)$ is a complete separable metric space
(in particular, one can apply Skorohod representation theorem).  Furthermore, $\mu_i$ converges weakly towards $\mu\in\M(\bar D)$ if and only if $d_\M(\mu_i,\mu)\to 0$. 

It thus follows from Theorem \ref{th.PM} that one can define a joint coupling of $(\omega_\eta, \mu^\eps(\omega_\eta))$ and $(\omega_\infty, \mu^\eps(\omega_\infty))$ so that a.s. as $\eta\to 0$, 
\begin{align*}\label{}
\begin{cases}
d_\HH(\omega_\eta, \omega_\infty) \to 0  \\
\text{and }\\
d_\M(\mu^\eps(\omega_\eta) , \mu^\eps(\omega_\infty)) \to 0
\end{cases}
\end{align*}

Using this joint coupling, our next step is to couple the Poisson point processes $\PPP_T(\mu^\eps(\omega_\eta))$  and $\PPP_T(\mu^\eps(\omega_\infty))$ so that they are asymptotically close (as $\eta \to 0$). 
We will need the following general lemma. 

\begin{lemma}\label{l.Pcoupling}
We consider the space $\M$ of finite measures on the square $[0,1]^2$ (the extension to our domain $\bar D$ is straightforward). Let $T>0$ be any fixed time.  Suppose $\mu,\nu \in \M$ are such that $d_\M(\mu ,\nu) <\delta$ and $\mu([0,1]^2)<M$, then one can couple $\PPP_T(\mu)$ with $\PPP_T(\nu)$ so that with probability at least 
$1- 12(T+M)\delta^{1/20}$, one has 

\bi  
\item[(i)]  $\#\PPP_T (\mu) = \#\PPP_T(\nu) = k \in \N$ 
\item[(ii)]  If $\PPP_T(\mu) = \{(x_1,t_1), \ldots, (x_k,t_k), 0< t_1< \ldots <t_k <T \}$ and  $\PPP_T(\nu) = \{(y_1,u_1), \ldots, (y_k,u_k), 0<u_1< \ldots <u_k<T \}$, then 
for all $1 \le i \le k$, one has $t_i = u_i$ and $x_i$ and $y_i$ are in the same $r$-square of the grid $G_r$ (see Definition \ref{d.nested}), where
\begin{equation}\label{e.GR}
r =\inf\{ u \in 2^{-\N},\, u\geq 4^{1/20}\delta^{1/20} \}\,.
\end{equation}
In particular, one has $B^{2r}(\PPP_T(\mu))=B^{2r}(\PPP_T(\nu))$. 
\ei
\end{lemma}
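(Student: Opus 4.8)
The plan is to build the coupling of $\PPP_T(\mu)$ and $\PPP_T(\nu)$ by combining two standard ingredients: a coupling of the (scalar) total masses realizing the total-variation distance between two Poisson random variables, and a coupling of i.i.d.\ samples from the two normalized measures $\mu/\mu([0,1]^2)$ and $\nu/\nu([0,1]^2)$ obtained from the Prohorov bound. Concretely, first I would note that the number of points $\#\PPP_T(\mu)$ is Poisson with parameter $T\mu([0,1]^2)$, and $\#\PPP_T(\nu)$ is Poisson with parameter $T\nu([0,1]^2)$. Since $d_\M(\mu,\nu)<\delta$ (applied to the closed set $A=[0,1]^2$, giving $|\mu([0,1]^2)-\nu([0,1]^2)|<\delta$), the two Poisson parameters differ by at most $T\delta$, so one can couple the two counts to agree with probability at least $1-C\,T\delta$ for a universal $C$; this gives $(i)$ with $k\in\N$ common. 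I would similarly couple the (conditionally i.i.d.\ uniform on $[0,T]$) time-marks so that $t_i=u_i$ for all $i$, which costs nothing extra since the time-marginal is the same Lebesgue measure on $[0,T]$.

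Second, conditioned on $\#\PPP_T(\mu)=\#\PPP_T(\nu)=k$, the spatial points $x_1,\dots,x_k$ are i.i.d.\ with law $\bar\mu:=\mu/\mu([0,1]^2)$ and $y_1,\dots,y_k$ i.i.d.\ with law $\bar\nu:=\nu/\nu([0,1]^2)$. From $d_\M(\mu,\nu)<\delta$ and $\mu([0,1]^2)<M$ one deduces a Prohorov-type bound $d_\M(\bar\mu,\bar\nu)<\delta'$ for some $\delta'$ of order $\delta/\min(\mu([0,1]^2),\nu([0,1]^2))$; to avoid dividing by a possibly tiny total mass one treats the easy case where the total mass is itself $\le\delta^{1/20}$ (then with high probability $k=0$ and there is nothing to prove) separately, so one may assume the total mass is bounded below and $\delta'$ is controlled polynomially in $\delta$. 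A single pair $(x_i,y_i)$ can then be coupled, via the standard equivalence between the Prohorov/L\'evy metric and coupling distance, so that $|x_i-y_i|<\delta''$ with probability at least $1-\delta''$, where $\delta''$ is again polynomially comparable to $\delta$; taking a union bound over the $\le k$ points and using $\Eb{k}=T\mu([0,1]^2)<TM$ (plus a tail bound to handle atypically large $k$) yields that with probability at least $1-C(T+M)\delta^{1/20}$ every pair satisfies $|x_i-y_i|<\delta''$ with $\delta''<r/2$, where $r$ is as in \eqref{e.GR}. Since $r\in 2^{-\N}$ and $\delta''<r/2\le r/\sqrt2$, whenever $x_i,y_i$ are within $r/2$ of each other they either lie in the same $r$-square of $G_r$ or in two adjacent ones; a small perturbation of the exponent $1/20$ (which is why the statement is stated with room to spare) together with the $(r/4,r/4)$-shift in the definition of $B^{2r}$ in Definition \ref{d.nested} absorbs the adjacency issue, giving $x_i$ and $y_i$ in the same $G_r$-square and hence $B^{2r}(\PPP_T(\mu))=B^{2r}(\PPP_T(\nu))$, which is $(ii)$.

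The main obstacle I anticipate is purely bookkeeping: carefully tracking how the Prohorov distance $\delta$ between the unnormalized measures propagates, first to the normalized measures, then through the i.i.d.\ coupling, then through the union bound over a random number $k$ of points whose size itself must be controlled, all while ending up with a clean exponent like $\delta^{1/20}$ and a clean prefactor $12(T+M)$. None of the individual steps is deep — each is a textbook Poisson-coupling or Prohorov-to-coupling estimate — but getting all the constants and the dichotomy (small total mass vs.\ not) to line up so the final bound is exactly of the stated form is the part that requires care. I would organize the write-up so that all the slack is pushed into the generous exponent $1/20$, so that no inequality needs to be tight.
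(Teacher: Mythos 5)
The global structure of your coupling (match the total counts, identify the time marks, then couple the conditionally i.i.d.\ spatial samples pairwise so that $|x_i-y_i|$ is small, with all constants absorbed by the exponent $1/20$) is fine up to bookkeeping, but the last step is a genuine gap, and it is exactly where the content of the lemma lies. Conclusion (ii) does not ask that $x_i$ and $y_i$ be close; it asks that they lie in the \emph{same} square of the fixed grid $G_r$, and the ``in particular'' $B^{2r}(\PPP_T(\mu))=B^{2r}(\PPP_T(\nu))$ genuinely needs this: if a pair lies in two adjacent $G_r$-squares, however close the two points are, the associated enclosing squares from Definition~\ref{d.nested} differ (the $(r/4,r/4)$-shift there is a fixed translation of the enclosing grid and is irrelevant to which $G_r$-square contains a given point). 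Pairwise closeness cannot be upgraded to same-square membership under the stated hypotheses: nothing in ``$d_\M(\mu,\nu)<\delta$ and $\mu([0,1]^2)<M$'' prevents $\mu$ from carrying non-negligible mass within distance $\delta$ of a gridline of $G_r$, with $\nu$ obtained by pushing that mass across the line; a Strassen-type coupling then produces pairs that straddle the gridline with probability bounded away from $0$ no matter how small the closeness scale $\delta''$ is, so no choice of exponents ``absorbs the adjacency issue''. The obstruction is structural, not quantitative, and your plan contains no mechanism for controlling where the coupled pairs sit relative to the gridlines.

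The paper's proof is organized differently precisely to avoid this: it partitions $[0,1]^2$ into the $R=r^{-2}$ squares $S_1,\dots,S_R$ of the grid at scale $r$, uses the hypothesis $d_\M(\mu,\nu)<\delta$ square by square to compare $\mu(S_i)$ with $\nu(S_i)$, and couples the two Poisson \emph{counts} in each square (Binomial thinning when $\nu(S_i)<\mu(S_i)$, adding an independent Poisson number of points otherwise), showing that with probability at least $1-12(T+M)\delta^{1/20}$ all $R$ counts agree simultaneously; the error terms come from an extra Poisson point appearing in some square, from thinning in squares of very small mass, and from some count being atypically large. Once the per-square counts agree, the points are matched within each square and given identical time marks, so same-square membership, and hence $B^{2r}(\PPP_T(\mu))=B^{2r}(\PPP_T(\nu))$, holds by construction. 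Some device of this kind --- aligning the two processes square by square, i.e.\ using the metric hypothesis to control the masses of the individual $G_r$-squares rather than merely to move matched points a little --- is the missing ingredient in your proposal.
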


\ni
{\bf Proof of Lemma \ref{l.Pcoupling}:}
\ni
Let $r$ be defined as in equation~\eqref{e.GR}.
As in Definition \ref{d.nested}, divide $[0,1]^2$ into $R=r^{-2}$ disjoint squares of side-length $r$ and of the form $[0,r)^2$. 
 Call these squares $S_1, \ldots, S_R$. 
For each square $S_i$, let us call $\mu_i:= \mu(S_i)$ and $\nu_i:= \nu(S_i)$.
In each such square $S_i$, the number of points which will fall in that square for $\PPP_T(\mu)$ is a Poisson variable of mean $T\, \mu_i$. Let us sample independently for all $i\in [1,R]$, $X_i \in \N \sim \mathrm{Poisson}(T\,\mu_i)$.  Now, we have that $\nu_i\le \nu(S_i^{2\delta}) \le \mu(S_i)+ 2\delta=\mu_i+2 \delta$. We now couple $Y_i \sim \mathrm{Poisson}(\nu_i)$ with $X_i$. We distinguish two cases:
\bi
\item[(i)] If $\nu_i< \mu_i$, sample $Y_i$ using a Binomial random variable $\mathrm{Binom}(X_i, \frac{\nu_i}{\mu_i})$.
\item[(ii)] Otherwise $Y_i \sim X_i + \mathrm{Poisson}(T \nu_i-T \mu_i)$.
\ei

We claim that with probability at least $1- (2 R T \delta)$ we have $Y_i \le X_i$ for all $1\le i \le R$. Indeed the probability that $Y_i>X_i$ is bounded from above by $R \times \Pb{\mathrm{Poisson}(2\,T\,\delta) \geq 1} \le 2 R T \delta$. 
\smallskip

We now wish to show that with high probability one has for all $1\leq i \leq R$, $Y_i \geq X_i$. By our coupling, this is already the case for the squares $S_i$ with $\nu_i \geq \mu_i$. Let us analyse what happens for squares $S_i$ in situation $(i)$, i.e. when $\nu_i <\mu_i$. Since $\nu_i / \mu_i$ might be very small, we will divide this set of squares as follows:
\begin{align*}\label{}
\begin{cases}
I^-:= \{ 1\leq i \leq R, \, \mu_i < r^3 \text{ and } \nu_i < \mu_i \} \\
I^+:= \{ 1 \leq i \leq R, \, \mu_i \geq r^3 \text{ and } \nu_i < \mu_i\}
\end{cases}
\end{align*}
Clearly, the squares $S_i$ with $i\in I^-$ contribute very little, since $\sum_{i \in I^-} \mu_i < r^{-2}r^3 =r$. 
On these squares, we have by definition of our coupling $Y_i \leq X_i$. But since $\sum_{i\in I^-} X_i$ is a Poisson variable of parameter bounded by $T \,r$, the probability that $Y_i<X_i$ for at least one $i\in I^-$ is bounded from above by $\Pb{\mathrm{Poisson}(T\, r)\geq 1} \leq T\, r$. It remains to control what happens for  $i\in I^+$. 
In this case since $\nu_i < \mu_i$ we use item $(i)$ to sample $Y_i$. But notice that by our assumption, we have $\mu_i - 2\delta < \nu_i < \mu_i$ which leads to $1- \frac{2\delta}{\mu_i} < \frac {\nu_i}{\mu_i} < 1$ and since in this case $\mu_i \geq r^3$ we have $\frac {\nu_i} {\mu_i} > 1- r^{17}$ ( recall that $\delta < r^{20}/2$ by~\eqref{e.GR}). 
Now, notice that with high probability all $X_i$ are smaller than $M/r^3$, indeed
\begin{align*}\label{}
\Pb{X_i \geq \frac {M} {r^3} } \leq \frac{\Eb{X_i}}{M} \, r^3  \leq T\, r^3\,,
\end{align*}
which implies $\Pb{\exists i, \, X_i \geq M r^{-3}} \leq T\, r$. On the event that all $X_i$ are smaller than $M\, r^{-3}$, 
if $i\in I^+$ since $Y_i \sim \mathrm{Binom}(X_i, \frac{\nu_i}{\mu_i})$, we have 
\begin{align*}\label{}
\Pb{Y_i < X_i} \leq \frac M {r^{3}} r^{17} = M\, r^{14}\,,
\end{align*}
which implies, 
\begin{align*}\label{}
\Pb{\exists i \in I^+,\,  Y_i< X_i} \leq M\, r^{12}\,.
\end{align*}
Summarising the above analysis, we obtain that with probability at least $1- (2\,T \, r^{-2}\, \delta +  2T\, r + M\, r^{12})$, one has $X_i = Y_i\, \forall 1\leq i \leq R$.  Since $r^{20}/8<\delta< r^{20}/2$, we thus obtain the following upper bound on the probability that our event is not satisfied: 
\begin{align*}\label{}
2 T\, r^{-2} \delta + 2 T\, r + M\, r^{12} & \leq T\, r^{18} + 2 \, T \, r + M\, r^{12} \\
&\leq 3 (T+M)\, r  \leq 12 (T+M)\, \delta^{1/20}\,. 
\end{align*}
\qed

As such, one has the following Corollary 
\begin{corollary}\label{c.coupling}
Let us fix $T>0$ and $\eps>0$. One can can couple $(\omega_\eta, \PPP_T^\pm(\mu^\eps(\omega_\eta)))$ and $(\omega_\infty, \PPP_T^\pm(\mu^\eps(\omega_\infty)))$ so that for each $r\in 2^{-\N}$, we have a.s. as $\eta\to 0$:
\bi
\item[(i)] $|\PPP_T^+(\omega_\eta)| = |\PPP_T^+(\omega_\infty)|$ and $|\PPP_T^-(\omega_\eta)| = |\PPP_T^-(\omega_\infty)|$.
\item[(ii)] The switching times $\switch_T(\omega_\eta)$ and $\switch_T(\omega_\infty)$ are identical for $\PPP_T^\pm(\omega_\eta)$ and $\PPP_T^\pm(\omega_\infty)$.
\item[(iii)] $B^r(\PPP_T^\pm(\omega_\eta)) = B^r(\PPP_T^\pm(\omega_\infty))$ (recall Definition \ref{d.nested}).
\ei

This way we obtain a joint coupling of the dynamics $(\omega_\eta^\eps(t))_{\eta>0}$, as defined in Definition \ref{d.dcutoff}, with 
our \cadlag processes $Z_Q(t)$ which are aimed at defining our process $\omega_\infty^\eps(t)$.
\end{corollary}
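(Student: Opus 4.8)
The plan is to derive Corollary~\ref{c.coupling} directly from Lemma~\ref{l.Pcoupling} together with the joint coupling of $(\omega_\eta,\mu^\eps(\omega_\eta))$ and $(\omega_\infty,\mu^\eps(\omega_\infty))$ established just above via Theorem~\ref{th.PM} and Skorohod's representation theorem. First I would fix the almost sure coupling in which $d_\HH(\omega_\eta,\omega_\infty)\to 0$ and $d_\M(\mu^\eps(\omega_\eta),\mu^\eps(\omega_\infty))\to 0$, so that for every $\delta>0$ we have $d_\M(\mu^\eps(\omega_\eta),\mu^\eps(\omega_\infty))<\delta$ for all $\eta$ small enough. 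Since $D$ is bounded and $\mu^\eps$ has finite first moment (Proposition~\ref{th.mu}(i)), the total mass $\mu^\eps(\omega_\infty)(\bar D)$ is a.s.~finite, say bounded by some random $M<\infty$; by the weak convergence the masses $\mu^\eps(\omega_\eta)(\bar D)$ are then also eventually bounded by, say, $M+1$.

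Next, conditionally on the realizations of $\mu^\eps(\omega_\eta)$ and $\mu^\eps(\omega_\infty)$, I would apply Lemma~\ref{l.Pcoupling} (in its version for $\bar D$ rather than $[0,1]^2$, as remarked there) to the pair of intensity measures $\tfrac12\mu^\eps(\omega_\eta)$ and $\tfrac12\mu^\eps(\omega_\infty)$, independently for the $+$ and $-$ clouds. For a given target scale $r\in 2^{-\N}$, choosing $\delta=\delta(r)$ of order $r^{20}$ makes the exceptional probability in Lemma~\ref{l.Pcoupling} of order $(T+M)\,r$; on the complementary (good) event the two Poisson clouds have the same number of points, the same switching times, and corresponding points lying in the same $r/2$-square of $G_{r/2}$, hence $B^r(\PPP_T^\pm(\omega_\eta))=B^r(\PPP_T^\pm(\omega_\infty))$. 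Doing this separately for $\PPP_T^+$ and $\PPP_T^-$ costs only a factor $2$. Then, for each fixed $r\in 2^{-\N}$, along the coupling the good event holds for all $\eta$ sufficiently small (since $\delta(\omega_\eta)\to 0$ a.s.), which gives (i), (ii) and (iii) for that fixed $r$; since $2^{-\N}$ is countable, a further application of countable intersection over $r$ (or simply re-running the argument, noting that the event for a coarser $r$ follows from that for a finer $r$) yields the conclusion simultaneously for all $r\in 2^{-\N}$.

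One subtlety to spell out is consistency: the coupling of $\PPP_T^\pm$ for different $\eta$ should be built from a single source of randomness, so that the statement ``a.s.~as $\eta\to 0$'' is meaningful; this is done in the standard way by sampling the clouds for all $\eta$ from the same Poisson process on an auxiliary space and then thinning/transporting mass according to Lemma~\ref{l.Pcoupling}, exactly as in its proof. Having fixed the coupling of the clouds, the coupling of the full objects $(\omega_\eta^\eps(t))$ with the processes $Z_Q(t)$ defining $\omega_\infty^\eps(t)$ is then automatic: the discrete dynamics $\omega_\eta^\eps(t)$ is a deterministic function of $\bigl(\omega_\eta,\PPP_T^+(\omega_\eta),\PPP_T^-(\omega_\eta)\bigr)$ by Definition~\ref{d.dcutoff}, and the processes $Z_Q(t)$ are a deterministic function of $\bigl(\omega_\infty,\PPP_T^+(\omega_\infty),\PPP_T^-(\omega_\infty)\bigr)$ by Definition~\ref{d.ZQ}, so we simply read them off the joint coupling already built.

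The main obstacle is bookkeeping rather than conceptual: one must verify that the exceptional probabilities from Lemma~\ref{l.Pcoupling}, which depend on the random mass $M=\mu^\eps(\omega_\infty)(\bar D)$ and on $\delta=d_\M(\mu^\eps(\omega_\eta),\mu^\eps(\omega_\infty))$, are under control uniformly enough to pass to an almost sure statement. This is handled by first conditioning on the a.s.~event $\{M<\infty\}$ and on the a.s.~convergence $\delta\to 0$, after which for each fixed $r$ the good event holds for all small $\eta$ with probability one; a Borel--Cantelli / countable-intersection argument over the countable set of scales $r\in 2^{-\N}$ then upgrades this to the simultaneous almost sure statement claimed in (i)--(iii). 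Everything else --- matching numbers of points, equality of switching times, and the nesting $B^r\subset B^{r'}$ for $r<r'$ that lets the coarse-scale conclusion be deduced from the fine-scale one --- is immediate from the construction.
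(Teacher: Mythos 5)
Your proposal is correct and takes essentially the same route as the paper: the paper's (very terse) proof is exactly an application of Lemma~\ref{l.Pcoupling} to the pivotal measures $\mu^\eps(\omega_\infty)$ and $\mu^\eps(\omega_\eta)$ under the joint coupling provided by Theorem~\ref{th.PM} and Skorohod representation, with the total-mass hypothesis handled via Proposition~\ref{th.mu}(i), which is what you do (splitting into the $+$ and $-$ clouds with the halved intensities). One small caveat: Lemma~\ref{l.Pcoupling} only yields the matching event with probability $1-O\bigl((T+M)\,\delta^{1/20}\bigr)$, so your assertion that the good event holds almost surely for all sufficiently small $\eta$ does not follow merely from $\delta\to 0$; however, the convergence-in-probability form, i.e.\ $\Pb{\text{(i)--(iii) hold}}\to 1$ as $\eta\to 0$, is what your argument actually delivers, and this is precisely how Corollary~\ref{c.coupling} is used later in the paper.
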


\ni
{\bf Proof:}
This follows easily from Lemma \ref{l.Pcoupling} with $\mu=\mu^\eps(\omega_\infty)$ and $\eta=\mu^\eps(\omega_\eta)$. One just has to deal with the fact that the first measure $\mu=\mu^\eps(\omega_\infty)$ in the latter Lemma is assumed to have a total mass bounded by $M$. By Proposition \ref{th.mu}, item $(i)$, one indeed has that $\mu^\eps(\bar D) \leq M$  with probability going to one as $M\to \infty$, which is enough for our purpose here. \qed

\subsection{Comparison of $\omega_\eta^\eps(t)$ with $\omega_\infty^\eps(t)$}\label{ss.proofMonotony}

In this subsection, we wish to prove the following proposition.
\begin{proposition}\label{pr.compZ}
Let us consider the coupling from Corollary \ref{c.coupling}. 
For any quad $Q\in\QUAD_\N$, we have 
\begin{align}\label{e.EXCT}
\lim_{\eta\to 0}\Pb{\exists t\in [0,T], \{ \omega_\eta^\eps(t) \in \boxminus_Q \} \Delta \{ \omega_\infty^\eps(t) \in \boxminus_Q\} }  =0\,,
\end{align}
where since we did not yet prove Theorem \ref{th.extension} at this point, the event $\{ \omega_\infty^\eps(t) \in \boxminus_Q \}$ should be understood as $\{ Z_Q(t) =1 \}$. 
\end{proposition}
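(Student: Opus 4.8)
The plan is to reduce the supremum over $t\in[0,T]$ to a comparison on the finitely many constancy intervals of the coupled dynamics, and then to propagate agreement through each interval by comparing Boolean networks. We work throughout inside the coupling of Corollary~\ref{c.coupling}, so that with probability tending to $1$ as $\eta\to0$ the clouds $\PPP_T^\pm(\omega_\eta)$ and $\PPP_T^\pm(\omega_\infty)$ have the same (a.s.\ finite) cardinality, the same set of switching times $\switch_T=\{t_1,\ldots,t_N\}$, and, for every fixed $r\in2^{-\N}$, the same associated family of $r$-squares $B^r(\PPP_t)$ at each $t$; moreover $\omega_\eta\to\omega_\infty$ a.s., so Proposition~\ref{p.discrete} applies. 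Since $t\mapsto 1_{\boxminus_Q}(\omega_\eta^\eps(t))$ (dynamical percolation jumps only at switching times) and $t\mapsto Z_Q(t)$ (Definition~\ref{d.ZQ}) are piecewise constant with discontinuity set contained in $\switch_T$, the event in~\eqref{e.EXCT} occurs iff the two quantities disagree at a rational time inside one of the at most $N+1$ constancy intervals. Fixing first, via Proposition~\ref{pr.PPP}(i), an integer $N_0$ with $\Pb{|\switch_T|>N_0}<\delta$, it suffices to bound, for a fixed rational slice time $\bar t$, the probability of disagreement at $\bar t$, and then take a union bound over the at most $N_0+1$ slices.

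Fix such a rational $\bar t$ and write $\phi:=1_{\PPP_{\bar t}^+}\in\{0,1\}^{\PPP_{\bar t}}$ for the current pivotal state. On the continuum side, by construction, $Z_Q(\bar t)=f_{Q,\bar t}(\phi)$, where $f_{Q,\bar t}$ is the Boolean function of the limiting network $\Net_{Q,\bar t}=\Net_Q(\omega_\infty,\PPP_{\bar t})$ of Theorem~\ref{th.BC}. As $\bar t$ ranges over the (a.s.\ finitely many) slices, $\PPP_{\bar t}$ takes only finitely many values, so Theorem~\ref{th.BC} furnishes an a.s.\ positive dyadic scale $r_0=r_0(\omega_\infty,\PPP_T)$ below which $\Net_Q^r(\omega_\infty,\PPP_{\bar t})=\Net_{Q,\bar t}$ at every slice simultaneously. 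Choosing $\bar r>0$ small, we discard the event $\{r_0\le\bar r\}\cup\{r^*(\PPP_T,Q)\le\bar r\}$, of probability $<\delta$ by Proposition~\ref{pr.PPP} and the a.s.\ positivity of $r_0$. Then for any fixed $r\in2^{-\N}$ with $r\le\bar r$, $Z_Q(\bar t)$ equals the Boolean evaluation at $\phi$ of the mesoscopic network $\Net_Q^r(\omega_\infty,V)$ with $V:=B^r(\PPP_{\bar t})$; this network is Boolean by Corollary~\ref{c.boolean}, its vertex family $V$ coincides with the one on the discrete side by Corollary~\ref{c.coupling}(iii), and Proposition~\ref{p.discrete} gives $\Pb{\Net_Q^r(\omega_\eta,V)\ne\Net_Q^r(\omega_\infty,V)}\to0$ as $\eta\to0$. (The base slice $\bar t=0$, where $\PPP_0=\emptyset$ and $\Net_{Q,0}$ records only whether $Q$ is crossed, is the special case $Z_Q(0)=1_{\boxminus_Q}(\omega_\infty)$, consistent via Theorem~\ref{th.SScardy} with $1_{\boxminus_Q}(\omega_\eta)\to1_{\boxminus_Q}(\omega_\infty)$.)

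It remains to establish the discrete identity: with probability $\to1$ as $\eta\to0$, the Boolean evaluation at $\phi$ of $\Net_Q^r(\omega_\eta,V)$ equals $1_{\boxminus_Q}(\omega_\eta^\eps(\bar t))$. This is the crux. Since $\omega_\eta$ is a genuine site-percolation configuration on $\eta\Tg$ and $\omega_\eta^\eps(\bar t)$ is obtained from it by resetting the single site $x_i$ to $\phi(x_i)$ for each $x_i\in\PPP_{\bar t}$, at the lattice scale the Boolean evaluation of the network literally computes $1_{\boxminus_Q}(\omega_\eta^\eps(\bar t))$: in the immediate neighbourhood of an $\eps$-pivotal point the four-arm property forces exactly the alternating ``$X$-junction'' on which flipping $x_i$ acts as the Boolean toggle. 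Now run the telescoping Borel--Cantelli estimate behind Theorem~\ref{th.BC} in the discrete setting: the bound $\Pb{\Net_Q^{\rho/2}(\omega_\eta,V)\ne\Net_Q^{\rho}(\omega_\eta,V),\ r^*(\PPP_{\bar t},Q)>\bar r}\le\psi_{Q,\bar r}(\rho)$, uniform in $\eta$, with $\sum_{\rho\in2^{-\N},\,\rho\le r}\psi_{Q,\bar r}(\rho)\to0$ as $r\to0$, is exactly what the proof of Lemma~\ref{l.stab} yields via Proposition~\ref{p.discrete} and the six-arm (and boundary three-arm) exponents of \cite{\SmirnovWerner}, the six-arm exponent $35/12$ exceeding $2$. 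Summing over dyadic scales $\rho$ from $r$ down to the lattice scale, with probability $\ge1-\sum_{\rho\le r}\psi_{Q,\bar r}(\rho)$ the networks $\Net_Q^{\rho}(\omega_\eta,V)$ are constant in $\rho$ over this range, hence so are their Boolean functions, hence so are their evaluations at $\phi$. Combining all four error terms — coupling failure, continuum stabilization failure, $\{\Net_Q^r(\omega_\eta,V)\ne\Net_Q^r(\omega_\infty,V)\}$, and discrete stabilization failure — and letting $\eta\to0$, then $r\to0$, then $\bar r\to0$, and finally $N_0\to\infty$, every contribution vanishes, which proves~\eqref{e.EXCT}.

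The step I expect to be the main obstacle is this last discrete identity: making rigorous that the mesoscopic Boolean network at scale $r$, evaluated at the current pivotal configuration, literally computes the discrete crossing of $Q$ by the partially re-randomised configuration $\omega_\eta^\eps(\bar t)$. Two points need care. First, uniformity in $\eta$ of the six-arm stabilization estimate all the way down to the lattice scale; this is controlled because $35/12>2$, so $O(\rho^{-2})$ times the per-square six-arm probability $\asymp(\rho/\bar r)^{35/12}$ tends to $0$ as $\rho\to0$, the smallest scales therefore contributing the least. Second, that selecting a flipped point according to the pivotal measure $\mu^\eps$ does not bias it towards carrying six or more arms at intermediate scales; this is handled because $\PPP_{\bar t}\subset\Piv^\eps(\omega_\eta)$ only imposes the four-arm structure already built into the definition of $\Piv^\eps$, and by quasi-multiplicativity the conditional probability of an additional six-arm crossing over a range of scales decays like a positive power of that range.
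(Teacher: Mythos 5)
Your overall skeleton matches the paper's: reduce the supremum over $t\in[0,T]$ to the finitely many coupled switching times, stabilize the continuum network via Theorem~\ref{th.BC}, match the discrete and continuum mesoscopic networks through Corollary~\ref{c.coupling} and Proposition~\ref{p.discrete} (conditioning on the finitely many possible vertex families $B^r$), and then bridge to the actual crossing of $Q$ by $\omega_\eta^\eps(\bar t)$. The divergence, and the gap, is at the bridge, which you yourself flag as the crux. Your plan is to telescope the \emph{discrete} network down to the lattice scale (uniformly in $\eta$, via the estimates behind Lemma~\ref{l.stab}) and then assert that at the lattice scale the Boolean evaluation of the network at $\phi=1_{\PPP_{\bar t}^+}$ ``literally computes'' $1_{\boxminus_Q}(\omega_\eta^\eps(\bar t))$. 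That last claim is not literally true, and the ``$X$-junction'' heuristic does not substantiate it. By Definition~\ref{d.mesoN}, a primal edge $\langle x_i,x_j\rangle$ at scale $\rho$ requires a quad whose crossing avoids the \emph{entire} squares $B^\rho_{x_k}$ of all other marked points and stays $\rho$-away from $\p Q$ (and boundary edges must reach $\rho$-outside $Q$), whereas a genuine crossing of $Q$ by $\omega_\eta^\eps(\bar t)$ only avoids the closed sites themselves and may well run through a neighbouring hexagon that overlaps $B^\rho_{x_k}$ --- which is in fact typical, since the open arms of the four-arm event at $x_k$ pass right next to its hexagon. Conversely, gluing the crossings realising a Boolean open chain through the open pivotal hexagons is not automatic either: the quad-crossings end ``strictly inside'' $B^\rho_{x_i}$ but need not touch a hexagon adjacent to $x_i$, and the hexagons in between have unconstrained states. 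So even granting the scale-stability of the discrete network down to $\rho\approx\eta$ (which itself should be justified with a uniform-in-$\eta$ six-arm bound of the form $(\rho/\bar r)^{2+\epsilon}$, e.g.\ as in Proposition~\ref{pr.ExNC}, rather than the asymptotic exponent $35/12$ of \cite{\SmirnovWerner} taken at fixed radii), the bottom-scale identification still requires a genuine argument, and that argument is of exactly the same nature as the one you were trying to avoid.

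The paper's proof never descends below a mesoscopic scale: on an event of probability $\geq 1-\alpha$ it fixes $r_0\leq r_\alpha$ with $\Net_Q^{r_0}(\omega_\eta,\PPP_q(\omega_\eta))=\Net_Q(\omega_\infty,\PPP_q(\omega_\infty))$, adds the event $B_{r_0}$ that no six-arm event occurs in any $2r_0$-square up to radius $r_\alpha/10$ (plus the boundary three-arm analogue), and then shows directly --- by the same geometric analysis as in the proof of Lemma~\ref{l.stab} --- that on this event a disagreement between the Boolean value $Z_Q(q)$ and $1_{\boxminus_Q}(\omega_\eta^\eps(q))$ would force such a multi-arm configuration, in both directions (using Corollary~\ref{c.boolean} for the dual case). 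This multi-arm exclusion is precisely what is missing from your step (a); once you supply it, the detour through the lattice scale becomes unnecessary, and your argument collapses back onto the paper's. So as written the proposal has a genuine gap at its decisive step.
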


Note that this Proposition will be very helpful (in fact stronger than what we need) in order to show using Definition \ref{d.skorohod} and item $(i)$ in Proposition \ref{pr.CovrtoCovk} that $d_{\Sk_T}(\omega_\eta^\eps(t), \omega_\infty^\eps(t))$ converges in probability towards 0. 
\bigskip

\ni
{\bf Proof:}
\ni
Recall from Definition \ref{d.ZQ}, that for any $Q\in\QUAD_\N$, the \cadlag process $t\in[0,T]\mapsto Z_Q(t)$ is piecewise constant with a set of discontinuities included in $\switch_T=\switch_T(\omega_\infty)$. Similarly the \cadlag process $t\mapsto \omega_\eta^\eps(t)$ is piecewise constant on $[0,T]$ with a set of discontinuities included in $\switch_T(\omega_\eta)$. Recall that in the joint coupling obtained in Corollary \ref{c.coupling}, one has $\Pb{\switch_T(\omega_\eta)=\switch_T(\omega_\infty)}\to 1$ as $\eta\to 0$. 

Using these facts plus the property that $\switch_T(\omega_\infty)$ is a.s finite (Proposition \ref{pr.PPP}), it is straightforward to check that proving~\eqref{e.EXCT} boils down to proving the following fact: For any $q\in \Q \cap [0,T]$ and any $Q\in \QUAD_\N$, one has 
\begin{align*}\label{}
\lim_{\eta\to 0}\Pb{ \{ \omega_\eta^\eps(q) \in \boxminus_Q \} \Delta \{ \omega_\infty^\eps(q) \in \boxminus_Q\} }  =0\,.
\end{align*}
Let us then fix some $Q\in \QUAD_\N$ and some $q\in \Q\cap [0,T]$.
Let $\alpha \in (0,1)$ be fixed. We wish to show that   
\begin{align*}\label{}
\limsup_{\eta\to 0}\Pb{ \{ \omega_\eta^\eps(q) \in \boxminus_Q \} \Delta \{ \omega_\infty^\eps(q) \in \boxminus_Q\} }  \leq \alpha\,.
\end{align*}
Let us sample $(\omega_\infty, \PPP_q)$ coupled with its discrete analogs.
From Theorem \ref{th.BC}, there is a measurable scale $r_Q=r_Q(\omega_\infty, \PPP_q) \in 2^{-\N}$ such that for all $r\in 2^{-\N},\, r\leq r_Q$, one has $\Net_{Q}(\omega_\infty, \PPP_q) = \Net_Q^r(\omega_\infty, \PPP_q)$. Furthermore the random variable $r^*= r^*(\PPP_q,Q )$ is a.s. positive (see Definition \ref{d.r*}). In particular, one can find $r_\alpha \in 2^{-\N}$ so that 
\begin{align*}\label{}
\Pb{ r_Q \wedge r^* \geq r_\alpha} \geq 1-\alpha/100\,. 
\end{align*}
Let $A_\alpha$ be this event. On the event $A_\alpha$, we have for all $r\leq r_\alpha$, 
\begin{align*}\label{}
\begin{cases}
\Net_Q(\omega_\infty, \PPP_q) = \Net_Q^r(\omega_\infty, \PPP_q) \\
r^*(\PPP_q,Q) \geq r_\alpha
\end{cases}
\end{align*}

Now, in our coupling defined in Corollary \ref{c.coupling}, since $\omega_\eta \to \omega_\infty$ in $\HH$, we have from Proposition \ref{p.discrete} that for any $r\leq r_\alpha$, 
\begin{align*}\label{}
\Pb{ \Net_Q^r(\omega_\infty, X) = \Net_Q^r(\omega_\eta, X) } \to 1\,,
\end{align*}
as $\eta \to 0$ for any fixed (detreministic) $X\subset D$ with $r^*(X,Q)>0$. 
We also have from Corollary \ref{c.coupling} that for any $r\in 2^{-\N}$, 
\begin{align*}\label{}
\Pb{B^r(\PPP_q(\omega_\eta)) = B^r(\PPP_q(\omega_\infty))} \to 1\,,
\end{align*}
as $\eta \to 0$. Since for any fixed $r\in 2^{-\N}$, there are finitely many possible $B^r$ sets, the above two facts plus the way $\Net_Q^r$ is defined in Definition \ref{d.mesoN} imply that for any $r\leq r_\alpha\, \in 2^{-\N}$, we have 

\begin{align}\label{}
\Pb{ \Net_Q^r(\omega_\eta, \PPP_q(\omega_\eta)) = \Net_Q(\omega_\infty, \PPP_q(\omega_\infty)) \md A_\alpha}  \to 1\,,
\end{align}
as $\eta\to 0$. 

Let $r_0 \leq r_\alpha$ be small enough so that there is $\eta_0>0$ such that the probability to have a $2\, r_0$-square in $G_{2\, r_0}$ (recall Definition \ref{d.nested}) with a 6-arms event for $\omega_\eta$ up to radius $r_\alpha/10$ is bounded above by $\alpha/100$ whatever $\eta<\eta_0$ is (see the proof of Lemma \ref{l.stab} where such estimates were used). We will call $B_{r_0}$ the event that there are no such 6-arm events. Finally, let also $\eta_1<\eta_0$ be small enough so that 
for any $\eta<\eta_1$, 
\begin{align}\label{}
\Pb{ \Net_Q^{r_0}(\omega_\eta, \PPP_q(\omega_\eta)) = \Net_Q(\omega_\infty, \PPP_q(\omega_\infty)) \md A_\alpha} \geq 1- \alpha/100\,.
\end{align}
We are now ready to introduce for any $\eta<\eta_1$, the following event 
\[
C:= A_\alpha \cap B_{r_0} \cap \{  \Net_Q^{r_0}(\omega_\eta, \PPP_q(\omega_\eta)) = \Net_Q(\omega_\infty, \PPP_q(\omega_\infty)) \}\,,
\]
whose probability (from the above estimates) is at least $(1-\alpha/100)(1-\alpha/100)-\alpha/100 - \alpha/100$ which is greater than $1- \alpha/10$.  Note that this event depends on the $\eta$- configuration $(\omega_\eta, \PPP_q(\omega_\eta))$
as well as on $(\omega_\infty, \PPP_q(\omega_\infty))$.

Let $\eta<\eta_1$ and suppose we are on the event $C$. We distinguish two cases:
\bnum
\item First case: $\omega_\infty^\eps(q)\in \boxminus_Q$ (or in other words, $Z_Q(q)=1)$. This means that one can find an open path $e_1, \ldots, e_k$ from $\p_1 Q$ to $\p_3 Q$ which only uses vertices in $\PPP_q^+(\omega_\infty)$.  Since we are on the event $C$, we have $\Net^{r_0}_Q(\omega_\eta, \PPP_q(\omega_\eta))= \Net_Q(\omega_\infty, \PPP_q(\omega_\infty))$. By Definition \ref{d.mesoN}, this means that one can find quads $R_1^{r_0}, \ldots, R_k^{r_0}$ which realise the open path $e_1,\ldots, e_k$ and satisfy the conditions of Definition \ref{d.mesoN}. In particular, $\p_1 R_1^{r_0}$ remains $r_0$-away from $\p_1 Q$ (outside of $Q$) and so on. We also have for each $1\leq i \leq k, \omega_\eta \in \boxminus_{R_i^{r_0}}$. To obtain that $\omega_\eta \in \boxminus_Q$, we proceed exactly as in the proof of Lemma \ref{l.stab} by showing that the converse would lead to six-arms events that cannot exist under the above event $C$. We leave the details to the reader. 
\item The second case, i.e. $\omega_\infty^\eps(q)\notin \boxminus_Q$ is treated in the same manner by relying on a dual path $\tilde e_1, \ldots, \tilde e_k$. Note that we need here the fact that $\Net_Q(\omega_\infty, \PPP_q)$ is a.s. Boolean by Corollary \ref{c.boolean}. 
\enum

We thus conclude that if $\eta<\eta_1$, we have 
\begin{align*}\label{}
\Pb{ \{ \omega_\eta^\eps(q) \in \boxminus_Q \} \Delta \{ \omega_\infty^\eps(q) \in \boxminus_Q\} }  \leq \alpha/10\,,
\end{align*}
as desired. 
\qed

We will use this Proposition later to prove that $d_{\Sk_T}(\omega_\eta^\eps(\cdot), \omega_\infty^\eps(\cdot))$
goes to zero in probability as $\eta \to 0$, see Theorem \ref{th.etaskoro}. But, before that, we still need to justify the existence of a \cadlag trajectory $\omega_\infty^\eps(t)$ which extends our projected \cadlag processes $t\mapsto Z_Q(t)$. We will now use the above Proposition \ref{pr.compZ} in order to prove Theorem \ref{th.extension}. 
\bigskip

\subsection{Proof of Theorem \ref{th.extension}}\label{ss.extension}

As explained at the end of Subsection \ref{ss.remainsH}, we start by proving the following Proposition.

\begin{proposition}\label{pr.extension}
For almost all $(\omega_\infty, \PPP_T)$, the following property is satisfied: for all $q\in \Q\cap [0,T]$, the map
\[ Z_q :  
\begin{array}{ccc}
\QUAD_\N & \longrightarrow  &\{0,1\} \\
 Q & \mapsto  & Z_Q(q)
\end{array}
\]
 satisfies the conditions $(i)$ and $(ii)$ of Lemma \ref{l.unique}. 
\end{proposition}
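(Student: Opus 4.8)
The plan is to obtain both conditions of Lemma~\ref{l.unique} for the map $Z_q$ by importing the corresponding \emph{tautological} facts about the genuine discrete configurations $\omega_\eta^\eps(q)$, using the coupling of Corollary~\ref{c.coupling} and the comparison estimate of Proposition~\ref{pr.compZ}. Since $\Q\cap[0,T]$ is countable, it is enough to fix $q\in\Q\cap[0,T]$ and show that a.s.\ the map $Q\mapsto Z_Q(q)$ on $\QUAD_\N$ satisfies (i) and (ii). The starting point is that for every $\eta>0$ the configuration $\omega_\eta^\eps(q)$ is an honest element of $\HH$, so the restriction of $Q\mapsto 1_{\boxminus_Q}(\omega_\eta^\eps(q))$ to $\QUAD_\N$ automatically satisfies (i) — because $\omega_\eta^\eps(q)$ is hereditary, $Q<Q'$ forcing every crossing of $Q'$ to contain a crossing of $Q$ — and (ii) — because $\omega_\eta^\eps(q)$, viewed as a subset of $\QUAD_D$, is closed. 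The whole issue is to transfer these to $Z_q$, knowing only, from Proposition~\ref{pr.compZ}, that $\Pb{1_{\boxminus_Q}(\omega_\eta^\eps(q))\neq Z_Q(q)}\to 0$ as $\eta\to 0$ for each \emph{individual} $Q\in\QUAD_\N$.

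I expect the monotonicity condition (i) to be the main obstacle, as the discussion preceding the proposition already warns: it cannot be read off directly from the continuum networks, since an open crossing of $Q'$ realized by mesoscopic quads need not descend to an open crossing of the smaller quad $Q$ (Figure~\ref{f.careful}). The detour through the discrete model, where heredity holds by construction, resolves this. Concretely: fix $Q,Q'\in\QUAD_\N$ with $Q<Q'$ and set $E:=\{Z_Q(q)=0,\ Z_{Q'}(q)=1\}$, a measurable event. If $p:=\Pb{E}>0$, then for $\eta$ small enough Proposition~\ref{pr.compZ}, applied to both $Q$ and $Q'$, shows that with probability at least $p/3>0$ one is on $E$ and $\omega_\eta^\eps(q)$ simultaneously crosses $Q'$ and fails to cross $Q$, contradicting that $\omega_\eta^\eps(q)$ is hereditary. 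Hence $\Pb{E}=0$, and a union over the countably many triples $(Q,Q',q)$ with $Q,Q'\in\QUAD_\N$, $Q<Q'$, $q\in\Q\cap[0,T]$ gives (i) a.s.

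For the closedness condition (ii) the extra step is a reduction to a countable collection of events: $Z_q$ satisfies (ii) iff for every $Q\in\QUAD_\N$ with $Z_Q(q)=0$ there is $m\ge1$ with $Z_{Q'}(q)=0$ for all $Q'\in\QUAD_\N$ satisfying $Q'<Q$ and $d_\QUAD(Q',Q)<1/m$, so its failure is the countable union over $Q\in\QUAD_\N$ of the events $\mathcal{B}_Q:=\{Z_Q(q)=0\}\cap\bigcap_{m\ge1}\bigcup_{Q'\in\QUAD_\N:\,Q'<Q,\,d_\QUAD(Q',Q)<1/m}\{Z_{Q'}(q)=1\}$, and it suffices to show $\Pb{\mathcal{B}_Q}=0$. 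On $\mathcal{B}_Q$ one has a sequence $Q'_m\to Q$ with $Q'_m<Q$ and $Z_{Q'_m}(q)=1$; since $\PPP_q$ is a.s.\ finite (Proposition~\ref{pr.PPP}), each such crossing is realized by an open primal path through the fixed finite vertex set $\PPP_q^+$ of the Boolean network $\Net_{Q'_m,q}$, and there being finitely many combinatorial paths, one path witnesses $Z_{Q'_m}(q)=1$ for infinitely many $m$. A limiting argument on the mesoscopic quads realizing that path — the squares $B^r(\PPP_q)$ do not depend on the quad, and $\omega_\infty$, being a closed subset of $\QUAD_D$, preserves the relevant crossing events under quad limits — then shows the same path is open in $\Net_{Q,q}$, i.e.\ $Z_Q(q)=1$, contradicting $\mathcal{B}_Q$; alternatively one can adapt the discrete-comparison argument of (i), using also the a.s.\ finiteness of $\PPP_q$. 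Unioning over $Q\in\QUAD_\N$ and then over $q\in\Q\cap[0,T]$ proves Proposition~\ref{pr.extension}; feeding each $Z_q$ into Lemma~\ref{l.unique} and passing to \cadlag extensions then yields Theorem~\ref{th.extension}.
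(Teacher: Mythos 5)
Your treatment of condition $(i)$ is correct and is essentially the paper's argument: for a fixed pair $Q<Q'$ in $\QUAD_\N$ one writes $\Pb{Z_{Q'}(q)=1,\,Z_Q(q)=0}\le \lim_{\eta\to0}\Pb{\omega_\eta^\eps(q)\in\boxminus_{Q'},\,\omega_\eta^\eps(q)\notin\boxminus_Q}=0$, the last event being empty because the discrete configuration is hereditary, and one then sums over the countably many triples $(Q,Q',q)$.

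For condition $(ii)$, however, there is a genuine gap, and it sits exactly at the point you pass over quickly. Your ``limiting argument on the mesoscopic quads realizing that path'' is the same kind of step that the paper explicitly refuses to make for monotonicity (the issue illustrated by Figure~\ref{f.careful}): the quads witnessing the edges of $\Net_{Q'_m,q}$ are only required to stay away from $\p Q'_m$ and to reach $r$ \emph{outside} $Q'_m$, not outside $Q$; as $Q'_m\to Q$ these witnesses can degenerate along the arc $\p_1 Q$, and since the conditions in Definition~\ref{d.mesoN} are strict, closedness of $\omega_\infty$ in $\QUAD_D$ does not by itself produce an admissible witness for an edge of $\Net_{Q,q}$. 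Your fallback, ``adapt the discrete-comparison argument of $(i)$,'' also does not work as stated: unlike in $(i)$, the discrete event $\{\omega_\eta^\eps(q)\in\boxminus_{Q'},\,\omega_\eta^\eps(q)\notin\boxminus_Q\}$ with $Q'<Q$ close to $Q$ is \emph{not} empty — a discrete configuration can perfectly well cross a slightly smaller quad without crossing $Q$ — so each individual comparison only yields a small positive probability, and a union over a sequence $Q_n\to Q$ proves nothing without a quantitative rate. This quantitative input is the missing idea, and it is genuinely probabilistic: the paper first uses $(i)$ to replace an arbitrary sequence $Q_n<Q$, $Q_n\to Q$ by the canonical approximants $\tilde Q_k\in\QUAD^{k+10}$ (so the bad event is contained, up to a null set, in $\bigcap_{k\ge k_0}\{Z_q(\tilde Q_k)=1,\,Z_q(Q)=0\}$), and then proves, uniformly in small $\eta$, that $\Pb{\omega_\eta^\eps(q)\in\boxminus_{\tilde Q_k},\,\omega_\eta^\eps(q)\notin\boxminus_Q}\le C_Q\,2^{-k}$ via a three-arm event along the $2^{-k-10}$-neighbourhood of $\p_1 Q$ (two arms near the corners). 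Letting $k\to\infty$ kills the bad event. Without this boundary arm estimate, or some equivalent quantitative control of near-boundary degeneration, your argument for $(ii)$ does not close.
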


\ni
{\bf Proof:}

\ni
Let us fix $T>0$ and $q\in \Q\cap [0,T]$. We wish to show that the random map $Z_q : \QUAD_\N \to \{0,1\}$ 
 a.s. satisfies the conditions $(i)$ and $(ii)$ of Lemma \ref{l.unique}. 
\smallskip

Let us start with condition $(i)$. Since $\QUAD_\N$ is countable, it is enough to check that property $(i)$ is a.s. satisfied for any fixed $Q,Q' \in \QUAD_\N$ with $Q<Q'$. Suppose $Z_q(Q')=1$, we wish to show that a.s. $Z_q(Q)=1$ as well. We will compare with the $\eta$-lattice using Proposition \ref{pr.compZ}. Let then $(\omega_\eta, \PPP_T(\omega_\eta))$ be coupled with $(\omega_\infty, \PPP_T(\omega_\infty))$ as in Corollary \ref{c.coupling}. By Proposition \ref{pr.compZ}, we have as $\eta\to 0$
\begin{align*}\label{}
\begin{cases}
\lim_{\eta\to 0}\Pb{ \{ \omega_\eta^\eps(q) \in \boxminus_{Q'} \} \Delta \{ Z_q(Q')=1\} }  =0 \\
\lim_{\eta\to 0}\Pb{ \{ \omega_\eta^\eps(q) \in \boxminus_Q \} \Delta \{ Z_q(Q)=1\} }  =0
\end{cases}
\end{align*}

From the above limiting probability, we can write 
\begin{align*}\label{}
\Pb{  Z_q(Q')=1,\, Z_q(Q)=0} & 
= \lim_{\eta \to 0} \Pb{ Z_q(Q')=1,\, \omega_\eta^\eps(q) \in \boxminus_{Q'}, \omega_\eta^\eps(q) \notin \boxminus_{Q}, Z_q(Q)=0} \\
&\leq \lim_{\eta\to 0} \Pb{ \omega_\eta^\eps(q) \in \boxminus_{Q'}, \omega_\eta^\eps(q) \notin \boxminus_{Q} } \\
& = 0\,,
\end{align*}
since $Q<Q'$. This ends the proof of item $(i)$. 

\smallskip

We now turn to the proof of item $(ii)$. Since $\QUAD_\N$ is countable, we may fix one quad $Q\in \QUAD_\N$. 
We wish to prove that the probability that there exists a sequence of quads $Q_n$ with $Q_n<Q$ and $Q_n \to Q$ so that $Z_q(Q_n)=1$ for each $n\geq 1$ but $Z_q(Q)=0$ is equal to zero. 

Suppose our fixed quad $Q$ is in $\QUAD^{k_0}$ (recall Definition \ref{d.kquads}). Similarly to the definition of $\bar Q_k$ in the same Definition \ref{d.kquads}, we will define for each $k\geq k_0$, the quad $\tilde Q_k$ which among all quads $Q'$ in $\QUAD^{k+10}$ satisfying $Q'<Q$ is the largest one. 
Suppose now that a sequence of quads $(Q_n)$ as above exists. Then for each $k\geq k_0$, there is $N=N_k<\infty$ such that for all $n\geq N_k$, one has $\tilde Q_k < Q_n < Q$. In particular, by item $(i)$, one has a.s.  
$Z_q(\tilde Q_k)=1$ since $\tilde Q_k< Q_n$ (for $n$ large enough) and $Z_{q}(Q_n)=1$. 
This implies that there is a negligible event  $W$ ($\Pb{W}=0$) so that 
for any quad $Q\in \QUAD_\N$:
\begin{align*}\label{}
\{ \exists (Q_n)_n \in \QUAD_\N,\, Q_n<Q, \, Q_n \to Q,\, Z_q(Q_n)=1,\, Z_q(Q)=0 \} \\
& \hskip -6 cm  \subset 
\Bigl( \bigcap_{k\geq k_0}\, \{ Z_q(\tilde Q_k)=1,\, Z_q(Q)=0 \}\Bigr) \cup W \,.
\end{align*}

As such it is enough to prove the lemma below in order to show that item $(ii)$ is a.s. satisfied. 
\begin{lemma}\label{}
For any quad $Q\in \QUAD^{k_0}\subset \QUAD_\N$, 
there exists a constant $C=C_Q<\infty$ such that for any $k\geq k_0$ we have
\begin{align}\label{}
\Pb{Z_q(\tilde Q_k)=1\,, Z_q(Q)=0} \leq C\, 2^{-k}\,.
\end{align}
\end{lemma}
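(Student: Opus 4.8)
\medskip
\noindent\emph{Proof strategy.} The plan is to reduce the statement to an estimate on boundary multi-arm events for the discrete cut-off configuration, very much in the spirit of the proof of Lemma~\ref{l.stab}. First I would use the monotonicity property $(i)$ proved above: since $\tilde Q_k<Q$ one has $\boxminus_Q\subset\boxminus_{\tilde Q_k}$, hence $\{Z_q(Q)=1\}\subset\{Z_q(\tilde Q_k)=1\}$ and $\Pb{Z_q(\tilde Q_k)=1,\,Z_q(Q)=0}=\Pb{Z_q(\tilde Q_k)=1}-\Pb{Z_q(Q)=1}$. By Proposition~\ref{pr.compZ} (in the coupling of Corollary~\ref{c.coupling}) this equals $\lim_{\eta\to0}\bigl(\Pb{\omega_\eta^\eps(q)\in\boxminus_{\tilde Q_k}}-\Pb{\omega_\eta^\eps(q)\in\boxminus_Q}\bigr)=\lim_{\eta\to0}\Pb{\omega_\eta^\eps(q)\in\boxminus_{\tilde Q_k}\setminus\boxminus_Q}$, so it is enough to bound $\Pb{\omega_\eta^\eps(q)\in\boxminus_{\tilde Q_k}\setminus\boxminus_Q}$ by $C_Q\,2^{-k}$ uniformly over small $\eta$.

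Here the discreteness is essential: $\omega_\eta^\eps(q)$ is a genuine site-percolation configuration, so if it crosses $\tilde Q_k$ but not $Q$, then by planar duality it simultaneously carries an open crossing of $\tilde Q_k$ and a dual-closed path crossing $Q$ in the transverse direction. Since $d_\QUAD(\tilde Q_k,Q)=O(2^{-k})$, the two quads differ only inside a collar of width $O(2^{-k})$ around $\partial Q$, and the open crossing and the dual blocking path must ``interlock'' inside this collar. I would next argue that this forces, for at least one point $z$ within distance $O(2^{-k})$ of $\partial Q$, a $3$-arm event for $\omega_\eta^\eps(q)$ --- one primal arm and two dual arms, all contained in $[Q]$, hence on one side of $\partial Q$ --- reaching from scale $2^{-k}$ out to a fixed macroscopic scale $\rho_Q>0$ depending only on $Q$; if the collar point $z$ sits near a corner of the polygonal quad $Q$, the three arms are instead confined to a sector of opening at most $3\pi/2$.

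The conclusion is then a union bound. Cover $\partial Q$ by $O(2^{k})$ boxes of side $2^{-k}$, among which only $O(1)$ are near the (at most $3\pi/2$) corners of $Q$. For a box away from corners the relevant configuration is a half-plane $3$-arm event, of probability at most $C\,(2^{-k}/\rho_Q)^{2}$ by the half-plane $3$-arm exponent $2$; for a box near a corner it is a $3$-arm event in a sector of opening $\le 3\pi/2$, of probability at most $C\,(2^{-k}/\rho_Q)^{4/3}$ by the exponent $4/3$ for such a sector. These are exactly the Smirnov--Werner estimates \cite{\SmirnovWerner} invoked in the proof of Lemma~\ref{l.stab}, and they apply uniformly in small $\eta$ to $\omega_\eta^\eps(q)$: this configuration differs from critical percolation $\omega_\eta$ only through the (a.s.\ finite, and with probability $1-O(2^{-3k/4})$ also $\partial Q$-avoiding, by Proposition~\ref{pr.PPP}) set of flipped $\eps$-important points, so the relevant arm estimates are stable under this sparse perturbation, exactly as in \cite{\GPSa}. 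Summing gives $\Pb{\omega_\eta^\eps(q)\in\boxminus_{\tilde Q_k}\setminus\boxminus_Q}\le O(2^{k})\cdot C\,4^{-k}+O(1)\cdot C\,(2^{-k})^{4/3}\le C_Q\,2^{-k}$ uniformly over small $\eta$, and letting $\eta\to0$ concludes.

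The hard part is the geometric/duality step of the second paragraph. One must verify that crossing the slightly smaller quad $\tilde Q_k$ while failing to cross $Q$ produces a genuine $3$-arm configuration near $\partial Q$ --- a mere $2$-arm configuration would only have half-plane exponent $1$ and yield the useless bound $O(1)$ --- and that the correct inner scale of this configuration is the collar width $2^{-k}$ rather than the mesh $\eta$ (otherwise the bound would degenerate to $0$, which is false, since the left-hand side is of exact order $2^{-k}$ already at the level of $\omega_\infty$, by Cardy's formula). The bookkeeping of the corner contributions, and the uniform-in-$\eta$ stability of the arm estimates for $\omega_\eta^\eps(q)$ (of the same nature as the stability results used earlier), are the remaining, more routine, points.
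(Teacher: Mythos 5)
Your proposal follows essentially the same route as the paper: reduce, via Proposition~\ref{pr.compZ} in the coupling of Corollary~\ref{c.coupling}, to bounding $\Pb{\omega_\eta^\eps(q)\in\boxminus_{\tilde Q_k},\ \omega_\eta^\eps(q)\notin\boxminus_Q}$ uniformly in small $\eta$, and then control this by a union bound over the $O(2^{k})$ boxes of side $2^{-k}$ along the boundary, each forcing a half-plane three-arm event with exponent $2$, which the paper handles by citing the standard analysis (Section~7.2 of \cite{\GPS}, Chapter~VI of \cite{Buzios}). The one point where you diverge is at the corners of $\p_1 Q$ (and $\p_3 Q$): you assert that the full three-arm event survives there, merely confined to a sector of opening at most $3\pi/2$, whereas in fact one of the three arms can be absorbed by the adjacent side $\p_2 Q$ or $\p_4 Q$, so that only a two-arm event is forced near a corner --- this is exactly the caveat the paper makes (``near the corners of $\p_1 Q$ only a two-arm event appears''), and it is why the corner bookkeeping requires the separate two-arm analysis of the cited references rather than your sector three-arm bound. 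Since there are only $O(1)$ corners this does not threaten the way the lemma is used (any bound tending to $0$ in $k$ suffices for condition $(ii)$), but your derivation of the precise $C\,2^{-k}$ rate is not justified as written at the corners. Your remark that the arm estimates must be stable for the perturbed configuration $\omega_\eta^\eps(q)$ is a legitimate point the paper leaves implicit; the natural reference for it is the stability machinery of Section~\ref{s.stability} (Lemma~\ref{l.main}) rather than \cite{\GPSa}.
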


To prove this lemma, we proceed as in the proof of item $(i)$ by using the coupling with $(\omega_\eta, \PPP_T(\omega_\eta))$. Using Proposition \ref{pr.compZ}, we have 
\begin{align*}\label{}
\Pb{Z_q(\tilde Q_k)=1\,, Z_q(Q)=0} 
& = \lim_{\eta \to 0} \Pb{Z_q(\tilde Q_k)=1,\, \omega_\eta^\eps(q) \in \boxminus_{\tilde Q_k},\,
\omega_\eta^\eps(q) \notin \boxminus_{Q},\, \,, Z_q(Q)=0} \\
&\le \lim_{\eta \to 0} \Pb{\omega_\eta^\eps(q) \in \boxminus_{\tilde Q_k},\,
\omega_\eta^\eps(q) \notin \boxminus_{Q}}\,.
\end{align*}
Now, it is a standard fact (see for example teh analysis in Section 7.2. in \cite{\GPS} or in Chapter VI in \cite{Buzios}) that the above probability is given by the existence of a three arm event along the $2^{-k-10}$ neighbourhood of $\p_1 Q$ (some analysis needs to be done near the corners of $\p_1 Q$, where only a two-arm events appears, see again Chapter VI in \cite{Buzios} where this is treated in details). As such uniformly in $\eta$ small enough, we obtain an upper bound of the form $O(2^{-k})$.   \qed

 \ni
 {\bf End of proof of Theorem \ref{th.extension}:}
 
 \ni
 Let $A$ be the event that for each $q\in \Q\cap [0,T]$, the map $Z_q= Z_q(\omega_\infty, \PPP_T)$ defined in Proposition \ref{pr.extension} satisfies the conditions $(i)$ and $(ii)$ of Lemma \ref{l.unique}. By Proposition \ref{pr.extension}, we have that $\Pb{A}=1$.
Furthermore, the process $q\mapsto Z_q$ is by construction (see Definition \ref{d.ZQ}) piecewise constant with a set of discontinuities included in the a.s. finite $\switch_T\subset [0,T]$. 
As such, on the event $A\cap \{ |\switch_T| <\infty \}$ and using Lemma \ref{l.unique}, we define the \cadlag process $t\in [0,T]\mapsto \omega_\infty^\eps(t)$ to be the unique \cadlag process in $\HH$ which is compatible with all the \cadlag processes $\{ t\in [0,T] \mapsto Z_t(Q)\}_{Q\in \QUAD_\N}$ defined in  Definition \ref{d.ZQ}.
On $A^c$,  define the \cadlag process $t\mapsto \omega_\infty^\eps(t)$ on $A^c$ to be the constant process $\omega_\infty^\eps(t):=\omega_\infty(t=0)$. \margin{I changed here to make it \cadlag !!!!}


The fact that this process is measurable w.r.t. to $(\omega_\infty,\PPP_T(\mu^\eps(\omega_\infty))$  is due to the fact that one has built $t\mapsto \omega_\infty^\eps(t)$ out of the networks $\Net_Q(\omega_\infty,\PPP_q), q\in Q$, which are themselves limits of the mesoscopic networks $\Net_Q^r(\omega_\infty, \PPP_q)$. Finally, the later networks are 
measurable w.r.t. $(\omega_\infty, \PPP_q)$ using Lemma \ref{l.mesNET}.

The fact that this process is unique up to indistinguishability is obvious (it is a \cadlag process). 
 \qed

This thus ends the proof of Theorem \ref{th.extension}. We now have for any $\eps>0$ and any $T>0$ a well-defined random process 
\begin{align*}\label{}
 t\in [0,T] \mapsto \omega_\infty^\eps(t)\,.
\end{align*}

In the next subsection, we wish to prove that under the above coupling, the trajectories $\omega_\eta^\eps(t)$ 
and $\omega_\infty^\eps(t)$ are very close to each other w.h.p as $\eta\to 0$. 

\subsection{The process $\omega_\eta^\eps(\cdot)$ converges in probability towards $\omega_\infty^\eps(\cdot)$ in the Skorohod space $\Sk_T$.}\label{ss.convinSK}

\begin{theorem}\label{th.etaskoro}
Let $T>0$ and $\eps>0$ be fixed. Under the joint coupling defined in Corollary \ref{c.coupling}, the \cadlag process $t\in[0,T] \mapsto \omega_\eta^\eps(t)$ converges in probability in the Skorohod space $\Sk_T$ (see Definition \ref{d.skorohod}) towards the \cadlag process $t\mapsto \omega_\infty^\eps(t)$ defined in Theorem \ref{th.extension}.
\end{theorem}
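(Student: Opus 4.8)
The plan is to deduce convergence in the Skorohod space $\Sk_T$ from the pointwise-in-$Q$ control already obtained in Proposition \ref{pr.compZ}, combined with the uniform-structure machinery of Section \ref{s.unif}. First I would fix $\delta>0$ and produce, via Proposition \ref{pr.CovrtoCovk}(i), an integer $k=\k(\delta)$ such that if two configurations agree (up to the $2^{-k-10}$-perturbation built into $\O_k$) on all quads in the \emph{finite} family $\QUAD^k$, then they are $d_\HH$-close within $\delta$. Since $\QUAD^k$ is finite and $\QUAD^k \subset \QUAD_\N$, I can apply Proposition \ref{pr.compZ} to each of the finitely many quads $Q$ appearing in $\QUAD^k$ (and to the finitely many perturbed quads $\bar Q_j$, $\hat Q_j$ entering the definition of $\O_k$), and take the union bound: with probability tending to $1$ as $\eta\to 0$, for \emph{every} $t\in[0,T]$ and every relevant quad, $\1_{\boxminus_Q}(\omega_\eta^\eps(t)) = Z_Q(t) = \1_{\boxminus_Q}(\omega_\infty^\eps(t))$. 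On that event, $\omega_\eta^\eps(t)$ and $\omega_\infty^\eps(t)$ lie in a common $\O_k$-neighbourhood for each $t$, hence $d_\HH(\omega_\eta^\eps(t),\omega_\infty^\eps(t)) \le \delta$ \emph{simultaneously for all} $t\in[0,T]$.

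The key point making this work without a time-reparametrisation is that Proposition \ref{pr.compZ} controls the symmetric difference of the crossing events \emph{uniformly over $t\in[0,T]$}, not just at a fixed $t$; this is exactly the content of the event $\{\exists t\in[0,T],\ \{\omega_\eta^\eps(t)\in\boxminus_Q\}\Delta\{\omega_\infty^\eps(t)\in\boxminus_Q\}\}$ having vanishing probability. Once we know that with high probability $\omega_\eta^\eps(t)$ and $\omega_\infty^\eps(t)$ are $\delta$-close in $d_\HH$ for \emph{all} $t$ at once, we may simply take the identity reparametrisation $\lambda = \mathrm{id}\in\Lambda_T$ in the definition of $d_{\Sk_T}$ (Definition \ref{d.skorohod}), for which $\|\lambda\|=0$, and conclude
\[
d_{\Sk_T}\bigl(\omega_\eta^\eps(\cdot),\omega_\infty^\eps(\cdot)\bigr) \;\le\; \sup_{0\le t\le T} d_\HH\bigl(\omega_\eta^\eps(t),\omega_\infty^\eps(t)\bigr) \;\le\; \delta
\]
on the high-probability event. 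Hence $\Pb{d_{\Sk_T}(\omega_\eta^\eps(\cdot),\omega_\infty^\eps(\cdot)) > \delta} \to 0$ as $\eta\to 0$, which is convergence in probability. Since $\delta>0$ was arbitrary, the theorem follows.

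One small technical wrinkle to address along the way: the definition of $\O_k(\omega)$ involves the perturbed quads $\bar Q_j \in \QUAD^{k+10}$ and the open sets $\hat Q_j = B_{d_\QUAD}(Q,2^{-k-10})$, so to certify $\omega_\eta^\eps(t)\in\O_k(\omega_\infty^\eps(t))$ and vice versa I should invoke Proposition \ref{pr.compZ} not only for quads in $\QUAD^k$ but also for the associated quads $\bar Q_j\in\QUAD^{k+10}\subset\QUAD_\N$, which is still a finite family; membership of $\omega_\eta^\eps(t)$ in the $\boxup_{\hat Q_j}^c$ sets follows from $\1_{\boxminus_{Q_j}}=1$ together with the fact that $\boxminus_{Q_j}^c \subset \boxup_{\hat Q_j}$ is handled exactly as in the proof of Proposition \ref{pr.CovrtoCovk}(ii). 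The main obstacle is really just bookkeeping: ensuring that the finitely many crossing/non-crossing statements needed to place both processes in a common $\O_k$-cell are all covered by (finitely many applications of) Proposition \ref{pr.compZ}, and that the $\delta$-perturbation tolerances match up with the $2^{-k-10}$ slack in the definition of $\O_k$; no genuinely new probabilistic estimate is required beyond Propositions \ref{pr.compZ} and \ref{pr.CovrtoCovk}.
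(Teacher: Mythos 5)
Your proposal is correct and follows essentially the same route as the paper's proof: Proposition \ref{pr.compZ} applied to the finite family $\QUAD^{\k(\delta)}$, the neighbourhoods $\O_k$ and Proposition \ref{pr.CovrtoCovk}$(i)$ to convert agreement on crossing events into a uniform-in-$t$ bound on $d_\HH$, and then the identity reparametrisation in $d_{\Sk_T}$. Two cosmetic remarks: the paper does not need to invoke Proposition \ref{pr.compZ} for the perturbed quads $\bar Q_j$, since agreement on $\boxminus_Q$ for $Q\in\QUAD^k$ already yields membership in $\boxminus_{\bar Q_k}^c$ and $\boxup_{\hat Q_k}^c$ via the hereditarity of the configurations in $\HH$ (your variant, using agreement on the $\bar Q_j$'s as well, works too), and the containment you want is $\boxminus_{Q_j}\subset\boxup_{\hat Q_j}^c$ (equivalently $\boxup_{\hat Q_j}\subset\boxminus_{Q_j}^c$), not $\boxminus_{Q_j}^c\subset\boxup_{\hat Q_j}$.
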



\ni
{\bf Proof: }

\ni
We wish to prove that for any $r>0$, 
\begin{align*}\label{}
\lim_{\eta\to 0}\Pb{ d_{\Sk}(\omega_\eta^\eps(\cdot), \omega_\infty^\eps(\cdot))  > r }  =0\,.
\end{align*}


Recall the definition of $d_{\Sk}$ from Definition \ref{d.skorohod}. By the a.s. property $(ii)$ in Corollary $\ref{c.coupling}$, it will be enough to fix $\lambda(s)=\mathrm{id}(s)=s$ so that 
\begin{equation}\label{}
\|\lambda\|: = \sup_{0\leq s<t\leq 1} |\log \frac{\lambda(t)-\lambda(s)}{t-s}| =0 \,. 
\end{equation}
It thus remains to show that for any fixed $r>0 $, one has 
\begin{align}\label{e.unif1}
\Pb{ \exists t\in [0,T],\, d_\HH(\omega_\eta^\eps(t), \omega_\infty^\eps(t)) > r} \underset{\eta\to 0}{\longrightarrow} 0\,.
\end{align}

Since the set $\QUAD^k$ is a finite set of quads, we readily obtain from Proposition \ref{pr.compZ} that for any fixed $k\geq 0$, 
\begin{align}\label{e.meige}
\lim_{\eta\to 0}\Pb{\exists Q\in \QUAD^k \text{ and }\exists t\in [0,T], \{ \omega_\eta^\eps(t) \in \boxminus_Q \} \Delta \{ \omega_\infty^\eps(t) \in \boxminus_Q\} }  =0\,.
\end{align}
Recall the notations from Definition \ref{d.kquads}. We claim that the event 
\begin{align*}\label{}
\mathcal{C}:= \{ \exists t\in [0,T],\, \omega_\infty^\eps(t) \notin \O_k(\omega_\eta^\eps(t)) \text{ and } \omega_\eta^\eps(t) \notin \O_k(\omega_\infty^\eps(t)) \}
\end{align*}
is included in the above event $\left\{ \exists Q\in \QUAD^k \text{ and }\exists t\in [0,T], \{ \omega_\eta^\eps(t) \in \boxminus_Q \} \Delta \{ \omega_\infty^\eps(t) \in \boxminus_Q\} \right\}$. 
Indeed, suppose our joint coupling satisfies the event $\mathcal{C}$. We just need to focus on the fact that $\omega_\infty^\eps(t) \notin \O_k(\omega_\eta^\eps(t))$ for some $t\in [0,T]$. 
This means we can find a quad $Q\in \QUAD^k$ with respect to which $\omega_\infty^\eps(t)$ and $\omega_\eta^\eps(t)$ behave differently. We thus have two cases.
\bnum
\item Either this quad $Q$ is such that $\omega_\eta^\eps(t) \in \boxminus_Q$ and  $\omega_\infty^\eps(t)\notin \boxup_{\hat Q_k}^c$ (recall the notation after Definition \ref{d.kquads}). In particular, this implies that a.s. $\omega_\infty^\eps(t) \notin \boxminus_Q$. We are using here the fact that our process by its construction in Theorem \ref{th.extension} belongs to $\HH$ and is thus hereditary.  In particular the event $\{ \omega_\eta^\eps(t) \in \boxminus_Q \} \Delta \{\omega_\infty^\eps(t)\in \boxminus_Q\}$ holds. 
\item Or this quad $Q$ is such that $\omega_\eta^\eps(t)\in \boxminus_Q^c$ and $\omega_\infty^\eps(t) \in \boxminus_{\bar Q_k}$ (recall the notation after Definition \ref{d.kquads}). In particular, since $\omega_\infty^\eps(t) \in \HH$, and since $\bar Q_k > Q$, we have $\omega_\infty^\eps(t) \in \boxminus_Q$ which implies also here that 
the event  $\{ \omega_\eta^\eps(t) \in \boxminus_Q \} \Delta \{\omega_\infty^\eps(t)\in \boxminus_Q\}$ is realised. 
\enum

We thus obtain from equation~\ref{e.meige} the following estimate that for any fixed $k\in \N$,
\begin{align}\label{e.Kr}
\lim_{\eta\to 0}\Pb{ \exists  t\in [0,T],\, \omega_\infty^\eps(t) \notin \O_k(\omega_\eta^\eps(t)) \text{ and } \omega_\eta^\eps(t) \notin \O_k(\omega_\infty^\eps(t))}  =0\,.
\end{align}

Using Proposition \ref{pr.CovrtoCovk} together with equation~\eqref{e.Kr} (with $k=\k(r)$) we obtain that for any fixed  $r>0$, we have 
\begin{align*}\label{}
\lim_{\eta\to 0}\Pb{ \exists t\in [0,T],\, d_\HH(\omega_\eta^\eps(t),\, \omega_\infty^\eps(t)) > r }  = 0\,.
\end{align*}

This implies (using $\lambda(s)=\mathrm{id}(s) =s $) as desired that for any $r>0$, 
\begin{align*}\label{}
\lim_{\eta\to 0}\Pb{ d_{\Sk}(\omega_\eta^\eps(\cdot), \omega_\infty^\eps(\cdot))  > r }  =0\,.
\end{align*}
\qed

\subsection{The case of the near-critical trajectory $\lambda \mapsto \omega_\infty^{\nc,\eps}(\lambda)$}\label{ss.DNC}

The construction of the near-critical trajectory $\lambda \mapsto \omega_\infty^{\nc,\eps}(\lambda)$ follows the exact same steps as the construction of $t \mapsto \omega_\infty^{\eps}(t)$, except that instead of fixing some $T>0$, we fix some $L>0$ and work on the interval $\lambda\in [-L,L]$. Also, we do not need an analog here of $\PPP_T= \PPP_T^+ \cup \PPP_T^-$ since in this near-critical case, it is enough to consider $\PPP_L=\PPP_L(\mu^\eps(\omega_\infty(0)))$, a Poisson point process on $\bar D \times [-L,L]$ with intensity measure $d\mu^\eps \times d\lambda$. 

Theorem \ref{th.etaskoro} extends readily to this near-critical setting where $\lambda\in[-L,L] \mapsto \omega_\eta^{\nc,\eps}(\lambda)$ converges to $\lambda \mapsto \omega_\infty^{\nc,\eps}(\lambda)$ as $\eta\to 0$ (either in law under the topology of $\Sk_L$ or in probability for a joint coupling on $\Sk_L$ similar to the coupling defined in Corollary \ref{c.coupling}).


\section{Stability property on the discrete level}\label{s.stability}
 
%
%

We wish to prove the following Proposition.

\begin{proposition}\label{pr.stab}
Let $T>0$ be fixed. There exists a continuous function $\psi =\psi_T :  [0,1]\to [0,1]$, with $\psi(0)=0$ such that uniformly in $0<\eta<\eps$,
\[
\Eb{ d_{\Sk_T}(\omega_\eta(\cdot), \omega_\eta^\eps(\cdot)) }\leq  \psi(\eps)\,.
\] 
\end{proposition}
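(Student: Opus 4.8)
The plan is to show that the discrete dynamical percolation $\omega_\eta(\cdot)$ and its $\eps$-cutoff version $\omega_\eta^\eps(\cdot)$ stay close in the Skorohod metric, uniformly in $0<\eta<\eps$, and the key point is that the only discrepancies come from ``small'' pivotal switches (those in $\bar\Piv^\eps(\omega_\eta)\setminus\Piv^\eps(\omega_\eta)$ type contributions and, more importantly, switches of points that are \emph{not} $\eps$-important at all), and such switches can affect a macroscopic quad-crossing only through a cascade that requires an exceptional multi-arm configuration whose probability is controlled by arm exponents. Since $\diam_{d_\HH}(\HH)=1$, it suffices to bound, for each fixed finite family of quads $\QUAD^k$, the probability that $\omega_\eta(\cdot)$ and $\omega_\eta^\eps(\cdot)$ disagree on the crossing status of some $Q\in\QUAD^k$ at some time $t\in[0,T]$, and then invoke Proposition~\ref{pr.CovrtoCovk}(i) to convert this into a $d_\HH$-bound with $k=\k(r)$ and integrate over $r$.

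First I would fix a quad $Q$ and run both dynamics from the same initial configuration $\omega_\eta(0)$, driven by the same Poisson clock: $\omega_\eta^\eps(\cdot)$ ignores all updates at sites that are not in $\Piv^\eps(\omega_\eta(0))$, while $\omega_\eta(\cdot)$ uses all of them. The two trajectories in $\boxminus_Q$ can first differ only at a time $t$ at which some update at a site $x$ changes the crossing status of $Q$ in the full dynamics but $x$ was not being tracked by the cutoff dynamics, i.e.\ at the moment of the switch the site $x$ is pivotal for $\boxminus_Q$ in $\omega_\eta(t^-)$ (up to the lattice scale) but $x\notin\Piv^\eps(\omega_\eta(0))$. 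Because $x$ is pivotal for a macroscopic quad $Q$ at time $t$, it is in particular pivotal up to some mesoscopic scale; combining this with the fact that $x$ was not $\eps$-important initially, together with the (bounded) number of prior $\eps$-important switches that could have created this pivotality — each of which is itself surrounded by a four-arm annulus — one is forced into a four- or six-arm event in a mesoscopic annulus around $x$, exactly as in the ``bulk to bulk'' analysis in the proof of Lemma~\ref{l.stab}. I would sum over a mesh of mesoscopic scales $\rho\in[\eta,\eps]$ and over the $O(\rho^{-2})$ boxes of size $\rho$, and over the (a.s.\ finite, expectation-bounded via Proposition~\ref{th.mu}(i) and the Poisson structure) number of $\eps$-important switches in $[0,T]$; the $\alpha_6^\eta(\rho,\eps)$ factor with the $35/12>2$ six-arm exponent makes $\rho^{-2}\alpha_6^\eta(\rho,\eps)$ summable and gives a bound $O((\,\cdot\,)(\eps/\eta)^{?}\cdots)$ — more precisely one gets a bound that is a power of $\eps$ after summing, uniformly in $\eta<\eps$, using also the RSW/quasi-multiplicativity estimates. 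The boundary-to-bulk and boundary-to-boundary cases give the smaller three-arm boundary contribution $O(\rho^{-1}(\rho/\eps)^{4/3})$ as in Lemma~\ref{l.stab}, which is also a power of $\eps$.

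Concretely, I would first prove a one-switch, single-quad statement of the form
\[
\Pb{\exists t\in[0,T]:\ \{\omega_\eta(t)\in\boxminus_Q\}\,\Delta\,\{\omega_\eta^\eps(t)\in\boxminus_Q\}}\le \psi_{Q}(\eps),
\]
with $\psi_Q(\eps)\to0$ as $\eps\to0$, uniformly in $\eta<\eps$; the extension from one quad to the finite family $\QUAD^{\k(r)}$ costs only the (finite, $r$-dependent) cardinality $|\QUAD^{\k(r)}|$, which is harmless since we will choose the threshold below. Then, exactly as in the proof of Theorem~\ref{th.etaskoro}, the event that $\omega_\eta(t)$ and $\omega_\eta^\eps(t)$ fail to lie in each other's $\O_k$-neighbourhood for some $t$ is contained in the event that they disagree on some $Q\in\QUAD^k$ at some $t$; taking $k=\k(r)$ and applying Proposition~\ref{pr.CovrtoCovk}(i) gives
\[
\Pb{\exists t\in[0,T]:\ d_\HH(\omega_\eta(t),\omega_\eta^\eps(t))>r}\le |\QUAD^{\k(r)}|\ \psi_{\QUAD^{\k(r)}}(\eps).
\]
Finally, since $\|\lambda\|$ can be taken $0$ (both processes share the same jump times, being driven by the same clock with $\omega_\eta^\eps$ just suppressing some jumps, so no time-change is needed — in fact here the identity reparametrisation works because the suppressed jumps are jumps of $\omega_\eta$ only), $d_{\Sk_T}(\omega_\eta(\cdot),\omega_\eta^\eps(\cdot))\le \sup_{t}d_\HH(\omega_\eta(t),\omega_\eta^\eps(t))$, so
\[
\Eb{d_{\Sk_T}(\omega_\eta(\cdot),\omega_\eta^\eps(\cdot))}\le r+\Pb{\exists t:\ d_\HH(\omega_\eta(t),\omega_\eta^\eps(t))>r}\le r+|\QUAD^{\k(r)}|\,\psi_{\QUAD^{\k(r)}}(\eps).
\]
Choosing $r=r(\eps)\to0$ slowly enough that $|\QUAD^{\k(r(\eps))}|\,\psi_{\QUAD^{\k(r(\eps))}}(\eps)\to0$ produces the desired continuous $\psi=\psi_T$ with $\psi(0)=0$, uniform in $0<\eta<\eps$; a standard monotonisation makes $\psi$ continuous and nondecreasing.

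The main obstacle — and the real content borrowed from the companion arm-exponent analysis — is the combinatorial-geometric argument that a discrepancy between $\omega_\eta(\cdot)$ and $\omega_\eta^\eps(\cdot)$ on a macroscopic quad \emph{forces} a mesoscopic multi-arm event (the ``no cascades from the microscopic world'' phenomenon of Figure~\ref{f.cascade}): one must carefully track how a switch of a non-$\eps$-important point can only matter if that point became pivotal for $Q$ through the accumulated effect of finitely many $\eps$-important switches, each carrying its own four-arm annulus, so that at the fatal time one sees either a six-arm event in a bulk annulus of inner radius $\gtrsim\eta$ and outer radius $\lesssim\eps$, or a boundary three-arm event — both with summable probabilities by the exponents $35/12$ and $4/3$ from \cite{\SmirnovWerner}. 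Controlling the number of relevant $\eps$-important switches (and thus not paying an unbounded union bound) uses Proposition~\ref{th.mu}(i) together with the a.s.\ finiteness from Proposition~\ref{pr.PPP}, and the uniformity in $\eta<\eps$ uses quasi-multiplicativity of arm probabilities to pass between scales $\eta$, $\rho$ and $\eps$.
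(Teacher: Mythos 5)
Your overall assembly is the same as the paper's: couple the two dynamics on the same Poisson clock (so the identity reparametrisation suffices), reduce closeness in $d_\HH$ to agreement of crossing statuses on the finite family $\QUAD^{\k(r)}$ via Proposition~\ref{pr.CovrtoCovk}(i), bound $\Eb{d_{\Sk_T}}$ by $r$ plus a disagreement probability, and optimise over $r=r(\eps)$. The gap is in the core per-quad estimate. You reduce a disagreement on $\boxminus_Q$ to a static four/six-arm event "around $x$" at mesoscopic scales and then sum $\rho^{-2}\alpha_6^\eta(\rho,\eps)$ over scales and boxes. But the pivotality you exhibit at the discrepancy time is pivotality for the \emph{perturbed} configuration $\omega_\eta(t^-)$ (or for one of the two coupled configurations), which differs from $\omega_\eta(0)$ at an unbounded number of updated non-$\eps$-important sites; transferring such an arm event back to a configuration whose law you control is exactly the "no cascades" stability statement being proved, and it is the content of Lemma~\ref{l.main} (the double-induction strengthening of Kesten's near-critical four-arm stability), Lemma~\ref{l.change} and Proposition~\ref{p.almostCross}. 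Your attempt to control the union bound by the number of $\eps$-important switches (via Proposition~\ref{th.mu}(i)) misses the point: those switches are followed by \emph{both} dynamics and are not the source of discrepancies; the dangerous flips are the non-important ones, of which there are of order $T\,\alpha_4^\eta(\eta,1)^{-1}\to\infty$ per unit area, so neither a per-switch union bound nor a union over the continuum of times survives without the stability lemma. The appeal to the "bulk to bulk" six-arm analysis of Lemma~\ref{l.stab} is also misplaced: there the six arms come from comparing two mesoscopic scales for a \emph{single} configuration with known critical law, whereas here one compares full versus cut-off dynamics, and no such single configuration carries the required event.

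A second, smaller inaccuracy is the identification of the first discrepancy time: the crossing status of $Q$ can first differ at the flip of a \emph{tracked} ($\eps$-important) site whose pivotality for $Q$ differs between the two coupled configurations because of earlier accumulated differences at non-tracked sites; it need not be the flip of an untracked site that is pivotal in the full dynamics. The paper circumvents this by the interpolation argument of Proposition~\ref{p.almostCross}: take any $\omega',\omega''\in\Omega(\omega,X)$ agreeing on all sites with $Z_\omega\ge\eps$ (which covers $\omega_\eta(t)$ and $\omega_\eta^\eps(t)$ by Remark~\ref{r.PIV}), order the sites where they differ, and locate a first flip that changes the macroscopic connection; that site necessarily satisfies $x\in X$, $Z_\omega(x)<\eps$ and $Z^X(x)>r/2$, whose probability Lemma~\ref{l.change} bounds by $O_T(1)\,\eps^2\alpha_4(\eps)\,\alpha_4(r,1)^{-1}$, summable over the $O_Q(\eta^{-2})$ candidate sites because of the factor $\Ps{x\in X}\asymp T\eta^2\alpha_4(\eta,1)^{-1}$. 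To make your sketch into a proof you would need to supply these three ingredients (or equivalents); the rest of your argument then matches the paper's conclusion, with the bound $O_{T,\k(r)}(\eps^2)\,\alpha_4(\eps,1)^{-1}\approx\eps^{3/4+o(1)}$ replacing your six-arm count.
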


To prove this Proposition, we will need to introduce some notations as well as some preliminary lemmas.

First of all, since this entire section is about discrete configurations $\omega_\eta\in \HH$, we will often omit for convenience the subscript $\eta$ and denote the percolation configurations simply by $\omega$.

\begin{definition}[Importance of a point]\label{}
Given a percolation configuration $\omega=\omega_\eta\in \HH$ and a site $z$,
let $Z(z)=Z_\omega(z)$ denote the maximal radius $r$ such that
the four arm event holds from the hexagon of $z$ to distance $r$ away.  
This is also the maximum $r$ for which changing the value of $\omega(z)$ will
change the white connectivity in $\omega$ between two white points at distance
$r$ away from $z$, or will change the black connectivity between two
black points at distance $r$ away from $z$.
The quantity $Z(z)$ will also be called the
{\bf importance} of $z$ in $\omega$.
\end{definition}


\begin{definition}\label{}
Fix $T>0$. We will denote by $X=X_{\eta,T}$ the random set of sites on $\eta \Tg$ which are updated along the dynamics $t\in[0,T]\mapsto \omega_\eta(t)$. Recall from Definition \ref{d.DP} that this random subset of $\eta \Tg$ is independent of $\omega=\omega_\eta(t=0)$ and each site $z\in\eta \Tg$ is in $X$ independently with probability $q_T:=1-e^{-T r(\eta)} \sim T\, \eta^2/ \alpha_4^\eta(\eta,1)$. 

let $\Omega(\omega,X)$ denote the set of percolation configurations
$\omega'$ such that $\omega'(x)=\omega(x)$ for all $x\notin X$.
Finally, let $\ev A_4(z,r,r')$ denote the $4$-arm event
in the annulus $A(z,r,r')$.
\end{definition}


\begin{lemma}\label{l.main}
Let $T>0$ be fixed. 
Set $r_i:= 2^i\,\eta$, $N:= \lfloor\log_2(1/\eta)\rfloor$.
Let $\ev W_z(i,j)$ denote the event that
there is some $\omega'\in\Omega(\omega,X)$ satisfying $\ev A_4(z,r_i,r_j)$.
Then for every pair of integers $i,j$ satisfying $0\le i<j<N$
and every $z\in\R^2$,
\begin{equation}\label{e.dinduct}
\Pb{\ev W_z(i,j)}
\le C_1\,\alpha_4(r_i,r_j)\,,
\end{equation}
where $C_1=C_1(T)$ is a constant that may depend only on
$T$ (note that here $\P$ includes the extra randomness in the choice of the subset $X$).
\end{lemma}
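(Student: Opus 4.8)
The plan is to exploit the identity $q_T=1-e^{-Tr(\eta)}\asymp T\,r(\eta)=T\,\eta^2\alpha_4(\eta,1)^{-1}$ between the update probability and the renormalizing rate. Its meaning is that for any scale $\rho\le 1$ the expected number of sites of $X$ that are $\rho$-important in $\omega$ inside a disc of radius $\rho$ is of order $(\rho/\eta)^2\,\alpha_4(\eta,\rho)\,q_T\asymp\rho^2/\alpha_4(\rho,1)$, hence bounded by a $T$-dependent constant; this is the discrete incarnation of the fact that on the time scale $r(\eta)$ the dynamics performs only a near-critical perturbation of bounded strength, under which four-arm probabilities change at most by bounded factors. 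By translation invariance I may take $z$ to be a lattice point, and the bound is trivial when $j-i$ is below a fixed threshold $\ell_0$ (then $\alpha_4(r_i,r_j)$ is bounded below by RSW), so the content is the behaviour over many scales.

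First I would replace the existential event $\ev W_z(i,j)$ by a concrete witness. On $\ev W_z(i,j)$ pick $\omega'\in\Omega(\omega,X)$ realizing $\ev A_4(z,r_i,r_j)$ with the fewest flipped sites $Y=\{x:\omega'(x)\ne\omega(x)\}\subset X$. Minimality forces every $x\in Y$ to be pivotal for the four-arm event in $\omega'$, hence to carry four alternating arms in $\omega'$ — equivalently four alternating arms in $\omega$ up to a single site, after an RSW repair — out to its \emph{effective scale} $\rho_x:=\frac13\,\dist\!\big(x,\ \partial B(z,r_i)\cup\partial B(z,r_j)\cup(Y\setminus\{x\})\big)$, because inside $B(x,\rho_x)$ the configuration $\omega'$ differs from $\omega$ only at $x$. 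Thus $\ev W_z(i,j)$ is contained in the event that $\omega$ has a four-arm crossing of $A(z,r_i,r_j)$ which is allowed to route freely through the sparse family of discs $\{B(x,\rho_x):x\in Y\}$, each centred at a site of $X$ that is itself four-arm important in $\omega$ up to the disc radius; outside the discs $\omega'$ coincides with $\omega$.

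Next I would bound this by a union bound over the random family $Y$ and the ``skeleton'' four-arm structure joining the discs. Quasi-multiplicativity of $\alpha_4$, with the arm-separation and gluing consequences of RSW, makes the skeleton probability for a fixed admissible $Y$ factorize as $\lesssim\alpha_4(r_i,r_j)\prod_{x\in Y}\alpha_4(\eta,\rho_x)^{-1}$ up to combinatorics — dropping the four-arm requirement in $B(x,\rho_x)$ saves the factor $\alpha_4(\eta,\rho_x)$ — while independently each $x$ lies in $X$ and is $\rho_x$-important with probability $\asymp q_T\,\alpha_4(\eta,\rho_x)$. In the product the two occurrences of $\alpha_4(\eta,\rho_x)$ cancel, and summing over the position of a flip at a given effective scale turns each flip's contribution into an expected number of relevant $X$-sites lying on the four-arm structure, which by the density estimate above (equivalently, the first-moment bound behind the pivotal measure of \cite{\GPSa}) is at most a $T$-dependent constant; summing over the number of flips yields a convergent geometric series once $\ell_0$ is taken large. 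This is cleanest organized as an induction on $j-i$: in the step one splits according to whether the outermost $\ell_0$ dyadic sub-annuli of $A(z,r_i,r_j)$ carry an essential flip, the ``no'' case being independent of the interior and closing by the induction hypothesis and quasi-multiplicativity, the ``yes'' case carrying the small extra factor from the density bound.

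The main obstacle is precisely the cascade phenomenon of Figure~\ref{f.cascade}: a flipped site could a priori be important only because of a larger, nearby flip, so that its naive importance radius is not the distance to the annulus boundary and the flips compound across scales. Two devices handle this. Choosing a minimal-flip witness makes every flip genuinely pivotal for the \emph{global} four-arm event, removing redundant flips and legitimizing the definition of $\rho_x$. And a separate estimate, using the $5$-arm half-plane and $6$-arm plane exponents from \cite{\SmirnovWerner}, bounds the probability that two sites of $X$ are both important at comparable scales while lying within that scale of one another — such a configuration forces a $\ge 5$- or $6$-arm event — so that with overwhelming probability the scales $\rho_x$ really are of the order of the distances to $\partial B(z,r_i)\cup\partial B(z,r_j)$, and the multi-scale sums behave as if the flips were well spread out.
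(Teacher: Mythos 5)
Your opening reduction is sound and close in spirit to Kesten-type arguments: taking a minimal-flip witness $\omega'$ does force every $x\in Y$ to be pivotal in $\omega'$ for $\ev A_4(z,r_i,r_j)$, and since the four-arm event at $x$ does not involve the colour of $x$ itself, inside $B(x,\rho_x)$ (where $\omega'$ and $\omega$ agree off $x$) you indeed get that $x$ is four-arm important \emph{in $\omega$} up to the scale $\rho_x$. The trouble starts with how you control the cascades afterwards. Your key device --- ``two sites of $X$ both important at comparable scales and within that scale of one another forces a $\ge 5$- or $6$-arm event'' --- is false: two points at distance $d$ that are each four-arm important to distance $R\gg d$ cost about $\alpha_4(\eta,d)^2\,\alpha_4(d,R)$, since their arms may merge pairwise outside $B(x,2d)$; no fifth or sixth arm is forced (this is exactly the two-point estimate underlying the second moments for the pivotal measure in the companion paper). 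So clustered flips cannot be dismissed as ``overwhelmingly unlikely'' before the main union bound; their contribution is genuinely comparable in order to the target and has to be beaten quantitatively, scale by scale. Relatedly, your skeleton bookkeeping does not close: the pivotality of a flip $x$ is used at the scale $d_x$ of its distance to the existing structure, and between $\rho_x$ and $d_x$ the four arms live in $\omega'$, not in $\omega$ --- i.e.\ that intermediate-scale information is itself a $\ev W$-type event. If you discard it (as your single ``free-pass'' crossing does), the saving is only of order $\alpha_4(\rho_x,d_x)^{-1}$, not $\alpha_4(\eta,\rho_x)^{-1}$, so the advertised exact cancellation against the importance cost $\alpha_4(\eta,\rho_x)$ does not occur, and the independence you need for the product over an arbitrary, possibly large family $Y$ is asserted rather than arranged on disjoint regions.

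There is a second, quantitative gap in the closing step. After the correct cancellation, the cost per flip at effective scale $\rho$ is of order $T\,\rho^2/\alpha_4(\rho,1)$, and summing over dyadic scales up to $r_j$ gives roughly $T\,r_j^2/\alpha_4(r_j,1)$. This is small only when the \emph{outer} radius $r_j$ is small (equivalently $N-j$ large); making $j-i\ge \ell_0$ large does nothing, so your geometric series over the number of flips need not converge when $r_j$ is of order $1$. This is precisely why the paper's proof is organized differently: there the first flipped site that creates the crossing is identified by a sequential interpolation on the event that $\omega$ itself fails $\ev A_4(z,r_{i+1},r_{j-1})$, the local event around it is kept as a $\ev W$-event and fed back into the recursion $b_i^j\lesssim \alpha_4(r_i,r_j)+T\,\alpha_4^\eta(\eta,1)^{-1}\sum_n r_n^2\,b_1^{n-1}b_i^{n-1}b_{n+2}^j$ via independence on disjoint annuli, and the double induction only closes when $N-j>M(T)$, the remaining top scales being absorbed by the monotonicity $b_i^j\le b_i^{N-M-1}$. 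Your proposal has no counterpart of this top-scale cutoff and monotonicity step, and without it (and without a valid replacement for the cascade control) the multi-scale union bound does not yield the claimed $C_1(T)\,\alpha_4(r_i,r_j)$.
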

\begin{proof}
Let $\ev D$ denote the event
that $\omega$ does not satisfy $\ev A_4(z,r_{i+1},r_{j-1})$.
Suppose that $\ev W_z(i,j)\cap \ev D$ holds, and let
$\omega'\in\Omega(\omega,X)$
satisfy $\ev A_4(z,r_i,r_j)$.
Let $Y_0:=X\setminus A(z,r_{i+1},r_{j-1})$,
and let $\{x_1,x_2,\dots,x_m\}$ be some ordering
of $X\cap A(z,r_{i+1},r_{j-1})$.
Let $Y_k=Y_0\cup\{x_1,x_2,\dots,x_k\}$, $k=1,2,\dots,m$,
and let $\omega_k$ be the configuration that agrees with
$\omega'$ on $Y_k$ and is equal to $\omega$ elsewhere.
Then $\omega_0$ does not satisfy $\ev A_4(z,r_{i+1},r_{j-1})$,
and therefore also does not satisfy $\ev A_4(z,r_i,r_j)$.
On the other hand, $\omega_m=\omega'$ satisfies $\ev A_4(z,r_i,r_j)$.
Let $q\in\{1,2,\dots,m\}$ be minimal with the property that
$\ev A_4(z,r_i,r_j)$ holds in $\omega_q$,
and let $n\in\N\cap[i+1,j-2]$ be chosen so that
$x_q\in A(z,r_n,r_{n+1})$.
Then $x_q$ is pivotal in $\omega_q$ for $\ev A_4(z,r_i,r_j)$.
Since $B(x_q,r_{n-1})\subset A(z,r_i,r_j)$,
this implies that $\omega_q$ satisfies
$\ev A_4(x_q,2\,\eta,r_{n-1})$.
Hence, we get the bound
\begin{equation*}\label{e.bs}
\begin{aligned}
&
\Pb{\ev W_z(i,j),\,\ev D}
\le
\sum_{n=i+1}^{j-2}\;\;
\sum_{x\in A(z,r_{n},r_{n+1})}
\Pb{ x\in X,\,\ev W_x ( 1,n-1 ) ,\,\ev W_z(i,j) }.
\end{aligned}
\end{equation*}
Since $\ev W_z(i,j)\subset \ev W_z(i,n-1)\cap\ev W_z(n+2,j)$
and since $B(x,r_{n-1})\subset A(z,r_{n-1},r_{n+2})$,
independence on disjoint sets gives
\begin{align*}
\Pb{ x\in X,\,\ev W_x ( 1,n-1 ) ,\,\ev W_z(i,j) } & 
\\
& \hskip -3 cm \le
\Pb{ x\in X,\,\ev W_x ( 1,n-1 ) ,\,\ev W_z(i,n-1),\,\ev W_z(n+2,j) }
\\
&\hskip -3 cm =
\Ps{x\in X}\,
\Pb{ \ev W_x ( 1,n-1 )}
\Pb{\ev W_z(i,n-1)}\,\Pb{\ev W_z(n+2,j) }.
\end{align*}

Now set $b_i^j:=\sup_z \Pb{\ev W_z(i,j)}$.
The above gives
$$
\Pb{\ev W_z(i,j),\,\ev D}
\le
O(T)
\sum_{n=i+1}^{j-2}
\,(r_n/\eta)^2\,
\eta^2\,\alpha_4^\eta(\eta,1)^{-1}\,b_1^{n-1}\,b_i^{n-1}\,b_{n+2}^j\,.
$$
Since
$
\Pb{\ev W_z(i,j)}
\le \Pb{\neg\ev D}+\Pb{\ev W_z(i,j),\,\ev D)}
$,
The above shows that for some absolute constant
$C_0>0$, we have
\begin{equation}\label{e.recurs}
\begin{aligned}
b_i^j/C_0 &
\le \alpha_4(r_i,r_j) +
T\,\sum_{n=i+1}^{j-2}
r_n^2 \,\alpha_4^\eta(\eta,1)^{-1}\,b_1^{n-1}\,b_i^{n-1}\,b_{n+2}^j
\\ &
\le \alpha_4(r_i,r_j) +T\,\alpha_4^\eta(\eta,1)^{-1}\,\sum_{n=i+1}^{j-1} \,
r_n^2 \,b_1^{n-1}\,b_i^{n-1}\,b_{n+2}^j.
\end{aligned}
\end{equation}
We now claim that~\eqref{e.dinduct} holds with some fixed constant
$C_1=C_1(T)$,
to be later determined. This will be proved by induction on $j$,
and for a fixed $j$ by induction on $j-i$.
In the case where $j-i\le 5$, say, this can be guaranteed by an
appropriate choice of $C_1$.
Therefore, assume that the claim holds for all smaller $j$ and
for the same $j$ with all larger $i$.
The inductive hypothesis can be applied to estimate the right
hand side of~\eqref{e.recurs}, to yield
\begin{align*}
b_i^j\le C_0\, \alpha_4(r_i,r_j)+
T\,C_0\,C_1^3\,\alpha_4^\eta(\eta,1)^{-1}
\sum_{n=i+1}^{j-1}\,r_n^2\,\alpha_4(r_1,r_{n-1})
\,\alpha_4(r_i,r_{n-1})\,\alpha_4(r_{n+2},r_j) \,.
\end{align*}
By the familiar multiplicative properties of $\alpha_4$, \margin{A link here !}
we obtain
\begin{equation}
\label{e.near}
b_i^j\le C_2\,\alpha_4(r_i,r_j)\Bigl(1+T\,
C_1^3
\sum_{n=i+1}^{j-1}\,\frac{r_n^2}{\alpha_4(r_n,1)}
\Bigr),
\end{equation}
for some constant $C_2$.
Since $O(1)\,\alpha_4(r_n,1)>r_n^{2-\epsilon}$
for some constant $\epsilon>0$ (see for example Section 2.2 in \cite{\GPS}), it is clear that when $N-j$ is larger
than some fixed constant $M=M(T)\in\N$,
we have
$$
T\,
(2\,C_2)^3
\sum_{n=i+1}^{j-1}\,\frac{r_n^2}{\alpha_4(r_n,1)} \le 1\,.
$$
This shows that~\eqref{e.near} completes the inductive
step if we choose $C_1=2\,C_2$ and if $N-j>M$.
(Note that in the proof of the induction step when
$N-j>M$, we have not relied on the inductive assumption
in which this condition does not hold.)
To handle the case $N-j\le M$, we just note
that $b_i^j\le b_i^{N-M-1}$,
and the estimate that we have for $b_i^{N-M-1}$ is within
a constant factor (depending on $T$)
of our claimed estimate for $b_i^j$,
since $M$ depends only on $T$.
\end{proof}

Set
$$
Z^X(z):=\sup_{\omega'\in\Omega(\omega,X)}Z_{\omega'}(z)\,.
$$

\begin{lemma}\label{l.change}
For every site $z$ and every $\eps$ and $r$ satisfying
$2\,\eta<\eps<2^4\,\eps<r\le 1$, we have
$$
\Pb{Z^X(z)\ge r,\, Z_\omega(z)\le \eps}\le
O_T(1)\,\eps^2\,\alpha_4(\eta,\eps)\,\alpha_4(r,1)^{-1}\,.
$$
\end{lemma}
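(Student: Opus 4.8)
\medskip\noindent\textbf{Proof (proposal).}
The plan is to follow the mechanism of Lemma~\ref{l.main}: on the event $\{Z^X(z)\ge r,\, Z_\omega(z)\le\eps\}$ a \emph{single} update site $x$ close to $z$ must be responsible for pushing the importance of $z$ past scale $\eps$, and I would bound its contribution by applying Lemma~\ref{l.main} on three disjoint annuli. The main obstacle is expected to be the geometric step: extracting, from the pivotality of this one site, three four-arm events supported on \emph{disjoint} regions. Once that is in place the estimate reduces to Lemma~\ref{l.main}, quasi-multiplicativity of $\alpha_4$, and a geometric sum.

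First I would work on $\{Z^X(z)\ge r,\, Z_\omega(z)\le\eps\}$: pick $\omega'\in\Omega(\omega,X)$ with $Z_{\omega'}(z)\ge r$, and let $\omega^\flat\in\Omega(\omega,X)$ coincide with $\omega$ on $B(z,4\eps)$ and with $\omega'$ outside $B(z,4\eps)$. Since $Z_\omega(z)\le\eps<4\eps$ and $\omega^\flat$ agrees with $\omega$ on $B(z,4\eps)$, we have $Z_{\omega^\flat}(z)=Z_\omega(z)\le\eps<r$, whereas $Z_{\omega'}(z)\ge r$; re-introducing the updates of $X\cap B(z,4\eps)$ one at a time and running the pivotality argument of the proof of Lemma~\ref{l.main}, I would obtain an update site $x\in X$ with $s:=|x-z|\in(2\eta,4\eps]$ that is pivotal, in some $\omega''\in\Omega(\omega,X)$, for the four-arm event from $z$ to distance $r$. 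Because $16\eps<r$, one has $\dist(x,\partial B(z,r))\ge r-4\eps\ge 3s$, so the same reasoning shows that $\omega''$ must satisfy all three of $\ev A_4(x,2\eta,s/3)$, $\ev A_4(z,2\eta,s/3)$ and $\ev A_4(z,3s,r)$, and that these four-arm configurations can be realised inside the pairwise \emph{disjoint} regions $B(x,s/3)$, $B(z,s/3)$ and $A(z,3s,r)$ (the first two being the portions of the four arms near $x$ and near $z$, the third the portion of $z$'s four arms beyond $B(z,3s)$). The $O(1)$ sites $x$ with $s\le 6\eta$ make the first two annuli degenerate; I would treat those separately, their total contribution being $O_T(1)\,\eta^2\,\alpha_4(r,1)^{-1}\le O_T(1)\,\eps^2\,\alpha_4(\eps)\,\alpha_4(r,1)^{-1}$ (using $\eta\le\eps$ and $\alpha_4^\eta(a,b)\ge c\,(a/b)^2$).

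Next, since $(\omega,X)$ is i.i.d.\ over the sites of $\eta\Tg$ and $\omega''\in\Omega(\omega,X)$, the three four-arm configurations above witness the events $\ev W_z(1,m)$, $\ev W_x(1,m)$ and $\ev W_z(m',m'')$ of Lemma~\ref{l.main}, with $r_m\asymp s$, $r_{m'}\asymp 3s$, $r_{m''}\asymp r$; together with $\{x\in X\}$ these depend on $(\omega,X)$ restricted to disjoint sets of sites (note $\ev W_x(1,m)$ does not involve the hexagon of $x$), hence are independent. A union bound over the candidate sites and Lemma~\ref{l.main} would then give
\begin{align*}
\Pb{Z^X(z)\ge r,\, Z_\omega(z)\le\eps}
&\le \sum_{x\in B(z,4\eps)\cap\eta\Tg} q_T\,\Pb{\ev W_z(1,m)}\,\Pb{\ev W_x(1,m)}\,\Pb{\ev W_z(m',m'')}\\
&\le C_1^{3}\sum_{x\in B(z,4\eps)\cap\eta\Tg} q_T\,\alpha_4(2\eta,s)^{2}\,\alpha_4(s,r)\,,
\end{align*}
where $q_T=1-e^{-T\,r(\eta)}\le T\,\eta^2\,\alpha_4^\eta(\eta,1)^{-1}$ and $s=|x-z|$.

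Finally I would evaluate the sum by splitting the sites into the dyadic shells $|x-z|\asymp r_n$, $r_n=2^n\eta$, $1\le n\le\log_2(4\eps/\eta)$, each containing $\asymp(r_n/\eta)^2$ sites; using quasi-multiplicativity of $\alpha_4$, the $n$-th shell contributes $\asymp_T r_n^2\,\alpha_4(2\eta,r_n)\,\alpha_4(r,1)^{-1}$. By the inequality $\alpha_4^\eta(a,b)\ge c\,(a/b)^{2-\delta}$ for a universal $\delta>0$ — the same input used in the proof of Lemma~\ref{l.main} — there is a fixed $K$ with $r_{n+K}^2\,\alpha_4(2\eta,r_{n+K})\ge 2\,r_n^2\,\alpha_4(2\eta,r_n)$ for all $n$, so the sum is dominated by its outermost term, at $r_n\asymp\eps$. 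This yields $\Pb{Z^X(z)\ge r,\, Z_\omega(z)\le\eps}\le O_T(1)\,\eps^2\,\alpha_4(\eps)\,\alpha_4(r,1)^{-1}$, as required.
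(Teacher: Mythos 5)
Your proposal is correct and follows essentially the same route as the paper's proof: you interpolate between configurations by re-introducing the updates near $z$ one at a time to locate a single responsible pivotal update site $x$, extract three four-arm events supported on disjoint regions (around $x$, around $z$ at small scale, and around $z$ from scale $\asymp s$ to $r$), bound each by Lemma~\ref{l.main} using independence on disjoint sets together with $\Pb{x\in X}=q_T$, and sum over $x$ with the dyadic shell at scale $\eps$ dominating. The only differences are cosmetic bookkeeping (radius $4\eps$ instead of $\eps$, the explicit treatment of sites with $|x-z|\le 6\eta$, and spelling out the quasi-multiplicativity/dyadic-sum step that the paper leaves implicit).
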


The proof uses some of the ideas going into the proof of Lemma~\ref{l.main}
as well as the estimate provided by that lemma.

\begin{proof}
Fix $z$, $\eps$ and $r$ as above.
Suppose that
$Z^X(z)\ge r$ and $Z_\omega(z)\le \eps$ both hold.
Let $\omega'\in\Omega(\omega,X)$ be such that
$Z_{\omega'}(z)\ge r$.
Let $x_1,x_2,\dots,x_m$ be the sites in $\dbox(z,\eps)$ where
$\omega'\ne\omega$.  (We use some arbitrary but fixed rule to choose
$\omega'$ and the sequence $x_j$ among the allowable possibilities.)
For each $j=0,1,\dots,m$, let $\omega_j$ denote the configuration that agrees
with $\omega'$ on every site different from $x_{j+1},x_{j+2},\dots,x_m$,
and agrees with $\omega$ on $x_{j+1},\dots,x_m$.
Then $\omega_m=\omega'$ and $Z_{\omega_0}(z)<\eps$.
Let $k$ be the first $j$ such that $Z_{\omega_j}(z)>r$.

Fix some site $x$ satisfying $r^x:=|z-x|\le\eps$. In order for
$Z^X(z)\ge r$, $Z_\omega(z)\le \eps$ and
$x_k=x$ to hold, the following four events must occur:
 $x\in X$,
 $Z^X(z)\ge r^x/2$,
 $Z^X(x) \ge r^x/2$, and
 $\ev W_z(2+\lceil \log_2 r^x\rceil ,\lfloor \log_2 r\rfloor)$
(using the notation of Lemma~\ref{l.main}).
We have $\Pb{x\in X}= q_T=O(T)\,\eta^2\,\alpha_4(\eta,1)^{-1}$,
while the probabilities of the latter three events
are bounded by Lemma~\ref{l.main}.
Combining these bounds, we get
\begin{multline*}
\Pb{ Z^X(z)\ge 1,\, Z(z)\le \eps,\,x_k=x}
\le
O_T(1)\, \alpha_4(\eta,r^x)^2\, \eta^2\,\alpha_4(\eta,1)^{-1}\,
\alpha_4(r^x,r)
\\
=
O_T(1)\,\alpha_4(\eta,r^x)\,\eta^2\,\alpha_4(r,1)^{-1}\,.
\end{multline*}
Summing this bound over all sites $x$ satisfying $|z-x|\le\eps$
 yields the lemma.
\end{proof}

For any quad $Q\in \QUAD$, if $r>0$ is smaller than the minimal distance from $\p_1 Q$ to $\p_3 Q$,
we will say that $Q$  is {\bf $r$-almost crossed} by $\omega=\omega_\eta\in\HH$, if there is an
open path in the $r$-neighborhood of $Q$ that comes within distance
$r$ of each of the two arcs $\p_1 Q$ and $\p_3 Q$.

\begin{proposition}\label{p.almostCross}
Let $T$ and $X$ be as above, and fix some quad $Q\in \QUAD$.
Let $r>0$ be smaller than the minimal distance between $\p_1Q$ and $\p_3 Q$,
and suppose that $0<\eta<2\,\eta<\eps<2^5\,\eps<r\le 1$.
 Then the probability that there are some
$\omega',\omega''\in\Omega(\omega,X)$ such that
(a) $Q$ is crossed by $\omega'$, (b) $Q$ is not $r$-almost crossed by $\omega''$, and
(c) $\omega'(z)=\omega''(z)$ for every site $z$ satisfying $Z_\omega(z)\ge \eps$
is at most
$$
O_{T,Q}(\eps^2)\,\alpha_4(\eps,1)^{-1}\,\alpha_4(r,1)^{-1}.
$$
\end{proposition}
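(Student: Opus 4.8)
The plan is to adapt the interpolation‑and‑arm‑counting scheme behind Lemmas~\ref{l.main} and~\ref{l.change}. We may assume $r\le\tfrac12\dist(\p_1 Q,\p_3 Q)$: the event in question is decreasing in $r$ (an $r'$‑almost crossing is an $r$‑almost crossing whenever $r'<r$), so the general case follows from this one, using that for larger $r$ both $\alpha_4(r,1)$ and $\alpha_4\bigl(\tfrac12\dist(\p_1Q,\p_3Q),1\bigr)$ are bounded below by a constant depending only on $Q$.

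\emph{Step 1: reduction to a single pivotal site.} Suppose the event $\mathcal E$ of the statement holds, and fix witnesses $\omega',\omega''\in\Omega(\omega,X)$. By (c) the configurations $\omega'$ and $\omega''$ agree off the finite set $W:=\{z\in X:Z_\omega(z)<\eps\}$, so they differ only at finitely many sites $x_1,\dots,x_m\in W$. Let $\omega_j$ agree with $\omega'$ on $\{x_1,\dots,x_j\}$ and with $\omega''$ elsewhere, so $\omega_0=\omega''$, $\omega_m=\omega'$, and each $\omega_j\in\Omega(\omega,X)$. Since ``$Q$ is $r$‑almost crossed'' is increasing in the set of open hexagons, $\omega_0$ fails it while $\omega_m$ satisfies it (a crossing of $Q$ is in particular an $r$‑almost crossing), so there is a first $k$ at which it holds; then $x_k$ is open in $\omega_k$ and closed in $\omega_{k-1}$, and $x_k$ is pivotal in $\hat\omega:=\omega_k$ for ``$Q$ is $r$‑almost crossed''. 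Writing $p:=x_k$ (which necessarily lies in the $r$‑neighbourhood of $Q$), we obtain
\[
\mathcal E\ \subseteq\ \bigcup_{p}\bigl\{\,p\in X,\ Z_\omega(p)<\eps,\ \exists\,\hat\omega\in\Omega(\omega,X)\text{ with }p\text{ pivotal for ``}Q\text{ is }r\text{-almost crossed''}\,\bigr\}.
\]

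\emph{Step 2: the bulk term.} Call $p$ a bulk site if it lies in $Q([0,1]^2)$ at distance at least $2r$ from $\p_1 Q\cup\p_3 Q$ and from $\p Q$. For such $p$, pivotality in $\hat\omega$ forces the alternating four‑arm event around $p$ in $\hat\omega$ out to distance $r$ — the two open arms run to within $r$ of the two target arcs and the two dual arms separate inside the $r$‑neighbourhood, none of which can be blocked before distance $r$ — so $Z^X(p)\ge r$. The bulk part of the right‑hand side above is therefore contained in $\bigcup_{\text{bulk }p}\{p\in X\}\cap\{Z^X(p)\ge r,\ Z_\omega(p)\le\eps\}$. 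Crucially, $Z^X(p)$ and $Z_\omega(p)$ are four‑arm quantities at $p$ and so do not depend on the state of the hexagon at $p$, whereas $\{p\in X\}$ depends only on that hexagon; hence these events are independent and, by Lemma~\ref{l.change},
\[
\Pb{p\in X,\ Z^X(p)\ge r,\ Z_\omega(p)\le\eps}=q_T\,\Pb{Z^X(p)\ge r,\ Z_\omega(p)\le\eps}\le q_T\,O_T(1)\,\eps^2\,\alpha_4(\eps)\,\alpha_4(r,1)^{-1}.
\]
Since $q_T\le T\,r(\eta)=T\,\eta^2\,\alpha_4^\eta(\eta,1)^{-1}$ and $\alpha_4(\eps)=\alpha_4(\eta,\eps)\le C\,\alpha_4^\eta(\eta,1)\,\alpha_4(\eps,1)^{-1}$ by quasi‑multiplicativity, each bulk site contributes $O_T(1)\,\eta^2\,\eps^2\,\alpha_4(\eps,1)^{-1}\,\alpha_4(r,1)^{-1}$; as there are $O_Q(\eta^{-2})$ of them, the bulk term sums to $O_{T,Q}(\eps^2)\,\alpha_4(\eps,1)^{-1}\,\alpha_4(r,1)^{-1}$, i.e.\ exactly the claimed bound.

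\emph{Step 3: the boundary term, and the main difficulty.} The remaining candidate sites $p$ — those outside $Q([0,1]^2)$, or within $2r$ of $\p Q$ — lie within distance $O(r)$ of a fixed rectifiable curve of length $O_Q(1)$, so there are only $O_Q(r\,\eta^{-2})$ of them. For these, pivotality forces around $p$ in $\hat\omega$ at least a half‑plane three‑arm event out to distance of order $r$ (here $r\le\tfrac12\dist(\p_1 Q,\p_3 Q)$ is used), degenerating to a half‑plane two‑arm event near the corners of $\p_1 Q$ and $\p_3 Q$; combining the factor $q_T$ from $\{p\in X\}$ with the bound $O\bigl(\alpha_3^{+}(\eta,r)\bigr)$ for the existence of such a configuration in $\Omega(\omega,X)$ (an estimate of the same type as Lemma~\ref{l.main}) and summing, one gets a contribution of order $r\,\eta^{-2}\cdot q_T\cdot\alpha_3^{+}(\eta,r)$, which — since $\alpha_3^{+}$ decays strictly faster than $\alpha_4$, so that $r(\eta)=\eta^{3/4+o(1)}$ is much smaller than $\eps^{3/4}r^{-1/4}$ — is dominated by $O_{T,Q}(\eps^2)\,\alpha_4(\eps,1)^{-1}\,\alpha_4(r,1)^{-1}$ throughout the range $\eta<\eps<2^{-5}r\le1$. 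Adding Steps~2 and~3 completes the proof. I expect this boundary/corner bookkeeping to be the only genuinely delicate point: one must identify, for each kind of boundary site, the precise half‑plane or corner arm event forced by pivotality and check that the resulting sum stays below the target uniformly in $\eta<\eps<2^{-5}r\le1$; and, exactly as already in Lemmas~\ref{l.main} and~\ref{l.change}, one must make the ``independence on disjoint sets'' used in these arm estimates rigorous even though the balls and annuli around $p$ (and around the auxiliary sub‑witness internal to Lemma~\ref{l.change}) overlap in annuli of bounded aspect ratio, which is handled by quasi‑multiplicativity of $\alpha_4$ and $\alpha_3^{+}$ as in the cited arguments and in \cite{\GPS}.
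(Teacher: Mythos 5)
Your Step 1 and Step 2 are sound and reproduce the heart of the paper's argument (interpolate between $\omega''$ and $\omega'$, locate a first pivotal flip $x_k$ with $Z_\omega(x_k)<\eps$ and $x_k\in X$, deduce $Z^X(x_k)\gtrsim r$, and close with the independence of $\{x\in X\}$ from $\{Z^X(x)\ge r,\,Z_\omega(x)\le\eps\}$ plus Lemma~\ref{l.change} and quasi-multiplicativity). But your interpolation differs from the paper's in a way that creates the boundary term of Step~3, and Step~3 has a genuine gap. The paper never faces a boundary term: it freezes the configuration to equal $\omega'$ on all hexagons contained in the $r$-neighborhood of $\p_1Q\cup\p_3Q$, interpolates only over the hexagons meeting $Q$ outside that neighborhood, and tracks the event ``$\p_1Q$ and $\p_3Q$ are connected by a white path inside the $r$-neighborhood of $Q$'' (which holds for $\omega_0$ because $Q$ is crossed, and fails for $\omega_m$ because $\omega_m$ agrees with $\omega''$ off the $r$-neighborhood of the arcs and $\omega''$ is not $r$-almost crossed). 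With this design the pivotal site automatically meets $Q$ and is at distance essentially $r$ from both arcs, so a genuine four-arm event to scale $r/2$ is forced and Lemma~\ref{l.change} alone finishes the proof; no half-plane or corner analysis is needed.

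The concrete problems with your Step 3 are the following. First, for a site $p$ within distance $\sim r$ of $\p_1Q\cup\p_3Q$ (in particular for $p$ inside the ``target'' region within distance $r$ of an arc, which is a perfectly possible location for your pivotal site), pivotality for the $r$-almost crossing does \emph{not} force a half-plane three-arm event to scale $r$: the boundary of the $r$-neighborhood of $Q$ is at distance at least $r$ from any $p\in Q$, and one of the two open arms may have length close to $0$, so at best one gets a full-plane polychromatic three-arm event (one open, two closed arms), or even only two long arms, up to scale $\sim r$. With the correct (much larger) probabilities your bookkeeping no longer closes: for instance, using the full-plane three-arm exponent $2/3$, the sum over the $O_Q(r\eta^{-2})$ sites near the arcs of $q_T$ times that arm probability is of order $r^{1/3}\eta^{-7/12+o(1)}$, which diverges as $\eta\to0$; the only way to rescue such sites is to exploit $Z_\omega(p)<\eps$ together with a stability statement for the relevant boundary arm events over $\Omega(\omega,X)$, which you never invoke there. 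Second, even where your half-plane three-arm identification is correct (sites close to the outer boundary of the $r$-neighborhood along $\p_2Q\cup\p_4Q$), the estimate you cite --- an analogue of Lemma~\ref{l.main} for $\alpha_3^+$ (and for corner two-arm events) within $\Omega(\omega,X)$ --- is asserted, not proved, and is not available in the paper, whose stability machinery is only developed for the four-arm event. So as written the boundary term is not controlled; either prove a boundary/corner analogue of Lemmas~\ref{l.main}--\ref{l.change} and redo the case analysis near the arcs with the $\eps$-constraint, or adopt the paper's interpolation, which eliminates the boundary term altogether.
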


\begin{proof}
Suppose that there are such $\omega'$ and $\omega''$.
Let $Y$ denote the set of sites whose hexagons are contained in the
$r$-neighborhood of $\p_1 Q\cup\p_3 Q$,
and let $\{x_1,x_2,\dots,x_m\}$ denote the sites not in $Y$ whose hexagons
intersect $Q$.
For $j=0,1,\dots,m$, let $\omega_j$ denote the configuration
that agrees with $\omega'$ on $Y\cup\{x_{j+1},x_{j+2},\dots,x_m\}$,
and agrees with $\omega''$ elsewhere.
Then $Q$ is crossed by $\omega_0$
(since $\omega_0$ agrees with $\omega'$
on all hexagons intersecting $Q$),
but is not $r$-almost crossed by
$\omega_m$ (since $\omega_m$ agrees with $\omega''$ on all hexagons
except those contained in the $r$-neighborhood of $\p_1Q\cup\p_3Q$).
Let $k$ be the least index $j$ such that there is no
$\omega_j$-white path connecting $\p_1Q$ and $\p_3 Q$ within the
$r$-neighborhood of $Q$.
Then $\omega_{k-1}$ and $\omega_k$ differ only in the color of
$x_k$.
Since a flip of $x_k$ modifies the connectivity
between $\p_1 Q$ and $\p_3 Q$ within
the $r$-neighborhood of $Q$, and since
the hexagon of $x_k$ intersects $Q$ and is not
contained in the $r$-neighborhood of $\p_1Q\cup\p_3Q$,
it follows that $Z_{\omega_{k-1}}(x_k)>r/2$.
Consequently, $Z^X(x_k)>r/2$.
If $x$ is any site, then in order to have $x=x_k$, we must
have (i) $Z^X(x)>r/2$, (ii) $Z_\omega(x)<\eps$, (iii) $x\in X$,
and (iv) the hexagon of $x$ intersects $Q$.
There are $O_Q(\eta^{-2})$ sites satisfying (iv).
The event (iii) has probability $q_T$ and
is independent from the intersection of
(i) and (ii), while Lemma~\ref{l.change} bounds the
probability of this intersection.
The proposition now follows easily by summing the
bound we get for $\Ps{x_k=x}$ over all
possible $x$.
\end{proof}

The previous Proposition readily implies the following Lemma. 
\begin{lemma}\label{}
Let $k\in \N$ and $T>0$ be fixed and suppose that  $0<\eta<2\eta< \eps<2^{-k-20}$. Then, the probability that there are some
$\omega',\omega''\in\Omega(\omega,X)$ such that
\bi
\item[(a)] $\omega'\notin \O_k(\omega'')$ (recall Definition \ref{d.open});
\item[(b)] $\omega''\notin \O_k(\omega')$;
\item[(c)] $\omega'(z)=\omega''(z)$ for every site $z$ satisfying $Z_\omega(z)\ge \eps$;
\ei
is at most
$$
O_{T,k}(\eps^2)\,\alpha_4(\eps,1)^{-1}.
$$
\end{lemma}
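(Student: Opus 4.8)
The plan is to reduce this statement to Proposition~\ref{p.almostCross} by unwinding the definition of $\O_k$ (Definition~\ref{d.open}). Recall that $\omega'\in\O_k(\omega'')$ fails precisely when there is some quad $Q\in\QUAD^k$ for which the crossing status of $\omega'$ and $\omega''$ differ in a quantitative sense: either $Q\in\omega''$ but $\omega'\notin\boxup_{\hat Q}^c$ (i.e.\ $\omega'$ crosses some quad in the neighborhood $\hat Q_k=B_{d_\QUAD}(Q,2^{-k-10})$), or $Q\notin\omega''$ but $\omega'\in\boxminus_{\bar Q}$ (i.e.\ $\omega'$ crosses the slightly enlarged quad $\bar Q_k$). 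So the event that both $\omega'\notin\O_k(\omega'')$ and $\omega''\notin\O_k(\omega')$ forces, for some quad $Q\in\QUAD^k$, a discrepancy between a crossing of a quad close to $Q$ and the absence of an ``almost crossing'' of $Q$ at scale comparable to $2^{-k-10}$. The key geometric observation is that since all quads in $\QUAD^k$ are polygonal quads in $D\cap 2^{-k}\Z^2$, the minimal distance between $\p_1 Q$ and $\p_3 Q$ is bounded below by a constant depending only on $D$ and $k$ (actually at least $2^{-k}$), so one may take $r\asymp_k 2^{-k-10}$ in Proposition~\ref{p.almostCross}.

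The steps, in order, would be as follows. First, fix $k$ and note $\QUAD^k$ is finite, so it suffices to bound, for each fixed $Q\in\QUAD^k$, the probability that there are $\omega',\omega''\in\Omega(\omega,X)$ with $\omega'$ crossing some quad $\hat Q'\in\hat Q_k$ (or crossing $\bar Q_k$), with $\omega''$ not crossing $Q$ (hence, by heredity and the definition of $\O_k$, not $r$-almost crossing a slightly shrunk version of $Q$ for suitable $r\asymp_k 2^{-k-10}$), and with $\omega'(z)=\omega''(z)$ for all $z$ with $Z_\omega(z)\geq\eps$; then sum over the finitely many $Q$ and absorb the count into the $O_{T,k}$ constant. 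Second, reconcile the ``crosses a nearby quad'' statement with the hypothesis (a) of Proposition~\ref{p.almostCross}, which asks that $\omega'$ crosses $Q$ itself: here one uses that $\hat Q'>Q$ (or $\bar Q_k>Q$) so a crossing of the nearby quad yields a crossing of $Q$, after possibly replacing $Q$ by a fixed quad in $\QUAD^{k+20}$ slightly smaller than $Q$ — this only changes constants. Third, apply Proposition~\ref{p.almostCross} with this fixed quad and $r=r_k:=2^{-k-20}$ say, which is legitimate since $\eps<2^{-k-20}$ ensures $2^5\eps<r_k\leq 1$. This gives the bound $O_{T,Q}(\eps^2)\,\alpha_4(\eps,1)^{-1}\alpha_4(r_k,1)^{-1}$. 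Fourth, since $k$ is fixed, $\alpha_4(r_k,1)^{-1}$ is a constant depending only on $k$, so it can be absorbed, yielding $O_{T,k}(\eps^2)\,\alpha_4(\eps,1)^{-1}$ as claimed.

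The main obstacle I anticipate is the bookkeeping in the second step: translating between the ``quad-crossing'' language of $\O_k$ (which involves the auxiliary quads $\hat Q_k$ and $\bar Q_k$ and a small perturbation parameter $2^{-k-10}$) and the more concrete ``$r$-almost crossed by a path in the $r$-neighborhood'' language of Proposition~\ref{p.almostCross}. One has to check carefully that a failure of $\omega''\in\O_k(\omega')$ combined with $\omega''\in\HH$ (so $\omega''$ is hereditary and closed) really does imply that $\omega''$ fails to $r$-almost cross some fixed quad comparable to $Q$, for an $r$ that is a fixed constant multiple (depending on $k$) of the mesh-independent perturbation scale — and symmetrically for the $\omega'$ side — so that condition (b) of the Proposition is met. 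This is exactly the kind of argument already carried out in the proof of Proposition~\ref{pr.CovrtoCovk}, cases (a)--(d), and in the proof of Proposition~\ref{pr.compZ}, so it should go through, but it requires care with the nesting of the dyadic quad families. Once that translation is in place, the probabilistic content is entirely supplied by Proposition~\ref{p.almostCross} and the conclusion is immediate.
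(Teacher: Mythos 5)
Your proposal follows essentially the same route as the paper: unwind Definition \ref{d.open} over the finitely many quads in $\QUAD^k$, translate each of the two failure modes into the ``crossed vs.\ not $r$-almost crossed'' hypotheses of Proposition \ref{p.almostCross} with $r=2^{-k-20}$ (the paper enlarges to $\bar Q_k$ where you shrink to a nearby smaller quad, a cosmetic difference), and then absorb $\alpha_4(2^{-k-20},1)^{-1}$ together with the quad count into the $O_{T,k}$ constant. One small slip: $\omega'\notin\boxup_{\hat Q}^c$ means $\omega'$ crosses \emph{no} quad in $\hat Q_k$ (your parenthetical gloss inverts this), but your subsequent steps use the correct reading, so the argument stands.
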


\ni
{\bf Proof:}
Suppose $\omega' \notin \O_k(\omega'')$ (the second condition $(b)$ is treated the same way). 
We need to analyse 2 cases:
\bnum
\item Either, there is some $Q\in \QUAD^k$, s.t. $\omega' \in \boxminus_Q$ and $\omega'' \in \boxup_{\hat Q_k}$.
By the definition of $\hat Q_k= B_{d_\QUAD}(Q,2^{-k-10})$, this means that $Q$ is not $r$-almost crossed by $\omega''$ with, say $r=2^{-k-20}$. As such, one has from the above Proposition a constant $C_{T,Q}<\infty$ such that  the probability of such a scenario is bounded from above by $C_{T,Q} \; \eps^2 \alpha_4(\eps,1)^{-1} \alpha_4(2^{-k-20},1)^{-1}$.
\item Or, there is some $Q\in\QUAD^k$, s.t. $\omega' \in \boxminus_Q^c$ and $\omega'' \in \boxminus_{\bar Q_k}$. 
Similarly, this means now that $\bar Q_k$ is not $r$-almost crossed by $\omega'$, with $r=2^{-k-20}$. By the symmetry in $\omega',\omega''$ of Proposition \ref{p.almostCross}, there is a constant $\bar C_{T,Q}$ such that the probability of this scenario is bounded from above by $\bar C_{T,Q} \; \eps^2 \alpha_4(\eps,1)^{-1} \alpha_4(2^{-k-20},1)^{-1}$.
\enum

Recall that our domain $D$ is bounded. As such there are finitely many quads $Q\in \QUAD^k$. We thus obtain an upper bound of the form 
\[
2\, \sum_{Q\in \QUAD^k}[C_{T,Q}+\bar C_{T,Q}] \alpha_4(2^{-k-20},1)^{-1} \eps^2 \alpha_4(\eps,1)^{-1} = O_{T,k}(\eps^2) \alpha_4(\eps,1)^{-1}\,,
\]
where the factor 2 handles the second condition $(b)$.  
\qed

We are now able to conclude the proof of Proposition \ref{pr.stab}.

\ni
{\bf Proof of Proposition \ref{pr.stab}}:

Note by our definitions of $t\mapsto \omega_\eta(t)$ and $t\mapsto \omega_\eta^\eps(t)$ in Definitions \ref{d.DP} and \ref{d.dcutoff} that for any time $t\in [0,T]$, the configurations $\omega_\eta(t)$ and $\omega_\eta^\eps(t)$ belong to the  space $\Omega(\omega_\eta(t=0),X)$ introduced above. Furthermore, recall from the definition of the measure $\mu^\eps(\omega_\eta)$ and the construction of $\omega_\eta^\eps(t)$ that for any $t\in[0,T]$, $\omega_\eta(t)(z)=\omega_\eta^\eps(t)(z)$ for all points in $X$ which are initially in $\Piv^\eps(\omega_\eta)$. 
By the definition of $\Piv^\eps$, which relies on an $\eps$-annulus structure, it is easy to check that $\Piv^\eps$ contains the set of all points $z$ which are such that $Z_{\omega_\eta(t=0)}(z)\geq 3\eps$. (See for example remark \ref{r.PIV}). In particular the above Lemma applied with $\tilde \eps= 3\eps$  implies readily that there is a constant $M_{T,K}<\infty$ s.t. for any $2\eta<  \eps<2^{-k-20}$:
\[
\Pb{\exists t\in [0,T],\, K_\HH(\omega_\eta(t), \omega_\eta^\eps(t))<k} < M_{T,k}\, \eps^2 \alpha_4(\eps,1)^{-1}\,,
\]
where the quantity $K_\HH$ was defined in Definition \ref{d.K}. Using this bound with $k=\k(r)$, one obtains for any $2\eta < \eps < 2^{-\k(r)-20}$, 
\begin{align}\label{e.unif2}
\Pb{\sup_{t\in[0,T]} d_{\HH}(\omega_\eta(t), \omega_\eta^\eps(t)) > r} < M_{T,\k(r)}\, \eps^2 \alpha_4(\eps,1)^{-1}\,.
\end{align} 
By the definition of the Skorohod distance $d_{\Sk_T}$ in Definition \ref{d.skorohod}, one thus has
\begin{align*}
\Eb{d_{\Sk_T}(\omega_\eta(\cdot), \omega_\eta^\eps(\cdot))} & \leq r + \diam(\HH) M_{T,\k(r)}\, \eps^2 \alpha_4(\eps,1)^{-1}  \\
&=r + M_{T,\k(r)}\, \eps^2 \alpha_4(\eps,1)^{-1}\,.
\end{align*}

Notice that for any fixed $r>0$, 
\[
\lim_{\eps\to 0} \sup_{0< 2\eta <\eps}  \bigl( r + M_{T,\k(r)}\, \eps^2 \alpha_4(\eps,1)^{-1} \bigr) =r\,.
\]
It is easy to see that this ensures the existence of a continuous function $\psi= \psi_T:[0,1]\to [0,1]$ with $\psi(0)=0$ which is such that  Proposition \ref{pr.stab} holds. (Note that this function is not explicit since it depends on how fast $r\mapsto \k(r)$ diverges). 
\qed

Let us point out that our Lemma \ref{l.main} can be seen as a strengthening of a classical estimate on the stability of the four-arm probabilities in the  near-critical regime, which goes back to Kesten's seminal paper \cite{Kesten}. See also \cite{Nolin,Damron}. Even though we obtain here a strengthening of Kesten's original estimate, our proof is very similar in flavour to the one in \cite{Kesten}, with the important difference that he uses differential inequalities that work well in the monotone coupling, but would break down for the dynamical version. Furthermore, given the stability of the four-arm probability, the above proof can be generalized for alternating $j$-arm events with $j$ even, and also for the one-arm event, since the change in these probabilities is also governed by the pivotal points.

\section{Proof of the main theorem}\label{s.proof}

\subsection{Bounded domain $D$ and  finite time-range $[0,T]$}


We are now ready to prove our main Theorem under the hypothesis we used until now, i.e. $D$ is bounded and one considers dynamical percolation on a finite time-range $[0,T]$. The extensions to $\C$ and $\R_+$ are straightforward and are discussed in the next subsection. 

\begin{theorem}\label{th.MAIN}

The processes defined earlier in Theorem \ref{th.extension} 
\[
\{ t\mapsto \omega_\infty^{\eps}(t)\}_{\eps>0}\,,
\]
converge in probability in $(\Sk_T, d_{\Sk_T})$ as $\eps \to 0$ to a {\bf continuum dynamical percolation}  process: $t\in [0,T] \mapsto \omega_\infty(t)$.  (We will see in Proposition \ref{pr.PROJ} that $\omega_\infty(t)\sim \P_\infty$ for each $t\geq 0$). 

Furthermore, this process $t\in [0,T]\mapsto \omega_\infty(t)$ is the limit in law (under the Skorohod topology on $\Sk_T$) of the discrete dynamical percolation $t\in[0,T] \mapsto \omega_\eta(t)$ as the mesh $\eta\to 0$.  

\end{theorem}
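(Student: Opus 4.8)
\emph{Proof idea.} The statement will be deduced from three facts established above: the convergence $\omega_\eta^\eps(\cdot)\to\omega_\infty^\eps(\cdot)$ for each fixed $\eps$ (Theorem~\ref{th.etaskoro}); the discrete stability bound $\Eb{d_{\Sk_T}(\omega_\eta(\cdot),\omega_\eta^\eps(\cdot))}\le\psi_T(\eps)$, uniform over $0<\eta<\eps$ (Proposition~\ref{pr.stab}); and the completeness of $(\Sk_T,d_{\Sk_T})$ (Proposition~\ref{pr.sko}). We also use the trivial inequality $d_{\Sk_T}\le\diam(\HH)=1$, obtained by choosing the identity time-change in Definition~\ref{d.skorohod}. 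The plan is first to construct $\omega_\infty(\cdot)$ as a limit of the $\omega_\infty^\eps(\cdot)$, and then to identify it as the scaling limit of $\omega_\eta(\cdot)$.

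\emph{Step 1: Cauchy property of the cut-off processes.} Put all the processes $\omega_\infty^\eps(\cdot)$, $\eps>0$, on one probability space by building, from a single $\omega_\infty\sim\P_\infty$, the whole family of Poisson clouds $\PPP_T^{\pm}(\mu^\eps(\omega_\infty))$; by Theorem~\ref{th.extension} each $\omega_\infty^\eps(\cdot)$ is a measurable functional of $(\omega_\infty,\PPP_T^{\pm}(\mu^\eps(\omega_\infty)))$. Fix $0<\eps'<\eps$. Exactly as in Corollary~\ref{c.coupling}, but applied simultaneously to the two scales $\eps$ and $\eps'$ --- which is legitimate since $(\omega_\eta,\mu^\eps(\omega_\eta),\mu^{\eps'}(\omega_\eta))\to(\omega_\infty,\mu^\eps(\omega_\infty),\mu^{\eps'}(\omega_\infty))$ by Theorem~\ref{th.PM}, whence Skorohod's representation theorem and two applications of Lemma~\ref{l.Pcoupling} --- one couples a discrete dynamics $\omega_\eta(\cdot)$ with $(\omega_\infty,\PPP_T^{\pm,\eps},\PPP_T^{\pm,\eps'})$ so that, as $\eta\to0$, both $d_{\Sk_T}(\omega_\eta^\eps(\cdot),\omega_\infty^\eps(\cdot))\to0$ and $d_{\Sk_T}(\omega_\eta^{\eps'}(\cdot),\omega_\infty^{\eps'}(\cdot))\to0$ in probability (Theorem~\ref{th.etaskoro}). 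For every $\eta\in(0,\eps')$ the triangle inequality gives
\begin{multline*}
d_{\Sk_T}(\omega_\infty^\eps(\cdot),\omega_\infty^{\eps'}(\cdot))\le d_{\Sk_T}(\omega_\infty^\eps(\cdot),\omega_\eta^\eps(\cdot))+d_{\Sk_T}(\omega_\eta^\eps(\cdot),\omega_\eta(\cdot))\\{}+d_{\Sk_T}(\omega_\eta(\cdot),\omega_\eta^{\eps'}(\cdot))+d_{\Sk_T}(\omega_\eta^{\eps'}(\cdot),\omega_\infty^{\eps'}(\cdot)),
\end{multline*}
so taking expectations, using $d_{\Sk_T}\le1$ together with bounded convergence as $\eta\to0$, and Proposition~\ref{pr.stab} for the two middle terms, we obtain $\Eb{d_{\Sk_T}(\omega_\infty^\eps(\cdot),\omega_\infty^{\eps'}(\cdot))}\le\psi_T(\eps)+\psi_T(\eps')$. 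Since $\psi_T$ is continuous with $\psi_T(0)=0$, the family $\{\omega_\infty^\eps(\cdot)\}$ is Cauchy in probability (indeed in $L^1(d_{\Sk_T})$); by completeness of $(\Sk_T,d_{\Sk_T})$ it converges, in probability in $(\Sk_T,d_{\Sk_T})$, to a \cadlag process which we call $\omega_\infty(\cdot)$.

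\emph{Step 2: convergence in law of $\omega_\eta(\cdot)$.} Let $F:\Sk_T\to\R$ be bounded and Lipschitz. Splitting $|\Eb{F(\omega_\eta(\cdot))}-\Eb{F(\omega_\infty(\cdot))}|$ into three differences through $\omega_\eta^\eps(\cdot)$ and $\omega_\infty^\eps(\cdot)$, the first is $\le\|F\|_{\mathrm{Lip}}\,\psi_T(\eps)$ by Proposition~\ref{pr.stab}; for fixed $\eps$ the second tends to $0$ as $\eta\to0$ because Theorem~\ref{th.etaskoro} gives $\omega_\eta^\eps(\cdot)\Rightarrow\omega_\infty^\eps(\cdot)$; the third is $\le\|F\|_{\mathrm{Lip}}\,\Eb{d_{\Sk_T}(\omega_\infty^\eps(\cdot),\omega_\infty(\cdot))}$, which tends to $0$ as $\eps\to0$ by Step~1. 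Hence $\limsup_{\eta\to0}|\Eb{F(\omega_\eta(\cdot))}-\Eb{F(\omega_\infty(\cdot))}|\le\|F\|_{\mathrm{Lip}}\,\psi_T(\eps)$ for every $\eps>0$, so it vanishes; since bounded Lipschitz functions determine weak convergence on the Polish space $(\Sk_T,d_{\Sk_T})$, this proves $\omega_\eta(\cdot)\Rightarrow\omega_\infty(\cdot)$. The only point requiring care is the joint coupling in Step~1 of a single discrete dynamics with two continuum cut-offs at once; it is the reason the stability estimate is transported through the $\eta$-discretization rather than proved directly in the continuum, but it is routine given Theorem~\ref{th.PM} and Lemma~\ref{l.Pcoupling}.
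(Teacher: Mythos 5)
Your proof is correct and takes essentially the same route as the paper: the limit is constructed from the cut-off processes by transporting the uniform stability bound of Proposition~\ref{pr.stab} through the discrete level via the coupling behind Theorem~\ref{th.etaskoro} and a triangle inequality, and the convergence in law of $\omega_\eta(\cdot)$ then follows from the same three-term decomposition. The remaining differences are cosmetic: the paper first extracts a subsequence $\eps_L$ with $\psi_T(\eps_L)\le 2^{-L}$ to get an a.s.\ limit before upgrading to convergence in probability of the whole family, and it concludes weak convergence by exhibiting couplings with small expected Skorohod distance rather than by bounded Lipschitz test functions, while your explicit treatment of the simultaneous two-scale coupling spells out a point the paper leaves implicit.
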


To prove this theorem, we start by constructing the limiting process as an a.s. limit of cut-off processes $t\mapsto \omega_\infty^{\eps_L}(t)$ along a well-chosen sequence $\eps_L$. Namely:

\begin{proposition}\label{}
For a well chosen subsequence $\eps_L \to 0$, the processes $t\mapsto \omega_\infty^{\eps_L}(t)$ converge a.s. for $d_{\Sk_T}$ to a limiting process $t\mapsto \omega_\infty(t)$. 
\end{proposition}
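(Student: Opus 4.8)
The plan is to extract an almost surely convergent subsequence from the cut-off processes, using the stability estimate of Proposition~\ref{pr.stab} together with the convergence in probability of $\omega_\eta^\eps(\cdot)$ to $\omega_\infty^\eps(\cdot)$ established in Theorem~\ref{th.etaskoro}. The key point is that Proposition~\ref{pr.stab} controls $\Eb{d_{\Sk_T}(\omega_\eta(\cdot),\omega_\eta^\eps(\cdot))}$ by a quantity $\psi_T(\eps)$ that is \emph{uniform} in $0<\eta<\eps$; letting $\eta\to 0$ inside this bound (which is legitimate because $d_{\Sk_T}$ is a bounded continuous functional away from null sets and both processes converge appropriately) should yield a comparison between the limiting objects $\omega_\infty^{\eps}(\cdot)$ and $\omega_\infty^{\eps'}(\cdot)$ for $\eps'<\eps$ of the same order $\psi_T(\eps)$.

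First I would make this comparison precise. Fix $\eps'<\eps$ and couple $\omega_\eta$, $\omega_\eta^\eps(\cdot)$, $\omega_\eta^{\eps'}(\cdot)$, $\omega_\infty^\eps(\cdot)$ and $\omega_\infty^{\eps'}(\cdot)$ simultaneously: use the joint coupling of Corollary~\ref{c.coupling} (applied for both cut-offs, which is possible since $\Piv^{\eps}\subset\Piv^{\eps'}$ and the Poisson processes are built from the same measure $\mu^{\eps'}(\omega_\infty)$ with the points in $\Piv^{\eps}$ distinguished) so that $d_{\Sk_T}(\omega_\eta^\eps(\cdot),\omega_\infty^\eps(\cdot))\to 0$ and $d_{\Sk_T}(\omega_\eta^{\eps'}(\cdot),\omega_\infty^{\eps'}(\cdot))\to 0$ in probability as $\eta\to 0$. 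Then by the triangle inequality
\[
d_{\Sk_T}(\omega_\infty^\eps(\cdot),\omega_\infty^{\eps'}(\cdot))
\le d_{\Sk_T}(\omega_\infty^\eps(\cdot),\omega_\eta^\eps(\cdot))
+ d_{\Sk_T}(\omega_\eta^\eps(\cdot),\omega_\eta(\cdot))
+ d_{\Sk_T}(\omega_\eta(\cdot),\omega_\eta^{\eps'}(\cdot))
+ d_{\Sk_T}(\omega_\eta^{\eps'}(\cdot),\omega_\infty^{\eps'}(\cdot)).
\]
Taking expectations, using Proposition~\ref{pr.stab} for the two middle terms and the convergence in probability (with the bounded diameter of $\HH$, hence of $\Sk_T$) for the two outer terms, and letting $\eta\to 0$ gives $\Eb{d_{\Sk_T}(\omega_\infty^\eps(\cdot),\omega_\infty^{\eps'}(\cdot))}\le 2\,\psi_T(\eps)$ for all $\eps'<\eps$.

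Next I would run a standard Borel--Cantelli / fast-subsequence argument. Choose $\eps_L\searrow 0$ decreasing so rapidly that $\psi_T(\eps_L)\le 4^{-L}$; the above bound then gives $\Eb{d_{\Sk_T}(\omega_\infty^{\eps_L}(\cdot),\omega_\infty^{\eps_{L+1}}(\cdot))}\le 2\cdot 4^{-L}$, so by Markov $\Pb{d_{\Sk_T}(\omega_\infty^{\eps_L}(\cdot),\omega_\infty^{\eps_{L+1}}(\cdot))>2^{-L}}\le 2^{-L+1}$, which is summable. By Borel--Cantelli, almost surely $d_{\Sk_T}(\omega_\infty^{\eps_L}(\cdot),\omega_\infty^{\eps_{L+1}}(\cdot))\le 2^{-L}$ for all large $L$, so $(\omega_\infty^{\eps_L}(\cdot))_L$ is almost surely a Cauchy sequence in the complete metric space $(\Sk_T,d_{\Sk_T})$ (completeness by Proposition~\ref{pr.sko}, since $(\HH,d_\HH)$ is Polish). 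Hence it converges almost surely to a limit, which we call $t\in[0,T]\mapsto\omega_\infty(t)$; this is a \cadlag process in $\HH$ and is measurable as an a.s.\ limit of measurable maps.

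The main obstacle I anticipate is the interchange of limits $\eta\to 0$ inside the expectations — specifically justifying that $\Eb{d_{\Sk_T}(\omega_\infty^\eps(\cdot),\omega_\eta^\eps(\cdot))}\to 0$ from mere convergence in probability. This is harmless because $d_{\Sk_T}$ is bounded by $\diam(\HH)=1$ on $\Sk_T$ (Lemma~\ref{l.SKR} and Definition~\ref{d.metric}), so convergence in probability upgrades to convergence in $L^1$ by bounded convergence; one just needs to be slightly careful that the couplings used for the two cut-offs are compatible, which is arranged by building everything from a single sample of $(\omega_\infty,\mu^{\eps'}(\omega_\infty))$ and its discrete approximations as in Corollary~\ref{c.coupling}. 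Once the a.s.\ limit exists, the remaining assertions of Theorem~\ref{th.MAIN} — convergence in probability of the full family $\{\omega_\infty^\eps(\cdot)\}_{\eps>0}$ (not just along $\eps_L$) and weak convergence of $\omega_\eta(\cdot)$ to $\omega_\infty(\cdot)$ — follow by another round of the same triangle-inequality bookkeeping, which is the routine part alluded to in the text.
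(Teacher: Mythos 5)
Your argument is correct and is essentially the proof given in the paper: compare $\omega_\infty^{\eps}(\cdot)$ and $\omega_\infty^{\eps'}(\cdot)$ through the discrete processes via the triangle inequality, bound the middle terms uniformly in $\eta$ by Proposition~\ref{pr.stab} and send the outer terms to zero using Theorem~\ref{th.etaskoro} (with the coupling of Corollary~\ref{c.coupling}, bounded convergence upgrading convergence in probability to $L^1$), then pick $\eps_L$ decaying fast enough in $\psi_T$ and conclude by Borel--Cantelli that the sequence is a.s.\ Cauchy in the complete space $(\Sk_T,d_{\Sk_T})$. Your extra care about coupling the two cut-off levels simultaneously and about $\psi_T(\eps')\le\psi_T(\eps)$ (which you can avoid by keeping $\psi_T(\eps)+\psi_T(\eps')$) only makes explicit what the paper leaves implicit.
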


The proof of this Proposition will rely on a coupling with discrete dynamical percolations, but we wish to point out that the process we eventually obtain does not depend on the choice of the coupling, only in principle on the choice of the subsequence $\{\eps_L\}_L$.  
\smallskip

\ni
{\bf Proof:}
Let $\{\eps_L\}_{L\geq 1}$ be a non-increasing sequence converging to 0, to be chosen later. 
for any $L_1<L_2$, by using Theorem \ref{th.etaskoro} (for the coupling defined in  Corollary \ref{c.coupling} with $\eps=\eps_{L_2}<\eps_{L_1}$) together with Proposition \ref{pr.stab} applied successively to $\eps=\eps_{L_1}$ and $\eps=\eps_{L_2}$ and using the triangle inequality, one obtains
\[
\Eb{ d_{\Sk_T}(\omega_\infty^{\eps_{L_1}}(\cdot) , \omega_\infty^{\eps_{L_2}}(\cdot)) } \leq \psi_T(\eps_{L_1})+\psi_T(\eps_{L_2})
\]
Let us fix the sequence $\{\eps_L\}_{L\geq 1}$ so that for any $1\leq L\leq N <\infty$, $\psi_T(\eps_L)\leq 2^{-L}$. 
It follows easily that the sequence $\{ \omega_\infty^{\eps_L}(\cdot) \}_{L\geq 1}$ is a.s. a Cauchy sequence in $\Sk_T$ for $d_{\Sk_T}$. In particular, this defines an a.s. limiting process $t\in [0,T] \mapsto \omega_\infty(t)$. 
\qed

\ni
{\bf Proof of Theorem \ref{th.MAIN}:}
To prove that $t\in[0,T] \mapsto \omega_\eta(t)$ converges in law to the above \cadlag process $t\in [0,T]\mapsto \omega_\infty(t)$, it is enough to show that for any $\delta>0$, one can couple these two processes so that 
\begin{equation}\label{e.TheEnd}
\Eb{d_{\Sk_T}(\omega_\eta(\cdot),\omega_\infty(\cdot))}<\delta\,. 
\end{equation}

This follows easily from the above proof. Indeed, let $L$ be large enough so that $2^{-L}<\delta/10$, then by using the 
coupling defined in  Corollary \ref{c.coupling} with $\eps=\eps_{L}$, one has 
\begin{align*}\label{}
\Eb{d_{\Sk_T}(\omega_\eta(\cdot),\omega_\infty(\cdot))} & \leq 
\Eb{d_{\Sk_T}(\omega_\eta(\cdot),\omega_\eta^{\eps_L}(\cdot))} + \Eb{d_{\Sk_T}(\omega_\eta^{\eps_L}(\cdot),\omega_\infty^{\eps_L}(\cdot))}+ \Eb{d_{\Sk_T}(\omega_\infty^{\eps_L}(\cdot),\omega_\infty(\cdot))} \\
&\leq \psi_T(\eps_L)+ \Eb{d_{\Sk_T}(\omega_\eta^{\eps_L}(\cdot),\omega_\infty^{\eps_L}(\cdot))} + \sum_{N\geq L } \psi_T(N) \\
&\leq 2^{-L} + \Eb{d_{\Sk_T}(\omega_\eta^{\eps_L}(\cdot),\omega_\infty^{\eps_L}(\cdot))} + 2^{-L+1}\,,
\end{align*}
uniformly in $0<2\eta<\eps$. From Theorem \ref{th.etaskoro}, one can choose $\eta$ small enough so that the second term is less than $2^{-L}$ which gives us~\eqref{e.TheEnd}. 

The convergence in probability of the processes $\omega_\infty^\eps(\cdot)$ to $\omega_\infty(\cdot)$ is obtained in the same fashion:
for any $\delta>0$, we wish to show that there is some $\bar\eps>0$ small enough so that for any $\eps<\bar \eps$,
\[
\Pb{d_{\Sk_T}(\omega_\infty^{\eps}(\cdot),\omega_\infty(\cdot))>\delta}<\delta\,.
\] 
Let $\bar \eps$ be such that for any $\eps<\bar \eps$, $\psi_T(\eps)<\delta/2$. Let $L$ be large enough in the above sequence such that $\Eb{d_{\Sk_T}(\omega^{\eps_L}_\infty(\cdot), \omega_\infty(\cdot))}<\delta/2$. By using the exact same argument as above (i.e. coupling with a discrete dynamical configuration), one obtains for any $\eps<\bar \eps$, 
\[
\Eb{\omega^{\eps}_\infty(\cdot), \omega_\infty(\cdot))}<\delta\,,
\]
which implies the desired convergence in probability and thus ends our proof of Theorem \ref{th.MAIN}. \qed

\begin{remark}\label{}
The proof above is somewhat classical. It is very similar for example to the setup of the approximation Theorem 4.28 from Kallenberg's book \cite{Kallenberg}. 
\end{remark}


\subsection{Main theorem in the near-critical case}


\begin{theorem}\label{th.MAINnc}
For any $L>0$, the near-critical ensemble $\lambda\in[-L,L] \mapsto \omega_\eta^\nc(\lambda)$ converges in law (under the Skorohod topology on $\Sk_L$) to a \cadlag process $\lambda\mapsto \omega_\infty^\nc(\lambda)$ as the mesh $\eta\to 0$. 

This limiting process is the limit in probability of the cut-off processes $\lambda\in[-L,L] \mapsto \omega_\infty^{\nc,\eps}(\lambda)$, as $\eps\to 0$.
\end{theorem}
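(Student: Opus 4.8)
The plan is to deduce Theorem~\ref{th.MAINnc} from the machinery already assembled for the dynamical case, observing that the near-critical construction was set up to be formally parallel. The key inputs are: (1) Theorem~\ref{th.etaskoro}, extended to the near-critical setting at the end of Subsection~\ref{ss.DNC}, which gives for each fixed $\eps>0$ and $L>0$ a coupling under which $\lambda\in[-L,L]\mapsto\omega_\eta^{\nc,\eps}(\lambda)$ converges in probability in $(\Sk_L,d_{\Sk_L})$ to $\lambda\mapsto\omega_\infty^{\nc,\eps}(\lambda)$ as $\eta\to 0$; and (2) the stability estimate of Proposition~\ref{pr.stab}, which must be available in the near-critical version, i.e. there is a continuous $\psi=\psi_L:[0,1]\to[0,1]$ with $\psi(0)=0$ such that, uniformly in $0<\eta<\eps$, $\Eb{d_{\Sk_L}(\omega_\eta^\nc(\cdot),\omega_\eta^{\nc,\eps}(\cdot))}\le\psi(\eps)$. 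The remark at the end of Section~\ref{s.stability} explicitly states that the four-arm stability Lemma~\ref{l.main} and the argument built on it apply in the monotone coupling as well, so this near-critical analogue of Proposition~\ref{pr.stab} is in hand.

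First I would fix $L>0$ and construct the limiting process as an almost sure limit of cut-off processes along a subsequence $\eps_N\to 0$, exactly as in the dynamical case. Combining the near-critical Theorem~\ref{th.etaskoro} (with $\eps=\eps_{N_2}<\eps_{N_1}$) with the near-critical Proposition~\ref{pr.stab} applied to $\eps_{N_1}$ and $\eps_{N_2}$, and the triangle inequality for $d_{\Sk_L}$, one gets
\[
\Eb{d_{\Sk_L}(\omega_\infty^{\nc,\eps_{N_1}}(\cdot),\omega_\infty^{\nc,\eps_{N_2}}(\cdot))}\le\psi_L(\eps_{N_1})+\psi_L(\eps_{N_2})\,.
\]
Choosing $\{\eps_N\}$ so that $\psi_L(\eps_N)\le 2^{-N}$ makes $\{\omega_\infty^{\nc,\eps_N}(\cdot)\}_N$ an a.s.\ Cauchy sequence in the Polish space $(\Sk_L,d_{\Sk_L})$, hence a.s.\ convergent to a \cadlag limit $\lambda\in[-L,L]\mapsto\omega_\infty^\nc(\lambda)$.

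Next I would prove the two convergence statements. For weak convergence of $\omega_\eta^\nc(\cdot)$, it suffices (since weak convergence is metrized via couplings) to show that for every $\delta>0$ one can couple $\omega_\eta^\nc(\cdot)$ with $\omega_\infty^\nc(\cdot)$ so that $\Eb{d_{\Sk_L}(\omega_\eta^\nc(\cdot),\omega_\infty^\nc(\cdot))}<\delta$; choose $N$ with $2^{-N}<\delta/10$, use the near-critical coupling of Corollary~\ref{c.coupling} with $\eps=\eps_N$, and split
\[
\Eb{d_{\Sk_L}(\omega_\eta^\nc(\cdot),\omega_\infty^\nc(\cdot))}\le\psi_L(\eps_N)+\Eb{d_{\Sk_L}(\omega_\eta^{\nc,\eps_N}(\cdot),\omega_\infty^{\nc,\eps_N}(\cdot))}+\sum_{M\ge N}\psi_L(\eps_M)\,,
\]
where the middle term tends to $0$ as $\eta\to 0$ by the near-critical Theorem~\ref{th.etaskoro} and the outer two are $O(2^{-N})$. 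For convergence in probability of $\omega_\infty^{\nc,\eps}(\cdot)$ to $\omega_\infty^\nc(\cdot)$ as $\eps\to 0$, the same three-term split (coupling the cut-off process and the limit through a discrete near-critical configuration) gives $\Eb{d_{\Sk_L}(\omega_\infty^{\nc,\eps}(\cdot),\omega_\infty^\nc(\cdot))}<\delta$ for all sufficiently small $\eps$, whence the Markov-type bound yields convergence in probability. Finally, one checks the limit is independent of the chosen subsequence $\{\eps_N\}$: any two such subsequences can be interleaved, and the Cauchy estimate above forces the two a.s.\ limits to coincide a.s.

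The only genuinely new point compared with the dynamical case is verifying that Proposition~\ref{pr.stab} really does transfer: Lemma~\ref{l.main}, Lemma~\ref{l.change} and Proposition~\ref{p.almostCross} are phrased for the set $X$ of updated sites in a dynamical trajectory, and in the monotone near-critical coupling the analogous object is the set of sites whose uniform label $u_x$ lies in the window $[p_c,\,p_c+\lambda r(\eta)]$ (or the symmetric window for $\lambda<0$), which is again an i.i.d.\ set of density $O_L(1)\,\eta^2\alpha_4^\eta(\eta,1)^{-1}=O_L(q)$ independent of $\omega_\eta(0)$ conditioned on that window. Since the arm-event arguments only used that $X$ is such an independent density-$q$ set (not the dynamical update rule), and the near-critical trajectory stays inside $\Omega(\omega_\eta(0),X)$ with $\omega_\eta^\nc(\lambda)(z)=\omega_\eta^{\nc,\eps}(\lambda)(z)$ for all $z\in\Piv^\eps(\omega_\eta(0))$, the proof of Proposition~\ref{pr.stab} goes through verbatim, yielding $\psi_L$. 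I expect this transfer — making sure the monotone-coupling update set genuinely satisfies the independence hypotheses used in Lemmas~\ref{l.main}--\ref{l.change} and that the notion of $\eps$-cut-off near-critical process matches — to be the main (though largely bookkeeping) obstacle; everything else is the routine Cauchy-sequence / triangle-inequality argument of Theorem~\ref{th.MAIN}.
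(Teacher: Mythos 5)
Your proposal is correct and follows essentially the same route as the paper, which proves Theorem~\ref{th.MAINnc} simply by repeating the Cauchy-sequence/triangle-inequality argument of Theorem~\ref{th.MAIN} with the near-critical versions of Theorem~\ref{th.etaskoro} (from Subsection~\ref{ss.DNC}) and Proposition~\ref{pr.stab}. Your extra verification that the stability estimates transfer to the monotone coupling is exactly the bookkeeping the paper leaves implicit (note that the framework of Section~\ref{s.stability}, via $\Omega(\omega,X)$ with $X$ an independent density-$O_L(q)$ set, already covers the near-critical trajectory, as the paper itself uses in the proof of Proposition~\ref{pr.LC}).
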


This is proved exactly along the lines of Theorem~\ref{th.MAIN}. Now note that these results 
are not yet satisfactory, because, due to the form of the Skorohod distance $d_{\Sk_L}$, if we fix any $\lambda_0\in(-L,L)$, we cannot conclude from the above Theorem~\ref{th.MAINnc} that $\omega_\eta^\nc(\lambda_0)$ converges in law in $(\HH,d_\HH)$ to $\omega_\infty^\nc(\lambda_0)$ as $\eta\to 0$. We thus need the following theorem, which is not an immediate corollary:

\begin{theorem}\label{th.MARGIN}
For any fixed $\lambda\in \R$, $\omega_\eta^\nc(\lambda)$ converges in law in $(\HH,d_\HH)$ to $\omega_\infty^\nc(\lambda)$, where the ``slice'' $\omega_\infty^\nc(\lambda)$ is extracted from the trajectory obtained in Theorem \ref{th.MAINnc}.  (By taking $L$ sufficiently large, say). 

Furthermore, as in Theorem \ref{th.SScardy} one has 
\begin{align}\label{e.CardyL1}
\lim_{\eta\to 0} \Pb{\omega_\eta^\nc(\lambda) \in \boxminus_Q} = \Pb{\omega_\infty^\nc(\lambda) \in \boxminus_Q}\,.
\end{align}
This may be viewed as a near-critical Cardy's theorem (except we only establish the convergence here, we do not find an explicit formula). We also have:
\begin{align}\label{e.CardyL2}
\lim_{\eta\to 0} \Pb{\omega_\eta^\nc(\lambda) \in \A_j(r,R)} = \Pb{\omega_\infty^\nc(\lambda) \in \A_j(r,R)}\,,
\end{align}
\end{theorem}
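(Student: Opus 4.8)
\textbf{Proof plan for Theorem \ref{th.MARGIN}.}
The plan is to avoid the Skorohod topology when dealing with the fixed slice $\lambda$, and instead transfer control directly in the metric $d_\HH$, mimicking the proof of Theorem~\ref{th.etaskoro} but evaluated at the single parameter $\lambda$ rather than uniformly in $[-L,L]$. Fix $L>|\lambda|$ and a cut-off $\eps>0$. The near-critical analogue of Proposition~\ref{pr.compZ} (cf.\ Subsection~\ref{ss.DNC}) gives, for every quad $Q\in\QUAD_\N$, that $\Pb{\{\omega_\eta^{\nc,\eps}(\lambda)\in\boxminus_Q\}\,\Delta\,\{\omega_\infty^{\nc,\eps}(\lambda)\in\boxminus_Q\}}\to 0$ as $\eta\to 0$. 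Since $\QUAD^k$ is finite, this holds simultaneously for all $Q\in\QUAD^k$, and exact agreement of all crossings of quads in $\QUAD^k$ forces $\omega_\eta^{\nc,\eps}(\lambda)\in\O_k(\omega_\infty^{\nc,\eps}(\lambda))$ and $\omega_\infty^{\nc,\eps}(\lambda)\in\O_k(\omega_\eta^{\nc,\eps}(\lambda))$ (Definition~\ref{d.open}), hence $K_\HH\ge k$; Proposition~\ref{pr.CovrtoCovk}(i) with $k=\k(r)$ then gives $d_\HH(\omega_\eta^{\nc,\eps}(\lambda),\omega_\infty^{\nc,\eps}(\lambda))\to 0$ in probability, so $\omega_\eta^{\nc,\eps}(\lambda)\to\omega_\infty^{\nc,\eps}(\lambda)$ in law in $(\HH,d_\HH)$, for each fixed $\eps>0$.

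Next I would let $\eps\to 0$ using the stability analysis of Section~\ref{s.stability}. Its inputs (Lemma~\ref{l.main}, Lemma~\ref{l.change}, Proposition~\ref{p.almostCross} and the $\O_k$-lemma preceding Proposition~\ref{pr.stab}) only use that two configurations reachable by the dynamics differ on updated sites, so they apply verbatim to the monotone near-critical coupling (in fact more easily, since $\Omega(\omega,X)$ is then a sublattice). Evaluating the slice version of~\eqref{e.unif2} at the single parameter $\lambda$ yields, uniformly in $0<2\eta<\eps$ and with $k=\k(r)$,
\[
\Pb{d_\HH\bigl(\omega_\eta^\nc(\lambda),\omega_\eta^{\nc,\eps}(\lambda)\bigr)>r}\ \le\ M_{\k(r)}\,\eps^2\,\alpha_4^\eta(\eps,1)^{-1}\ =\ \eps^{3/4+o(1)}\,.
\]
Thus $(\omega_\eta^{\nc,\eps}(\lambda))_\eps$ is Cauchy in $\eps$ for convergence in probability, uniformly in $\eta$; combined with the previous paragraph, the laws of $\omega_\infty^{\nc,\eps}(\lambda)$ form a Cauchy sequence in the L\'evy--Prohorov metric on $\mathcal P(\HH)$ and converge to a law $\P_{\lambda,\infty}$, and a routine triangle-inequality argument (as in the proof of Theorem~\ref{th.MAIN}) gives $\omega_\eta^\nc(\lambda)\to\P_{\lambda,\infty}$ in law in $(\HH,d_\HH)$.

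It remains to identify $\P_{\lambda,\infty}$ with the law of the slice $\omega_\infty^\nc(\lambda)$ of the trajectory produced in Theorem~\ref{th.MAINnc}; since both arise as $\eps_L\to 0$ limits of $\omega_\infty^{\nc,\eps_L}(\lambda)$, this reduces to showing that $\lambda$ is almost surely a continuity point of $\lambda'\mapsto\omega_\infty^\nc(\lambda')$, and \textbf{this is the main obstacle}, because as $\eps\to 0$ the jump set of the cut-off approximants becomes dense, so one cannot simply argue that the limiting jumps avoid $\lambda$. The route I would take: fix $r>0$; by the slice-uniform stability bound above choose $\eps$ so small that $\sup_{\lambda'\in[-L,L]}d_\HH(\omega_\eta^\nc(\lambda'),\omega_\eta^{\nc,\eps}(\lambda'))\le r$ with probability $\ge 1-\delta$ uniformly in $0<2\eta<\eps$; then, since $\omega_\eta^{\nc,\eps}$ jumps only at the points of $\PPP_L(\mu^\eps)$, whose $\lambda$-intensity is finite ($\Eb{\mu^\eps(D)}\le C\eps^{-5/4}$ by Proposition~\ref{th.mu}(i)), choose $\gamma=\gamma(\eps)$ so small that $\Pb{\switch_L\cap(\lambda-\gamma,\lambda+\gamma)\ne\emptyset}\le\delta$. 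This gives, uniformly in $\eta<\eps/2$, $\Pb{\exists\,\lambda'\in(\lambda-\gamma,\lambda+\gamma):d_\HH(\omega_\eta^\nc(\lambda'),\omega_\eta^\nc(\lambda))>3r}\le 3\delta$. Letting $\eta\to 0$ (using weak convergence in $\Sk_L$ together with the fact that a $\Sk_L$-limit having a jump of $d_\HH$-size $>6r$ inside a closed sub-window forces the approximants to have oscillation $>3r$ in the window), then $\gamma\to 0$, $\delta\to 0$, and finally $r\to 0$ along a sequence, yields $\Pb{\lambda\in\mathrm{Jump}(\omega_\infty^\nc)}=0$. With continuity at $\lambda$ established, $\Sk_L$-convergence gives $\omega_\infty^{\nc,\eps_L}(\lambda)\to\omega_\infty^\nc(\lambda)$ almost surely, which identifies $\P_{\lambda,\infty}$ and finishes the proof that $\omega_\eta^\nc(\lambda)\to\omega_\infty^\nc(\lambda)$ in law in $(\HH,d_\HH)$.

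Finally, for~\eqref{e.CardyL1} and~\eqref{e.CardyL2}: given convergence in law in $(\HH,d_\HH)$, by the portmanteau theorem it suffices to check $\P_{\lambda,\infty}[\p\boxminus_Q]=0$ and $\P_{\lambda,\infty}[\p\A_j(r,R)]=0$. These boundary-null statements are proved exactly as for the critical measure in Theorem~\ref{th.SScardy} and Lemma~\ref{l.meas4arm}: $\p\boxminus_Q$ forces a crossing of $Q$ using both prime ends $\p_1Q,\p_3Q$ but no strictly smaller quad, a ``marginal'' crossing whose probability under $\omega_\eta^\nc(\lambda)$ is dominated by a boundary three-arm quantity up to the robustness scale $\delta$, which tends to $0$ with $\delta$; the domination is uniform in $\eta$ because at fixed $\lambda$ the ratio $L(p(\eta,\lambda))/\eta$ is macroscopic, so the requisite RSW and quasi-multiplicativity estimates for $\omega_\eta^\nc(\lambda)$ are of the same order as at criticality --- which one may again obtain through the cut-off approximation and the near-critical stability of arm probabilities (Lemma~\ref{l.main} and its extensions to one-arm and $j$-arm events, as noted after its proof). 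The same reasoning, now using the measurability and stability of arm events from Lemma~\ref{l.meas4arm}, gives the statement for $\A_j(r,R)$, completing the proof.
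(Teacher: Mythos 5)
Your proposal is correct, and its skeleton coincides with the paper's: the whole difficulty is to rule out a sudden change of the limiting trajectory at the deterministic parameter $\lambda$, this is proved by uniform-in-$\eta$ discrete estimates and then transferred through the Skorohod convergence of Theorem \ref{th.MAINnc}, and the limits \eqref{e.CardyL1}--\eqref{e.CardyL2} are obtained exactly as you say, by rerunning the boundary-null arguments of Theorem \ref{th.SScardy} and Lemma \ref{l.meas4arm} with near-critical arm estimates (the paper packages these as Proposition \ref{pr.ExNC}). Where you genuinely diverge is in the mechanism for the local continuity at $\lambda$: the paper's Proposition \ref{pr.LC} does it in one step on the \emph{full} process, observing that an $\O_{\k(\alpha/2)}$-discrepancy between $\omega_\eta^\nc(\lambda_0)$ and some $\omega_\eta^\nc(\lambda')$ with $|\lambda'-\lambda_0|<2\delta$ forces a site updated with label in that window to satisfy $\ev W_x(\eta,2^{-\k(\alpha/2)-10})$, and Lemma \ref{l.main} bounds the expected number of such sites by $C_{L,\alpha}\,\delta$; you instead route through the $\eps$-cut-off process, combining the slice-uniform stability bound (the near-critical analogue of \eqref{e.unif2}) with the finite Poisson intensity $\Eb{\mu^\eps(D)}\le C\eps^{-5/4}$ to make a window of width $\gamma(\eps,\delta)$ switch-free, and you also reconstruct the slice law by a Cauchy-in-$\eps$ argument before identifying it with the trajectory slice. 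Both routes are sound and rest on the same inputs (the stability section and the transfer between discrete oscillation and limiting jumps, which the paper also leaves at the "details to the reader" level); the paper's argument is shorter and yields the explicit linear-in-$\delta$ modulus for the full process directly, while yours trades a second application of Lemma \ref{l.main} for a two-parameter approximation $(\eps,\gamma)$ and a somewhat redundant independent construction of $\P_{\lambda,\infty}$ — a redundancy you could remove by noting that, once a.s.\ continuity at $\lambda$ is known, evaluation at the fixed time $\lambda$ is continuous on such paths for the Skorohod topology, so the slice convergence of $\omega_\eta^\nc(\lambda)$ follows from Theorem \ref{th.MAINnc} alone.
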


\ni
{\bf Proof:} The reason why such a result does not follow readily from Theorem \ref{th.MAINnc} is that there could be some deterministic value of $\lambda$, some $\lambda_0\in\R$ such that there is always a sudden change at that parameter. Of course, such a scenario will not happen, but we do need to prove such a local continuity property:

\begin{proposition}\label{pr.LC}
For any $\lambda_0\in\R$ and any $\alpha>0$, there is some $\delta=\delta(\lambda_0,\alpha)>0$ such that 
\begin{align}\label{e.CONT}
\Pb{\exists \lambda\in(\lambda_0-\delta, \lambda_0+\delta), d_\HH(\omega_\infty^\nc(\lambda_0), \omega_\infty^\nc(\lambda))>\alpha}<\alpha\,.
\end{align}
\end{proposition}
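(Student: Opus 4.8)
The plan is to reduce the statement to the behaviour of finitely many crossing events, and then to exploit the $\eps$-cut-off process, whose jumps are governed by a Poisson point process with intensity absolutely continuous in the $\lambda$-direction. \emph{Step 1 (reduction to crossings of a finite family of quads).} By Proposition~\ref{pr.CovrtoCovk}(i), $K_\HH(\omega,\omega')\geq\k(\alpha)$ implies $d_\HH(\omega,\omega')\leq\alpha$; equivalently, $d_\HH(\omega,\omega')>\alpha$ forces $\omega'\notin\O_{\k(\alpha)}(\omega)$. Exactly as in the proof of Theorem~\ref{th.etaskoro} (the inclusion of the event $\mathcal C$ into a crossing-discrepancy event), this forces the status of $\boxminus_Q$ to differ between $\omega$ and $\omega'$ for some $Q$ in the finite family $\mathcal Q_\alpha:=\QUAD^{\k(\alpha)}\cup\QUAD^{\k(\alpha)+10}$. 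Hence it suffices to find $\delta>0$ with
\[
\Pb{\exists\lambda\in(\lambda_0-\delta,\lambda_0+\delta),\ \exists Q\in\mathcal Q_\alpha:\ 1_{\boxminus_Q}(\omega_\infty^\nc(\lambda))\neq 1_{\boxminus_Q}(\omega_\infty^\nc(\lambda_0))}<\alpha.
\]

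\emph{Step 2 (insert the cut-off process).} Fix a cut-off $\eps>0$ and the near-critical cut-off process $\lambda\mapsto\omega_\infty^{\nc,\eps}(\lambda)$ of subsection~\ref{ss.DNC}. The near-critical analogue of the uniform-in-$\lambda$ estimate~\eqref{e.unif2} (proved at the discrete level in Proposition~\ref{pr.stab} via Lemma~\ref{l.main}, which is valid in the monotone coupling as well) gives, uniformly in $\eta$,
\[
\Pb{\sup_{\lambda\in[-L,L]}d_\HH\bigl(\omega_\eta^\nc(\lambda),\omega_\eta^{\nc,\eps}(\lambda)\bigr)>\tfrac{\alpha}{3}}\ \le\ M_{L,\k(\alpha/3)}\,\eps^{2}\,\alpha_4(\eps,1)^{-1}.
\]
Transferring this bound to the continuum (see the last paragraph) and choosing $\eps=\eps(\alpha,L)$ small, we obtain $\Pb{\sup_{\lambda}d_\HH(\omega_\infty^\nc(\lambda),\omega_\infty^{\nc,\eps}(\lambda))>\alpha/3}<\alpha/3$; on the complement, for all $\lambda\in[-L,L]$, $d_\HH(\omega_\infty^\nc(\lambda_0),\omega_\infty^\nc(\lambda))\leq\tfrac{2\alpha}{3}+d_\HH(\omega_\infty^{\nc,\eps}(\lambda_0),\omega_\infty^{\nc,\eps}(\lambda))$.

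\emph{Step 3 (no jump of the cut-off process near a deterministic $\lambda_0$).} By Theorem~\ref{th.extension} (near-critical version), $\lambda\mapsto\omega_\infty^{\nc,\eps}(\lambda)$ is piecewise constant with discontinuity set contained in the a.s.\ finite set $\switch_L$ of switching times of $\PPP_L(\mu^\eps(\omega_\infty(0)))$, whose $\lambda$-marginal intensity is $\mu^\eps(\bar D)\,d\lambda$ with $\mu^\eps(\bar D)<\infty$ a.s. Thus $\Pb{\switch_L\cap(\lambda_0-\delta,\lambda_0+\delta]\neq\emptyset\mid\mu^\eps}=1-e^{-2\delta\,\mu^\eps(\bar D)}$, and by dominated convergence $\Pb{\switch_L\cap(\lambda_0-\delta,\lambda_0+\delta]\neq\emptyset}\to0$ as $\delta\to0$; fix $\delta$ so that this is $<\alpha/3$. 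Since a.s.\ $\lambda_0\notin\switch_L$, on the event that no switch lies in $(\lambda_0-\delta,\lambda_0+\delta)$ we have $\omega_\infty^{\nc,\eps}(\lambda)=\omega_\infty^{\nc,\eps}(\lambda_0)$ for every such $\lambda$, hence $d_\HH(\omega_\infty^\nc(\lambda_0),\omega_\infty^\nc(\lambda))\leq 2\alpha/3<\alpha$ by Step~2. Combining Steps~2 and~3, the probability in Step~1 is at most $\alpha/3+\alpha/3<\alpha$, which is~\eqref{e.CONT}.

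\emph{Main obstacle.} The delicate point is the passage to the continuum in Step~2, since $\sup_\lambda d_\HH$ is not continuous on $\Sk_L\times\Sk_L$ for the product Skorohod topology (independent time changes are allowed). This is handled as in the proof of Theorem~\ref{th.etaskoro}: the event $\{\sup_\lambda d_\HH(\omega_\infty^\nc(\lambda),\omega_\infty^{\nc,\eps}(\lambda))>\alpha/3\}$ is contained in a finite union, over $Q\in\mathcal Q_{\alpha/3}$, of events ``$\boxminus_Q$ is violated between $\omega_\infty^\nc(\lambda)$ and $\omega_\infty^{\nc,\eps}(\lambda)$ for some $\lambda$'', and each such event is comparable, uniformly in $\eta$, to its discrete counterpart through the coupling of Corollary~\ref{c.coupling}, the reduction of $\switch_L$ to rational times (Definition~\ref{d.ZQ}), the boundary-continuity of crossing events (Theorem~\ref{th.SScardy}, Lemma~\ref{l.meas4arm}) and Proposition~\ref{pr.compZ}; a union bound over $\mathcal Q_{\alpha/3}$ then gives the continuum bound from the displayed discrete one. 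Alternatively, one can prove~\eqref{e.CONT} directly by a rate argument in the spirit of Section~\ref{s.stability}: in the discrete near-critical ensemble, $1_{\boxminus_Q}$ can change in a window of length $2\delta$ only through a clock ring at a site that is, at that moment, pivotal for $Q$ up to the macroscopic scale $\dist(\p_1Q,\p_3Q)$, and the renormalized expected number of such sites stays bounded over $|\lambda|\le L$ by Kesten's near-critical stability, yielding a bound $O_Q(\delta)$ uniform in $\eta$, followed by $\eta\to0$ through the finite family $\mathcal Q_\alpha$.
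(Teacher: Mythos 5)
Your primary route (Steps~1--3) is genuinely different from the paper's proof, and its weak point is exactly the one you flag in Step~2. You need $\Pb{\sup_{\lambda\in[-L,L]} d_\HH(\omega_\infty^\nc(\lambda),\omega_\infty^{\nc,\eps}(\lambda))>\alpha/3}<\alpha/3$, a comparison \emph{at equal times}, whereas the paper only establishes convergence of the cut-off processes in the Skorohod metric (Theorem~\ref{th.MAINnc}), which permits time reparametrizations and does not control the sup-distance at equal times. Your proposed fix --- that each crossing-discrepancy event between $\omega_\infty^\nc(\lambda)$ and $\omega_\infty^{\nc,\eps}(\lambda)$ ``is comparable, uniformly in $\eta$, to its discrete counterpart'' --- presupposes a coupling in which the crossing events of $\omega_\infty^\nc(\lambda)$ and $\omega_\eta^\nc(\lambda)$ agree simultaneously in $\lambda$ (or even at one fixed $\lambda$); no such statement is available at this point of the paper: its fixed-$\lambda$ version is Theorem~\ref{th.MARGIN}, which is \emph{deduced from} Proposition~\ref{pr.LC}, so as written your transfer is circular. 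The gap is repairable, but only by an extra argument you do not give: chain the near-critical version of the discrete stability bound \eqref{e.unif2} with Proposition~\ref{pr.compZ} under the coupling of Corollary~\ref{c.coupling} to show that two continuum cut-offs $\omega_\infty^{\nc,\eps_{L_1}}$, $\omega_\infty^{\nc,\eps_{L_2}}$ are close uniformly in $\lambda$ with high probability, deduce that the sequence $\omega_\infty^{\nc,\eps_L}$ is Cauchy for the \emph{uniform} metric, and identify that limit with the Skorohod limit $\omega_\infty^\nc$ --- in effect a strengthening of Theorem~\ref{th.MAINnc} to the uniform topology. Steps~1 and~3 themselves are fine (the $\lambda$-marginal of the Poisson process has no atoms, so a deterministic $\lambda_0$ is a.s.\ not a switch time and a short window contains no switch with high probability).

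The ``alternative'' you sketch in your last paragraph is essentially the paper's actual proof, and it avoids the cut-off process altogether: the paper first uses the Skorohod convergence of $\omega_\eta^\nc(\cdot)$ to $\omega_\infty^\nc(\cdot)$ to reduce \eqref{e.CONT} to a bound, uniform in small $\eta$, for the \emph{discrete} ensemble (inequality \eqref{e.CONT2}); then replaces $d_\HH$ by the requirement $\omega_\eta^\nc(\lambda)\in\O_{\k(\alpha/2)}(\omega_\eta^\nc(\lambda_0))$ via Proposition~\ref{pr.CovrtoCovk}; and finally observes that a violation forces some site updated with label in the window to satisfy the event $\ev W_x(\eta,2^{-\k(\alpha/2)-10})$ of Lemma~\ref{l.main}, whose probability is at most $C_1(L)\,\alpha_4^\eta(\eta,2^{-\k(\alpha/2)-10})$, so a first-moment count over the $O(\eta^{-2})$ sites yields the bound $C_{L,\alpha}\,\delta$. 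Two small corrections to your sketch: ``pivotal for $Q$ at that moment'' is made rigorous precisely through the $\ev W$ events of Lemma~\ref{l.main} (pivotality in \emph{some} configuration of $\Omega(\omega,X)$), which is the Kesten-type stability you invoke; and interior sites only need a four-arm event up to their distance to $\p Q$, with boundary sites contributing half-plane three-arm events, so the correct summary is that the renormalized expected number of pivotals of a fixed macroscopic quad is bounded (boundary included), not that every relevant site is pivotal up to the scale $\dist(\p_1 Q,\p_3 Q)$. With those adjustments, your alternative coincides with the paper's argument, while your main route would require the additional uniform-convergence input described above.
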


Assume $\lambda_0>0$ and choose $L=2\lambda_0$. 
Since $\lambda \mapsto \omega_\eta^\nc(\lambda)$ converges in law to $\lambda \mapsto \omega_\infty^\nc(\lambda)$ for the topology given by $d_{\Sk_L}$, it is easy to see from the definition of $d_{\Sk_L}$ that it is enough to show that one can find a $\delta=\delta(\lambda_0, \alpha)>0$ sufficiently small so that as $\eta\to 0$, one has 
\begin{align}\label{e.CONT2}
\Pb{\exists \lambda\in(\lambda_0-2\delta, \lambda_0+2\delta), d_\HH(\omega_\eta^\nc(\lambda_0), \omega_\eta^\nc(\lambda))>\alpha/2}<\alpha/2\,.
\end{align}
We leave the details to the reader to recover~\eqref{e.CONT} from~\eqref{e.CONT2} plus the convergence in law of $\omega_\eta^\nc(\cdot)$ to $\omega_\infty^\nc(\cdot)$.

Now, in order to prove ~\eqref{e.CONT2}, recall the definition  of $r\mapsto \k(r)$ from Proposition \ref{pr.CovrtoCovk}.  In particular, it is stronger but sufficient to show that 
\begin{align}\label{e.CONT3}
\Pb{\exists \lambda\in(\lambda_0-2\delta, \lambda_0+2\delta), \omega_\eta^\nc(\lambda) \notin \O_{\k(\alpha/2)}(\omega_\eta^\nc(\lambda_0))} < \alpha/2\,.
\end{align}

In order to prove this, we will use the setup and the notations from Section \ref{s.stability}. In particular let $X=X_{\eta,L}=X_{\eta,2\lambda_0}$ be the set of points which are updated in the interval $[-L,L]$. Note that if $\exists \lambda\in(\lambda_0-2\delta, \lambda_0+2\delta), \omega_\eta^\nc(\lambda) \notin \O_{\k(\alpha/2)}(\omega_\eta^\nc(\lambda_0))$, this means that one can find a point $x\in X_{\eta,L}$ whose label is in $(\lambda_0-2\delta, \lambda_0+2\delta)$ and for which the event $\ev W_x(\eta, 2^{-\k(\alpha/2)-10})$ is satisfied (formally the notation $\ev W_z(i,j)$ used a logarithmic scale but we freely extend the notation to $\ev W_z(r_i,r_j)$ here). Using Lemma \ref{l.main}, the probability of finding at least one such point is dominated by (for $\eta$ sufficiently small):
\begin{align*}
O(\eta^{-2}) \Pb{ x \in X_{\eta,L}\text{ and its label is in } (\lambda_0 -2\delta, \lambda_0 +2 \delta)} C_1(L)\, \alpha_4^\eta(\eta, 2^{-\k(\alpha/2)-10} ) & \\
& \hspace{-12cm} \leq O(\eta^{-2}) O(\delta) \eta^2 \alpha_4^\eta(\eta,1)^{-1} \,  C_1(L)\alpha_4^\eta(\eta,1)\alpha_4(2^{-\k(\alpha/2)-10},1)^{-1} \\
& \hspace{-12cm} \leq C_{L,\alpha} \,\delta\,,
\end{align*}
where $C_{L,\alpha}<\infty$ is a constant which depends only on $L,\alpha$. One can thus find $\delta=\delta(\lambda_0, \alpha)>0$ small enough so that ~\eqref{e.CONT3} holds, thus concluding the proof of Proposition \ref{pr.LC}. \qed

It remains to justify the limits~\eqref{e.CardyL1} and~\eqref{e.CardyL2} in Theorem \ref{th.MARGIN}. It is enough for this to follow the proofs of Corollary 5.2 in \cite{\SchrammSmirnovNoise} and Lemma 2.10 in \cite{\GPSa} by relying when needed on the estimates on near-critical arm-events given by Proposition \ref{pr.ExNC}.  \qed

In fact the proof of Theorem \ref{th.MARGIN}, once adapted to the dynamical setting, easily implies the following interesting and non-trivial fact about the scaling limit of dynamical percolation: 

\begin{proposition}\label{pr.PROJ}
Let $t\mapsto \omega_\infty(t)$ be the process constructed in Theorem \ref{th.MAIN}. Then one has for all $t\geq 0$, 
\[
\omega_\infty(t)\sim \P_\infty\,.
\]
In particular, the process $t\mapsto \omega_\infty(t)$ preserves the measure $\P_\infty$. (This will be important for the simple Markov property in Theorem \ref{th.SMP}). 
\end{proposition}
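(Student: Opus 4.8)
\textbf{Proof plan for Proposition \ref{pr.PROJ}.}

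The plan is to exploit the fact that $\omega_\infty(t)$ is, by construction, an a.s.\ limit (along the subsequence $\eps_L$) of the cut-off processes $\omega_\infty^{\eps_L}(t)$, each of which is in turn the $\eta\to 0$ scaling limit of the discrete cut-off dynamics $\omega_\eta^{\eps_L}(t)$, which itself is a coupling-modification of genuine rescaled dynamical percolation $\omega_\eta(t)$. Since $\P_\eta$ is invariant for the discrete dynamics $t\mapsto\omega_\eta(t)$ (Definition \ref{d.DP}), we have $\omega_\eta(t)\sim\P_\eta$ exactly, for every $t$ and every $\eta$. The strategy is then a two-step limit: first pass $\eta\to 0$ at fixed small $\eps$ to compare $\omega_\eta(t)$ with $\omega_\infty^\eps(t)$, then pass $\eps\to 0$. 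Concretely, I would fix a bounded continuous test function $F:\HH\to\R$, fix $t\ge 0$, and show $\E[F(\omega_\infty(t))]=\int F\,d\P_\infty$.

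First I would write, for any $\eps>0$,
\[
\bigl|\E[F(\omega_\infty(t))]-\textstyle\int F\,d\P_\infty\bigr|
\le \bigl|\E[F(\omega_\infty(t))]-\E[F(\omega_\infty^{\eps}(t))]\bigr|
+\bigl|\E[F(\omega_\infty^{\eps}(t))]-\textstyle\int F\,d\P_\infty\bigr|.
\]
For the first term: by Proposition \ref{pr.stab} and Theorem \ref{th.etaskoro} (exactly as in the proof of Theorem \ref{th.MAIN}), $\E[d_{\Sk_T}(\omega_\infty^{\eps_L}(\cdot),\omega_\infty(\cdot))]\le\psi_T(\eps_L)\to 0$, and since $\psi_T(\eps)\to 0$ one gets $\E[d_{\Sk_T}(\omega_\infty^{\eps}(\cdot),\omega_\infty(\cdot))]\to 0$ as $\eps\to 0$ along the chosen subsequence. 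A small extra point is needed here: $d_{\Sk_T}$-closeness of the \cadlag paths does not directly control $d_\HH$ of the time-$t$ slices, because of the time-reparametrization built into the Skorohod metric. This is precisely the obstacle resolved for the near-critical case in Theorem \ref{th.MARGIN} via the local-continuity Proposition \ref{pr.LC}, and the remark accompanying Proposition \ref{pr.PROJ} explicitly says the proof of Theorem \ref{th.MARGIN} adapts to the dynamical setting. So I would invoke the dynamical analogue of Proposition \ref{pr.LC}: using Lemma \ref{l.main} with the update set $X=X_{\eta,T}$, one shows that for any fixed $t$ the probability of a macroscopic change of $\omega_\eta(\cdot)$ in a window $(t-\delta,t+\delta)$ is $\le C_{T,\alpha}\delta$, hence no deterministic time $t$ is an atom of discontinuity, and $d_{\Sk_T}$-convergence of the paths upgrades to $d_\HH$-convergence of the slice at any fixed $t$. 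This gives $\E[F(\omega_\infty^{\eps}(t))]\to\E[F(\omega_\infty(t))]$ as $\eps\to 0$.

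For the second term: for fixed $\eps$, Theorem \ref{th.etaskoro} gives $\omega_\eta^{\eps}(\cdot)\to\omega_\infty^{\eps}(\cdot)$ in probability in $\Sk_T$ under the coupling of Corollary \ref{c.coupling}, and again the dynamical Proposition \ref{pr.LC}-type estimate shows the time-$t$ slice converges: $\omega_\eta^{\eps}(t)\to\omega_\infty^{\eps}(t)$ in law in $(\HH,d_\HH)$. On the other hand, by Proposition \ref{pr.stab}, $\E[d_{\Sk_T}(\omega_\eta(\cdot),\omega_\eta^{\eps}(\cdot))]\le\psi(\eps)$ uniformly in $0<\eta<\eps$, and combined with the slice-continuity estimate (now applied to the pair $\omega_\eta(\cdot),\omega_\eta^{\eps}(\cdot)$, both of which live in $\Omega(\omega_\eta(0),X)$, so that Lemma \ref{l.main} and the Lemma preceding the proof of Proposition \ref{pr.stab} apply directly as in \eqref{e.unif2}) one gets $\E[d_\HH(\omega_\eta(t),\omega_\eta^{\eps}(t))]\le \tilde\psi(\eps)$ uniformly in $\eta$, for some $\tilde\psi$ with $\tilde\psi(0+)=0$. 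Since $\omega_\eta(t)\sim\P_\eta$ and $\P_\eta\Rightarrow\P_\infty$ (with $F$ bounded continuous and $\P_\infty$-a.e.\ continuity automatic as $F\in C_b$), letting $\eta\to 0$ gives $|\E[F(\omega_\infty^{\eps}(t))]-\int F\,d\P_\infty|\le \mathrm{Lip}(F)\,\tilde\psi(\eps)$ (or, for merely continuous $F$, an $\eps$-vanishing bound by uniform continuity of $F$ on the compact $\HH$). Letting $\eps\to 0$ along $\eps_L$ kills the second term. Combining the two displays yields $\E[F(\omega_\infty(t))]=\int F\,d\P_\infty$ for all $F\in C_b(\HH)$, hence $\omega_\infty(t)\sim\P_\infty$. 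Invariance of $\P_\infty$ under $t\mapsto\omega_\infty(t)$ is then immediate since this holds for every $t\ge 0$.

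The main obstacle is the one flagged above: transferring $\Sk_T$-convergence of trajectories to $d_\HH$-convergence of a fixed-time slice. It is not a formal consequence of the Skorohod topology (a deterministic jump at time $t$ would break it), so one genuinely needs the quantitative four-arm stability estimate Lemma \ref{l.main} to rule out that $t$ is a "bad" parameter — exactly the mechanism of Proposition \ref{pr.LC}, which the text tells us adapts to the dynamical case. Everything else is bookkeeping with the triangle inequality and the already-established convergences $\omega_\eta\to\omega_\infty^{\eps}\to\omega_\infty$ and $\P_\eta\Rightarrow\P_\infty$.
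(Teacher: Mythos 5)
Your proposal is correct and is essentially the paper's intended argument: the paper proves Proposition \ref{pr.PROJ} precisely by adapting Theorem \ref{th.MARGIN} to the dynamical setting, i.e.\ using the Lemma \ref{l.main}-based local continuity estimate (the dynamical analogue of Proposition \ref{pr.LC}) to rule out deterministic jump times and upgrade $\Sk_T$-convergence to convergence of the fixed-time slice, and then invoking the exact invariance $\omega_\eta(t)\sim\P_\eta$ together with $\P_\eta\Rightarrow\P_\infty$. Your extra routing through the cut-off processes $\omega_\eta^\eps,\omega_\infty^\eps$ is just the bookkeeping already implicit in Theorems \ref{th.etaskoro} and \ref{th.MAIN} and equation \eqref{e.unif2}, so it does not constitute a different method.
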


\subsection{Extension to the full plane and infinite time-range}\label{ss.extensionF}

Extending the above Theorem~\ref{th.MAINnc} to the case of the full plane or to an infinite time-range does not add real additional technicalities. It can be handled using a standard compactification setup. 
For example, one way to proceed  is to consider the following metric on plane percolation configurations: 
\begin{equation}\label{}
d_{\HH_\C}(\omega,\omega'):= \sum_{N\geq 1} 2^{-N} \, d_{\HH_{[-2^N,2^N]}}(\omega,\omega')\,.
\end{equation}
Recall that we assumed for any bounded domain $D$, our distance $d_{\HH_D}$ to be such that $\diam_{d_{\HH_D}}(\HH_D)=1$ so that the above sum is bounded above by one. 

Under the topology given by this metric $d_{\HH_\C}$, it is clear form the above results (Theorem \ref{th.MAIN}) that dynamical and near-critical percolation on the full plane converge to a limiting process for the Skorohod topology on $(\HH_\C, d_{\HH_\C})$. 

Before stating an actual Theorem, let us also extend the setup to an infinite time-range $t\in[0,\infty)$ or $\lambda\in\R$. For this purpose, let us consider as in Lemma \ref{l.SKR} the following Skorohod metric on $\Sk_{\C, (-\infty,\infty)}$ (resp. $\Sk_{\C, [0,\infty)}$), the space of \cadlag  processes from $\R$ (resp. $[0,\infty)$) to $\HH_\C$:

\begin{equation}\label{}
d_{\Sk_{(-\infty,\infty)}}(\omega(\cdot), \tilde\omega(\cdot)) := 
\sum_{k\geq 1} \frac 1 {2^k} d_{\Sk_{\C, [-k,k]}}(\omega(\cdot), \tilde \omega(\cdot)) \,.
\end{equation}

Theorem \ref{th.MAIN} readily implies (since $\Sk_{\C, (-\infty,\infty)}$ and $\Sk_{\C,[0,\infty)}$ are Polish spaces as noted in Lemma \ref{l.SKR}) the following result. 

\begin{theorem}\label{th.MAIN2}
Let $t\mapsto \omega_\eta(t)$ and $\lambda\mapsto \omega_\eta^\nc(\lambda)$ be respectively the dynamical and near-critical percolations (properly renormalized as in Definitions \ref{d.DP} and \ref{d.NCP}) on $\eta \Tg\cap \C=\eta \Tg$.  Then, as the mesh $\eta\to 0$, these processes converge in law respectively to the \cadlag processes $t\mapsto \omega_\infty(t)$ and $\lambda\mapsto \omega_\infty^\nc(\lambda)$ in $\Sk_{\C,[0,\infty)}$ and $\Sk_{\C,(-\infty,\infty)}$ under the  topologies given by $d_{\Sk_{\C,[0,\infty)}}$ and $d_{\Sk_{\C,(-\infty,\infty)}}$.
\end{theorem}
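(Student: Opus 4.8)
The statement to prove is Theorem~\ref{th.MAIN2}, which extends the scaling-limit results from a bounded domain $D$ and a finite time interval to the full plane $\C$ and an infinite parameter range.

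\medskip

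\textbf{Overall strategy.} The plan is to reduce everything to the already-established finite-domain, finite-time statement (Theorem~\ref{th.MAIN} together with its near-critical analogue Theorem~\ref{th.MAINnc}), using two nested exhaustions — one in space by the boxes $D_N := [-2^N, 2^N]^2$ (or disks of radius $2^N$) and one in time by the intervals $[-k,k]$ — and the compatibility of restriction maps. The key structural point is that the quad-crossing picture is \emph{local}: a quad $Q$ whose image $[Q]$ is contained in a box $D_N$ sees only the part of the configuration inside $D_N$, so that the restriction map $\HH_\C \to \HH_{D_N}$ is continuous, and likewise the restriction of a \cadlag path is \cadlag. Consequently, by Theorem~\ref{th.compact} and the definition of $d_{\HH_\C}$ as a weighted sum of the $d_{\HH_{D_N}}$, weak convergence in $(\HH_\C, d_{\HH_\C})$ is equivalent to weak convergence of all the spatial restrictions, and the same holds one level up for the Skorohod spaces with the metric $d_{\Sk_{(-\infty,\infty)}}$ built in Lemma~\ref{l.SKR}.

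\medskip

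\textbf{Steps in order.} First I would record the measure-theoretic preliminaries: that $(\HH_\C, d_{\HH_\C})$ and the Skorohod spaces $\Sk_{\C,[0,\infty)}$, $\Sk_{\C,(-\infty,\infty)}$ (with the metrics displayed just above the theorem) are Polish — this is exactly Lemma~\ref{l.SKR} applied with $\HH_\C$ in place of $\HH$, once one knows $\HH_\C$ itself is Polish, which follows from the same Urysohn-type argument as Theorem~\ref{th.compact} since $\HH_\C$ is a countable product-type limit of compact metric spaces. Second, I would check tightness: the family of laws of $\omega_\eta(\cdot)$ (resp.\ $\omega_\eta^\nc(\cdot)$) on $\Sk_{\C,[0,\infty)}$ is tight because, for each fixed $N$ and each fixed $k$, the spatially-and-temporally restricted processes are tight in $\Sk_{D_N}[-k,k]$ — this is a consequence of the finite-domain convergence in Theorem~\ref{th.MAIN}/\ref{th.MAINnc}, since convergence in law implies tightness — and tightness in a countable product of Polish spaces is equivalent to tightness of each coordinate. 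Third, I would identify the limit: any subsequential limit, when restricted to $D_N$ over $[-k,k]$, must equal the law of $\omega_\infty(\cdot)$ (resp.\ $\omega_\infty^\nc(\cdot)$) restricted to $D_N\times[-k,k]$, by Theorem~\ref{th.MAIN}; since the restrictions to all $(D_N, [-k,k])$ determine a Borel probability measure on the (Polish, hence standard Borel) space $\Sk_{\C,[0,\infty)}$ by a Kolmogorov-type consistency/extension argument, the subsequential limit is unique and equals the law of the full-plane process $t\mapsto\omega_\infty(t)$ constructed by the same exhaustion. Finally I would remark that the finite-time objects are consistent under increasing $k$ — the process on $[-k',k']$ restricts to the one on $[-k,k]$ for $k<k'$ — which is immediate from the construction in Section~\ref{s.cutoff} (the Poisson point process $\PPP_T$, resp.\ $\PPP_L$, is obtained from $\PPP_{T'}$ by projection), so that the union over $k$ and $N$ is well-defined as a single \cadlag $\HH_\C$-valued process.

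\medskip

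\textbf{Main obstacle.} I expect the only genuinely delicate point to be verifying that the restriction operations behave well enough that \emph{convergence} (not just the identification of marginals) transfers through the two exhaustions — concretely, that $d_{\Sk_{(-\infty,\infty)}}\big(\omega_\eta(\cdot), \omega_\infty(\cdot)\big) \to 0$ in probability (under an appropriate coupling) follows from the finite-$(N,k)$ convergences. Here one must be slightly careful that a single time-reparametrization $\lambda\in\Lambda$ in the Skorohod metric need not work simultaneously for all spatial scales $N$ — but since the metric $d_{\Sk_{\C,[-k,k]}}$ already takes an infimum over reparametrizations of $[-k,k]$ and, in the relevant couplings of Corollary~\ref{c.coupling}, one can take $\lambda=\mathrm{id}$ (the switching times agree with high probability, cf.\ the proof of Theorem~\ref{th.etaskoro}), this issue dissolves: one bounds $d_{\Sk_{\C,[-k,k]}}$ by $\sup_{t\in[-k,k]} d_{\HH_\C}(\cdot,\cdot) \le \sum_N 2^{-N} \sup_t d_{\HH_{D_N}}(\cdot,\cdot)$, truncates the sum over $N$ at the cost of $2^{-N_0}$, and applies Theorem~\ref{th.MAIN} to each of the finitely many remaining terms. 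The rest — extending $t\in[0,T]$ to $t\in[0,\infty)$ and $\lambda\in[-L,L]$ to $\lambda\in\R$ — is the routine weighted-sum compactification already set up in Lemma~\ref{l.SKR}, so no new estimates are needed, as the theorem statement itself indicates.
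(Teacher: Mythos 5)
Your proposal is correct, and its skeleton is the same as the paper's: both reduce the full-plane, infinite-time statement to Theorem~\ref{th.MAIN}/\ref{th.MAINnc} through the weighted-sum metrics $d_{\HH_\C}=\sum_N 2^{-N}d_{\HH_{[-2^N,2^N]}}$ and $d_{\Sk_{(-\infty,\infty)}}=\sum_k 2^{-k}d_{\Sk_{\C,[-k,k]}}$, truncating the sums and using that each term is bounded by $1$; your observation that one may take the identity reparametrization in the relevant couplings is exactly how the paper's Theorem~\ref{th.etaskoro} handles the Skorohod infimum. The one place where you genuinely diverge is the point the paper singles out as ``the only additional technicality'': the construction of the limiting full-plane object. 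The paper builds $t\mapsto\omega_\infty(t)$ by rerunning the cut-off scheme with $\eps$ playing a double role --- truncating pivotals at scale $\eps$ \emph{and} restricting to $[-1/\eps,1/\eps]^2$ --- precisely because on the full plane the Poisson process $\PPP_T(\mu^\eps)$ would otherwise have infinitely many switches in any time interval, and then takes $\eps\to 0$ as in Theorem~\ref{th.MAIN}. You instead never define a full-plane cut-off dynamics: you glue the bounded-domain limits via consistency of spatial restrictions and a Kolmogorov-type extension, which sidesteps the infinite-switching issue entirely. This is legitimate, but it shifts the burden onto the (easy, yet not free) verifications you partly gesture at: that the restriction maps $\HH_\C\to\HH_{D_N}$ and the induced maps between Skorohod spaces are continuous, that the restriction of the full-plane discrete dynamics to $D_N$ agrees (up to negligible boundary hexagons) with the $D_N$-dynamics covered by Theorem~\ref{th.MAIN}, and that a consistent family of \cadlag restrictions assembles into a single \cadlag $\HH_\C$-valued path. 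The paper's more concrete route has the side benefit of exhibiting the full-plane process as a limit in probability of explicit cut-off dynamics, which is what gets reused later (e.g.\ for the Markov property and conformal covariance), whereas your projective-limit route is cleaner for the pure convergence-in-law statement but only identifies the limit in law.
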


There is one little subtlety which needs to be made more precise here: the construction of the limiting process $t\mapsto \omega_\infty(t)$ (or $\lambda\mapsto \omega_\infty^\nc(\lambda)$). Indeed, in order to prove the existence of this limiting process, one proceeds as in the proof of Theorem \ref{th.MAIN} by approximations using cut-off processes $t\mapsto \omega_\infty^\eps(t)$ except that here the cut-off $\eps$ will play two different roles: focusing on $\eps$-pivotal points as previously and also focusing on the percolation configurations only on the domain $[-1/\eps, 1/\eps]^2$. (Otherwise, one would have infinitely many switches on any interval $[0,T]$). As $\eps\to 0$, these cut-off processes converge in probability to a limiting one as in Theorem \ref{th.MAIN}. 
This is the only additional technicality needed to prove Theorem \ref{th.MAIN2}.

\section{Conformal covariance property, the infinite cluster and correlation length of the n.c.~model}\label{s.CL}

\subsection{Conformal covariance of dynamical and near-critical percolation}

Before stating our result, we need to introduce a slight generalisation of our dynamical and near-critical percolation models originally defined in Definitions~\ref{d.DP} and~\ref{d.NCP}:

\begin{definition}\label{d.INC}
Let $\Omega\subset \C$ be a domain of the plane and let $\phi : \Omega \to (0,\infty)$ be any continuous function. 

We will consider the dynamical percolation process $t\mapsto \omega_\eta^\phi(t)$ which starts at $\omega_\eta^\phi(t=0)\sim \P_\eta$ and for which sites $x\in \eta\, \Tg$ are updated independently of each other with inhomogeneous rate $r^\phi(\eta,x):= \phi(x) \frac{\eta^2} {\alpha_4(\eta,1)}$.  (As such, this dynamical percolation is mixing faster in areas of the domain $\Omega$ where the function $\phi$ is large).

Similarly, we will consider the near-critical coupling $(\omega_\eta^{\nc,\phi}(\lambda))_{\lambda\in \R}$, where $\omega_\eta^{\nc,\phi}(\lambda=0)\sim \P_\eta$, and as $\lambda$ increases, white hexagons $x\in \eta \Tg$ switch to black hexagons at same rate  $r^\phi(\eta,x):= \phi(x) \frac{\eta^2} {\alpha_4(\eta,1)}$. This near-critical percolation $\omega_\eta^{\nc,\phi} (\lambda)$ corresponds exactly to a percolation configuration on $\eta \Tg$ with inhomogeneous parameter $p(x) =p_c + 1-e^{-\lambda\, r^\phi(\eta,x) }\sim p_c+\lambda \phi(x) r(\eta)$. 
\end{definition}

Following the exact same proof as in the rest of the paper, one can define cut-off processes $t\mapsto \omega_\eta^{\phi,\eps}(t)$ by only following the evolution of points in $\Piv^\eps(\omega_\eta(t=0))$. In the same way as before, it can be shown that these processes converge in law (in $(\Sk,d_{\Sk})$) to a process $t\mapsto \omega_\infty^{\phi,\eps}(t)$ and it is straightforward to establish the following analog of Theorem \ref{th.MAIN}:

\begin{theorem}\label{th.INC}
Let $\Omega\subset \C$ be a domain and let $\phi : \Omega \to (0,\infty)$ be any continuous function. Then the processes
\[
\{ t\mapsto \omega_\infty^{\eps,\phi}(t)\}_{\eps>0}
\]
converge in probability in $(\Sk_T, d_{\Sk_T})$ as $\eps \to 0$ to a {\bf continuum dynamical percolation}  process: $t\in [0,T] \mapsto \omega_\infty^\phi(t)$.

Furthermore, this process $t\in [0,T]\mapsto \omega_\infty^\phi(t)$ is the limit in law (under the Skorohod topology on $\Sk_T$) of the discrete dynamical percolation $t\in[0,T] \mapsto \omega_\eta^\phi(t)$ as the mesh $\eta\to 0$. 
\end{theorem}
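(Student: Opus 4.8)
The plan is to run verbatim the same machinery developed in Sections \ref{s.poisson}--\ref{s.proof}, simply replacing the homogeneous rate $r(\eta)$ by the inhomogeneous rate $r^\phi(\eta,x)=\phi(x)\,\eta^2/\alpha_4^\eta(\eta,1)$, and checking that every estimate used along the way survives the perturbation by $\phi$. First I would observe that since $\Omega$ need not be bounded we work, as in Subsection~\ref{ss.extensionF}, with the compactified metrics $d_{\HH_\C}$ and $d_{\Sk_{\C,[0,\infty)}}$, and that it suffices to treat a bounded smooth subdomain $D\Subset\Omega$ together with a finite time-range $[0,T]$; on such a $D$ the continuous function $\phi$ is bounded, $0<c_\phi\le\phi\le C_\phi<\infty$, which is all we shall need. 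The building block is again the pivotal measure: for the cut-off scale $\eps>0$ one keeps the \emph{same} measure $\mu^\eps=\mu^\eps(\omega_\infty)$ from \cite{\GPSa}, but now samples the Poisson point process $\PPP_T^{\phi}$ on $D\times[0,T]$ with intensity $d\mu^\eps(x)\,\phi(x)\,dt$ (and, in the dynamical case, splits it into $\PPP_T^{\phi,+}\cup\PPP_T^{\phi,-}$ with rate $\tfrac12\phi(x)\,d\mu^\eps\times dt$). Because $\phi$ is bounded above and below on $\bar D$, the measure $\phi\,d\mu^\eps$ is comparable to $d\mu^\eps$, so Proposition~\ref{pr.PPP} (local finiteness, positive separation of points/times, positive distance to quad boundaries) holds with constants multiplied by $C_\phi$; likewise the second-moment bound in Proposition~\ref{th.mu}(ii) is unaffected, so the $r^*$-tail estimate in Definition~\ref{d.r*} carries over.

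The next steps go through essentially unchanged. The networks $\Net_Q^r(\omega_\infty,\PPP_T^\phi)$ are defined exactly as in Section~\ref{s.network}; the comparison with the discrete picture (Proposition~\ref{p.discrete}), the stabilization Lemma~\ref{l.stab} and Theorem~\ref{th.BC} use only arm-exponent inputs and the bound $r^*>\bar r$, so they are insensitive to $\phi$. The extension machinery of Section~\ref{s.cutoff} --- Lemma~\ref{l.unique}, the coupling Lemma~\ref{l.Pcoupling} and Corollary~\ref{c.coupling}, Proposition~\ref{pr.compZ}, Theorem~\ref{th.extension}, Theorem~\ref{th.etaskoro} --- then produces the cut-off process $t\mapsto\omega_\infty^{\phi,\eps}(t)$ and shows $\omega_\eta^{\phi,\eps}(\cdot)\to\omega_\infty^{\phi,\eps}(\cdot)$ in $\Sk_T$; the only place $\phi$ enters the coupling lemma is that the parameter $M$ bounding the total mass becomes $C_\phi\cdot\mu^\eps(\bar D)$, still a.s.\ finite. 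The one genuinely substantive point is the stability estimate of Section~\ref{s.stability}: Lemma~\ref{l.main} is proved by an induction in which the key recursion~\eqref{e.recurs} contains the factor $\Ps{x\in X}=q_T=1-e^{-Tr(\eta)}$. In the inhomogeneous model $x\in X_{\eta,T}$ with probability $1-e^{-T r^\phi(\eta,x)}\le T\,C_\phi\,\eta^2\alpha_4^\eta(\eta,1)^{-1}$, so the same bound holds with $T$ replaced by $T\,C_\phi$; consequently Lemma~\ref{l.main}, Lemma~\ref{l.change}, Proposition~\ref{p.almostCross} and Proposition~\ref{pr.stab} all remain valid, with $\psi_T$ replaced by a $\psi_{T,\phi}$ that depends additionally on $\sup_{\bar D}\phi$. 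I expect this verification --- tracking the constant $C_\phi$ through the differential-inequality-free induction of Lemma~\ref{l.main} and its corollaries --- to be the only step requiring any care, and even there the argument is purely one of replacing $T$ by $T\,\sup\phi$ as an upper bound in every probability of the form $\Pb{x\in X}$.

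With Propositions~\ref{pr.stab} (inhomogeneous version) and Theorem~\ref{th.etaskoro} in hand, the proof of Theorem~\ref{th.INC} is then identical to that of Theorem~\ref{th.MAIN}: choose a subsequence $\eps_L\to0$ with $\psi_{T,\phi}(\eps_L)\le 2^{-L}$, deduce that $\{\omega_\infty^{\phi,\eps_L}(\cdot)\}_L$ is a.s.\ Cauchy in $(\Sk_T,d_{\Sk_T})$, call its limit $t\mapsto\omega_\infty^\phi(t)$, and then for any $\delta>0$ use the triangle inequality
\[
\Eb{d_{\Sk_T}(\omega_\eta^\phi(\cdot),\omega_\infty^\phi(\cdot))}\le
\Eb{d_{\Sk_T}(\omega_\eta^\phi(\cdot),\omega_\eta^{\phi,\eps_L}(\cdot))}
+\Eb{d_{\Sk_T}(\omega_\eta^{\phi,\eps_L}(\cdot),\omega_\infty^{\phi,\eps_L}(\cdot))}
+\Eb{d_{\Sk_T}(\omega_\infty^{\phi,\eps_L}(\cdot),\omega_\infty^\phi(\cdot))}
\]
together with the inhomogeneous Proposition~\ref{pr.stab} for the first term (uniform in $0<2\eta<\eps_L$), Theorem~\ref{th.etaskoro} for the second, and the Cauchy estimate for the third, to conclude convergence in law as $\eta\to0$. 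Finally the extension to the unbounded domain $\Omega$ and to infinite time-range is the same compactification argument as in Subsection~\ref{ss.extensionF}, where now the cut-off $\eps$ simultaneously restricts to $\eps$-pivotal points and to the subdomain $\{x\in\Omega:\dist(x,\p\Omega)>\eps,\ |x|<1/\eps\}$; since $\phi$ is bounded on each such subdomain, the previous estimates apply there and the cut-off processes converge in probability to $t\mapsto\omega_\infty^\phi(t)$, which completes the proof.
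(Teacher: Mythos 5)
Your proposal is correct and follows essentially the same route as the paper, which proves Theorem~\ref{th.INC} simply by rerunning the construction of Sections~\ref{s.poisson}--\ref{s.proof} with the Poisson intensity $\phi\,d\mu^\eps\times dt$ and observing that all estimates are insensitive to a bounded continuous rate factor. In fact you supply more detail than the paper does (tracking $\sup_{\bar D}\phi$ through Lemma~\ref{l.main} and Proposition~\ref{pr.stab}, and the exhaustion of $\Omega$), which is exactly the verification the paper declares ``straightforward.''
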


The same theorem holds with near-critical instead of dynamical percolation. We are now ready to state our main conformal covariance result:

\begin{theorem}\label{th.cc}
Assume that $f: \Omega \longrightarrow \tilde \Omega$ is a conformal map with $|f'|$ being bounded away from zero and infinity. (For instance, a conformal map between so-called Dini-smooth domains is always like this, see \cite[Theorem 10.2]{Pommerenke}.) Then, if $\omega_\infty(\cdot)$ (resp. $\omega_\infty^\nc(\cdot)$) is a continuous dynamical percolation (resp.~n.c.~percolation), then the image of these processes by $f$, i.e., the \cadlag processes $t\mapsto f(\omega_\infty(t))$ (resp.~$\lambda \mapsto f(\omega_\infty^\nc(\lambda))$) have the same law as the following processes defined on $\tilde \Omega$:
\bnum
\item $t \mapsto \omega_\infty^{\phi}(t)$ in the dynamical case
\item $\lambda \mapsto \omega_\infty^{\nc,\phi}(\lambda)$ in the near-critical case, 
\enum
where the function $\phi$ on $\tilde \Omega$ is defined  by, 
\[
\phi(f(z)):= |f'(z)|^{-3/4}\,, \forall z\in \Omega\,.
\]
\end{theorem}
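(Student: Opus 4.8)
\textbf{Proof strategy for Theorem \ref{th.cc}.} The plan is to transfer the statement to the discrete level, where conformal covariance becomes an almost tautological statement about inhomogeneous update rates, and then pass to the limit using Theorem \ref{th.INC}. First I would recall that by Smirnov's theorem (and the fact, used throughout, that $\omega\in\HH$ is measurable with respect to the Camia--Newman loop ensemble), the scaling limit $\omega_\infty\sim\P_\infty$ of critical percolation is conformally invariant: if $\omega_\eta\sim\P_\eta$ on $\eta\Tg\cap\Omega$, then $f(\omega_\eta)$ and the critical configuration on $\eta'\Tg\cap\tilde\Omega$ have the same scaling limit, and $f(\omega_\infty)\overset{(d)}{=}\tilde\omega_\infty$ in $(\HH_{\tilde\Omega},\T)$. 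This handles the $t=0$ (resp.\ $\lambda=0$) slice. The content of the theorem is that this invariance propagates along the dynamics with the rate distortion dictated by $|f'|^{-3/4}$.

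The key computation is the behaviour of the renormalization rate $r(\eta)$ under a local dilation. Near a point $z\in\Omega$, the map $f$ looks like multiplication by $f'(z)$, so an $\eta$-mesh neighbourhood of $z$ is mapped to an (approximately) $|f'(z)|\eta$-mesh neighbourhood of $f(z)$. A site $x$ near $z$ on $\eta\Tg$, which updates at rate $r(\eta)=\eta^2\alpha_4^\eta(\eta,1)^{-1}$ in the definition of $\omega_\eta(\cdot)$ on $\Omega$, should be compared with a site near $f(z)$ on the mesh $\eta'\Tg$ of $\tilde\Omega$; matching the number of sites forces $\eta'\asymp|f'(z)|\eta$. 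Using the polynomial behaviour $r(\eta)=\eta^{3/4+o(1)}$ (equivalently $\alpha_4^\eta(\eta,1)=\eta^{5/4+o(1)}$, item (ii) of the introduction, plus quasi-multiplicativity), one gets
\begin{equation}\label{e.ratedist}
\frac{r(\eta)}{r(|f'(z)|\eta)} \;\longrightarrow\; |f'(z)|^{-3/4}
\end{equation}
as $\eta\to 0$, which is exactly the exponent appearing in the definition of $\phi$. So after pushing forward by $f$, the discrete dynamical (resp.\ near-critical) process on $\Omega$ at mesh $\eta$ agrees, up to vanishing errors, with the inhomogeneous process $\omega_{\eta'}^{\phi}(\cdot)$ (resp.\ $\omega_{\eta'}^{\nc,\phi}(\cdot)$) on $\tilde\Omega$ at mesh $\eta'$, with $\phi(f(z))=|f'(z)|^{-3/4}$, since on a scale where $f$ is essentially affine the update rate $r(\eta)$ on $\Omega$ translates into the rate $\phi(f(z))\,r(\eta')$ on $\tilde\Omega$ by \eqref{e.ratedist}. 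I would make this precise by a standard dyadic decomposition of $\tilde\Omega$ into mesoscopic boxes on which $|f'|$ is nearly constant (using that $|f'|$ is bounded away from $0$ and $\infty$ and uniformly continuous on the relevant compact sets, as guaranteed by the Dini-smoothness hypothesis and \cite[Theorem 10.2]{Pommerenke}), noting that the total pivotal mass, hence the number of switches in $[0,T]$, is tight (Proposition \ref{th.mu}), so only finitely many boxes matter.

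Having established the discrete identity in law ``$f(\omega_\eta(\cdot)) \approx \omega_{\eta'}^\phi(\cdot)$ as $\eta\to 0$'', I would conclude as follows. On the left, $f(\omega_\eta(\cdot))\to f(\omega_\infty(\cdot))$ in the Skorohod space because $f$ induces a homeomorphism $\HH_\Omega\to\HH_{\tilde\Omega}$ for the quad-crossing topologies (a quad $Q$ is crossed by $\omega$ iff $f(Q)$ is crossed by $f(\omega)$, and $f$ is a homeomorphism $\QUAD_\Omega\to\QUAD_{\tilde\Omega}$), hence also a homeomorphism of the associated Skorohod spaces, and weak convergence is preserved under continuous maps. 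On the right, $\omega_{\eta'}^\phi(\cdot)\to\omega_\infty^\phi(\cdot)$ in law by Theorem \ref{th.INC}. Matching the two limits gives $f(\omega_\infty(\cdot))\overset{(d)}{=}\omega_\infty^\phi(\cdot)$, and the near-critical case is identical with $\lambda$ in place of $t$ and the monotone coupling in place of the dynamical one. The main obstacle I expect is the rate-matching step: one has to control the error in \eqref{e.ratedist} uniformly across the mesoscopic boxes and show that the resulting perturbation of the Poisson point process of pivotal switches (in both location and time/parameter) is negligible in the Skorohod metric. This is precisely the kind of estimate proved in Lemma \ref{l.Pcoupling} and Corollary \ref{c.coupling}, together with the stability Proposition \ref{pr.stab} which guarantees that the cut-off processes approximate the true ones uniformly; so the argument is a (somewhat technical) repackaging of tools already developed, rather than anything genuinely new, but writing it carefully — especially the interplay between the $o(1)$ in $\alpha_4^\eta(\eta,1)=\eta^{5/4+o(1)}$ and the requirement that the distortion converge to the clean exponent $|f'|^{-3/4}$ — is where the real work lies.
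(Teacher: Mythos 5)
There is a genuine gap here, and it sits exactly at the step you flag as ``where the real work lies''. Your plan is to match, at the discrete level, the pushforward $f(\omega_\eta(\cdot))$ with an inhomogeneous process $\omega_{\eta'}^{\phi}(\cdot)$ on a lattice $\eta'\Tg\cap\tilde\Omega$, using the rate-distortion heuristic $r(\eta)/r(|f'(z)|\eta)\to|f'(z)|^{-3/4}$. Two things break. First, $f(\eta\Tg\cap\Omega)$ is not a triangular lattice of any mesh: it is a distorted graph whose local scale varies with $|f'|$, so there is no site-by-site identification with $\eta'\Tg\cap\tilde\Omega$, and even the time-zero slices of the two discrete processes only agree after passing to the scaling limit. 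The tools you invoke to absorb the error (Lemma \ref{l.Pcoupling}, Corollary \ref{c.coupling}, Proposition \ref{pr.stab}) couple Poisson processes whose driving measures are already known to be close in the Prohorov metric, or compare a process with its own cut-off version; none of them lets you compare pivotal Poisson clouds of two different discrete configurations living on two different graphs. Second, and more fundamentally, the limit in your display does not follow from $\alpha_4^\eta(\eta,1)=\eta^{5/4+o(1)}$ together with quasi-multiplicativity: those only give $r(\eta)/r(|f'(z)|\eta)=|f'(z)|^{-3/4+o(1)}$ up to unbounded multiplicative ambiguity, whereas the theorem needs a genuine ratio limit with the clean constant. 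That sharp statement is precisely the conformal covariance of the pivotal measure, Theorem~6.1 of \cite{\GPSa}, which your proposal never invokes and cannot be reconstructed from the exponent values alone.

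The paper avoids both problems by staying at the continuum level. It first shows (Lemma \ref{l.UC}) that $\omega(\cdot)\mapsto f(\omega(\cdot))$ is uniformly continuous from $\Sk$ to $\tilde\Sk$, so $f(\omega_\infty^{\eps}(\cdot))\to f(\omega_\infty(\cdot))$ in probability as $\eps\to0$ by Theorem \ref{th.MAIN}. It then observes that $f(\omega_\infty^{\eps}(\cdot))$ is exactly the cut-off dynamics started from $\tilde\omega_\infty=f(\omega_\infty)$ and driven by the Poisson process of intensity $f_*(\mu^\eps(\omega_\infty))\times dt$, supported on the image grid $F_\eps=f(\eps\Z^2)$. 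The exponent then enters through the imported identity $\frac{d f_*(\mu^\eps(\omega_\infty))}{d\mu^{\eps,f}(\tilde\omega_\infty)}(f(z))=|f'(z)|^{-3/4}$ from \cite{\GPSa}, and the distorted grid is replaced by a true $\eps$-grid via the sandwich $\Piv^{10c_2\eps}(\tilde\omega_\infty)\subset\Piv_f^{\eps}(\tilde\omega_\infty)\subset\Piv^{c_1\eps/10}(\tilde\omega_\infty)$ (using $c_1<|f'|<c_2$) together with the stability estimates of Section \ref{s.stability}. Your critical-slice observation and the continuity-of-$f$ step are fine and parallel the paper; to repair the proposal you would either import Theorem~6.1 of \cite{\GPSa} and rephrase your argument at the level of the continuum pivotal measure, or reprove the sharp four-arm ratio limit yourself, which is substantial work beyond the ingredients you list.
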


\begin{remark}
If $\omega\in \HH_\Omega$, the configuration image $f(\omega)\in\HH_{\tilde \Omega}$ is well-defined. See the end of Subsection 2.3 in \cite{\GPSa} for a discussion why the measure $\P_\infty=\P_{\infty,\Omega}$ is conformally invariant. 
\end{remark}

\begin{corollary}\label{c.SCALE}
\ni
\bi
\item The scaling limits of dynamical and near-critical percolation on $\eta \Tg$ as $\eta\to 0$ are rotationally invariant.
\item They also have a form of scaling invariance which can be stated as follows. For any scaling parameter $\alpha>0$ and any $\omega\in \HH$, we will denote by $\alpha\cdot \omega$ the image by $z\mapsto \alpha\, z$ of the configuration $\omega$. With these notations, we have the following identities in law:
\bnum
\item \[
\Bigl( \lambda \mapsto \alpha\cdot \omega_\infty^\nc(\lambda) \Bigr) \overset{(d)}{=} \Bigl( \lambda \mapsto \omega_\infty^\nc(\alpha^{-3/4} \lambda) \Bigr)
\]
\item
\[
\Bigl( t\geq 0 \mapsto \alpha\cdot \omega_\infty(t)\Bigr) \overset{(d)}{=} \Bigl( t \mapsto \omega_\infty(\alpha^{-3/4} t) \Bigr)
\]
\enum
\ei
\end{corollary}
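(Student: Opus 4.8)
\ni \textbf{Proof strategy for Corollary~\ref{c.SCALE}.}
Both assertions will be deduced from the conformal covariance Theorem~\ref{th.cc}, the point being that the relevant conformal maps (rotations and scalings of the plane) have \emph{constant} modulus of derivative, so the rate function $\phi$ produced by that theorem is a positive constant, and one is then reduced to checking that an inhomogeneous process with constant rate is merely a deterministic time-change of the homogeneous one.

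The first thing I would establish is the elementary lemma: for any positive constant $c$, the processes of Definition~\ref{d.INC} with $\phi\equiv c$ satisfy $\bigl(t\mapsto\omega_\infty^\phi(t)\bigr)\overset{(d)}{=}\bigl(t\mapsto\omega_\infty(c\,t)\bigr)$ and $\bigl(\lambda\mapsto\omega_\infty^{\nc,\phi}(\lambda)\bigr)\overset{(d)}{=}\bigl(\lambda\mapsto\omega_\infty^\nc(c\,\lambda)\bigr)$. On the lattice $\eta\Tg$ this is immediate, since with $\phi\equiv c$ the update rate $r^\phi(\eta,x)=c\,r(\eta)$ is site-independent, so $\omega_\eta^\phi(t)=\omega_\eta(c\,t)$ (and similarly in the near-critical coupling) as \emph{processes}; then Theorem~\ref{th.INC}, together with Theorems~\ref{th.MAIN} and~\ref{th.MAINnc}, transports this identity in law to the scaling limit. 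Alternatively — and this is perhaps the cleanest route — I would argue directly on the cut-off processes of Section~\ref{s.cutoff}: a Poisson point process on $D\times[-L,L]$ of intensity $d\mu^\eps(x)\times c\,d\lambda$ is the image under $(x,\lambda)\mapsto(x,\lambda/c)$ of one with intensity $d\mu^\eps(x)\times d\lambda$, and since the networks $\Net_Q$ are built from the restrictions $\PPP_\lambda=\PPP\cap(\C\times[0,\lambda])$, this rescaling of the clock yields $\omega_\infty^{\nc,\phi,\eps}(\lambda)=\omega_\infty^{\nc,\eps}(c\,\lambda)$ on the nose for the coupled processes; one then lets $\eps\to0$ using the convergence in probability from Theorem~\ref{th.MAIN} (resp.\ Theorem~\ref{th.INC}).

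Granting the lemma, rotational invariance follows by feeding a rotation $f(z)=e^{i\theta}z$ into Theorem~\ref{th.cc}: here $|f'|\equiv1$, hence $\phi\equiv1$, and the theorem gives $f(\omega_\infty(\cdot))\overset{(d)}{=}\omega_\infty^\phi(\cdot)=\omega_\infty(\cdot)$, and likewise in the near-critical case, which is exactly rotational invariance of the laws of these processes. For the scaling relations I would take $f(z)=\alpha z$ with $\Omega=\tilde\Omega=\C$; then $|f'|\equiv\alpha$, so $\phi(f(z))=|f'(z)|^{-3/4}=\alpha^{-3/4}$ is the constant $c=\alpha^{-3/4}$ and $f(\omega)=\alpha\cdot\omega$. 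Theorem~\ref{th.cc} gives $\bigl(\lambda\mapsto\alpha\cdot\omega_\infty^\nc(\lambda)\bigr)\overset{(d)}{=}\bigl(\lambda\mapsto\omega_\infty^{\nc,\phi}(\lambda)\bigr)$, and the lemma turns the right-hand side into $\bigl(\lambda\mapsto\omega_\infty^\nc(\alpha^{-3/4}\lambda)\bigr)$; the dynamical statement is identical with $t$ in place of $\lambda$.

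The only step that is not a bare substitution into Theorem~\ref{th.cc} is the lemma — verifying that constant-$\phi$ dynamics is a pure, deterministic time change of the homogeneous dynamics. This is transparent on the discrete lattice, so I expect no serious difficulty at the scaling limit either; the actual work is just the bookkeeping needed to push the identification through the $\eta\to0$ limit (via Theorem~\ref{th.INC}) and, on the cut-off route, the $\eps\to0$ limit, keeping track of the Poisson-point-process intensities and the coupling of Corollary~\ref{c.coupling}. \qed
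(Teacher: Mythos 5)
Your proposal is correct and follows essentially the same route as the paper, which states Corollary~\ref{c.SCALE} as an immediate consequence of Theorem~\ref{th.cc}: one substitutes the maps $z\mapsto e^{i\theta}z$ and $z\mapsto\alpha z$, observes that $|f'|$ is constant so that $\phi\equiv 1$ or $\phi\equiv\alpha^{-3/4}$, and identifies the constant-rate inhomogeneous process with a deterministic time-change of the homogeneous one. Your explicit lemma on constant-$\phi$ dynamics (verified on the lattice, or directly on the cut-off processes via rescaling the Poisson clock) is exactly the bookkeeping the paper leaves implicit, and it is sound.
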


\ni
{\bf Proof of Theorem \ref{th.cc}.}
We start with the following lemma:

\begin{lemma}\label{l.UC}
Let $f: \Omega \to \tilde \Omega$ be a conformal map with $|f'|$ bounded away from zero and infinity.
Let $\Sk$ and $\tilde \Sk$ be respectively the space of \cadlag trajectories in $\HH_\Omega$ and $\HH_{\tilde \Omega}$ endowed with the Skorohod distance defined in Lemma~\ref{l.SKR}. Then (with a slight abuse of notation), the map 
\[
\begin{array}{llcl}
f  : & \Sk & \to & \tilde \Sk \\
& \omega(\cdot) &\mapsto& f(\omega(\cdot))
\end{array}
\]
is uniformly continuous. 
\end{lemma}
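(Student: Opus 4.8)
The plan is to reduce the uniform continuity of $f:\Sk\to\tilde\Sk$ to the uniform continuity of the map $f:\HH_\Omega\to\HH_{\tilde\Omega}$ on configurations, and then to establish the latter. First I would observe that the Skorohod distance of Lemma~\ref{l.SKR} is built out of the finite-horizon distances $d_{\Sk_{[-k,k]}}$, which in turn involve a time-reparametrization part $\|\lambda\|$ and a spatial part $\sup_u d_\HH(\omega(u),\tilde\omega(\lambda(u)))$. Since the image map $f$ acts only on the spatial coordinate and leaves the time parameter untouched, a reparametrization $\lambda$ witnessing closeness of $\omega(\cdot)$ and $\tilde\omega(\cdot)$ also witnesses closeness of $f(\omega(\cdot))$ and $f(\tilde\omega(\cdot))$, with the same $\|\lambda\|$. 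Hence it suffices to show: for every $\alpha>0$ there is $\beta>0$ such that $d_{\HH_\Omega}(\omega,\omega')\le\beta$ implies $d_{\HH_{\tilde\Omega}}(f(\omega),f(\omega'))\le\alpha$; feeding this through the sums $\sum_k 2^{-k}(\cdots)$ (truncating the tail, which is uniformly small because $\diam(\HH)=1$) gives uniform continuity on $\Sk$.

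Next I would prove the uniform continuity of $f$ on $(\HH_\Omega,d_{\HH_\Omega})$. The natural route is via the explicit uniform structure of Section~\ref{s.unif}: by Proposition~\ref{pr.CovrtoCovk}, it is enough to show that for each $k$ there is $k'$ so that $K_{\HH_\Omega}(\omega,\omega')\ge k'$ implies $K_{\HH_{\tilde\Omega}}(f(\omega),f(\omega'))\ge k$, i.e.\ that sharing crossing/non-crossing data for all quads in $\QUAD^{k'}_\Omega$ (up to the small $d_\QUAD$-perturbation built into the $\O_{k'}$) forces the same for all quads in $\QUAD^k_{\tilde\Omega}$. The point is that $f$ is a bijection of quads respecting the partial order $\leq$ (a set crosses $Q$ in $\omega$ iff its $f$-image crosses $f(Q)$ in $f(\omega)$, because $f$ is a homeomorphism), and that $f$ and $f^{-1}$ are uniformly continuous on the relevant compact closures with $|f'|$ bounded above and below, so $f$ is bi-Lipschitz-like on $d_\QUAD$: a $2^{-k'-10}$-perturbation of a quad $\tilde Q\in\QUAD^k_{\tilde\Omega}$ pulls back to a quad near $\QUAD^{k'}_\Omega$ whose $\O_{k'}$-data is controlled by hypothesis, provided $k'$ is chosen large enough depending on $k$ and on the modulus of continuity of $f,f^{-1}$. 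One then transports a realizing family of sub/super-quads from the $\Omega$ side to the $\tilde\Omega$ side, using the monotonicity $\bar Q_k>Q$, $\hat Q_k$ just as in the proof of Proposition~\ref{pr.CovrtoCovk} cases (a)--(d). This is essentially a ``quasi-functoriality of $\O_k$ under $f$'' argument.

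Alternatively — and perhaps more cleanly — one can bypass the uniform structures and argue directly from compactness: $\HH_\Omega$ is a compact metric space (Theorem~\ref{th.compact}), the map $f:\HH_\Omega\to\HH_{\tilde\Omega}$ is continuous (this is the content of the remark after Theorem~\ref{th.cc}, and follows because each $\boxminus_{\tilde Q}$ pulls back to $\boxminus_{f^{-1}(\tilde Q)}$ and each $\boxup_{\tilde U}$ to $\boxup_{f^{-1}(\tilde U)}$, $f^{-1}(\tilde U)$ being open since $f^{-1}$ is $d_\QUAD$-continuous), and a continuous map from a compact metric space to a metric space is automatically uniformly continuous. This gives the configuration-level statement in one line; I would then still need the reduction in the first paragraph to pass from $\HH$ to $\Sk$, which only uses that $f$ acts coordinatewise in time and that the tails of $\sum_k 2^{-k}(\cdots)$ are uniformly negligible.

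The main obstacle I anticipate is the bookkeeping in the reduction from $\Sk$ to $\HH$: one must make sure that the reparametrization $\lambda$ is genuinely shared — this is immediate here because $f$ does not touch time — and that the truncation of the infinite sum defining $d_{\Sk_{(-\infty,\infty)}}$ is uniform in the trajectories, which is where $\diam(\HH)=1$ is used. Given that the configuration-level continuity is essentially free from compactness, I expect the real (mild) work to be confirming that $f^{-1}$ is $d_\QUAD$-continuous so that $f:\HH_\Omega\to\HH_{\tilde\Omega}$ is continuous, and this is exactly the point flagged in the remark following the theorem statement, relying on $|f'|$ being bounded away from $0$ and $\infty$ (Dini-smoothness via \cite{Pommerenke}).
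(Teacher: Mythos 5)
Your proposal is correct and, in its third paragraph, is essentially the paper's own proof: the paper fixes a reparametrization $\phi$ witnessing $d_{\Sk_T}(\omega,\omega')<\alpha$ (unchanged under $f$ since $f$ acts only spatially), and then invokes continuity of $f:\HH_\Omega\to\HH_{\tilde\Omega}$ together with compactness of $(\HH_\Omega,d_\HH)$ to get a modulus of continuity $g$, yielding $d_{\Sk_T}(f(\omega(\cdot)),f(\omega'(\cdot)))<\alpha+g(\alpha)$, with the infinite time-range treated as a routine extension. Your more explicit detour through the uniform structure of Proposition~\ref{pr.CovrtoCovk} is unnecessary but harmless; the compactness shortcut you describe is exactly what the paper uses.
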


\ni
{\bf Proof.} 
Let us prove the lemma for a finite time-range $\Sk_T$ for any $T>0$. The extension to the infinite time-range is only technical. 
Let $\alpha>0$. Suppose $d_{\Sk_T}(\omega(\cdot), \omega'(\cdot))<\alpha$. One can thus find a reparametrization $\phi : [0,T] \to [0,T]$, such that $\| \phi \| < \alpha$ and $\sup_{t\in [0,T]} d_\HH(\omega(t), \omega'(\phi(t)))<\alpha$.    
Now, by assumption, we have two constants $c_1,c_2\in(0,\infty)$ such that
\begin{align}\label{e.c1}
c_1< \inf_\Omega |f'(z)| \leq \sup_\Omega |f'(z)| <c_2\,. 
\end{align}
Using Section~\ref{s.unif} together with the above bounds, one can show that $f(\omega(t))$ and $f(\omega'(\phi(t)))$ are also close. Indeed, the map (still with an abuse of notation)
\[
\begin{array}{llcl}
f  : & \HH_\Omega & \to & \HH_{\tilde \Omega} \\
& \omega &\mapsto& f(\omega)
\end{array}
\]
is continuous and thus uniformly continuous, since $(\HH_\Omega,d_\HH)$ is compact. If $\alpha\mapsto g(\alpha)$ denotes its modulus of continuity, we thus have 
\[
\sup_{t\in[0,T]} d_\HH(f(\omega(t)), f(\omega'(\phi(t)))) < g(\alpha)\,.
\]
Since $\| \phi \| < \alpha$, we have shown that $d_\Sk(\omega(\cdot), \omega'(\cdot))<\alpha$ implies 
$d_{\Sk_T}(f(\omega(\cdot)), f(\omega'(\cdot)))<\alpha+ g(\alpha)$ which ends the proof of the lemma (modulo the easy extension to the infinite time-range $\Sk_{[0,\infty)}$). \qed

This lemma is useful for the following reason: 
we have from Theorem \ref{th.MAIN} that $\omega_\infty^\eps(\cdot)$ converges in probability in $\Sk$ towards $\omega_\infty(\cdot)$ as $\eps\to 0$. By the above lemma, this implies that $f(\omega_\infty^\eps(\cdot))$ converges in probability in $\tilde \Sk$ to $f(\omega_\infty(\cdot))$ as $\eps\to 0$.

It remains to show (by the uniqueness of the limit in probability) that $f(\omega_\infty^\eps(\cdot))$ also converges in probability to the process $\omega_\infty^\phi(\cdot)$ defined in Theorem \ref{th.cc} on the domain $\tilde \Omega$. 
 
For this, recall that the cut-off dynamics $\omega_\infty^\eps(\cdot)$ is based on the set of $\eps$-pivotal points $\Piv^\eps(\omega_\infty)$ defined using the grid $\eps\Z^2$. On $\tilde \Omega$, let us consider the image of the grid $\eps \Z^2$ by the conformal map $f$. Call this grid $F_\eps$. Let $\Piv_f^\eps=\Piv_f^\eps(\tilde \omega_\infty)$ denote the set of $F_\eps$-important points for $\tilde \omega_\infty=f(\omega_\infty)$, a sample of a continuum critical percolation on $\tilde \Omega$.
It follows from the construction of $\omega_\infty^\eps(\cdot)$ that $f(\omega_\infty^\eps(\cdot))$ is exactly the \cadlag process which starts at $\tilde \omega_\infty$ and is updated according to a Poisson point process $\tilde \PPP$ of intensity measure $f_*(\mu^\eps(\omega_\infty)) \times dt$. Using the following two facts, one can conclude that $f(\omega_\infty^\eps(\cdot))$ and $\omega_\infty^{\eps,\phi}(\cdot)$ have the same limit in probability as $\eps\to 0$ (which thus concludes the proof):
\bnum
\item Theorem 6.1 in \cite{\GPSa} shows that the push-forward measure $f_*(\mu^\eps(\omega_\infty))$ satisfies for any point $z\in \Omega$:
\[
\frac{d f_*(\mu^\eps(\omega_\infty))}{d \mu^{\eps,f}(\tilde \omega)}(f(z)) = |f'(z)|^{-3/4}\,,
\]
where $\mu^{\eps,f}=\mu^{\eps,f}(\tilde \omega_\infty)$ stands for the measure on the $F_\eps$-important points of $\tilde \omega_\infty$. This item makes the link with the process $\omega_\infty^{\phi}(\cdot)$ in the statement of the theorem, with $\phi(f(z)):=|f'(z)|^{-3/4}$. 
\item From equation (\ref{e.c1}), one can easily check that
\[
\Piv^{10 c_2 \eps} (\tilde \omega_\infty) \subset \Piv_f^\eps (\tilde \omega_\infty) \subset \Piv^{c_1 \eps/10} (\tilde \omega_\infty)\,.
\]
By going back to the discrete and using the stability Section \ref{s.stability}, this shows that the cut-off dynamics $f(\omega_\infty^\eps(\cdot))$ defined on the distorted scale $F_\eps$ and the cut-off dynamics $\omega_\infty^{\eps,\phi}$ which is defined on a proper $\eps$-square grid, have the same limit as $\eps\to 0$. 
\enum
This finishes the proof of Theorem~\ref{th.cc}.\qed

\subsection{Infinite cluster and correlation length}\label{ss.CL}

\begin{theorem}\label{th.CL}
For any $\lambda>0$, there is a.s. an infinite cluster in $\omega_\infty^\nc(\lambda)$ in the sense that, for any $r>0$,
\begin{align}\label{e.inftycluster}
\lim_{\eta\to 0} \Pb{\omega_\eta^\nc(\lambda) \in \A_1(r,\infty)} = \PB{\omega_\infty^\nc(\lambda) \in \bigcap_{R>r} \A_1(r,R)}\,,
\end{align}
and this probability tends to 1 as $r\to\infty$, hence one can find some random $r>0$ such that the event on the right hand side occurs. 

Furthermore, as in the discrete model, one can define a notion of {\bf correlation length} for $\omega^\nc_\infty(\lambda), \lambda>0$. 
In fact, let us give two different such definitions: for any $\lambda>0$, define
\begin{align}\label{}
\begin{cases}
L_1(\lambda) &:= \inf \{r>0: \Pb{\omega_\infty^\nc(\lambda)\in \bigcap_{R>r} \A_1(r,R)} >1/2 \} \\
L_2(\lambda)&:=  \inf \{r>0: \Pb{\omega_\infty^\nc(\lambda)\text{ crosses } [0,2r]\times [0,r] } >0.99 \}\,.
\end{cases}
\end{align}
These correlation lengths have the following behaviour:  there exist two constants $c_1,c_2\in(0,\infty)$, s.t.
\begin{align}\label{}
\begin{cases}
L_1(\lambda) = c_1 \lambda^{-4/3} \\
L_2(\lambda)=  c_2 \lambda^{-4/3}\,.
\end{cases}
\end{align}
\end{theorem}

\ni
{\bf Proof.} 
Recall from Theorem \ref{th.MARGIN} that  for any $0<r<R$, 
\begin{align}\label{e.LimR}
\Pb{\omega_\infty(\lambda)\in \A_1(r,R)} = \lim_{\eta\to 0} \Pb{\omega_\eta(\lambda)\in \A_1(r,R)}\,.
\end{align}
However, we do not know this convergence for the infinite intersection of events in~(\ref{e.inftycluster}), hence we need to work a little bit.

Using the notations of Kesten, on the non-renormalized lattice $\Tg$, let $L_\eps(p)$ be the {\bf correlation length} defined as the smallest scale $n\geq 0$ such that the probability under $\P_p$ to cross the rectangle $[0,2n]\times[0,n]$ is larger than $1-\eps$. Kesten's \cite{Kesten} implies that for any $\eps,\eps'>0$, then as $p\to p_c$, 
\[
L_\eps(p)\asymp L_{\eps'}(p) \asymp L(p):= \inf \{N\geq 1: N^2 \alpha_4(N) \geq 1/|p-p_c|\}\,.
\] 
See also the survey \cite{Nolin}. Furthermore, it is well known that for any $\delta>0$, one can find $\eps>0$ such that for any $p>p_c$,  $\P_p\bigl[ \omega\in \A_1(L_\eps(p), \infty)\bigr]>1-\delta$. See for example \cite{Bollobas}. From these results, together with the large probability of having an open circuit in any annulus of large conformal modulus even at criticality, we also get that, for any $\eps, \delta, a>0$, if we take $b>b_0(\eps,a,\delta)$ large enough, then
\begin{align}\label{e.abinfty}
\P_p\bigl[ \omega\in  \A_1\big(a L_\eps(p), b L_\eps(p)\big) \setminus \A_1\big(a L_\eps(p), \infty\big)\bigr] < \delta\,.
\end{align}

One can introduce the same notion of correlation length in the setting of our near-critical coupling (see Definition \ref{d.NCP}), except the lattice is now renormalized. More precisely, for any $\eps,\eta,\lambda$, define 
\begin{align*}
\tilde L_{\eps,\eta}(\lambda):= \inf \{r>0:\, \Pb{\omega_\eta^\nc(\lambda) \text{ crosses the rectangle }  [0,2r]\times [0,r]} > 1-\eps\}. 
\end{align*}
By our choice of rescaling in Definition \ref{d.NCP}, the above results from Kesten readily translate as follows: for any values of $\eps,\lambda >0$, one has 
\begin{align}\label{e.CorL}
0< \liminf_{\eta\to 0}  \tilde L_{\eps,\eta}(\lambda) \leq \limsup_{\eta\to 0} \tilde L_{\eps,\eta}(\lambda) <\infty\,.
\end{align}
Furthermore, for  any $\delta>0$, one can also choose $\eps$ small enough so that for any $\eta\in(0,1]$:
\[
\Pb{\omega_\eta^\nc(\lambda) \in \A_1(\tilde L_{\eps,\eta}(\lambda),\infty)}>1-\delta\,,
\] 
or using~\eqref{e.CorL}, for $r>0$ large enough,
\begin{align}\label{e.rinfty}
\Pb{\omega_\eta^\nc(\lambda) \in \A_1(r,\infty)}>1-\delta\,.
\end{align}
Similarly to (\ref{e.abinfty}), we also get that for any $\delta, r>0$, if $R>R_0(\lambda,r,\delta)$ is large enough, then
\begin{align}\label{e.rRinfty}
\Pb{ \omega_\eta^\nc(\lambda)\in  \A_1(r, R) \setminus \A_1(r, \infty)} < \delta\,,
\end{align}
for all $\eta>0$ small enough.

Now, this finite $R$ approximation~(\ref{e.rRinfty}), together with~\eqref{e.LimR}, imply~(\ref{e.inftycluster}). That the probability tends to 1 follows from~(\ref{e.rinfty}).

The above arguments clearly show that the  correlation lengths $L_1(\lambda)$ and $L_2(\lambda)$ are finite and nonzero. The exact formulas for them follow from the scaling covariance result in Corollary \ref{c.SCALE}. Indeed, one needs to scale $\omega_\infty^\nc(\lambda)$ by a factor $\lambda^{4/3}$ in order to obtain the same law as $\omega_\infty^\nc(\lambda=1)$.  \qed

%
%

\ni
{\bf Proof of Corollary~\ref{c.BCL}.} The correlation length $L(p)$ that we use here is basically the inverse of the rate function $r(\eta)$ defined in~\eqref{e.r}, except that we do not know that $r(\eta)$ is monotone, hence the ``inverse'' is a little loosely defined. Nevertheless, there is a ratio limit theorem for $\alpha_4(n)$ in \cite[Proposition 4.7]{GPS2a}, saying that, for any $t>0$ fixed, 
$$
\lim_{\eta\to 0} \frac{r(t\eta)}{r(\eta)}=t^{3/4}\,,
$$
which immediately implies that 
\begin{equation}\label{e.qinverse}
\lim_{n\to 0} \eta L(r(\eta)) = 1\qquad\textrm{and}\qquad \lim_{p\to p_c} \frac{r(1/L(p))}{|p-p_c|} =1\,.
\end{equation} 

The configuration $\omega_\eta^\nc(\lambda)$, as $\eta\to 0$, is just percolation $\omega_p$ at density $p-p_c \sim \frac{1}{2}\lambda r(\eta)$. Therefore, when we consider percolation $\omega_p$ on a lattice scaled down by $L(p)$, that is, when we take $\eta=1/L(p)$, then~\eqref{e.qinverse} says that $r(\eta) \sim |p-p_c|$ as $p\to p_c$, hence $\frac{1}{L(p)}\omega_p$ is close to $\omega_\eta^\nc(\lambda=2)$ for $p\to p_c+$, and to  $\omega_\eta^\nc(\lambda=-2)$ for $p\to p_c-$, as claimed.
\qed

\section{Markov property, associated semigroup}\label{s.markov}

\subsection{Simple Markov property for $t\mapsto \omega_\infty(t)$}

We wish to prove the following simple Markov property.

\begin{theorem}\label{th.SMP}
The scaling limit of dynamical percolation is a simple Markov process with values in $(\HH, d_\HH)$. Furthermore this process is reversible w.r.t. the measure $\P_\infty$. 

As such, one obtains a semi-group $(P_t)_{t\geq 0}$ on $\mathcal{B}(\HH)$, the space of bounded Borel measurable functions on $(\HH, d_\HH)$.
\end{theorem}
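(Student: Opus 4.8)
\textbf{Proof plan for Theorem \ref{th.SMP}.}

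The plan is to derive the simple Markov property at the level of the scaling limit from the Markov property of the discrete dynamics $t\mapsto\omega_\eta(t)$ (which holds by construction, Definition \ref{d.DP}) by passing to the limit, and to derive reversibility and preservation of $\P_\infty$ from Proposition \ref{pr.PROJ} together with the reversibility of the discrete dynamics. First I would set up the canonical filtration: on the Skorohod space $\Sk_{[0,\infty)}$ (or on $\HH^{[0,\infty)}$) let $\F_s:=\sigma\bigl(\omega_\infty(u):0\le u\le s\bigr)$. The goal is to show that for every $s,t\ge 0$ and every bounded continuous (hence, by a monotone class argument, every bounded Borel) $F:\HH\to\R$,
\begin{equation}\label{e.smp}
\Eb{F(\omega_\infty(s+t))\mid\F_s}=\bigl(P_tF\bigr)(\omega_\infty(s))\qquad\text{a.s.},
\end{equation}
where $P_t$ is the transition kernel one obtains as a limit of the discrete ones. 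The main point is to identify this limiting kernel and to check that it depends only on $\omega_\infty(s)$. Concretely, I would fix finitely many times $0\le s_1<\dots<s_m=s$, bounded continuous $G_1,\dots,G_m$ on $\HH$, and bounded continuous $F$, and prove
\begin{equation}\label{e.finitedim}
\Eb{F(\omega_\infty(s+t))\prod_{i=1}^m G_i(\omega_\infty(s_i))}
=\Eb{\bigl(P_tF\bigr)(\omega_\infty(s))\prod_{i=1}^m G_i(\omega_\infty(s_i))}.
\end{equation}

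The key steps, in order, are as follows. First, by Theorem \ref{th.MAIN2} the discrete processes $\omega_\eta(\cdot)$ converge in law in $\Sk_{\C,[0,\infty)}$ to $\omega_\infty(\cdot)$; by Skorohod representation I may assume this convergence is almost sure in a common coupling. One must be careful that the evaluation maps $\omega(\cdot)\mapsto\omega(s_i)$ are \emph{not} continuous on Skorohod space at trajectories with a jump at $s_i$; but since $\switch_T$ is a.s.\ finite and, for the limiting process, the switching times form an a.s.\ countable set which for each fixed deterministic $s$ has $\Pb{s\in\switch}=0$ (the switching times are governed by a Poisson process with a non-atomic intensity in time, Proposition \ref{pr.PPP}), the maps $\omega(\cdot)\mapsto\omega(s_i)$ and $\omega(\cdot)\mapsto\omega(s+t)$ are a.s.\ continuous at $\omega_\infty(\cdot)$, so $F(\omega_\eta(s+t))\prod_i G_i(\omega_\eta(s_i))\to F(\omega_\infty(s+t))\prod_i G_i(\omega_\infty(s_i))$ a.s.\ and in $L^1$. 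Second, for the discrete dynamics the Markov property gives
\[
\Eb{F(\omega_\eta(s+t))\prod_{i=1}^m G_i(\omega_\eta(s_i))}
=\Eb{\bigl(P_t^\eta F\bigr)(\omega_\eta(s))\prod_{i=1}^m G_i(\omega_\eta(s_i))},
\]
where $P_t^\eta$ is the discrete transition semigroup. Third, one must show $(P_t^\eta F)(\omega_\eta(s))\to(P_tF)(\omega_\infty(s))$ in an appropriate sense; this is the delicate step. The clean way is to avoid constructing $P_t$ abstractly and instead argue that the left side of \eqref{e.finitedim} defines, as a function of the joint law of $(\omega_\infty(s_i))_{i\le m}$, a quantity that is measurable with respect to $\sigma(\omega_\infty(s))$ alone: by the Markov property in the discrete and the fact that $\omega_\eta(s)\to\omega_\infty(s)$, the conditional law of $\omega_\eta(s+t)$ given $\F_s^\eta$ depends only on $\omega_\eta(s)$, and one passes this to the limit by the same stability/coupling machinery (Corollary \ref{c.coupling}, Proposition \ref{pr.stab}, Theorem \ref{th.etaskoro}) used throughout the paper: the future increments of $\omega_\eta(\cdot)$ on $[s,s+t]$ are built from an independent copy of the pivotal-measure Poisson process, and this independence, being an a.s.\ statement robust under the coupling, survives in the limit. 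This yields that the regular conditional law $\mathcal{L}(\omega_\infty(s+t)\mid\F_s)$ is a measurable function of $\omega_\infty(s)$; calling it $P_t(\omega_\infty(s),\cdot)$ gives \eqref{e.smp}, and the Chapman--Kolmogorov identity $P_{t+t'}=P_tP_{t'}$ follows by the same limiting argument, so $(P_t)_{t\ge 0}$ is a genuine semigroup on $\mathcal B(\HH)$.

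Finally, reversibility: for each $\eta$ the discrete dynamics is reversible w.r.t.\ $\P_\eta$ (Definition \ref{d.DP} makes each site update a symmetric $\tfrac12\delta_0+\tfrac12\delta_1$ refresh, so $\P_\eta$ is reversible), hence $\Eb{F(\omega_\eta(0))\,H(\omega_\eta(t))}=\Eb{H(\omega_\eta(0))\,F(\omega_\eta(t))}$ for bounded continuous $F,H$. By Proposition \ref{pr.PROJ} one has $\omega_\infty(u)\sim\P_\infty$ for all $u$, and by the a.s.\ continuity of $\omega(\cdot)\mapsto\omega(t)$ at $\omega_\infty(\cdot)$ for fixed $t$ one may pass to the limit in both sides to get $\Eb{F(\omega_\infty(0))\,H(\omega_\infty(t))}=\Eb{H(\omega_\infty(0))\,F(\omega_\infty(t))}$, i.e.\ detailed balance $\int F\,P_tH\,d\P_\infty=\int H\,P_tF\,d\P_\infty$; combined with $\P_\infty P_t=\P_\infty$ this is reversibility. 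The main obstacle is the third step above --- proving that the limiting conditional law depends only on $\omega_\infty(s)$ and not on the past, i.e.\ that the Markov property is not destroyed in the limit. This is genuinely the crux (the authors themselves note Schramm expected it to fail), and the argument must lean on the measurable construction of $\omega_\infty^\eps(\cdot)$ from $(\omega_\infty(0),\PPP_T)$ in Theorem \ref{th.extension}: one restarts the construction at time $s$ from the slice $\omega_\infty(s)$ (which is $\P_\infty$-distributed by Proposition \ref{pr.PROJ}) and an independent Poisson process on $[s,s+t]$, checks via the stability estimates of Section \ref{s.stability} that this restarted $\eps$-cutoff process converges as $\eps\to0$ to the same law as $\omega_\infty(\cdot)$ run from time $s$, and concludes by uniqueness of the limit.
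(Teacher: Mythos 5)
Your overall route is, at its core, the paper's: the crux is resolved, exactly as in the paper, by re-running the $\eps$-cutoff construction from the time-$s$ slice (which is $\P_\infty$-distributed by Proposition \ref{pr.PROJ}), using the fact that its pivotal measure $\mu^\eps(\omega_\infty(s))$ is measurable w.r.t.\ that slice, and then invoking the stability estimates and uniqueness of the limit as in Theorem \ref{th.MAIN}. Your treatment of reversibility, by passing the discrete two-time detailed balance to the limit using stationarity and the a.s.\ continuity of the evaluation maps at fixed deterministic times, is in fact more explicit than what the paper writes, and is fine.

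There is, however, one genuinely under-specified step. Restarting only the \emph{future} at time $s$ from the slice and checking that the restarted process has the same law as $\omega_\infty(\cdot)$ run from time $s$ identifies the transition law, but it does not by itself show that the conditional law of $\omega_\infty(s+t)$ given all of $\F_s$ depends only on $\omega_\infty(s)$: conditional independence is not in general preserved under convergence in law, and your phrase that the independence ``survives in the limit'' is precisely the assertion that needs an argument. The paper's device is to build the \emph{whole two-sided trajectory} from the slice: a c\`adl\`ag forward piece on $[s,s+t]$ and a c\`agl\`ad backward piece on $[0,s]$, driven by two independent Poisson processes both with intensity $\mu^\eps(\omega_\infty(s))\,dx\,du$. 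Then the conditional independence of past and future given $\omega_\infty(s)$ holds by construction for every $\eps$, survives the $\eps\to 0$ limit in probability, and is transferred to $\omega_\infty(\cdot)$ by identifying the law of the \emph{full} trajectory on $[0,s+t]$ with that of the original process (via the discrete approximation and uniqueness of the limit). Your sketch needs this backward piece, or an equivalent joint construction of (past, slice, future), to close the argument. A smaller point: the semigroup claim on $\mathcal{B}(\HH)$ requires a process defined from \emph{every} initial $\omega\in\HH$, not just $\P_\infty$-a.e.; the paper handles this by fixing a set $B\subset\HH$ of good initial configurations with $\P_\infty[B]=1$ on which the cutoff processes converge, declaring the process constant on $B^c$, and checking $\Pb{\omega_\infty(t)\in B^c}=0$, whereas your abstract regular-conditional-law route produces $P_t$ only up to $\P_\infty$-null sets.
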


%

\ni
{\bf Proof:}
\ni

Fix $0<s<t$. We wish to prove that 
\begin{align*}\label{}
\mathcal{L}[\omega_\infty(t) \md (\omega_\infty(u))_{0\leq u \leq s}] = \mathcal{L}[\omega_\infty(t) \md \omega_\infty(s)]\,.
\end{align*}

To prove this identity in law, we will build the limiting process $\omega_\infty(\cdot)$ in a way which is well suited to the above conditioning. Instead of building our process using the critical ``slice'' $\omega_\infty(t=0)$, we will shift things so that one builds our process from the slice $\omega_\infty(s)$ which by Proposition \ref{pr.PROJ} is known to satisfy as well  $\omega_\infty(s) \sim \P_\infty$.  
One proceeds as follows (the details are omitted):

\bnum
\item We sample $\omega_\infty(s)\sim \P_\infty$. 
\item We choose $\eps>0$ very small and consider $\mu^\eps=\mu^\eps(\omega_\infty(s))$ the measure on the $\eps$-pivotal points of $\omega_\infty(s)$ we used continuously so far. 
\item Knowing $\mu^\eps$, we sample the Poisson Point processes $\PPP_{[s,t]}$ on $D\times[s,t]$ and $\PPP_{[0,s]}$ on $D\times [0,s]$ independently of each other and respectively with intensity measures given by $\mu^\eps(dx) du 1_{[s,t]}$ and $\mu^\eps(dx) du 1_{[0,s]}$. 
\item Using these two PPPs, we proceed as in Section \ref{s.cutoff} to construct a \cadlag trajectory $u\in[s,t]\mapsto \omega_\infty^\eps(u)$ and a ``c\`agl\`ad'' trajectory $u\in[0,s]\mapsto \omega_\infty^\eps(s-u)$. 
\item From the above construction, note that conditionally on $\omega_\infty(s)$, these two processes are conditionally independent. (This results from the fact that $\mu^\eps$ is measurable w.r.t. $\omega_\infty(s)\sim \P_\infty$). 
\item As in the proof of Theorem \ref{th.MAIN}, we obtain a limiting process $u\in[0,t] \mapsto \bar \omega_\infty(u)$ as $\eps$ to 0. The convergence is in probability in $\Sk_t$ as $\eps\to 0$ and the above conditional independence property survives as $\eps\to 0$. In particular, one has 
\begin{align}\label{e.SM}
\mathcal{L}[\bar \omega_\infty(t) \md (\bar \omega_\infty(u))_{0\leq u \leq s}] & = \mathcal{L}[\bar \omega_\infty(t) \md \bar \omega_\infty(s)] = \mathcal{L}[\bar \omega_\infty(t) \md  \omega_\infty(s)] 
\end{align}
\item As in Theorem \ref{th.MAIN}, this process $\bar \omega_\infty(\cdot)$ is the limiting law as $\eta\to 0$ of $u\in[0,t] \mapsto \omega_\eta(u)$. By the uniqueness of the limit, one has as a process in $\Sk_t$, $\bar \omega_\infty(\cdot) \overset{(d)}{=} \omega_\infty(\cdot)$. 
In particular, property~\eqref{e.SM} is satisfied for the process $\omega_\infty(\cdot)$ which implies the desired simple Markov property. 
\enum

In order to obtain a proper Markov process together with its semi-group $(P_t)_{t\geq 0}$ on $\mathcal{B}(\HH)$, one needs to be a bit more careful and define a random \cadlag process starting from any possible initial configuration $\omega\in\HH$. So far, it is implicit in Theorems \ref{th.extension}, \ref{th.MAIN} and Proposition \ref{pr.PROJ} that we only defined a random \cadlag process {\bf almost surely} in the initial configuration $\omega_\infty(0) \sim \P_\infty$ (for example, it could be that the measure $\mu^\eps=\mu^\eps(\omega_\infty)$ is infinite which is an event of measure 0 and is thus included in the event $A^c$ introduced in the proof of Theorem \ref{th.extension}). Formally, let $B\subset \HH$ be the set of initial configurations $\omega$ such that {\bf almost surely} in the additional randomness required to sample the Poisson Point Processes $\PPP(\mu^\eps(\omega))$, the random trajectory $t\mapsto \omega_\infty^{\eps_L}(\omega)$ converges  to a limiting \cadlag process in $\Sk$ (where the sequence $\eps_L$ is the one used in the proof of Theorem \ref{th.MAIN} to construct our limiting process). By the proofs of Theorems \ref{th.extension} and \ref{th.MAIN}, we have that $\P_\infty [B]=1$. As in the proof of Theorem~\ref{th.extension}, if the initial configuration $\omega$ is in $B^c \subset \HH$, then we define our random process $\omega_\infty(t)$ to be the constant process equal to $\omega$.  Since we know from the above argument that starting from $\omega_\infty(t=0)$, we have $\omega_\infty(t) \sim \P_\infty$, this construction implies that $\Pb{\omega_\infty(t)\in B^c} = \P_\infty[B^c]=0$, which is enough for the simple Markov property and the existence of a semi-group.  Note that in order to prove a strong Markov property, one would need to check (in particular) that this set $B$ is polar for the dynamics $t\geq 0\mapsto \omega_\infty(t)$ starting at $\P_\infty$. See Remark \ref{r.feller} and the question below.
\qed

\begin{remark}\label{}
Note that this proof uses in an essential manner the invariance of $\P_\eta$, $\P_\infty$ as well as our way of producing a trajectory $\omega_\infty(\cdot)$ in a measurable manner w.r.t. an initial slice $\omega_\infty(t=0)$.
\end{remark}

\subsection{Simple Markov property for $\lambda \mapsto \omega_\infty^\nc(\lambda)$}
It is tempting to claim that the simple Markov property is satisfied in the same fashion by the near-critical process $\lambda \mapsto \omega_\infty^\nc(\lambda)$ since so far the dynamical and near-critical regimes did share the same level of difficulty. This is no longer the case here. The additional difficulty in the near-critical case is due to the fact that the law $\P_\infty$ is not invariant along the process $\lambda \mapsto \omega_\infty^\nc(\lambda)$. In particular, the above proof for the simple Markov property of $t\mapsto \omega_\infty(t)$ does not work in the near-critical setting. Nevertheless, we can prove that the simple Markov property holds for $\omega_\infty^\nc(\cdot)$, namely:
\begin{theorem}\label{th.mnc}
The scaling limit of near-critical percolation $\lambda\mapsto \omega_\infty^\nc(\lambda)$ is a simple Markov process with values in $(\HH, d_\HH)$. 
\end{theorem}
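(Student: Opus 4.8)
\textbf{Proof proposal for Theorem~\ref{th.mnc}.} The plan is to imitate the restart argument used for the dynamical process in Theorem~\ref{th.SMP}, but to replace, at the point where that proof invokes the invariance of $\P_\infty$, a construction that works from a genuinely near-critical slice. Fix $-L<\lambda_1<\lambda_2<L$. As in the dynamical case, it suffices to exhibit, on a suitable probability space, a measurable map $\Phi$ and a random object $\xi$ such that
\[
\bigl(\lambda\in[\lambda_1,L]\mapsto\omega_\infty^\nc(\lambda)\bigr)\;=\;\Phi\bigl(\omega_\infty^\nc(\lambda_1),\,\xi\bigr),
\]
where $\xi$ is, conditionally on $\omega_\infty^\nc(\lambda_1)$, independent of the whole past $(\omega_\infty^\nc(u))_{u\le\lambda_1}$; the simple Markov property then follows at once, and time-homogeneity (cf.\ Theorem~\ref{th.main4}) is visible from the fact that $\Phi$ does not depend on $\lambda_1$ except through the starting configuration. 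The only structural obstacle compared to the dynamical case is that $\omega_\infty^\nc(\lambda_1)\sim\P_{\lambda_1,\infty}$ is near-critical (hence, by Subsection~\ref{ss.sing}, singular w.r.t.~$\P_\infty$), so the pivotal measure $\mu^\eps$ of \cite{\GPSa}, which is needed to drive the future, must first be made sense of on such a configuration.

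First I would establish a \emph{near-critical pivotal measure}. Define $\mu^\eps(\omega_\eta^\nc(\lambda_1))$ by the very same formula as in Definition~\ref{d.IP}, now with the near-critical configuration $\omega_\eta^\nc(\lambda_1)$ in place of $\omega_\eta$, and argue that
\[
\bigl(\omega_\eta^\nc(\lambda_1),\,\mu^\eps(\omega_\eta^\nc(\lambda_1))\bigr)\;\overset{(d)}{\longrightarrow}\;\bigl(\omega_\infty^\nc(\lambda_1),\,\mu^\eps(\omega_\infty^\nc(\lambda_1))\bigr)\qquad(\eta\to0),
\]
with the right-hand limit a measurable function of the near-critical slice, for every $\eps$ below the fixed macroscopic scale $L(\lambda_1)=c\,\lambda_1^{-4/3}$ of Theorem~\ref{th.CL}. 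Below its correlation length a near-critical configuration ``looks critical'': its four-arm (and $j$-arm) probabilities agree up to constants with the critical ones, by Kesten's stability estimates \cite{Kesten} — the same input already used in Section~\ref{s.stability} (Lemma~\ref{l.main}) and behind the near-critical arm convergence~\eqref{e.CardyL2} via Proposition~\ref{pr.ExNC}. Consequently the arguments of \cite{\GPSa} proving Theorem~\ref{th.PM} and Corollary~\ref{c.PM} apply verbatim to $\omega_\eta^\nc(\lambda_1)$, giving the claimed convergence and measurability.

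Next I would restart the construction at $\lambda_1$. The discrete near-critical coupling of Definition~\ref{d.NCP} is driven by a Poisson cloud of switches on $\eta\Tg\times\R$; its restrictions to $(-\infty,\lambda_1]$ and to $(\lambda_1,\infty)$ are independent, the first determining $(\omega_\eta^\nc(u))_{u\le\lambda_1}$ and the second (given $\omega_\eta^\nc(\lambda_1)$) acting as an independent family of site updates restricted by the current colouring. Its macroscopically relevant part consists of switches at points that are $\eps$-important for $\omega_\eta^\nc(\lambda_1)$, whose renormalised intensity converges, by the previous paragraph, to a Poisson point process $\PPP_{[\lambda_1,L]}$ on $D\times[\lambda_1,L]$ with intensity $\mu^\eps(\omega_\infty^\nc(\lambda_1))\times d\lambda$, independent of the past given $\omega_\infty^\nc(\lambda_1)$. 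Feeding $\omega_\infty^\nc(\lambda_1)$ and $\PPP_{[\lambda_1,L]}$ into the machinery of Section~\ref{s.cutoff} (networks, the extension Lemma~\ref{l.unique}, and the coupling/convergence of Theorem~\ref{th.etaskoro}) produces a cut-off future process $\lambda\in[\lambda_1,L]\mapsto\omega_\infty^{\nc,\eps}(\lambda)$ that is the $\eta\to0$ limit of the discrete future cut-off process; then the stability estimate Proposition~\ref{pr.stab}, which (as noted after its proof) applies at least as easily to the monotone near-critical coupling, lets one send $\eps\to0$ and obtain a limiting future process. By uniqueness of the Skorohod limit — exactly as in the passage from Theorem~\ref{th.etaskoro} to Theorem~\ref{th.MAINnc} — this limiting future process coincides with $\lambda\in[\lambda_1,L]\mapsto\omega_\infty^\nc(\lambda)$, which realises the desired factorisation $\Phi(\omega_\infty^\nc(\lambda_1),\xi)$ with $\xi=\{\PPP_{[\lambda_1,L]}(\mu^\eps)\}_\eps$ and completes the proof.

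I expect the main obstacle to be the second paragraph together with the $\eps\to0$ matching inside the third: one must show not only that a pivotal measure exists on the near-critical slice, but that driving the future by the points ``frozen as $\eps$-pivotal at time $0$'' and driving it by the $\eps$-pivotal points \emph{of $\omega_\infty^\nc(\lambda_1)$} have the same limit as $\eps\to0$ — i.e.\ a ``no cascade across the restart time'' statement, ruling out that macroscopic pivotal activity is created or destroyed by the switches in $[0,\lambda_1]$ in a way invisible to either cut-off. This is where the four-arm stability of Section~\ref{s.stability} (Lemma~\ref{l.main} and its consequences), applied in the monotone coupling, does the essential work.
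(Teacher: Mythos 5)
Your overall route is the paper's: restart the construction from the near-critical slice $\omega_\infty^\nc(\lambda_1)$, which requires (a) a pivotal measure measurable w.r.t.\ that slice, (b) near-critical arm estimates (separation of arms, quasi-multiplicativity, six-arm and half-plane three-arm bounds, i.e.\ the paper's Proposition~\ref{pr.ExNC}) to re-run the network/cut-off/stability machinery, and then the same factorisation argument as in Theorem~\ref{th.SMP}, with the ``no cascade across the restart time'' handled by Section~\ref{s.stability}. All of that matches.

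The gap is in your second paragraph, at ``the arguments of \cite{\GPSa} apply verbatim''. Knowing that near-critical arm probabilities agree with the critical ones \emph{up to constants} (Kesten stability) is not enough. What Theorem~\ref{th.PMNC} actually requires, and what Lemma~\ref{l.I1} proves, is that the normalising constants $\beta_\lambda(\eta,\eps)$ and $\hat\beta_\lambda(\eta,\eps)$ entering the $L^2$-approximation of \cite{\GPSa} are \emph{asymptotically equal} to their critical counterparts, $\beta\sim\beta_\lambda\sim\hat\beta_\lambda$ as $\eps,\eta/\eps\to 0$. Up-to-constant comparability would only produce \emph{some} $\lambda$-dependent limit measure; you need the limit to be given by the \emph{same} measurable map $\omega\mapsto\mu^\eps(\omega)$ as at criticality, both because the process $\omega_\infty^\nc(\cdot)$ you must recover on $[\lambda_1,L]$ was built with the critical normalisation $r(\eta)=\eta^2\alpha_4^\eta(\eta,1)^{-1}$ (with a $\lambda$-dependent density your restarted future would run at the wrong rate, and the identification by uniqueness of Skorohod limits in your third paragraph would fail), and because time-homogeneity (Theorem~\ref{th.inh}) is exactly this statement. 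Moreover, one step of \cite{\GPSa} genuinely breaks off criticality: the proof that $\beta\sim\hat\beta$ (Lemma~4.7 there) rests on a colour-switching argument valid only at $\lambda=0$, so it must be replaced — the paper does this by rerunning the double induction of Lemma~\ref{l.main} only up to scale $\sqrt{\eps}$, where the constants can be made arbitrarily close to $1$ (alternatively, Kesten's differential inequalities). So the plan is right, but ``verbatim'' conceals precisely the new technical content needed to make the restart argument close.
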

This scaling limit may seem to be an inhomogeneous Markov process in $\HH$. It is not (the asymmetry comes from the non-reversible nature of the near-critical dynamics). 

\begin{theorem}\label{th.inh}
$\lambda\mapsto \omega_\infty^\nc(\lambda)$ is a {\bf homogeneous} non-reversible Markov process.
\end{theorem}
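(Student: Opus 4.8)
The plan is to reduce homogeneity in $\lambda$ to a scaling-covariance identity rather than to a direct invariance of the law (which fails, since $\P_\infty$ is not stationary). First I would recall that by Theorem \ref{th.mnc} the process $\lambda \mapsto \omega_\infty^\nc(\lambda)$ is a (simple) Markov process, so it suffices to exhibit, for each $h>0$, a transition kernel $P^{(h)}$ such that $\mathcal{L}\bigl[\omega_\infty^\nc(\lambda_0+h)\mid \omega_\infty^\nc(\lambda_0)\bigr]=P^{(h)}\bigl(\omega_\infty^\nc(\lambda_0),\cdot\bigr)$ for \emph{all} $\lambda_0\in\R$; equivalently, that the conditional law of $\omega_\infty^\nc(\lambda_0+h)$ given $\omega_\infty^\nc(\lambda_0)=\omega$ does not depend on $\lambda_0$, only on $h$ and $\omega$. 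The key point is that this conditional law is \emph{constructed} from $\omega$ in a $\lambda_0$-agnostic way: given the slice at parameter $\lambda_0$, the increments on $[\lambda_0,\lambda_0+h]$ are produced (in the cut-off approximation of Section \ref{s.cutoff}, and then in the $\eps\to0$ limit of Theorem \ref{th.MAINnc}) by a Poisson point process of intensity $d\mu^\eps(\omega)\times d\lambda$ restricted to $\C\times[\lambda_0,\lambda_0+h]$, together with the networks $\Net_Q(\omega,\PPP)$ attached to that slice. Since $\mu^\eps$ depends only on the configuration $\omega$, and the intensity measure is \emph{translation-invariant in the $\lambda$-direction}, the law of the restricted Poisson process on $\C\times[\lambda_0,\lambda_0+h]$ — hence of the updated trajectory on that interval — depends only on $h$ and on $\omega$, not on $\lambda_0$. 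This is exactly the near-critical analogue of the construction used in the proof of Theorem \ref{th.SMP}, where one re-roots the process at an intermediate slice; the only difference is that here the re-rooting slice has law $\P_{\lambda_0,\infty}$ rather than $\P_\infty$, but the re-rooting construction itself never uses which measure the slice came from.

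Concretely the steps are: (1) invoke Theorem \ref{th.mnc} for the Markov property and reduce to showing $\lambda_0$-independence of the one-step kernel; (2) fix $\eps>0$ and build the cut-off process $\lambda\in[\lambda_0,\lambda_0+h]\mapsto \omega_\infty^{\nc,\eps}(\lambda)$ \emph{re-rooted} at $\omega_\infty^{\nc,\eps}(\lambda_0)$, exactly as in Section \ref{s.cutoff} but with the Poisson process $\PPP_{[\lambda_0,\lambda_0+h]}$ of intensity $d\mu^\eps(\omega_\infty^{\nc,\eps}(\lambda_0))\times d\lambda$, and observe that by translation invariance of $d\lambda$ the conditional law of this cut-off increment given the re-rooting slice depends only on $(h,\omega_\infty^{\nc,\eps}(\lambda_0))$; (3) pass to the limit $\eps\to0$ using the convergence in probability in $\Sk_L$ from Theorem \ref{th.MAINnc} together with the stability estimate Proposition \ref{pr.stab} (applied on the shifted interval), to conclude that the conditional law of $\omega_\infty^\nc(\lambda_0+h)$ given $\omega_\infty^\nc(\lambda_0)$ is a kernel $P^{(h)}$ independent of $\lambda_0$; (4) conclude homogeneity, and remark that reversibility fails because the increment mechanism (adding mass as $\lambda$ increases) is not symmetric under $\lambda\mapsto-\lambda$, unlike the $\pm$ symmetric updates in the dynamical case.

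The main obstacle I expect is step (3): the re-rooting argument is clean at the level of the cut-off processes, but one must make sure that taking $\eps\to0$ genuinely preserves the \emph{conditional} law statement, not just the marginal convergence. Concretely, one needs that $\mathcal{L}\bigl[\omega_\infty^{\nc,\eps}(\lambda_0+h)\mid \omega_\infty^{\nc,\eps}(\lambda_0)\bigr]$ converges (as a random measure) to $\mathcal{L}\bigl[\omega_\infty^\nc(\lambda_0+h)\mid \omega_\infty^\nc(\lambda_0)\bigr]$; this should follow from the joint convergence in probability of the pair $\bigl(\omega_\infty^{\nc,\eps}(\lambda_0),\omega_\infty^{\nc,\eps}(\lambda_0+h)\bigr)$ to $\bigl(\omega_\infty^\nc(\lambda_0),\omega_\infty^\nc(\lambda_0+h)\bigr)$ in $(\HH,d_\HH)^2$ (a consequence of Theorem \ref{th.MAINnc} and Theorem \ref{th.MARGIN}), combined with the fact that both the approximating and the limiting increments are built from Poisson processes whose intensities are measurable functions of the re-rooting slice. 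A secondary subtlety, as in the proof of Theorem \ref{th.SMP}, is the exceptional set of initial configurations $\omega$ for which the construction degenerates (e.g.\ $\mu^\eps(\omega)=\infty$); one sets the process constant there, and since each slice $\omega_\infty^\nc(\lambda)$ has law $\P_{\lambda,\infty}$ which charges that set with probability zero, this does not affect the Markov/homogeneity statement. Everything else — the density of $\QUAD_\N$, measurability of the networks, the extension lemma \ref{l.unique} — is already in place from earlier sections and can be quoted verbatim.
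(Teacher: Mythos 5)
There is a genuine gap, and it sits exactly where you wave your hands: the claim that ``the re-rooting construction itself never uses which measure the slice came from.'' At the discrete level this is true and homogeneity is trivial, but the continuum process is only \emph{defined} as a scaling limit, so the conditional law of $\omega_\infty^\nc(\lambda_0+h)$ given $\omega_\infty^\nc(\lambda_0)$ is identified with your re-rooted Poisson construction only after one redoes the discrete-to-continuum comparison \emph{starting from a near-critical slice}. That comparison requires knowing that the renormalized counting measure on $\Piv^\eps(\omega_\eta^\nc(\lambda_0))$ converges jointly with $\omega_\eta^\nc(\lambda_0)$ to the \emph{same} measurable map $\mu^\eps$ evaluated at $\omega_\infty^\nc(\lambda_0)$ -- this is the paper's Theorem \ref{th.PMNC}, and it is not soft: the construction of $\mu^\eps$ in \cite{\GPSa} is an $L^2$ argument carried out under the critical measure, with a normalization constant $\beta(\eta,\eps)$ and a color-switching step (Lemma 4.7 there) that breaks for $\lambda\neq 0$. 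The paper has to prove the asymptotics $\beta\sim\beta_\lambda\sim\hat\beta_\lambda$ (Lemma \ref{l.I1}, via the four-arm stability Lemma \ref{l.main} or Kesten-type differential inequalities) precisely because a $\lambda$-dependent normalization factor would make the limiting intensity $c(\lambda)\,\mu^\eps$ and hence make the re-rooted dynamics run at a $\lambda_0$-dependent speed, destroying the homogeneity you are trying to prove. Your step (2) simply posits the intensity $d\mu^\eps(\omega_\infty^{\nc,\eps}(\lambda_0))\times d\lambda$, which begs this question; note also that the actual cut-off process of Section \ref{s.cutoff} uses the pivotal measure of the $\lambda=0$ slice on the whole interval, so your re-rooted cut-off process is a genuinely different object whose agreement with the limit has to be proved, not quoted.

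A second, related omission: even granting the near-critical pivotal measure, your step (3) needs the stability estimates of Section \ref{s.stability} and the network/mesoscopic arguments of Sections \ref{s.network}--\ref{s.cutoff} to hold when the initial configuration is a near-critical slice rather than a critical one. These rest on RSW, quasi-multiplicativity, the six-arm bound and the half-plane three-arm bound, all of which must be re-established for $\omega_\eta^\nc(\lambda)$ below its correlation length; this is the content of Proposition \ref{pr.ExNC} in the paper and cannot be quoted verbatim from the critical case. Your observation about translation invariance of $d\lambda$ and your treatment of the exceptional set are fine, and your remark on non-reversibility is correct, but without Theorem \ref{th.PMNC} (in particular Lemma \ref{l.I1}) and Proposition \ref{pr.ExNC} the homogeneity statement is not established.
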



\ni
{\bf Proof:}

For any fixed $-\infty<\lambda_1 <\lambda_2<\infty$, we wish to show that
\begin{align*}\label{}
\mathcal{L}[\omega_\infty^\nc(\lambda_2) \md (\omega_\infty^\nc(\lambda))_{\lambda \leq \lambda_1}] = \mathcal{L}[\omega_\infty^\nc(\lambda_2) \md \omega_\infty^\nc(\lambda_1)]\,.
\end{align*}
The strategy we wish to follow is the same as the one used for dynamical percolation, i.e. to build the process $\omega_\infty^\nc(\cdot)$ from the near-critical slice $\omega_\infty^\nc(\lambda_1)$ instead of from the critical one $\omega_\infty^\nc(\lambda=0)$. The same approach works but several non-trivial steps need to be checked/adapted.
\bnum
\item Now that $\omega_\infty^\nc(\lambda_1)$ is well defined, one can sample such a near-critical slice. We will denote $\omega_\infty^\nc (\lambda) \sim \P_{\lambda,\infty}$. 
\item We need an analog of the measure $\mu^\eps$ which was defined in a measurable manner w.r.t. $\omega_\infty\sim \P_\infty$ except here that $\omega_\infty^\nc(\lambda_1)$ follows a different law. This means that the work done in \cite{\GPSa} to build a {\bf pivotal measure} needs to be extended to the near-critical regime. 
We will show in Theorem \ref{th.PMNC} that for any $\lambda_1<\lambda_2 <\ldots <\lambda_n$, this is the {\bf same} measurable function $\omega\in(\HH,d_\HH) \mapsto \mu^\eps(\omega)$ which gives the appropriate pivotal measures respectively for the measures $\P_{\lambda_1,\infty},\ldots, \P_{\lambda_n,\infty}$. In this sense, Theorem \ref{th.PMNC} implies that $\lambda\mapsto \omega_\infty^\nc(\lambda)$
 is indeed an {\bf homogeneous} non-reversible Markov process as stated in Theorem \ref{th.inh}. 
\item Once the work from \cite{\GPSa} is extended thanks to Theorem \ref{th.PMNC}, it remains to check that all the proofs of the present paper do extend to this regime. The main things to be checked are the stability Section \ref{s.stability} as well as the arguments from the discrete used everywhere in Sections \ref{s.network} and \ref{s.cutoff}. It is is easy to check that Proposition \ref{pr.ExNC} enables to extend these sections to the near-critical regime. 
\item With these extensions at hand, the proof used for the simple Markov property of the dynamical percolation $\omega_\infty(\cdot)$ works in the same manner. 
\enum

To prove Theorems \ref{th.mnc} and \ref{th.inh} following the same strategy as for dynamical percolation, we are thus left with the following two statements. 
\begin{theorem}\label{th.PMNC}
For any $-\infty< \lambda_1<\lambda_2 <\ldots< \lambda_n <\infty$ and any $\eps>0$, one can define a measure $\mu^\eps$ which is Borel measurable w.r.t. $\omega\in(\HH,d_\HH)$ and which is such that for any $\lambda \in \{ \lambda_i\}_{1\leq i \leq n}$, one has 
\begin{align*}\label{}
(\omega_\eta^\nc(\lambda),\mu^\eps(\omega_\eta^\nc(\lambda))) \overset{(d)}{\longrightarrow} 
(\omega_\infty^\nc(\lambda), \mu^\eps(\omega_\infty^\nc(\lambda)))\,,
\end{align*}
as the mesh $\eta\to 0$. 
\end{theorem}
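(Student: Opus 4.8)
The plan is to reduce Theorem~\ref{th.PMNC} to the already-established Theorem~\ref{th.PM} (equivalently Corollary~\ref{c.PM}) for the critical pivotal measure, by exploiting the fact that the near-critical configuration $\omega_\eta^\nc(\lambda)$ is obtained from the critical configuration $\omega_\eta^\nc(0)\sim\P_\eta$ by flipping only those hexagons which are hit by a Poisson point process on the set $\Piv^{\mathrm{mic}}$ of \emph{microscopically} pivotal points (at rate $r(\eta)$ up to time $|\lambda|$), and that the resulting change in the $\eps$-pivotal measure is a local, four-arm--stable modification. Concretely, I would first fix the finite time horizon $L=\max_i|\lambda_i|$ and, for each $\eps>0$, work on the $\eps\Z^2$ grid of Definition~\ref{d.IP}. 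The key structural observation is that $\mu^\eps$ as a Borel-measurable \emph{map} on $(\HH,d_\HH)$ is already defined in Corollary~\ref{c.PM}: it associates to any configuration $\omega\in\HH$ the weak limit of the renormalized counting measures on $\eps$-important points. One then must show that when this \emph{same} map is evaluated along $\omega_\eta^\nc(\lambda)$ the joint convergence still holds.

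The main steps I would carry out are as follows. \textbf{Step 1 (Discrete joint tightness and identification of the limit).} Couple $(\omega_\eta^\nc(\lambda))_{\lambda\in[-L,L]}$ via the near-critical coupling of Definition~\ref{d.NCP}, and show that $\bigl(\omega_\eta^\nc(\lambda_i),\mu^\eps(\omega_\eta^\nc(\lambda_i))\bigr)_{1\le i\le n}$ is tight under the product of $d_\HH$ and the Prohorov metric $d_\M$; this uses exactly the first- and second-moment bounds of Proposition~\ref{th.mu}, which remain valid in the near-critical regime below the correlation length by the stability estimates. Here one invokes Proposition~\ref{pr.ExNC} (the near-critical arm-event estimates, stated later) to control $\alpha_{4}$-type quantities for $\omega_\eta^\nc(\lambda)$, so that the bounds in Proposition~\ref{th.mu} hold with $\eps$-uniform constants depending only on $L$. \textbf{Step 2 (Four-arm stability transfers the limit).} Using Lemma~\ref{l.main} and Lemma~\ref{l.change} from Section~\ref{s.stability} (which, as noted at the end of that section, apply verbatim to the monotone near-critical dynamics), show that the $\eps$-pivotal measure is stable under the finitely many flips of the near-critical coupling on the interval $[-L,L]$: with probability tending to $1$, the symmetric difference between $\Piv^\eps(\omega_\eta^\nc(0))$ and $\Piv^\eps(\omega_\eta^\nc(\lambda))$ contributes negligible $\mu^\eps$-mass, so that $\mu^\eps(\omega_\eta^\nc(\lambda))$ is, up to a small perturbation, a measurable functional of $\bigl(\omega_\eta^\nc(0),\PPP_\eta\bigr)$ that is continuous in the coupling. \textbf{Step 3 (Passing to the limit and measurability).} Combining Step 1, Step 2, Corollary~\ref{c.PM} applied to $\omega_\eta^\nc(0)\to\omega_\infty^\nc(0)$, and the scaling-limit statement for the near-critical coupling (Theorem~\ref{th.MAINnc}/\ref{th.MARGIN}, giving $\omega_\eta^\nc(\lambda)\to\omega_\infty^\nc(\lambda)$ jointly for all $\lambda_i$), deduce $\bigl(\omega_\eta^\nc(\lambda),\mu^\eps(\omega_\eta^\nc(\lambda))\bigr)\to\bigl(\omega_\infty^\nc(\lambda),\mu^\eps(\omega_\infty^\nc(\lambda))\bigr)$, where $\mu^\eps$ on the right is the \emph{same} map from Corollary~\ref{c.PM}. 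The Borel measurability of $\mu^\eps$ w.r.t.\ $\omega\in(\HH,d_\HH)$ is inherited from Corollary~\ref{c.PM}; the point is that it does not need to be re-derived, it is one fixed map serving all the laws $\P_{\lambda_i,\infty}$, which is precisely the homogeneity assertion of Theorem~\ref{th.inh}.

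The delicate technical point — and I expect it to be the main obstacle — is Step 2: proving that $\mu^\eps$ evaluated at $\omega_\infty^\nc(\lambda)$ really is (the weak limit of) the renormalized counting measure on \emph{the same} $\eps$-important point structure, rather than some law-dependent object. One has to rule out that the singularity of $\P_{\lambda,\infty}$ w.r.t.\ $\P_\infty$ (Subsection~\ref{ss.sing}) causes the pivotal measures to be defined by genuinely different functionals. The resolution is that singularity is a global/macroscopic phenomenon whereas the pivotal measure is built from a local limiting procedure (renormalized counting on $\eps$-annuli), and the four-arm stability of Section~\ref{s.stability} shows this local procedure commutes with the finitely many near-critical flips on $[-L,L]$; thus the functional is literally unchanged. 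Making this commutation precise requires carefully re-running the comparison argument of Proposition~\ref{p.discrete} and the stability lemmas with $\omega_\eta^\nc(0)$ in place of $\omega_\eta$ and tracking that all constants depend only on $L$ (via Proposition~\ref{pr.ExNC}), which is routine but needs to be written out. Everything else is a straightforward adaptation of the arguments already in the paper.
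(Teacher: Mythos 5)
There is a genuine gap, and it is concentrated in your Step 2. The stability claim there is false as stated: for fixed $\eps$ and $\lambda$, the symmetric difference between $\Piv^\eps(\omega_\eta^\nc(0))$ and $\Piv^\eps(\omega_\eta^\nc(\lambda))$ does \emph{not} carry negligible $\mu^\eps$-mass as $\eta\to 0$. Between levels $0$ and $\lambda$ an order-one Poissonian number of macroscopically important switches occurs (on top of roughly $\lambda\,\alpha_4^\eta(\eta,1)^{-1}$ microscopic flips per unit area), and these genuinely change macroscopic connectivity, so the set of points carrying a four-arm event to distance $\eps$ is reshuffled by a proportion that does not vanish with $\eta$ (heuristically of order $\lambda\,\eps^{3/4}$ by the scaling covariance, hence fixed and positive for fixed $\eps,\lambda$). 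Lemma \ref{l.main} and Lemma \ref{l.change} only give up-to-constants control on the probability that importance can be created or destroyed; they do not say that the pivotal set, or the pivotal measure, is essentially unchanged along the near-critical trajectory, and indeed it is not.

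Second, and more decisively, even an approximate reduction to the critical slice cannot identify the limit as the \emph{same} Borel map $\mu^\eps$ evaluated at $\omega_\infty^\nc(\lambda)$, which is the whole content of the statement (and of the homogeneity claim in Theorem \ref{th.inh}). What is actually needed is to re-run the $L^2$/box-counting construction of \cite{GPS2a} under the near-critical law: there the renormalized counting measure is approximated by a deterministic factor $\beta_\lambda(\eta,\eps)=\E\bigl[x_0\mid \A_0(2\eps,1)\bigr]$ (and its conditioned variant $\hat\beta_\lambda$) times the number of mesoscopic boxes supporting a four-arm event, and to obtain a $\lambda$-independent limiting map one must prove the \emph{ratio} asymptotics $\beta_\lambda\sim\beta$ and $\hat\beta_\lambda\sim\hat\beta$ as $\eps$ and $\eta/\eps$ tend to $0$ (Lemma \ref{l.I1} in the paper). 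Your Step 1 invokes only the up-to-constants estimates of Proposition \ref{pr.ExNC}; these give tightness and the moment bounds of Proposition \ref{th.mu}, but constants bounded away from $0$ and $\infty$ can never force the normalizations to agree asymptotically. The paper gets the constants to tend to $1$ either by stopping the double induction of Lemma \ref{l.main} at a mesoscopic scale (so that the accumulated error is $o(1)$) or via Kesten's differential inequalities, and it must handle $\hat\beta_\lambda$ by a separate argument because the color-switching proof of the corresponding critical statement (Lemma 4.7 of \cite{GPS2a}) is specific to $\lambda=0$. None of this appears in your proposal, so the identification of the limiting pair as $(\omega_\infty^\nc(\lambda),\mu^\eps(\omega_\infty^\nc(\lambda)))$ — rather than merely some subsequential limit with a possibly $\lambda$-dependent normalization — is missing.
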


\begin{proposition}\label{pr.ExNC}
For any $\lambda\in \R$, 
\ni
\bnum
\item  As $\eta\to 0$, the {\bf separation of arms} phenomenon holds for $\omega_\eta(\lambda)$ up to scales of order $O(1)$. In particular, for any scale $R>0$, there is a constant $C_R\in(0,\infty)$ such that for any $0<r_1<r_2<r_3<R$ and uniformly in $\eta<r_1$, one has 
\begin{align*}
C_R^{-1} \Pb{\omega_\eta^\nc(\lambda)\in\A_4(r_1,r_3)} & \leq   \Pb{\omega_\eta^\nc(\lambda)\in\A_4(r_1,r_2)}\Pb{\omega_\eta^\nc(\lambda)\in\A_4(r_2,r_3)} \\
& \leq C_R \Pb{\omega_\eta^\nc(\lambda)\in\A_4(r_1,r_3)}
\end{align*}
\item There is an $\eps>0$ (independent of $\lambda$) such that for any $R>0$, there is a constant $C=C_{R,\lambda}<\infty$ such that for any $0<r<R$, one has  uniformly in $\eta<r$: 
\[
\Pb{\omega_\eta(\lambda)\in \A_6(r,R)} \leq C\, (r/R)^{2+\eps}\,,
\]
and the probability of a three--arm event for $\omega_\eta(\lambda)$ in $\Hyp$ between radii $r$ and $R$ is bounded above by $C\, (r/R)^2$ uniformly in $\eta<r$.  
\enum
\end{proposition}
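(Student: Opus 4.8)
The statement to prove is Proposition~\ref{pr.ExNC}: quasi-multiplicativity of the four-arm probability (separation of arms) for near-critical percolation $\omega_\eta^\nc(\lambda)$ up to scales $O(1)$, and the polynomial bounds on the six-arm and boundary three-arm probabilities with an exponent strictly above $2$ (respectively equal to $2$), all holding uniformly in $\eta$ small.

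My plan is to deduce both items from the corresponding \emph{critical} statements via the comparison between near-critical and critical arm probabilities below the correlation length. The key input is the observation, going back to Kesten~\cite{Kesten} (see also \cite{Nolin,Damron}), that for $p$ with $|p-p_c|$ small and any fixed scale $R$, on all scales $r\le R$ the near-critical configuration at parameter $p$ is comparable to the critical one: more precisely, for multi-arm events in annuli $A(r_1,r_2)$ with $\eta<r_1<r_2<R$, one has $\Pb{\omega_\eta^\nc(\lambda)\in\A_j(r_1,r_2)}\asymp \Pb{\omega_\eta(\,\lambda{=}0\,)\in\A_j(r_1,r_2)}$ with constants depending only on $R$, $\lambda$ and $j$. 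This is precisely the kind of estimate that our Lemma~\ref{l.main} strengthens: the Radon--Nikodym-type control there, applied to the monotone coupling between $\omega_\eta^\nc(0)$ and $\omega_\eta^\nc(\lambda)$ (only white$\to$black switches occur, governed by a Poisson cloud of rate $r(\eta)$ per site, i.e.\ expected number $O_\lambda(1)\,\eta^2\alpha_4^\eta(\eta,1)^{-1}$ of switches per unit area), shows that the probability that flipping the switched sites creates or destroys a four-arm crossing between scales $r_i$ and $r_j$ is at most $C_1(\lambda)\,\alpha_4(r_i,r_j)$; hence the near-critical four-arm event and the critical one have comparable probabilities up to scale $O(1)$.

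Granting this comparison, item (1) follows immediately: quasi-multiplicativity of $\alpha_4^\eta(r_1,r_3)\asymp\alpha_4^\eta(r_1,r_2)\,\alpha_4^\eta(r_2,r_3)$ at criticality is a classical theorem (see e.g.\ \cite{Nolin,PC}, or \cite{\GPSa}), and transferring it to $\omega_\eta^\nc(\lambda)$ costs only the multiplicative constants from the comparison, which depend only on $R$ and $\lambda$. For item (2), we use the critical six-arm estimate $\alpha_6^\eta(r,R)\le O((r/R)^{2+\eps})$ for some universal $\eps>0$ --- which holds on $\Tg$ because the monochromatic six-arm exponent (in fact even a polychromatic six-arm event) exceeds $2$; one can get this from the five-arm exponent being exactly $2$ together with an extra arm costing a positive power by a standard argument, or directly from the $35/12>2$ six-arm exponent used already in the proof of Lemma~\ref{l.stab}. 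Combining with the comparison gives $\Pb{\omega_\eta^\nc(\lambda)\in\A_6(r,R)}\le C_{R,\lambda}(r/R)^{2+\eps}$. The boundary three-arm bound $O((r/R)^2)$ in the half-plane $\Hyp$ is the classical \emph{exact} exponent $2$ for the half-plane three-arm event at criticality (a planarity/RSW argument, independent of SLE); again the near-critical comparison transfers it with a constant depending on $R,\lambda$.

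The main obstacle is establishing the near-critical/critical comparison of arm probabilities \emph{up to scale $O(1)$} with the uniformity in $\eta$ and the right dependence of constants --- that is, making Lemma~\ref{l.main} do the work in the monotone (near-critical) coupling. In the monotone coupling this is actually where Kesten's original differential-inequality method applies cleanly (whereas Lemma~\ref{l.main} was designed to also survive the non-monotone dynamical setting), so one route is simply to invoke Kesten's estimate \cite{Kesten} (and its modern expositions \cite{Nolin,Damron}) directly; the remark following the proof of Proposition~\ref{pr.stab} already notes that Lemma~\ref{l.main} strengthens exactly this. A secondary, more bookkeeping-level, point is the half-plane three-arm event near corners of a quad boundary, where the relevant angular sector is $3\pi/2$ rather than $\pi$; there the exponent is $4/3<2$, but as in the proof of Lemma~\ref{l.stab} the union bound over the $O(1/r)$ boundary squares still yields a net power of $\bar r$ that is good enough, so item (2) as stated (with the half-plane exponent $2$ for the genuinely half-planar three-arm event) is unaffected. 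Once these comparisons are in place, the Proposition follows by directly quoting the critical-percolation arm-separation and arm-exponent results.
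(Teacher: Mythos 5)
Your proposal is correct in substance and rests on the same underlying input as the paper, namely Kesten's correlation-length theory \cite{Kesten,Nolin}, but it packages the argument differently. The paper's sketch first establishes that RSW holds for $\omega_\eta^\nc(\lambda)$ itself on all rectangles of diameter at most $R$ (because $R$ is a bounded multiple of the correlation length at parameter $p_c+\lambda r(\eta)$), and then runs the classical separation-of-arms and quasi-multiplicativity arguments \emph{directly at the near-critical parameter}; for item (2) it offers either a generalization of Lemma~\ref{l.main} to $6$-arm and boundary $3$-arm events, or the more self-contained route ``near-critical RSW $\Rightarrow$ plane $5$-arm and half-plane $3$-arm exponents equal $2$, then Reimer for the extra $\eps$'', which avoids the SLE-based value $35/12$ altogether. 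You instead transfer the critical statements across a two-sided comparison $\Pb{\omega_\eta^\nc(\lambda)\in\A_j(r_1,r_2)}\asymp\Pb{\omega_\eta\in\A_j(r_1,r_2)}$; this works, but three caveats are worth recording. First, for the quasi-multiplicativity transfer you genuinely need the comparison in both directions, and Lemma~\ref{l.main} by itself only yields the upper direction (near-critical $\le C\,$critical, since the near-critical configuration is one element of $\Omega(\omega,X)$); the lower direction is Kesten's stability statement proper \cite{Kesten,Nolin,Damron}, which you do invoke, so this is fine but should be said explicitly. Second, Kesten's stability is stated for the alternating four-arm (and one-arm, even alternating) events, not for polychromatic $6$-arm or boundary $3$-arm events, so your item (2) really does require carrying out the ``generalize Lemma~\ref{l.main}'' step (or switching to the paper's RSW$+$Reimer route), exactly as the remark after Proposition~\ref{pr.stab} indicates. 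Third, the \emph{separation-of-arms phenomenon} announced in item (1) — which is what is actually used later, e.g.\ in the proof of Theorem~\ref{th.PMNC} — is a statement about the conditional geometry of the arms given $\A_4$, and it does not follow from comparability of the unconditioned probabilities alone; to get it you still need RSW at the near-critical parameter below scale $R$, which is the same Kesten input and is precisely where the paper starts. With these points made explicit, your argument yields the displayed inequalities of item (1) and the bounds of item (2) with constants depending on $R$ and $\lambda$, as required.
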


In fact, Theorem \ref{th.PMNC} will rely partly on Proposition \ref{pr.ExNC}. Hence we start with a sketch of proof of the latter proposition.
\medskip

\ni
{\bf Sketch of proof of Proposition \ref{pr.ExNC}.}
Item 1 follows from the fact that if one fixes a macroscopic scale $R>0$ as well as a fixed  $\lambda\in \R$, then the RSW Theorem holds for rectangles of diameter bounded by $R$. This can be seen for example by using the results on the correlation length by Kesten (see the discussion in Subsection \ref{ss.CL}). Once we have a RSW Theorem, then separation of arms as well as the quasi-mutliplicativity property can be established below the scale $R$ by now classical arguments. See for example \cite{PC,Nolin}.

There are two ways to see why item 2 holds: either by generalizing Lemma \ref{l.main} to the case of these arm-events, or by using the fact that a RSW Theorem classically implies that plane-5-arm exponent and half-plane 3-arm exponent are equal to 2. The 6-arm estimate then follows from Reimer's inequality.  \qed

\ni
{\bf Sketch of proof of Theorem \ref{th.PMNC}.} As in \cite{\GPSa}, we fix an annulus $A$ and we wish to construct a measure $\mu^A=\mu^A(\omega_\infty^\nc(\lambda))$ which is the limit in law of the counting measures for $\omega_\eta^\nc(\lambda)$ on its $A$-pivotal points. When $\lambda=0$ (i.e. the critical case), this measure was well approximated  (in the $L^2$ sense) on the discrete lattice $\eta \Tg$ by a deterministic constant times the number $Y$ of mesoscopic squares of size $\eps$ which intersect the set of $A$-pivotal points. (Note that the parameter $\eps$ does not play the same role in \cite{\GPSa} and in the present paper). This deterministic constant was given by 
\[
\beta=\beta(\eta,\eps):= \Eb{x_0 \md \A_0(2\eps, 1)}\,,
\]
where we use here the same notations as in \cite{\GPSa}. 
The same strategy/proof as in \cite{\GPSa} applies in the present near-critical case except that some work is needed to identify the deterministic constant $\beta_\lambda=\beta_\lambda(\eta,\eps)$ when $\lambda\neq 0$. Two issues in particular need to be addressed:
\bnum
\item First of all, in order to obtain the same measurable map $\omega\mapsto \mu^A(\omega)$, whatever $\lambda$ is, one needs to show that as $\eps$ and $\eta/\eps$ go to zero, one has 
\begin{align}\label{e,I1}
\beta_\lambda(\eta,\eps) \sim \beta(\eta,\eps)\,.
\end{align}
The fact that the proportional factor $\beta_\lambda$ is asymptotically identical to the critical case ensures that the measurable map $\mu^A(\cdot)$ does not depend on $\lambda$. 
\item One of the main technical problems that arises in \cite{\GPSa} comes form the fact that there is an additional conditioning that one needs to handle and the proportional factor that is eventually used is the following one (we rely here on notations from \cite{\GPS} to which we refer):
\[
\hat \beta= \hat \beta(\eta,\eps):= \Eb{x_0 \md \A_0(2\eps, 1), U_0 =1}\,.
\]
Lemma 4.7 in \cite{\GPSa} shows that $\beta=\hat \beta(1+o(1))$ as $\eps$ and $\eta/\eps$ go to zero. Unfortunately the proof of Lemma 4.7 does not apply in our case since it relies on a color-switching argument which only works if $\lambda=0$. 
\enum

From the above construction, Theorem \ref{th.PMNC} is proved exactly as the main theorem in \cite{\GPSa} (and using when needed the estimates from Proposition~\ref{pr.ExNC}), assuming that the following  lemma holds:

\begin{lemma}\label{l.I1}
Let $\lambda\in \R$ be fixed. With the same notations as in \cite{\GPSa}, let 
\[
\begin{cases}
\beta_\lambda= \beta_\lambda(\eta,\eps):= \Eb{x_0(\omega_\eta^\nc(\lambda)) \md \A_0(2\eps,1)}\\
\hat\beta_\lambda= \hat\beta_\lambda(\eta,\eps):= \Eb{x_0(\omega_\eta^\nc(\lambda)) \md \A_0(2\eps,1), U_0=1}
\end{cases}
\]
Then as $\eps$ and $\eta/\eps$ tend to zero, we have
\[
\beta \sim \beta_\lambda \sim \hat \beta_\lambda\,.
\]
\end{lemma}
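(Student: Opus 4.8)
The plan is to prove the two asymptotic equivalences $\beta\sim\beta_\lambda$ and $\beta_\lambda\sim\hat\beta_\lambda$ separately, in both cases by revisiting the corresponding critical arguments of \cite{\GPSa} and feeding in the near-critical substitutes for RSW, quasi-multiplicativity and separation of arms provided by Proposition~\ref{pr.ExNC}(1). We may assume $\lambda>0$ (the case $\lambda<0$ follows by colour inversion of the near-critical coupling, and $\lambda=0$ is trivial). The first point to record is that since $\lambda$ is \emph{fixed}, the near-critical correlation length (Subsection~\ref{ss.CL}) is a fixed positive macroscopic scale, so on the whole range of scales $[\eta,1]$ relevant to $\beta_\lambda(\eta,\eps)$ the configuration $\omega_\eta^\nc(\lambda)$ enjoys RSW and the quasi-multiplicativity/separation-of-arms package uniformly in small $\eta$. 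This lets us pick an intermediate scale $\delta=\delta(\eps)$ with $\eps\ll\delta\ll 1$ and decouple the scale ranges $[\eta,\eps]$, $[\eps,\delta]$, $[\delta,1]$: conditionally on $\A_0(\delta,1)$ and on the landing pattern of the four arms on $\partial B(0,\delta)$, the conditional law of $x_0$ (supported in a fixed $\eps$-box at the origin, hence well inside $B(0,\delta)$ once $\eps\ll\delta$) depends only on the restriction of the configuration to $B(0,\delta)$ with prescribed boundary arm conditions, with all the $\eta$-dependence factoring through $\alpha_4^\eta(\eta,1)^{-1}$ exactly as in \cite{\GPSa}. As usual, all estimates below are carried out on the discrete lattice uniformly in $\eta<\eps^2$ and transferred to $\eta\to 0$ exactly as in the proof of \eqref{e.CardyL1}--\eqref{e.CardyL2} in Theorem~\ref{th.MARGIN}.

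For $\beta\sim\beta_\lambda$, couple $\omega_\eta^\nc(\lambda)$ with the critical configuration $\omega_\eta=\omega_\eta^\nc(0)$ through the monotone coupling of Definition~\ref{d.NCP}. Inside $B(0,\delta)$ the two configurations differ only on a sprinkled set $S\subset\eta\Tg\cap B(0,\delta)$, where each hexagon lies in $S$ with probability $1-e^{-\lambda r(\eta)}\le \lambda\,\eta^2\,\alpha_4^\eta(\eta,1)^{-1}$. The key estimate is that the expected number of $z\in S$ for which $\omega_\eta^\nc(\lambda)$ has the four-arm event from $z$ out to distance $\delta$ is
\[
\lesssim\ \lambda\sum_{z\in\eta\Tg\cap B(0,\delta)}\alpha_4^\eta(\eta,\delta)\,\eta^2\,\alpha_4^\eta(\eta,1)^{-1}\ \lesssim\ \lambda\,\delta^2\,\alpha_4^\eta(\delta,1)^{-1}\ =\ \lambda\,\delta^{3/4+o(1)}\ \xrightarrow[\ \delta\to 0\ ]{}\ 0\,,
\]
uniformly in $\eta$, using the (near-critical) four-arm stability of Lemma~\ref{l.main}, the identity $r(\eta)=\eta^2\,\alpha_4^\eta(\eta,1)^{-1}$, quasi-multiplicativity, and the four-arm exponent $5/4<2$. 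Off the ensuing $o_\delta(1)$-probable event, no sprinkled site can change whether the origin $\eps$-box is $(2\eps,1)$-important: by definition of importance, flipping a site $z$ alters four-arm connectivity only between points within distance $Z_\omega(z)$ of $z$, so a sprinkled $z$ relevant to the origin box must have the four-arm event either out to distance $\delta$ (caught above) or, if $z$ sits inside the box itself, out to the box's outer radius $\sim\eps$ (caught by the same computation with $\delta$ replaced by $\eps$, giving $\lambda\,\eps^{3/4+o(1)}$). Hence on a high-probability event $x_0$ takes the same value for $\omega_\eta^\nc(\lambda)$ and $\omega_\eta$; combining with the scale-decoupling of the outer conditioning $\A_0(2\eps,1)$ between the inside and outside of $B(0,\delta)$ yields $|\beta_\lambda/\beta-1|\le o_\delta(1)+o_{\eta/\eps}(1)$, i.e.\ $\beta_\lambda\sim\beta$.

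For $\beta_\lambda\sim\hat\beta_\lambda$ the colour-switching identity of \cite[Lemma~4.7]{\GPSa} is unavailable, and replacing it is the crux. Instead we argue by scale separation. Any genuinely local component of the event $U_0$ near the origin is handled by the sprinkling coupling of the previous paragraph (local near-critical percolation being critical-like at scale $\eta$). The remainder of $U_0$ is measurable away from the inner scale-$\eps$ neighbourhood of the origin; conditionally on $\A_0(2\eps,1)$, by the near-critical separation of arms (Proposition~\ref{pr.ExNC}(1)) the landing pattern of the four arms on an intermediate circle $\partial B(0,\delta)$ — all that the inner region feels — has a limiting conditional law that is asymptotically unchanged by further conditioning on $\{U_0=1\}$. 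Since $x_0$ is determined by the inside of $B(0,\delta)$ given these boundary arm conditions, the conditional law of $x_0$ given $\{\A_0(2\eps,1),\,U_0=1\}$ and given $\{\A_0(2\eps,1)\}$ differ in total variation by $o_{\eps,\eta/\eps}(1)$, whence $\hat\beta_\lambda/\beta_\lambda\to 1$. One must also note $\P\bigl[\,\omega_\eta^\nc(\lambda)\in U_0\ \bigm|\ \A_0(2\eps,1)\,\bigr]$ stays bounded away from $0$, a fixed-scale near-critical RSW statement covered by Proposition~\ref{pr.ExNC}(1) and Subsection~\ref{ss.CL}. Together $\beta\sim\beta_\lambda\sim\hat\beta_\lambda$.

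The main obstacle is precisely this replacement of the exact combinatorial colour-switching symmetry by an asymptotic scale-separation argument: it forces one to use the near-critical quasi-multiplicativity and separation-of-arms estimates with control on the error terms (rather than merely up to multiplicative constants) and to manage the nested scale ranges $[\eta,\eps]\subset[\eta,\delta]\subset[\eta,1]$ with a correctly tuned $\delta=\delta(\eps)$. A secondary, purely technical nuisance is that $\omega_\eta^\nc(\lambda)$ has no stationary $\eta=0$ incarnation, so every estimate above must be established on the discrete lattice uniformly in small $\eta$ and only then passed to the limit, as in Theorem~\ref{th.MARGIN}; this is routine given Proposition~\ref{pr.ExNC} but lengthens the write-up.
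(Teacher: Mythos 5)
Your overall skeleton (an intermediate scale $\delta$, reduction of the conditioning, then a near-critical-versus-critical comparison via the monotone coupling) is the right family of ideas, but two load-bearing steps are not justified and, as written, would fail. First, for $\beta\sim\beta_\lambda$: your sprinkling estimate is an \emph{unconditional} first-moment bound, giving a bad event of probability $\lambda\,\delta^{3/4+o(1)}$ (and already $\lambda\,\eps^{3/4+o(1)}$ from sprinkled sites inside the box), while $\beta_\lambda$ and $\beta$ are expectations conditioned on $\A_0(2\eps,1)$, an event of probability $\asymp\eps^{5/4+o(1)}$. Since $\eps^{3/4}\gg\eps^{5/4}$, no choice of $\delta\gg\eps$ makes the bad event small relative to the conditioning event, so "with high probability $x_0$ is unchanged" does not transfer to the conditional expectations by any union/Markov argument; moreover $x_0\asymp(\eps/\eta)^{3/4}\to\infty$, so one would also need to control its size on the bad event under the conditioning. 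The sentence "combining with the scale-decoupling of the outer conditioning" is exactly where the work is hidden. The paper avoids this entirely: after replacing $\A_0(2\eps,1)$ by $\A_0(2\eps,\sqrt\eps)$ via the RSW coupling (your $\delta$-reduction), it writes the conditional expectation as an explicit ratio $\frac{\eps^2}{\eta^2}\,\alpha_\square^\lambda(\eta,\sqrt\eps)/\alpha_\square^\lambda(2\eps,\sqrt\eps)$ (Lemma 4.12 of \cite{\GPSa}, whose only input, the half-plane three-arm exponent $2$, is supplied near-critically by Proposition~\ref{pr.ExNC}), and then proves the genuinely quantitative stability $\alpha_\square^\lambda(\eta,\sqrt\eps)=(1+o(1))\,\alpha_\square^0(\eta,\sqrt\eps)$ --- ratio tending to $1$, not merely $\asymp$ --- by rerunning the double induction of Lemma~\ref{l.main} only up to scale $\sqrt\eps\ll L(\lambda)$ (or via Kesten's differential inequalities below the correlation length). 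That ratio-to-one statement is the quantitative engine of the whole lemma and never appears in your write-up; your sprinkling display is morally its first step, but applied at the wrong level (to the conditioned count rather than to the unconditioned arm probabilities entering the ratio representation).

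Second, for $\beta_\lambda\sim\hat\beta_\lambda$: you discard colour-switching and assert that conditioning additionally on $U_0=1$ leaves the law of the $\delta$-scale arm data, hence of $x_0$, asymptotically unchanged "by separation of arms". This is precisely the statement that Lemma 4.7 of \cite{\GPSa} had to establish at criticality, and there it required the exact colour-switching identity; it is not a soft consequence of separation of arms ($U_0$ constrains how the four arms from the box hook up at the unit scale, so conditioning on it does a priori bias the arm data, and showing the inner count is insensitive to this is the crux). As stated this step is an unproved assertion, and if it admitted the soft proof you sketch, colour-switching would have been unnecessary in \cite{\GPSa} as well. The paper takes a different route that you should note: it never proves any near-critical decorrelation of $U_0$; instead it shows $\hat\beta_\lambda\sim\hat\beta_0$ by the same Lemma~\ref{l.main}-type induction (a change of $U_0$ between the coupled critical and near-critical configurations forces a pivotal switch, which the induction already accounts for), and then invokes the critical identity $\hat\beta_0\sim\beta_0$ from \cite{\GPSa} as a black box. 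To repair your argument you must either genuinely prove your $U_0$-decorrelation claim or switch to this transfer-to-criticality mechanism.
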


\ni
{\bf Proof.} 
Let us fix $\lambda\in \R$, and assume without loss of generality that $\lambda>0$. 
Let us start with the first equivalent, $\beta_0 \sim \beta_\lambda$. Using the same technology as in \cite{\GPSa}, i.e. by relying on a coupling argument based on a near-critical RSW, it is easy to show that as $\eps$ and $\eta/\eps$ go to zero, one has:
\[
\Eb{x_0(\omega_\eta^\nc(\lambda))\md \A_0(2\eps, 1)} \sim \Eb{x_0(\omega_\eta^\nc(\lambda))\md \A_0(2\eps, \sqrt{\eps})}\,.
\]
The reason being that one has many logarithmic scales between radii $2\eps$ and $\sqrt{\eps}$ in order to couple the two conditional measures. 
The same argument shows that as $\eps, \eta/\eps \to 0$, 
\begin{align}\label{e.U0}
\Eb{x_0(\omega_\eta^\nc(\lambda))\md \A_0(2\eps, 1), U_0=1} \sim \Eb{x_0(\omega_\eta^\nc(\lambda))\md \A_0(2\eps, \sqrt{\eps}),U_0=1}\,.
\end{align}
This step is important: it explains why $\beta\sim \beta_\lambda$ as the $\lambda$ near-critical effect is almost invisible on the scale $\A_0(2\eps, \sqrt{\eps})$ (as it will be shown in Lemma~\ref{l.appr} below). We will analyse separately the numerators and denominators
\[
\Eb{x_0(\omega_\eta^\nc(\lambda))\md \A_0(2\eps, \sqrt{\eps})} = \frac{ \Eb{x_0(\omega_\eta^\nc(\lambda))\cap \A_0(2\eps, \sqrt{\eps})}} 
{\Pb{\omega_\eta^\nc(\lambda)\in\A_0(2\eps, \sqrt{\eps})}}\,.
\]
As is shown in Lemma 4.12. in \cite{\GPSa}, the numerator is well-approximated by 
\begin{align}\label{e.appr}
\eps^2/\eta^2 \frac{ \alpha_\square^\lambda(\eta, \sqrt{\eps}) }{ \alpha_\square^\lambda(2\eps, \sqrt{\eps})}\,.
\end{align}
See the notations in \cite{\GPSa}. The only ingredient to prove this estimate is the fact that the three-arm exponent in $\Hyp$ is 2 and this is still the case when $\lambda\neq 0$ by Proposition \ref{pr.ExNC}.  We now wish to show the following Lemma.
\begin{lemma}\label{l.appr}
For any fixed $\lambda>0$, 
\[
\alpha_\square^\lambda(\eta, \sqrt{\eps}) \sim \alpha_\square^0(\eta, \sqrt{\eps})\,,
\]
as $\eps, \eta/\eps \to 0$.
\end{lemma}

\ni
{\bf Proof.}
There are two ways to see why this lemma holds:
\bnum
\item One way is to notice that it follows from the proof of Lemma \ref{l.main}. Indeed this Lemma already gives that $\alpha_\square^\lambda(\eta,\sqrt{\eps}) \asymp \alpha_\square^0(\eta,\sqrt{\eps})$, but it is easy to check that if one stops the double induction $0< i <j $ at a level $j$ so that $r_j = 2^j\, \eta  \asymp \sqrt{\eta} \ll 1$, then as $\eta<\eps \to 0 $ one obtains constants in Lemma \ref{l.main} as close to 1 as one wishes. 
\item One may also use the differential inequalities from Kesten (see \cite{Kesten,PC}) and use the fact that the correlation length at level $\lambda>0$ is much larger than $\sqrt{\eps}$. 
\enum
\qed

Now, exactly as in this last lemma, one also has 
\[
\alpha_\square^\lambda(2\eps, \sqrt{\eps}) \sim \alpha_\square^0(2\eps, \sqrt{\eps})\,,
\]
as $\eps, \eta/\eps \to 0$. Thus one obtains the first asymptotic relation $\beta\sim \beta_\lambda$ in Lemma~\ref{l.I1}. For the second asymptotic relation, since we already have from Lemma~4.7 in \cite{\GPSa} that $\hat \beta \sim \beta$, we only need to check that $\hat \beta_\lambda(\eta,\eps)\sim \hat \beta_0(\eta,\eps)$ as $\eps, \eta/\eps\to 0$. This is done in the same manner as $\beta\sim \beta_\lambda$, i.e., by first relying on an approximation such as~\eqref{e.appr}, and then using the proof of Lemma~\ref{l.main} (the additional condition that $U_0=1$ is handled while following the proof of Lemma \ref{l.main}, namely if from $\omega_\eta(0)$ to $\omega_\eta(\lambda)$ one passes from $U_0=0$ to $U_1=1$, it also means that a pivotal point has been used, as in the double induction proof of Lemma \ref{l.main}). This finishes the proof of Lemma~\ref{l.I1} and Theorem~\ref{th.PMNC}. \qed


%
%
%

\begin{remark}\label{r.feller}
Let us end this section by pointing out that the simple Markov processes $t\mapsto \omega_\infty(t)$ and $\lambda\mapsto \omega_\infty(\lambda)$ are not Feller processes! Indeed it is not hard to build two configurations $\omega \sim \P_\infty$ and $\omega'\in \HH$ with $d_\HH(\omega,\omega')\ll 1$ which are such that their pivotal measures are very far apart which then induces very different dynamics starting from these initial points. Not being Feller does not exclude the possibility of being a strong Markov property, but it certainly makes it harder to prove:
\end{remark}

\begin{question}
Is $t\mapsto \omega_\infty(t)$ a {\bf strong} Markov process?
\end{question}

\section{Noise sensitivity and exceptional times}\label{s.ET}

We will start by establishing in Subsection \ref{ss.NS} an analog in our continuous setting of the noise-sensitivity results obtained for dynamical percolation in \cite{\GPS}.  We will then use the noise-sensitivity of the process $t\mapsto \omega_\infty(t)$ in order to obtain the a.s. existence of exceptional times for which there is an infinite cluster in $\omega_\infty(t)$.

%
\subsection{Noise sensitivity for $t\mapsto \omega_\infty(t)$}\label{ss.NS}

\begin{theorem}\label{th.NS1}
For any $Q\in \QUAD_\N$, there is a constant $C=C_Q<\infty$ s.t. for any $t\geq 0$, 
\begin{equation*}
\cov [1_{\boxminus_Q}(\omega_\infty(0)), 1_{\boxminus_Q}(\omega_\infty(t))] \leq C_Q\, t^{-2/3}\,. 
\end{equation*}
\end{theorem}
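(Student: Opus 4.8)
The plan is to prove the covariance bound by going back to the discrete dynamical percolation $t \mapsto \omega_\eta(t)$, where the analogous statement is exactly the noise-sensitivity estimate for quad-crossing events that follows from the spectral/randomized-algorithm methods of \cite{\GPS} (or, in the form most convenient here, from the fact that the spectral sample of a quad-crossing event is concentrated at scale comparable to the correlation length). First I would observe that, by Theorem~\ref{th.MAIN2} (convergence in law in the Skorohod space) together with Theorem~\ref{th.MARGIN}/Proposition~\ref{pr.PROJ}, the pair $(\omega_\eta(0), \omega_\eta(t))$ converges in law to $(\omega_\infty(0), \omega_\infty(t))$ in $(\HH,d_\HH)^2$. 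Since $\boxminus_Q$ is a Borel set with $\P_\infty[\p\boxminus_Q]=0$ (Theorem~\ref{th.SScardy}), and since the same holds at the marginal laws (here one uses that $\omega_\infty(t)\sim\P_\infty$), the functionals $1_{\boxminus_Q}$ are a.s.\ continuous at both coordinates, so
\[
\cov[1_{\boxminus_Q}(\omega_\infty(0)), 1_{\boxminus_Q}(\omega_\infty(t))] = \lim_{\eta\to 0}\cov[1_{\boxminus_Q}(\omega_\eta(0)), 1_{\boxminus_Q}(\omega_\eta(t))]\,.
\]
It then suffices to bound the right-hand side uniformly in $\eta$.

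For the discrete bound, recall that along the dynamics $t\mapsto\omega_\eta(t)$ each hexagon is resampled at rate $r(\eta)=\eta^2\alpha_4^\eta(\eta,1)^{-1}=\eta^{3/4+o(1)}$, so that at time $t$ the configuration $\omega_\eta(t)$ is obtained from $\omega_\eta(0)$ by independently resampling each site with probability $q_t := 1-e^{-t\,r(\eta)}$. Writing $f_Q=1_{\boxminus_Q}-\P_\eta[\boxminus_Q]$ and using the Fourier--Walsh expansion of the (monotone, since $\boxminus_Q$ is increasing) Boolean function $1_{\boxminus_Q}$ on the relevant finite set of hexagons, the covariance equals $\sum_{S\neq\emptyset}(1-q_t)^{|S|}\,\hat f_Q(S)^2$. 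The noise-sensitivity input from \cite{\GPS} is a quantitative control on the lower tail of the spectral sample, namely that $\sum_{0<|S|\le m}\hat f_Q(S)^2$ is small unless $m$ is of order the discrete four-arm count at the appropriate scale; quantitatively this yields, for the crossing of a fixed macroscopic quad, a bound of the form $\cov \le C_Q\, (t\,r(\eta)/\eta^2)^{?}\cdots$ — more precisely, using the scaling-invariance heuristic that the relevant spatial scale is the correlation length $L(t)\asymp t^{-4/3}$ from Theorem~\ref{th.CL} (equivalently Corollary~\ref{c.SCALE}), one expects $\cov[\,\cdot\,]\asymp \alpha_1(L(t))^{?}$, and a direct computation with the exponents $\alpha_1(R)=R^{-5/48+o(1)}$, $\alpha_4(R)=R^{-5/4+o(1)}$ gives the clean power $t^{-2/3}$ claimed. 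I would carry this out by: (i) fixing a macroscopic annular region around $\p_1Q,\p_3Q$; (ii) invoking the $\eta$-uniform spectral concentration of \cite{\GPS} for the crossing event restricted to that region; (iii) translating the spectral bound into the covariance via the formula above and optimizing over the cutoff scale, which produces $L(t)$ and hence the exponent $2/3$.

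The main obstacle I anticipate is purely bookkeeping of exponents and the passage from the spectral-sample tail estimate of \cite{\GPS}, which is stated there for specific crossing events in domains like squares or annuli, to an arbitrary polygonal quad $Q\in\QUAD_\N$: one must check that the four-arm stability and RSW inputs apply uniformly near the (finitely many, but geometry-dependent) corners of $\p Q$, exactly the kind of boundary analysis already used in Section~\ref{s.stability} and in the proof of Theorem~\ref{th.MARGIN}. This is why the constant is allowed to depend on $Q$. A secondary point is ensuring the $\lim_{\eta\to0}$ interchange is legitimate: here one uses that the discrete bound $C_Q t^{-2/3}$ is uniform in $\eta$ (for $\eta$ small relative to $t$ and to the scale of $Q$), and that for $\eta$ not small one can instead first let $\eta\to 0$ before concluding, or simply note the covariance is always at most $1/4$ so only the small-$\eta$ regime matters for the stated bound. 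Once these are in place, the theorem follows; the analogous covariance decay for arm events $\A_j(r,R)$ (which will be needed in Section~\ref{s.ET} for exceptional times) is obtained the same way, using Lemma~\ref{l.meas4arm} in place of Theorem~\ref{th.SScardy} for the continuity-at-the-boundary step.
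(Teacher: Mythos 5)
Your overall route is the same as the paper's: bound the discrete covariance uniformly in $\eta$ using the noise-sensitivity machinery of \cite{\GPS}, then pass to the limit $\eta\to 0$. Two remarks on the details. First, for the discrete input the paper does not redo the Fourier--Walsh/spectral-sample optimization you sketch: it simply quotes the estimates (7.6)/(8.7) of \cite{\GPS}, which already give $\limsup_{\eta\to 0}\cov[1_{\boxminus_Q}(\omega_\eta(0)),1_{\boxminus_Q}(\omega_\eta(t))]\asymp t^{-2/3}$ for macroscopic quads; your rederivation via the lower tail of the spectral sample is exactly what lies behind those estimates, so this is a matter of economy rather than substance. Second, and more importantly, your opening claim that the pair $(\omega_\eta(0),\omega_\eta(t))$ converges in law to $(\omega_\infty(0),\omega_\infty(t))$ ``by Theorem~\ref{th.main2} together with Theorem~\ref{th.MARGIN}/Proposition~\ref{pr.PROJ}'' is not a direct consequence of those statements: Skorohod convergence does not yield convergence of fixed-time marginals, Theorem~\ref{th.MARGIN} is stated for a single near-critical slice, and Proposition~\ref{pr.PROJ} only identifies the one-time marginal law. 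What is needed here is the joint two-time statement $\Pb{\omega_\eta(0),\omega_\eta(t)\in\boxminus_Q}\to\Pb{\omega_\infty(0),\omega_\infty(t)\in\boxminus_Q}$, and the paper obtains it by \emph{repeating} the proof of Theorem~\ref{th.MARGIN} and Proposition~\ref{pr.LC} in the dynamical setting, i.e.\ using the four-arm stability Lemma~\ref{l.main} to rule out a sudden pivotal switch just before/after the deterministic times $0$ and $t$ --- a change that would be nearly invisible in the Skorohod distance. You gesture at the right machinery (and your later ``secondary point'' about the $\eta\to0$ interchange addresses a different, easier issue, uniformity of the bound in $\eta$), so this is a fixable imprecision rather than a wrong turn, but it is the one genuinely delicate step of the limit argument and should be carried out rather than cited.
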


\ni
{\bf Proof:}
\ni
From the estimates (7.6) or (8.7) in \cite{\GPS}, one easily obtains that 
\begin{equation*}
\limsup_{\eta\to 0} \cov [1_{\boxminus_Q}(\omega_\eta(0)), 1_{\boxminus_Q}(\omega_\eta(t))] \asymp t^{-2/3}\,. 
\end{equation*}

Now, following the same proof as in Theorem \ref{th.MARGIN} and Proposition \ref{pr.LC}, one can prove that  
\[
\Pb{\omega_\eta(0) \text{ and } \omega_\eta(t) \in \boxminus_Q} 
\underset{\eta\to 0}{\longrightarrow} 
\Pb{\omega_\infty(0) \text{ and } \omega_\infty(t) \in \boxminus_Q}\,.
\]
Note that the difficulty here, as in Theorem \ref{th.MARGIN}, is to handle the possibility that there would be a sudden change before/after $t$, which would be almost invisible under the Skorohod distance from Definition \ref{d.skorohod}. \qed


Similarly, one has the following radial decorrelation result:
\begin{theorem}\label{th.NS2}
For any $0<r<R$, let $f_{r,R}$ be the indicator function of the event $\A_1(r,R)$ defined in Subsection \ref{ss.armevents}. There is a constant $C<\infty$ s.t. for all $0<r<R$ and for any $ t< r^{-3/4}$, 
\begin{align}
\Eb{f_{r,R}(\omega_\infty(0)) f_{r,R}(\omega_\infty(t))}  
& \leq C\,   \alpha_1(r, t^{-4/3}) \alpha_1(  t^{-4/3} ,R)^2 \nn \\
&  \leq C\, r^{-5/48}  t^{-5/36}\, \, \alpha_1(r,R)^2 \label{e.CORradial}
\end{align}

\end{theorem}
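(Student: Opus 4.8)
The plan is to mimic the proof of Theorem~\ref{th.NS1} but now for the radial arm event $\A_1(r,R)$, combining a discrete estimate from \cite{\GPS} with the convergence machinery already developed in this paper. First I would recall that the radial noise-sensitivity estimates in \cite{\GPS} (the analogs of (7.6)/(8.7) for the one-arm event) give, uniformly as $\eta\to 0$,
\[
\limsup_{\eta\to 0}\Eb{f_{r,R}(\omega_\eta(0))\,f_{r,R}(\omega_\eta(t))}\le C\,\alpha_1(r,t^{-4/3})\,\alpha_1(t^{-4/3},R)^2
\]
for $t<r^{-3/4}$; here the scale $t^{-4/3}$ is exactly the correlation-length-type scale below which the two configurations $\omega_\eta(0)$ and $\omega_\eta(t)$ are still strongly correlated, and above which they decorrelate, so that one pays $\alpha_1(r,t^{-4/3})$ once for the shared piece and $\alpha_1(t^{-4/3},R)^2$ for the two independent top pieces. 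The second inequality in~\eqref{e.CORradial} then follows purely from the known one-arm exponent $\alpha_1(\rho)=\rho^{-5/48+o(1)}$ and quasi-multiplicativity: $\alpha_1(r,t^{-4/3})\alpha_1(t^{-4/3},R)^2 \asymp \alpha_1(r,R)^2\,\alpha_1(r,t^{-4/3})/\alpha_1(r,R) \asymp \alpha_1(r,R)^2\,\alpha_1(R^{-1},\,\cdot)\cdots$, i.e. one extracts the factor $\alpha_1(t^{-4/3},R)^{-1}\asymp (t^{-4/3})^{5/48}R^{-5/48}$-type corrections; plugging in the exponent $5/48$ and the relation between $t^{-4/3}$ and $t$ gives the stated $r^{-5/48}t^{-5/36}$ prefactor (note $\frac{5}{48}\cdot\frac{4}{3}=\frac{5}{36}$), up to the $o(1)$'s which are absorbed in the constant $C$ on the relevant finite range, or more honestly stated with $o(1)$ exponents as elsewhere in the paper.

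The genuinely new point, exactly as in Theorem~\ref{th.NS1}, is to pass this discrete two-time correlation estimate to the scaling limit, i.e. to show
\[
\lim_{\eta\to 0}\Pb{\omega_\eta(0)\in\A_1(r,R)\ \text{and}\ \omega_\eta(t)\in\A_1(r,R)}=\Pb{\omega_\infty(0)\in\A_1(r,R)\ \text{and}\ \omega_\infty(t)\in\A_1(r,R)}.
\]
For this I would invoke the joint convergence in law of the pair $(\omega_\eta(0),\omega_\eta(t))$ to $(\omega_\infty(0),\omega_\infty(t))$, which is a consequence of Theorem~\ref{th.MAIN2} together with the fact (Theorem~\ref{th.MARGIN}, suitably adapted to the dynamical setting as indicated in the proof of Theorem~\ref{th.NS1}) that for fixed times the Skorohod-limit slices coincide with the marginal scaling limits. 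Since $\A_1(r,R)$ is measurable and one has the boundary-triviality statement $\P_\infty[\p\A_1(r,R)]=0$ from Lemma~\ref{l.meas4arm} (and its analog for the one-arm event), the indicator $1_{\A_1(r,R)}(\cdot)$ is a.s.\ continuous for the limiting law on $\HH$ at each of the two times, hence the product $1_{\A_1(r,R)}(\omega(0))\,1_{\A_1(r,R)}(\omega(t))$ is a.s.\ continuous on $\HH\times\HH$ for the law of $(\omega_\infty(0),\omega_\infty(t))$, and weak convergence upgrades to convergence of the expectation of this bounded functional.

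The main obstacle, and the one already flagged in the proof of Theorem~\ref{th.NS1}, is precisely the continuity-at-$t$ issue: the Skorohod distance $d_{\Sk}$ is insensitive to a small reparametrization of time, so a priori $\omega_\eta(t)$ could fail to converge to $\omega_\infty(t)$ because of a ``sudden macroscopic change'' located near the deterministic time $t$. To rule this out I would repeat verbatim the local-continuity argument of Proposition~\ref{pr.LC}: using the setup of Section~\ref{s.stability} (the set $X_{\eta,[t-\delta,t+\delta]}$ of updated sites, the importance $Z(z)$, and Lemma~\ref{l.main}) one bounds, uniformly in small $\eta$, the probability that some update with clock-time in $(t-\delta,t+\delta)$ affects the crossing status of a quad in $\QUAD^{\k(\alpha)}$, or here affects the one-arm connection between scales $r$ and $R$; this probability is $O(\delta)$ by the $\ev W_z$-estimate of Lemma~\ref{l.main} combined with the one-arm exponent, so choosing $\delta$ small makes the discrepancy between $\omega_\eta(t)$ and the ``Skorohod slice'' negligible. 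Once this continuity is in hand, the combination of the $\eta\to0$ limit above and the discrete bound from \cite{\GPS} yields the first line of~\eqref{e.CORradial}, and the arm-exponent computation yields the second; finally $\Eb{f_{r,R}(\omega_\infty(0))}=\alpha_1(r,R)$ (again via Lemma~\ref{l.meas4arm} for the one-arm event) lets one read the bound as a statement about the covariance if desired.
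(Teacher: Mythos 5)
Your proposal follows essentially the same route as the paper, which proves this result by the exact argument of Theorem~\ref{th.NS1} (discrete decorrelation bound plus passage to the limit via the Theorem~\ref{th.MARGIN}/Proposition~\ref{pr.LC} continuity argument), with the discrete input being precisely the two-scale radial (one-arm) version of Theorem~7.3 of \cite{\GPS} that you invoke. Your quasi-multiplicativity computation for the second inequality, despite the slightly garbled intermediate display, lands on the correct exponent arithmetic $\alpha_1(r,t^{-4/3})^{-1}\asymp r^{-5/48}t^{-5/36}$, so the argument is sound.
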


This is proved along the same lines as the proof of Theorem \ref{th.NS1}. This relies in particular on a two-scale version of Theorem 7.3. from \cite{\GPS}.

\begin{remark}\label{}
Theorem \ref{th.NS1} 
hints that the Markov process $t\mapsto \omega_\infty(t)$ should be {\bf ergodic}. We believe that this should indeed be the case, but in order to prove its ergodicity, we would need to control the decorrelation (or noise-sensitivity) of events like $\boxminus_{Q_1}\cap \boxminus_{Q_2}^c$ and the latter ones are not monotone events which prevents us from using the results and techniques from \cite{\GPS}. 
\end{remark}

\subsection{Exceptional times at the scaling limit}

\begin{theorem}\label{}
Almost surely, there exist exceptional times $t$, such that there is an infinite cluster in $\omega_\infty(t)$.  Furthermore, if $\mathcal{E} \subset (0,\infty)$ denotes the random set of such exceptional times, then $\mathcal{E}$ is almost surely of Hausdorff dimension 31/36. 
\end{theorem}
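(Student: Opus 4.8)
The plan is to follow the well-established blueprint of \cite{\GPS} for exceptional times of discrete dynamical percolation, transplanted to the continuum process $t\mapsto\omega_\infty(t)$. Recall that the existence of exceptional times with an infinite cluster is equivalent, by a standard zero-one law and monotone approximation, to showing that for a fixed annulus the first moment of the ``time spent with a one-arm crossing of $[1,R]$'' stays bounded below while the second moment is controlled, as $R\to\infty$; then a second-moment / Paley--Zygmund argument on the set of times in $[0,1]$ at which $\A_1(1,R)$ holds, followed by $R\to\infty$ and compactness, yields a time with an unbounded cluster. The Hausdorff dimension $31/36$ comes, as in \cite{\GPS}, from the codimension computation: the ``expected length of the exceptional set restricted to scale $R$'' decays like $\alpha_1(R)\cdot R^{3/4}$-type quantities, and matching upper bounds on the dimension follow from a first-moment (covering) estimate using the radial correlation bound of Theorem~\ref{th.NS2}.

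Concretely, I would proceed as follows. First, let $f_R:=1_{\A_1(1,R)}(\cdot)$ and set $Y_R:=\int_0^1 f_R(\omega_\infty(t))\,dt$. By Proposition~\ref{pr.PROJ}, $\omega_\infty(t)\sim\P_\infty$ for every $t$, so by Fubini $\E[Y_R]=\P_\infty[\A_1(1,R)]=\alpha_1(R)=R^{-5/48+o(1)}$ (using Lemma~\ref{l.meas4arm} and item (i) of the introduction). For the second moment, $\E[Y_R^2]=\int_0^1\int_0^1\E[f_R(\omega_\infty(s))f_R(\omega_\infty(t))]\,ds\,dt$, and Theorem~\ref{th.NS2} gives $\E[f_R(\omega_\infty(0))f_R(\omega_\infty(t))]\le C\,\alpha_1(1,t^{-4/3})\,\alpha_1(t^{-4/3},R)^2$ for $t<1$ (the exponent bookkeeping: one ``free'' arm up to scale $t^{-4/3}$, then two correlated arms from $t^{-4/3}$ to $R$). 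Plugging in the power-law values and integrating over $t\in[0,1]$ shows $\E[Y_R^2]\le C'\,\alpha_1(R)^2\,R^{\,5/48+o(1)}$, i.e.\ $\E[Y_R^2]\le C'\,\E[Y_R]^2\,R^{\,o(1)}$ up to a polynomial-in-$R$-with-tiny-exponent correction; more precisely the exponent arithmetic gives $\E[Y_R^2]/\E[Y_R]^2 = R^{5/48+o(1)}$, which is the exact analogue of the bound in \cite{\GPS} and is what is needed. By Paley--Zygmund, $\P[Y_R>0]\ge \E[Y_R]^2/\E[Y_R^2]\ge R^{-5/48+o(1)}$, which does \emph{not} stay bounded below---so, exactly as in \cite{\GPS}, one must instead run the second moment argument on the sharper ``$R$-dependent'' observable, namely restrict to the event that the one-arm crosses from a \emph{fixed mesoscopic} scale, or equivalently use the FKG/quasi-multiplicativity trick to bootstrap; the correct statement is that the set $\mathcal{E}_R:=\{t\in[0,1]:\omega_\infty(t)\in\A_1(1,R)\}$ has positive probability of being nonempty \emph{uniformly in $R$} once one uses the localized first and second moments as in Section~6 of \cite{\GPS}. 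I would import that argument verbatim, the only new input being Theorem~\ref{th.NS2}, which replaces the discrete two-scale decorrelation estimate (Theorem 7.3 of \cite{\GPS}).

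Next, for the passage $R\to\infty$: the events $\{\mathcal{E}_R\ne\emptyset\}$ are decreasing in $R$ (an $R$-crossing is in particular an $R'$-crossing for $R'<R$), and using the a.s.\ \cadlag regularity of $t\mapsto\omega_\infty(t)$ together with measurability of $\A_1(1,R)$ (Lemma~\ref{l.meas4arm}) and compactness of $(\HH,d_\HH)$, the intersection $\bigcap_R\{\mathcal{E}_R\ne\emptyset\}$ has probability bounded below by $\inf_R\P[\mathcal{E}_R\ne\emptyset]>0$; a $0$-$1$ law (ergodicity-type argument, or a Fubini/tail argument over disjoint time windows exploiting the Markov property of Theorem~\ref{th.SMP} and invariance of $\P_\infty$) upgrades this to probability one. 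On this event there is a time $t$ at which $\omega_\infty(t)$ has arms to every scale, i.e.\ an infinite cluster in the sense of the statement (for every $R>r$, $\omega_\infty(t)\in\A_1(r,R)$), which is the definition used in Theorem~\ref{th.CL}. For the dimension lower bound $\dim_H\mathcal{E}\ge 31/36$, I would build a Frostman measure on $\mathcal{E}\cap[0,1]$ as the weak limit of $\alpha_1(R)^{-1}\mathbf{1}_{\mathcal{E}_R}(t)\,dt$; the first moment gives total mass of order $1$, and Theorem~\ref{th.NS2} gives the energy estimate $\iint |s-t|^{-\gamma}\,d\nu(s)\,d\nu(t)<\infty$ for all $\gamma<31/36$, since $\E[f_R(\omega_\infty(0))f_R(\omega_\infty(t))]/\alpha_1(R)^2 \asymp t^{-5/36+o(1)}$ integrates against $|t|^{-\gamma}$ precisely when $\gamma+5/36<1$, i.e.\ $\gamma<31/36$. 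For the upper bound, cover $[0,1]$ by $\lfloor t_0^{-1}\rfloor$ intervals of length $t_0$ and estimate, via the first moment of $Y_R$ over each interval and the decorrelation up to scale $t_0^{-4/3}$, that $\E[\#\{\text{intervals meeting }\mathcal{E}_R\}]\le C\, t_0^{-1}\,\alpha_1(t_0^{-4/3})$; optimizing and letting $R\to\infty$ then $t_0\to 0$ pins the dimension at $31/36$.

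\textbf{Main obstacle.} The genuinely delicate point, as flagged already for Theorem~\ref{th.NS1}, is \emph{not} the moment arithmetic (that is routine once one has the power-law arm exponents and Theorem~\ref{th.NS2}) but rather justifying that the continuum correlation $\E[f_R(\omega_\infty(0))f_R(\omega_\infty(t))]$ is the $\eta\to0$ limit of its discrete counterpart $\E[f_R(\omega_\eta(0))f_R(\omega_\eta(t))]$ \emph{uniformly enough} in $R$ and $t$---the subtlety being, exactly as in the proof of Theorem~\ref{th.MARGIN} and Proposition~\ref{pr.LC}, that a ``sudden macroscopic change'' of the configuration near time $0$ or near time $t$ is essentially invisible in the Skorohod topology, so one cannot simply read off two-time statistics from convergence of the trajectory in $\Sk$. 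The remedy is the local-continuity estimate of Proposition~\ref{pr.LC} (built on Lemma~\ref{l.main}), applied at the two endpoints, to show that $\P[\exists\,s\in(t-\delta,t+\delta):\ \omega_\eta(s)\ \text{and}\ \omega_\eta(t)\ \text{differ on }\A_1(r,R)]$ is small uniformly in $\eta$; combining this with the $\Sk$-convergence of $\omega_\eta(\cdot)$ to $\omega_\infty(\cdot)$ and the $\eta\to0$ convergence of two-time arm probabilities on the discrete side (which is where the two-scale refinement of Theorem~7.3 of \cite{\GPS} enters) gives the required limit. Everything else is a transcription of Sections~6--8 of \cite{\GPS}.
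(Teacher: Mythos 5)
Your moment computation contains an arithmetic error that derails the first half of the argument. With $r=1$, the bound of Theorem~\ref{th.NS2} reads $\Eb{f_R(\omega_\infty(0))\,f_R(\omega_\infty(t))}\le C\,t^{-5/36}\,\alpha_1(1,R)^2$ for $0<t<1$, and since $5/36<1$ the integral $\int_0^1 t^{-5/36}\,dt$ converges; hence $\Eb{Y_R^2}\le 2C\bigl(\int_0^1 t^{-5/36}dt\bigr)\alpha_1(1,R)^2\le \tilde C\,\Eb{Y_R}^2$ \emph{uniformly in $R$}, with no $R^{5/48+o(1)}$ loss. (This is exactly what the time normalization by $r(\eta)$ buys, and it is the reason the dimension $31/36$ is positive.) Consequently Paley--Zygmund already gives $\inf_R\Pb{Y_R>0}>0$, which is precisely the route the paper takes; your claimed failure of the plain second-moment method, and the ensuing detour through the localized machinery of \cite{\GPS} (``import that argument verbatim''), is both unnecessary and, as written, too vague to constitute a proof --- it is motivated by an incorrect exponent count.

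The more serious gap is in the passage from $\Pb{\bigcap_R\{Y_R>0\}}>0$ to the existence of an actual exceptional time. To extract a single $t\in\bigcap_R\mathcal{E}_R$ from ``$\mathcal{E}_R\neq\emptyset$ for all $R$'' one needs the time sets $\mathcal{E}_R=\{t\in[0,1]:\omega_\infty(t)\in\A_1(1,R)\}$ to be compact, and they are \emph{not} closed in general: the trajectory is only \cadlag, and at a jump where a pivotal closes the one-arm event can be destroyed instantaneously, so a limit of times in $\mathcal{E}_R$ need not lie in $\mathcal{E}_R$. Compactness of $(\HH,d_\HH)$ concerns the state space and does not address this. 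The paper's proof devotes its second half to exactly this point: it passes to the modified trajectory $\tilde\omega_\infty$ obtained by taking left limits at the ``closing'' discontinuities (so $\tilde\omega_\infty\ge\omega_\infty$ and the corresponding time sets become compact), obtains exceptional times for $\tilde\omega_\infty$, and then adapts Lemma~3.2 of \cite{HPS} --- partitioning the plane into unit squares, using independence of the infinite-cluster event from the configuration in any fixed square together with countability of the discontinuities attached to each square --- to show that a.s.\ no discontinuity time is exceptional, so the exceptional times of $\tilde\omega_\infty$ transfer to $\omega_\infty$. Some such step is indispensable and is missing from your proposal. Your remaining points (Frostman/energy argument for the dimension lower bound, first-moment covering for the upper bound, and the care needed, as in Proposition~\ref{pr.LC}, to pass two-time discrete correlations to the limit) are consistent with the paper, which itself defers the dimension computation to \cite{SchrammSteif} and \cite{\GPS}.
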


\ni
{\bf Proof:}
\ni
In this proof, we will denote the radial event $f_{r=1,R}$ from Theorem \ref{th.NS2} simply by $f_R$.
Let $X_R:= \int_0^1 f_R(\omega_\infty(s)) ds$. By definition, we have $\Eb{X_R}= \alpha_1(1,R)\asymp R^{-5/48}$. As in \cite{\SS,\GPS}, one has 
\begin{align*}\label{}
\Eb{X_R^2} & \leq 2 \int_0^1 \Eb{f_R(\omega_\infty(0)) f_R(\omega_\infty(s))} ds \\ 
& \leq 2C\, (\int_{0}^1 s^{-5/36} \, ds)  \alpha_1(1,R)^2  \\
& \leq  \tilde C \; \Eb{X_R}^2\,.
\end{align*}
By the standard second moment method, $\liminf_{R\to\infty} \Pb{X_R>0} >0$. Since the events $\{ X_R >0 \}$ are decreasing in $R$, by countable additivity, one obtains 
\begin{equation}\label{e.ET}
\Pb{ \cap_R \{ X_R >0 \} } >0\,. 
\end{equation}
If for each radius $R$, the random set of times $\left\{t\in [0,1], \text{ s.t. } \omega_\infty(t) \in \A_1(1,R) \right\}$ was a.s. a compact set, then the estimate ~\eqref{e.ET} would readily imply the existence of exceptional times. Unfortunately, our process $t\mapsto \omega_\infty(t)$ is c\`adl\`ag. One can still conclude using a similar trick as in the case of (discrete) dynamical percolation: since $t\mapsto \omega_\infty(t)$ is \cadlag, let $\{ t_i^+\}_{i\geq 1}\cup \{t_i^-\}_{i\geq 1}$ denote its countable set of discontinuities in $[0,1]$, where each discontinuity is marked $+$ if $\omega_\infty(t-) \leq \omega_\infty(t+)$ (i.e. the pivotal point responsible for the discontinuity turned open) and is marked $-$ otherwise. 
Let us consider the trajectory $t \mapsto \tilde\omega_\infty(t)$ which is identical to $t\mapsto \omega_\infty(t)$ outside of $\bigcup t_i^-$ and on $\bigcup t_i^-$, is defined by $\tilde \omega_\infty(u):= \lim_{\delta\to 0} \omega_\infty(u-\delta)$. For this process, one has decreasing compact sets as $R$ increases and the above proof leads to the existence of exceptional times for $t\mapsto \tilde \omega_\infty(t)$. Since by construction $\omega_\infty(t) \leq \tilde \omega_\infty(t)$, it could still be that there are exceptional times for $\tilde \omega_\infty(\cdot)$ but not for our process $\omega_\infty(\cdot)$. 
The purpose of Lemma 3.2.  in \cite{HPS} is to overcome this problem in the classical (discrete) model. It turns out that one can adapt the proof of this Lemma 3.2. to our present setting as follows. 
Divide the plane $\R^2$ into disjoint squares $Q_{n,m}=[n,n+1)\times [m,m+1)$. Let $\{ t^{m,n}_j\} \subset \bigcup \{t_i^-\}$ be the set of discontinuities which correspond to a pivotal point in $Q_{n,m}$. 
 Since the event of having an infinite cluster in $\omega_\infty(t)$ is independent of what happens in each fixed square $Q_{n,m}$, by countable additivity of $\{ t^{m,n}_j\}$, one concludes that a.s. there are no times $t^{m,n}_j$ s.t. $\tilde \omega_\infty(t^{m,n}_j)$ has an infinite cluster. This implies that if $t\mapsto \tilde \omega_\infty(t)$ has exceptional times, then all of these a.s.  arise outside of the discontinuity points.

%

 Finally, the fact that $\mathcal{E}$ is a.s.~of Hausdorff dimension $31/36$ follows in a classical way from the $t^{-5/36}$ estimate in the correlation bound~\eqref{e.CORradial} as it is explained for example in \cite{SchrammSteif} or in \cite{\GPS}. \qed
 
It was pointed out at the end of \cite{dynexit} that although the dimension of exceptional times coincides for the discrete and the continuum dynamical percolation processes, the tail behaviour of the time until the first exceptional time seems to be different: it is proved to be exponentially small for the discrete process, but is conjecturally only subexponential in the scaling limit.\margin{added this paragraph}

\section{Miscellaneous: gradient percolation, near-critical singularity, Loewner drift}\label{s.MIS}

\subsection{Gradient percolation}\label{ss.gradient}

In \cite{gradient}, the author considers the following gradient percolation model: in the domain $[0,1]^2$, consider an inhomogeneous percolation model on $\frac 1 n \Tg \cap [0,1]^2$ with parameter $p(z):=\Im(z), z\in [0,1]^2$. As the mesh $\frac 1 n \to 0$, it is straightforward to check that there is an interface between open and closed hexagons which localizes near the horizontal line $y=p_c=1/2$. This interface between the two phases is called the {\bf front}. Various critical exponents of this front are studied in \cite{gradient}; in particular, its typical distance from the midline was proved to be $f(n)/n=n^{4/7+o(1)}/n$; the exact definition for $f(n)$ should be 
\begin{equation}\label{e.retafn}
\frac{f(n)}{n} = r\left(\frac{1}{f(n)}\right)\,.
\end{equation} 
It is furthermore conjectured in \cite{gradient} that the front properly renormalized should have an interesting scaling limit, which is what we wish to discuss now. More than just the front itself, we can also prove the existence of a scaling limit for the entire gradient percolation configuration. See Theorem~\ref{th.gradient} below. Then, the front itself will be a measurable function of this scaling limit, in the same way as the $\SLE_6$ trace is measurable w.r.t. $\omega_\infty \in \HH$, as proved in \cite[Corollary 2.13]{GPS2a}.

Before stating a theorem, just like in near-critical percolation, one needs to renormalize gradient percolation in a suitable  manner:

\begin{definition}
For each $\eta>0$, let $\omega_\eta^{\gr}$ be the percolation model on $\eta \Tg$ with inhomogeneous parameter $p(z) =p_c + -1/2 \vee \Im(z) r(\eta) \wedge 1/2 $. 
This $\omega_\eta^\gr$ is exactly a scaled and centered copy of the above gradient percolation with $\eta=1/f(n)$, as follows from~(\ref{e.retafn}).
\end{definition}

\begin{theorem}[Scaling limit of gradient percolation]\label{th.gradient}
There is a random variable $\omega_\infty^\gr\in \HH=\HH_\C$, the {\bf continuum gradient percolation}, so that 
\[
\omega_\eta^\gr \overset{(d)}{\longrightarrow} \omega_\infty^\gr\,,
\]
as the mesh $\eta \to 0$. Furthermore, this gradient percolation $\omega_\infty^\gr$ corresponds to the inhomogeneous near-critical $\omega_\infty^{\nc,\phi}(\lambda=1)$ with $\phi(z):=\Im(z)$ defined in Definition \ref{d.INC}. 
\end{theorem}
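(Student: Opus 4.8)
The plan is to deduce Theorem~\ref{th.gradient} from the machinery already developed for the inhomogeneous near-critical process $\omega_\infty^{\nc,\phi}(\lambda)$ of Definition~\ref{d.INC} and Theorem~\ref{th.INC}, specialized to $\phi(z) := \Im(z)$ and $\lambda = 1$. First I would observe that the discrete object $\omega_\eta^\gr$ is, essentially by construction, a version of the discrete inhomogeneous near-critical coupling $\omega_\eta^{\nc,\phi}(\lambda=1)$ with $\phi(z) = -1/2 \vee \Im(z) \wedge 1/2$: indeed, starting from critical percolation on $\eta\Tg$ and letting white hexagons at $z$ switch to black (for $\Im(z) > 0$) or black switch to white (for $\Im(z) < 0$) at rate $\phi(z)\,\eta^2/\alpha_4(\eta,1)$ for unit time produces exactly a configuration with inhomogeneous parameter $p(z) = p_c + (1 - e^{-\phi(z) r(\eta)})\asymp p_c + \phi(z) r(\eta)$, which matches the definition of $\omega_\eta^\gr$ up to the harmless truncation of $\phi$ at $\pm 1/2$ (which only matters far from the midline and can be absorbed, or one simply works on a bounded domain $D$ where $\Im$ is already bounded and then compactifies as in Subsection~\ref{ss.extensionF}). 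A minor point: Definition~\ref{d.INC} treats $\phi$ as taking values in $(0,\infty)$ and only increases the parameter; to handle $\Im(z) < 0$ one runs the same construction with the sign of the Poisson switches reversed in the lower half-plane, which is symmetric and requires no new ideas.

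Next I would invoke Theorem~\ref{th.INC} (and its stated near-critical analogue) directly: it asserts that the cut-off processes $\lambda \mapsto \omega_\infty^{\nc,\phi,\eps}(\lambda)$ converge in probability in the Skorohod space to a limit $\lambda \mapsto \omega_\infty^{\nc,\phi}(\lambda)$, and that this limit is the weak limit of the discrete processes $\lambda \mapsto \omega_\eta^{\nc,\phi}(\lambda)$ as $\eta \to 0$. Evaluating at the single slice $\lambda = 1$ and applying the slice-convergence statement in the spirit of Theorem~\ref{th.MARGIN} (convergence in law in $(\HH, d_\HH)$ of the fixed slice, together with the local-continuity Proposition~\ref{pr.LC}), one gets $\omega_\eta^\gr = \omega_\eta^{\nc,\phi}(1) \overset{(d)}{\longrightarrow} \omega_\infty^{\nc,\phi}(1) =: \omega_\infty^\gr$ in $(\HH_\C, d_{\HH_\C})$. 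The identification of $\omega_\infty^\gr$ with $\omega_\infty^{\nc,\phi}(\lambda=1)$ for $\phi = \Im$ is then immediate from this chain of equalities. One should also check that the renormalization $\eta = 1/f(n)$ with $f(n)/n = r(1/f(n))$ is precisely the one making $\omega_\eta^\gr$ a scaled copy of the original gradient percolation of \cite{gradient}; this is a direct unwinding of~\eqref{e.retafn} and the definition of $r(\eta)$, and requires no probabilistic input.

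The remark that the front is then a measurable function of $\omega_\infty^\gr$ would be justified exactly as the measurability of the $\SLE_6$ trace w.r.t.\ $\omega_\infty$ in \cite[Corollary 2.13]{GPS2a}: the front is the outer boundary of the union of black clusters touching the top of the domain (or the appropriate interface between the black phase above and white phase below), which is a quad-crossing-measurable object since connectivity and cluster boundaries are determined by crossing events of countably many quads. I expect the main obstacle to be purely bookkeeping rather than conceptual: one must carefully verify that the truncation of $\phi$ at $\pm 1/2$ and the passage from a bounded smooth domain $D$ to the full plane $\C$ (where $\Im$ is unbounded, so strictly speaking Theorem~\ref{th.INC} as stated does not apply verbatim) cause no trouble. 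This is handled by the same compactification argument as in Subsection~\ref{ss.extensionF}: restrict to $[-N,N]^2$, use that outside a bounded horizontal strip the parameter is $\pm 1/2 + O(r(\eta))$ and thus the configuration is essentially deterministic (all black above, all white below) at macroscopic scales, and take $N \to \infty$ in the metric $d_{\HH_\C}$. Beyond that, every ingredient — existence and convergence of the cut-off processes, the stability estimate of Section~\ref{s.stability} (whose Lemma~\ref{l.main} is already stated to extend to the near-critical regime via Proposition~\ref{pr.ExNC}), and the slice-convergence of Theorem~\ref{th.MARGIN} — is available off the shelf, so the proof is essentially an application of the general theory to a specific choice of $\phi$.
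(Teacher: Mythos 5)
Your proposal is correct and follows essentially the same route as the paper: identify $\omega_\eta^\gr$ with the inhomogeneous near-critical configuration $\omega_\eta^{\nc,\phi}(\lambda=1)$ for $\phi(z)=\Im(z)$, then invoke the near-critical version of Theorem~\ref{th.INC} together with a generalisation of Proposition~\ref{pr.LC} (in the spirit of Theorem~\ref{th.MARGIN}) to get convergence of the fixed slice in $(\HH,d_\HH)$. In fact you spell out several points the paper leaves to the reader (the signed nature of $\Im$ versus the positivity assumption in Definition~\ref{d.INC}, the truncation at $\pm 1/2$, and the compactification of Subsection~\ref{ss.extensionF} for the full plane), which is exactly where the implicit work lies.
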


\ni
{\bf Proof.}
The proof is rather straightforward at this stage of the paper: 
 it is enough to notice that $\omega_\eta^\gr$ is well-approximated by $\omega_\eta^{\nc,\phi}(\lambda=1)$ with $\phi(z)=\Im(z)$ and then to rely on the near-critical version of Theorem \ref{th.INC} as well as on an easy generalisation of Proposition \ref{pr.LC} to deal with the convergence of the latter. Details are omitted.  \qed


\ni
We end this subsection with the {\bf measurability of the front}. First of all, from the proof of Theorem \ref{th.CL}, we have that a.s.~for $\omega_\infty^\gr$, there is an infinite cluster in the upper half-plane and a dual infinite cluster in the lower half plane, which suggests an interface (or front) $\gamma_\infty$ between these two. Indeed, for any $0<r<R$, consider the subset of the plane 
\[
F_{r,R}:= \{ z\in \C, \omega_\infty^\gr \in \A_2(z,r,R) \}\,.
\]
In other words, $F_{r,R}$ is the set of points in $\C$ which have 2-arm (one dual, one primal) in the Euclidean annulus $A(x,r,R)$. 
Now, it is not hard to show using the proof of Theorem~\ref{th.CL} that the set 
\[
\gamma_\infty := \bigcap_{0<r<R<\infty} \closure{F_{r,R}}\,,
\]
is non-empty, and just like in \cite[Corollary 2.13]{GPS2a}, it is measurable w.r.t.~the gradient percolation scaling limit. We stress here that this is only the set of points in the front, without an ordering that would give the front as a curve.
See also \cite[Question 2.14]{GPS2a}.

\subsection{Singularity of $\omega_\infty(\lambda)$ w.r.t. $\omega_\infty(0)$ }\label{ss.sing}

The main result in \cite{NolinWerner} may be stated as follows:
\begin{theorem}[\cite{NolinWerner}]\label{th.NW}
Let $\lambda\neq 0$. Consider the interface $\gamma_\eta(\lambda)$ in the upper half-plane $\eta \Tg \cap \Hyp$ for the near-critical configuration $\omega_\eta^\nc(\lambda)$. Then, any subsequential scaling limit for $\{ \gamma_\eta(\lambda) \}_\eta$ is singular w.r.t. the $\SLE_6$ measure, the scaling limit of $\gamma_\eta(0)$.
\end{theorem}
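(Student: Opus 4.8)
\ni
\textbf{Proof proposal.}
The plan is to split the assertion into a soft part, which is essentially immediate from the results established in this paper, and the genuine curve-level part, which amounts to re-running the Nolin--Werner mechanism with the near-critical estimates now at our disposal. First I would prove the weaker statement that $\P_{\lambda,\infty}\perp\P_\infty$ as Borel measures on $(\HH,d_\HH)$. Consider the event
\[
\ev E:=\bigcup_{r\in\Q}\ \bigcap_{R>r,\,R\in\Q}\A_1(r,R)\ \subset\ \HH\,,
\]
which is Borel by Lemma~\ref{l.meas4arm} and which is precisely ``there is an unbounded open cluster''. By Theorem~\ref{th.CL}, $\P_{\lambda,\infty}[\ev E]=1$ for every $\lambda>0$. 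On the other hand, for each fixed $r$ one has $\P_\infty[\A_1(r,R)]=\lim_{\eta\to0}\P_\eta[\A_1(r,R)]\le\limsup_{\eta}\alpha_1^\eta(r,R)\to0$ as $R\to\infty$ (by Lemma~\ref{l.meas4arm} and $\alpha_1(R)=R^{-5/48+o(1)}$ from \cite{\SmirnovWerner}), so $\P_\infty[\ev E]=0$; for $\lambda<0$ one uses the analogous event $\ev E'$ for the dual cluster. Thus $\omega_\infty^\nc(\lambda)$ and $\omega_\infty$ live on disjoint Borel subsets of $\HH$, which already ``confirms \cite{NolinWerner}'' at the level of the full percolation configuration.

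\emph{Upgrading to the interface.} The interface $\gamma_\eta(\lambda)$ in $\eta\Tg\cap\Hyp$ is a measurable function of $\omega_\eta^\nc(\lambda)$ alone, and its subsequential limits are measurable w.r.t.\ the corresponding continuum configuration, exactly as the $\SLE_6$ trace is measurable w.r.t.\ $\omega_\infty$ (see \cite[Corollary~2.13]{\GPSa}). However, singularity of two laws does \emph{not} descend to push-forwards, so one must exhibit an event in the $\sigma$-field generated by the \emph{curve itself} that separates $\gamma_\infty(\lambda)$ from $\SLE_6$ almost surely. Following Nolin--Werner, I would record, along a family of disjoint dyadic annuli $A(z,\rho,2\rho)$ straddling the interface, a \emph{colour-asymmetric} arm pattern --- e.g.\ whether the curve, upon crossing the annulus, first reconnects to the open boundary arc rather than to the closed one. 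At $p_c$ the colour-flip symmetry of critical percolation forces the two patterns to be equiprobable in the $\eta\to0$ limit, while for $\lambda\neq0$ there is no colour symmetry, and the near-critical RSW and quasi-multiplicativity inputs (Proposition~\ref{pr.ExNC}, together with the stability Lemma~\ref{l.main}) produce a bias bounded away from $1/2$ by a constant $\delta(\lambda)>0$, uniformly over the relevant scales; this bias persists in the scaling limit via the near-critical Cardy-type convergence \eqref{e.CardyL1}--\eqref{e.CardyL2} and Theorem~\ref{th.MARGIN}. Since these annular observables can be arranged to be roughly independent, a second-moment / Borel--Cantelli (Kakutani-type) argument then yields a curve-measurable event $\ev F$ with $\Pb{\gamma_\infty(\lambda)\in\ev F}=1$ and $\Pb{\SLE_6\in\ev F}=0$; running the same estimates on $\eta\Tg$ shows that every subsequential scaling limit of $\gamma_\eta(\lambda)$ is a.s.\ in $\ev F$, hence singular w.r.t.\ $\SLE_6$.

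\emph{Main obstacle.} The configuration-level part is essentially free; the real difficulty is the one Nolin--Werner already faced: choosing the asymmetric observable so that it is simultaneously a continuous functional of $\omega$, measurable with respect to the curve, and genuinely colour-asymmetric; and then harvesting enough (nearly) independent scales. Because the correlation length $L(\lambda)\asymp\lambda^{-4/3}$ is a finite constant for fixed $\lambda\neq0$, in a bounded domain one cannot use a single nested sequence of macroscopic annuli and must instead exploit many parallel mesoscopic annuli along the curve (in the half-plane setting of \cite{NolinWerner} the far field supplies the scales directly). Making the bias $\delta(\lambda)$ uniform across those scales, and controlling the residual dependence between the annular patterns well enough for the law of large numbers to apply, is exactly where Proposition~\ref{pr.ExNC} and Lemma~\ref{l.main} do the essential work.
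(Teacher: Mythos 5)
This statement is quoted from \cite{NolinWerner}: the paper offers no proof of Theorem~\ref{th.NW} at all. What the paper proves nearby (Subsection~\ref{ss.sing}) is the strictly weaker, configuration-level statement that $\P_{\lambda,\infty}$ is singular w.r.t.\ $\P_\infty$, via Lemma~\ref{l.SQ} (an excess crossing probability $\ge c_\lambda u^{3/4}$ for $u$-squares), a Hoeffding bound over the $n^2$ disjoint $1/n$-squares and Borel--Cantelli; and the authors explicitly remark that this does \emph{not} imply Theorem~\ref{th.NW}, since mutually singular configurations could still have ``similar'' interfaces. Your first step is a correct alternative proof of that weaker statement in the full-plane setting (in the bounded domains used through most of the paper the infinite-cluster event $\ev E$ is not available, and one falls back on the square-counting argument anyway), but by your own account it is only the ``soft part''.

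The genuine gap is the curve-level upgrade, which is precisely the content of \cite{NolinWerner} and is not carried out here. Concretely, the claim that your colour-asymmetric annulus observable has probability bounded away from $1/2$ by a constant $\delta(\lambda)>0$ ``uniformly over the relevant scales'' cannot be correct: for fixed $\lambda\neq 0$, any local pattern at mesoscopic scale $\rho$ has near-critical bias of order $\lambda\rho^{3/4}\to 0$ as $\rho\to 0$ (below the correlation length the model looks critical --- the theme of this very paper), while at scales of order $L(\lambda)$ a bounded window contains only $O(1)$ annuli, far too few for any law-of-large-numbers or Kakutani-type dichotomy. The actual Nolin--Werner mechanism is that the vanishing per-box bias $\asymp\lambda\rho^{3/4}$ is compensated by the $\approx\rho^{-7/4}$ boxes of size $\rho$ visited by the interface, so the aggregated squared bias $\approx\lambda^2\rho^{-1/4}$ diverges as $\rho\to 0$; making this work requires constructing a specific curve-measurable, genuinely colour-asymmetric observable, proving the $\rho^{3/4}$ bias with constants uniform over the boxes along the (random) curve, and controlling the correlations between boxes --- none of which follows from Proposition~\ref{pr.ExNC} and Lemma~\ref{l.main} alone, and none of which is done in your sketch. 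Two further points need care: exact equiprobability at criticality is not automatic, since the colour flip maps the interface law to that of a reflected interface rather than fixing it; and the paper itself notes that measurability of the interface \emph{as a curve} (rather than its trace) is still open, so even transferring a curve-level separating event to the subsequential limits requires an argument. As it stands, your proposal establishes the configuration-level singularity by a different route, but for the stated theorem it defers the essential work back to \cite{NolinWerner}.
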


Note that in this paper we obtain a scaling limit for $\omega_\eta^\nc(\lambda)$, and exactly as in \cite[Corollary 2.13]{GPS2a}, the trace of the interface in the upper half plane is measurable w.r.t.~this scaling limit. However, we did not prove the measurability of the interface as a curve (see \cite[Question 2.14]{\GPSa}), hence the subsequential limits in the above theorem are not exactly known yet to be an actual limit.\margin{I'm not sure that singularity of traces is also proved in NW, so I wrote this paragraph like this.} On the other hand, in the spirit of this theorem, we prove the following singularity result:

\begin{theorem}\label{}
Let $\lambda \neq 0$, then the near-critical continuum percolation $\omega_\infty^\nc(\lambda)$ is singular w.r.t. $\omega_\infty=\omega_\infty(0)\sim\P_\infty$. 
\end{theorem}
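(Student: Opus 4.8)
The plan is to use the scaling covariance from Corollary~\ref{c.SCALE} together with an explicit observable whose expected behaviour under $\P_\infty$ and under $\P_{\lambda,\infty}$ differs, and then to invoke a zero-one law to upgrade ``different behaviour'' to mutual singularity. More precisely, fix $\lambda\neq 0$; by reflection symmetry we may assume $\lambda>0$. By Corollary~\ref{c.SCALE}, for any $\alpha>0$ the configuration $\alpha\cdot\omega_\infty^\nc(\lambda)$ has the law of $\omega_\infty^\nc(\alpha^{-3/4}\lambda)$; letting $\alpha\to\infty$, the rescalings of $\omega_\infty^\nc(\lambda)$ approach, in law, the near-critical configurations at parameter tending to $0$, i.e.\ they look more and more critical at large scales, but never exactly critical. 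The key is to extract from this a tail event of the ``germ at infinity'' (or, after inversion, the germ at a point) which has $\P_{\lambda,\infty}$-probability $1$ and $\P_\infty$-probability $0$, or vice versa.

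The concrete observable I would use is the one already central to Theorem~\ref{th.CL}: the one-arm event $\A_1(r,R)$ and the associated notion of an infinite cluster. Under $\P_{\lambda,\infty}$ with $\lambda>0$, Theorem~\ref{th.CL} says there is a.s.\ an infinite cluster, and moreover the correlation length $L_2(\lambda)=c_2\lambda^{-4/3}$ is finite: for $R$ much larger than $L_2(\lambda)$, crossing probabilities of $R\times R$ squares are close to $1$ (supercritical behaviour). Under $\P_\infty$ (critical), by RSW/Cardy the crossing probability of an $R\times R$ square stays bounded away from $0$ and $1$ uniformly in $R$. So the event
\[
\ev E:=\Bigl\{\liminf_{R\to\infty}\ \P\text{-a.s.\ crossing indicator of }[0,R]^2\ \text{behaviour is supercritical}\Bigr\}
\]
needs to be phrased as an honest tail event of a single configuration $\omega\in\HH$. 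The right formulation: let $N_R(\omega)$ be the indicator that $\omega$ crosses $[0,R]\times[0,2R]$ in the hard direction and also $[0,2R]\times[0,R]$ in the hard direction is blocked (a ``both-ways'' type observable), or more simply work with the density-type event ``$\omega$ contains an infinite cluster and $0$ is connected to distance $R$ for all large $R$ after a random shift.'' Then define
\[
\ev F:=\bigcup_{r\ge 1}\bigcap_{R>r}\A_1(r,R),
\]
the event of an infinite cluster as in Theorem~\ref{th.CL}. By Theorem~\ref{th.CL}, $\P_{\lambda,\infty}[\ev F]=1$. If we can show $\P_\infty[\ev F]=0$, we are done, since $\ev F$ is a Borel event in $(\HH,d_\HH)$. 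That $\P_\infty[\ev F]=0$ is exactly the statement that critical percolation has no infinite cluster in the scaling limit; this follows from $\P_\infty[\A_1(r,R)]=\lim_\eta \P_\eta[\A_1(r,R)]\asymp (r/R)^{5/48}\to 0$ as $R\to\infty$ (using Lemma~\ref{l.meas4arm} for the one-arm event and the one-arm exponent of \cite{LSW}), together with a Borel--Cantelli / monotonicity argument over a countable set of scales and centres. Hence $\P_\infty$ and $\P_{\lambda,\infty}$ are mutually singular.

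The steps in order: (1) recall from Theorem~\ref{th.CL} that $\P_{\lambda,\infty}[\ev F]=1$ for $\lambda>0$, where $\ev F=\bigcup_{r\ge 1}\bigcap_{R>r}\A_1(r,R)$, and handle $\lambda<0$ by the colour-reversal symmetry (swap the roles of primal and dual, i.e.\ use the dual one-arm event $\A_1^*$); (2) show $\P_\infty[\ev F]=0$ using $\P_\infty[\A_1(r,R)]=\alpha_1(r,R)=(r/R)^{5/48+o(1)}\to 0$ as $R\to\infty$ for fixed $r$, which by monotonicity of $R\mapsto\A_1(r,R)$ gives $\P_\infty[\bigcap_{R>r}\A_1(r,R)]=0$ for each fixed $r$, then a countable union over $r\in\N$; (3) conclude that $\ev F$ is a Borel set in $\HH$ with full $\P_{\lambda,\infty}$-measure and zero $\P_\infty$-measure, hence $\P_{\lambda,\infty}\perp\P_\infty$. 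For step (2) one should be slightly careful that $\ev F$ is genuinely Borel: this is fine because each $\A_1(r,R)$ is (Lemma~\ref{l.meas4arm}, or the obvious one-arm analogue of the open-set argument in Remark~\ref{r.open}), and the relevant countable intersections/unions stay Borel.

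The main obstacle, and the only place where a little real work is needed, is making sure the event used is simultaneously (a) measurable with respect to $\omega\in\HH$ in the quad-crossing sense, (b) of full measure under the near-critical law, and (c) of zero measure under the critical law, \emph{without} circularity. Using the infinite-cluster event $\ev F$ neatly sidesteps circularity because both facts $\P_{\lambda,\infty}[\ev F]=1$ and $\P_\infty[\ev F]=0$ were (or can be) proved independently --- the first is Theorem~\ref{th.CL}, the second is a direct consequence of the one-arm exponent and the convergence $\P_\eta[\A_1]\to\P_\infty[\A_1]$ from Lemma~\ref{l.meas4arm}. One subtlety: Theorem~\ref{th.CL} as stated gives existence of an infinite cluster ``in the sense that one can find some random $r>0$ such that for all $R>r$, $\omega_\infty^\nc(\lambda)\in\A_1(r,R)$,'' which is precisely the event $\ev F$, so no reformulation is needed there. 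The remaining care is purely in checking that $\ev F\in\mathcal{B}(\HH)$, which, as noted, follows from the measurability of arm events established in Section~\ref{s.HH}.
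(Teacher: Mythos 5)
Your argument is sound for the whole-plane process, but it proves a strictly weaker statement than the paper does, and in the setting in which most of the paper lives it does not apply at all. The event $\ev F=\bigcup_{r\in\N}\bigcap_{R>r}\A_1(r,R)$ is a tail event at infinity: it only makes sense for the full-plane limit of Theorem~\ref{th.MAIN2}, whereas the default setting throughout the paper is a bounded domain $D$, where there is no infinite-cluster event and your argument gives nothing. More importantly, the real content of the singularity statement --- and the point of the contrast drawn in the introduction with the near-critical Ising model, which in compact domains is \emph{absolutely continuous} w.r.t.\ its critical limit, and of the comparison with the Nolin--Werner result about interfaces in a fixed domain --- is that $\P_{\lambda,\infty}$ and $\P_\infty$ are already mutually singular when restricted to a bounded window. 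A tail-at-infinity event cannot detect this: your proof distinguishes the two laws only through their behaviour at spatial infinity, so it does not yield singularity of the restrictions to any compact region.

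The paper's proof is local and quantitative, and quite short: Lemma~\ref{l.SQ} shows, via the standard monotone coupling and the four-arm/pivotal estimates, that the probability that $\omega_\infty^\nc(\lambda)$ crosses a $u\times u$ square is at least $1/2+c_\lambda u^{3/4}$; subdividing $[0,1]^2$ into $n^2$ disjoint $1/n$-squares, whose crossing indicators are independent, Hoeffding's inequality shows that the number of crossed squares exceeds $\frac{n^2}{2}+\frac{c_\lambda}{2}n^{5/4}$ with probability $1-O(e^{-a_\lambda\sqrt n})$ under $\P_{\lambda,\infty}$ and with probability $O(e^{-a_\lambda\sqrt n})$ under $\P_\infty$; Borel--Cantelli then yields a Borel event, determined by the configuration inside the unit square alone, of full $\P_{\lambda,\infty}$-measure and zero $\P_\infty$-measure. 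If you only intend the full-plane statement, your route (Theorem~\ref{th.CL} for $\P_{\lambda,\infty}[\ev F]=1$, the one-arm exponent together with Lemma~\ref{l.meas4arm} and a countable-scale reduction for $\P_\infty[\ev F]=0$, colour reversal with the dual one-arm event for $\lambda<0$) is essentially correct, with the caveats you already flag (Borel-ness via countable index sets, and the need for the dual-arm analogue of Lemma~\ref{l.meas4arm}); but as a proof of the theorem you should either upgrade it to the local version or state explicitly that you are only separating the full-plane laws.
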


\begin{remark}\label{}
\ni\bnum
\item Note that such a result does not imply Theorem \ref{th.NW}. Indeed it could well be that $\omega_\infty^\nc(\lambda)$ and $\omega_\infty^\nc(0)$ are singular but their interfaces would look ``similar''.  In this sense, the singularity result provided by Theorem \ref{th.NW} is much finer than ours.
\item This singularity result has been proved independently and prior to our work by Simon Aumann in \cite{aumann}, but with a seemingly more complicated approach. 
\enum
\end{remark}

\ni
{\bf Proof.}
We wish to find a measurable event $A$ so that $\Pb{\omega_\infty^\nc(\lambda)\in A , \omega_\infty^\nc(0)\notin A} =1$. Let us start with the following lemma:

\begin{lemma}\label{l.SQ}
Let $\lambda>0$ be fixed. Let us denote by $\boxminus_u$ the crossing event which corresponds to the quad $Q_u:=[0,u]^2$. There is a constant $c=c_\lambda>0$ such that for any $u\in(0,1]$:
\[
\Pb{\omega_\infty^\nc(\lambda)\in \boxminus_u} \geq 1/2+ c u^{3/4}\,.
\]
\end{lemma}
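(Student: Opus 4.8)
The plan is to deduce Lemma~\ref{l.SQ} from the conformal covariance / scaling relation in Corollary~\ref{c.SCALE} together with a soft quantitative lower bound at one fixed scale. First I would record the base estimate: by monotonicity of the near-critical coupling and a RSW-type argument (or simply because $\omega_\infty^\nc(\lambda)$ has an infinite cluster for $\lambda>0$ by Theorem~\ref{th.CL}, while at $\lambda=0$ the crossing probability of $[0,1]^2$ is exactly $1/2$ by self-duality at the scaling limit), for each fixed $\lambda>0$ one has $\Pb{\omega_\infty^\nc(\lambda)\in\boxminus_1} = 1/2 + \delta(\lambda)$ for some $\delta(\lambda)>0$; more carefully, since $\omega_\infty^\nc(0)\sim\P_\infty$ is self-dual so that $\Pb{\omega_\infty^\nc(0)\in\boxminus_1}=1/2$, and since $\lambda\mapsto\omega_\infty^\nc(\lambda)$ is a genuine monotone increasing perturbation that adds a positive-intensity Poisson cloud of pivotal switches (the intensity $\mu^\eps$ has positive expected mass on any open set by Proposition~\ref{th.mu}(i), hence with positive probability at least one switch lands on the crossing event's pivotal set), the crossing probability is strictly bigger than $1/2$; and by the same argument it is continuous and strictly increasing in $\lambda$, giving $\delta(\lambda)>0$ for all $\lambda>0$.

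Next I would apply the scaling identity from Corollary~\ref{c.SCALE}: $\bigl(\alpha\cdot\omega_\infty^\nc(\lambda)\bigr)\overset{(d)}{=}\omega_\infty^\nc(\alpha^{-3/4}\lambda)$. Taking $\alpha=u\in(0,1]$, the event that $\omega_\infty^\nc(\lambda)$ crosses $[0,u]^2$ is, after rescaling by $u^{-1}$, the event that $u^{-1}\cdot\omega_\infty^\nc(\lambda)\overset{(d)}{=}\omega_\infty^\nc(u^{3/4}\lambda)$ crosses $[0,1]^2$. Hence
\begin{align}\label{e.SQrescale}
\Pb{\omega_\infty^\nc(\lambda)\in\boxminus_u} = \Pb{\omega_\infty^\nc(u^{3/4}\lambda)\in\boxminus_1} = \tfrac12 + \delta(u^{3/4}\lambda)\,.
\end{align}
So the whole problem reduces to showing $\delta(\mu)\geq c\,\mu$ as $\mu\to 0$, i.e.~that the crossing probability of the unit square increases at least linearly in the near-critical parameter near $\lambda=0$. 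This is the analog of Kesten's derivative lower bound $\theta'(p)\gtrsim$ const, or of the statement that $d/d\lambda\,\Pb{\omega_\infty^\nc(\lambda)\in\boxminus_1}\big|_{\lambda=0}>0$. I would prove it via Russo's formula in the discrete near-critical coupling $\omega_\eta^\nc(\lambda)$: the derivative in $\lambda$ of $\Pb{\omega_\eta^\nc(\lambda)\in\boxminus_{[0,1]^2}}$ equals $r(\eta)$ times the expected number of pivotals for the crossing, which by quasi-multiplicativity and RSW is of order $\sum_{x}\alpha_4^\eta(\eta,1)\asymp \eta^{-2}\alpha_4^\eta(\eta,1)\cdot\eta^2\alpha_4^\eta(\eta,1)^{-1}\cdot\text{(const)}$; the $\eta$-dependence cancels by the very definition of $r(\eta)$, leaving a constant lower bound uniform in $\eta$ small. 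Passing to the limit using Theorem~\ref{th.MARGIN} (the near-critical Cardy convergence $\Pb{\omega_\eta^\nc(\lambda)\in\boxminus_Q}\to\Pb{\omega_\infty^\nc(\lambda)\in\boxminus_Q}$) and integrating the lower bound on the derivative over $[0,\mu]$ yields $\delta(\mu)\geq c_0\mu$ for $\mu$ small, and then for all $\mu\in(0,\lambda]$ up to adjusting the constant since $\delta$ is increasing. Combined with \eqref{e.SQrescale} this gives $\Pb{\omega_\infty^\nc(\lambda)\in\boxminus_u}\geq 1/2 + c_\lambda u^{3/4}$ for $u\in(0,1]$, which is the claim.

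The main obstacle I anticipate is making the Russo/Margulis derivative computation rigorous at the level of the monotone coupling and transferring the resulting uniform-in-$\eta$ lower bound to the scaling limit without losing positivity: one must control the expected number of pivotal points for the unit-square crossing uniformly for $p\in[p_c,p_c+\lambda r(\eta)]$ (not just at $p_c$), which requires that RSW and quasi-multiplicativity hold with uniform constants throughout the near-critical window $[p_c,p_c+\lambda r(\eta)]$ — precisely the kind of near-critical stability established in Section~\ref{s.stability} (Lemma~\ref{l.main}) and in Proposition~\ref{pr.ExNC}. Once those uniform arm estimates are in hand, the derivative lower bound and the limiting transfer are routine; the conformal-covariance input from Corollary~\ref{c.SCALE} then does the rest essentially for free, which is what makes the proof short.
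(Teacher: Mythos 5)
Your proposal is correct, but it takes a genuinely different route from the paper. The paper's own proof stays entirely at scale $u$ in the discrete: it first reduces to $\omega_\eta^\nc(\lambda)$ via Theorem~\ref{th.MARGIN}, then lower-bounds $\Pb{\omega_\eta^\nc(\lambda)\in\boxminus_u}-\Pb{\omega_\eta(0)\in\boxminus_u}$ in the monotone coupling by $r(\eta)$ times the expected number of critical pivotal sites for $\boxminus_u$, and evaluates that sum by RSW/separation of arms and quasi-multiplicativity (so the $\eta$-dependence cancels against $r(\eta)$, leaving $u^2\alpha_4^\eta(u,1)^{-1}$), finally invoking the Smirnov--Werner four-arm exponent to produce the $u^{3/4}$. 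You instead prove only a unit-scale, constant-order derivative bound (the same pivotal first-moment computation, but with no multi-scale quasi-multiplicativity and no explicit exponent value), and then manufacture the $u^{3/4}$ from the scaling covariance of Corollary~\ref{c.SCALE}; this is legitimate and non-circular, since Theorem~\ref{th.cc} and its corollary are proved independently of this lemma, and it even yields a clean $u^{3/4}$ with no $u^{o(1)}$ corrections, at the price of leaning on the conformal-covariance machinery of Section~\ref{s.CL}. Two smaller remarks: your opening ``soft positivity'' observation ($\delta(\lambda)>0$) is not actually used, since the whole point is the linear rate $\delta(\mu)\geq c\mu$ near $\mu=0$, which you correctly obtain via Russo's formula; and note that your Russo integration needs four-arm (pivotal) estimates uniformly over the near-critical window, which you rightly flag and which Proposition~\ref{pr.ExNC} and Lemma~\ref{l.main} supply, whereas the paper's one-switch coupling bound gets away with pivotality at criticality only.
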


\ni
{\bf Proof of Lemma \ref{l.SQ}.}
Using Theorem \ref{th.MARGIN}, we have that 
\[
\Pb{\omega_\infty^\nc(\lambda)\in \boxminus_u} = \lim_{\eta \to 0} \Pb{\omega_\eta^\nc(\lambda)\in \boxminus_u}\,.
\]
Now, by using the standard monotone coupling, one has uniformly as $u\to 0$ and  $\eta/u \to 0$:
\begin{align*}
& \hspace{-1 cm}\Pb{\omega_\eta^\nc(\lambda)\in \boxminus_u} - \Pb{\omega_\eta(0) \in \boxminus_u}  \\
& \geq  (1-e^{-\lambda}) \eta^2 \alpha_4^\eta(\eta,1)^{-1} \sum_{x\in \eta \Tg \cap Q_u} \frac 1 2 \P_{\lambda=0} \bigl[x \text{ is pivotal for } \boxminus_u\bigr]  \\
& \geq  (1-e^{-\lambda}) \Omega(1) \eta^2 \alpha_4^\eta(\eta,1)^{-1}  u^2 \eta^{-2} \alpha_4^\eta(\eta,u) \\
& \geq  (1-e^{-\lambda}) \Omega(1) u^2 \alpha_4^\eta(u,1)^{-1} \\ 
& \geq  C (1-e^{-\lambda}) u^{3/4}\,,
\end{align*}
where $C>0$ is some universal constant. The second inequality is obtained by classical separation of arms phenomenon plus RSW (see for example Chapter VI in \cite{Buzios}). The third inequality relies on the multiplicativity property. The last one uses \cite{MR1879816}. 
\qed

Consider now the square $[0,1]^2$ and for each $n\geq 1$, divide this square into $n^2$ squares of side length $1/n$. For each such square $Q$, one has by the previous lemma, 
\[
\Pb{\omega_\infty^\nc(\lambda) \in \boxminus_Q}= \Pb{\omega_\infty^\nc(\lambda) \in \boxminus_{1/n}} \geq c_\lambda\, n^{-3/4}\,. 
\]

Let $A_n$ be the event that there are at least $\frac{n^2} 2 + \frac {c_\lambda} 2 n^{5/4}$ squares in the above $1/n$-grid which are crossed horizontally. Since these events are independent, by a classical Hoeffding inequality, one can find a constant $a_\lambda>0$ so that 
\[
\begin{cases}
\P_\lambda[A_n] \geq 1- a_\lambda^{-1} \exp(-a_\lambda \, \sqrt{n}) \\
\P_0[A_n] \leq a_\lambda^{-1} \exp(-a_\lambda \, \sqrt{n})
\end{cases}
\] 
Clearly, the event $A:= \bigcup_{N\geq 1} \bigcap_{n\geq N} A_n$ is measurable, and by the Borel-Cantelli lemma it satisfies $\Pb{\omega_\infty^\nc(\lambda)\in A , \omega_\infty^\nc(0)\notin A} =1$, as desired. 
\qed

\subsection{A conjecture on the Loewner drift}\label{ss.off}

We present here a conjectural SDE for the driving function of the so-called {\bf massive chordal $\SLE_6$}: the Loewner chain of the scaling limit of the interface in near-critical percolation at $p = p_c + \lambda r(\eta)$ in the upper half plane, with open hexagons on the left  boundary and closed ones on the right. As we mentioned in Subsection~\ref{ss.links}, a general discussion of massive $\SLE_\kappa$'s, with focus on some special values of $\kappa$ other than 6, can be found in \cite{MS}.

Since zooming in spatially is equivalent to moving $\lambda$ closer to 0, we expect the driving function to be of the form 
\begin{equation}\label{e.NCLoew}
d W_t = \sqrt{6} \, d B_t + d A_t\,,
\end{equation}
where $B_t$ is Brownian motion and $A_t$ is a monotone drift, increasing for $\lambda>0$, decreasing for $\lambda<0$. In other words we expect $W_t$ to be a \textbf{submartingale} when  $\lambda>0$. This property does not seem to be obvious, and will be analyzed in \cite{SubM}. We conjecture the following precise form for the monotone drift $A_t$:  
\begin{equation}\label{e.NCdrift}
\begin{aligned}
d A_t & = c' \, \lambda \, |d\gamma_t|^{3/4}\,  |dW_t| = c'' \, \lambda \, |d\gamma_t|^{3/4} \, |dt|^{1/2}\,,
\end{aligned}
\end{equation}
where $|d\gamma_t|$ stands for the infinitesimal Euclidean increment length performed by the curve $\gamma_t$. Prior to proving a scaling limit of massive $\SLE_6$ towards this Loewner chain, making sense of a Loewner chain with such a degenerate drift already appears like a challenging mathematical problem. The intuition behind this conjecture will be discussed in more depth in \cite{SubM}. 

\addcontentsline{toc}{section}{Bibliography}


\begin{thebibliography}{GPS10b}

\bibitem[ABG12]{ABG}
Louigo Addario-Berry, Nicolas Broutin and Christina Goldschmidt.
The continuum limit of critical random graphs. 
{\it Probab. Theory Related Fields} 152 (2012), no. 3-4, 367--406.

\bibitem[ABGM13]{ABGM}
Louigi Addario-Berry, Nicolas Broutin, Christina Goldschmidt and Gr\'egory Miermont. 
The scaling limit of the minimum spanning tree of the complete graph. {\it Preprint}, arXiv:1301.1664, 2013.

\bibitem[Ai98]{Aizenman}
Michael Aizenman.
Scaling limit for the incipient spanning clusters.  
{\it Mathematics of multiscale materials (Minneapolis, MN, 1995-1996)}, 1--24,
IMA Vol. Math. Appl., 99, Springer, New York, 1998. 

\bibitem[ABNW99]{AiBu}
Michael Aizenman, Almut Burchard, Charles Newman and David Wilson.
Scaling limits for minimal and random spanning trees in two dimensions. 
{\em Random Structures Algorithms 15}, no. 3-4, 319--367, 1999. 

\bibitem[Au14]{aumann}
Simon Aumann.
Singularity of full scaling limits of planar near-critical percolation. 
{\it Stochastic Processes and their Applications}, 124, no. 11, pp. 3807--3818, 2014.

\bibitem[BKN15]{vdBKN}
Jacob van den Berg, Demeter Kiss, and Pierre Nolin.
Two-dimensional volume-frozen percolation: deconcentration and prevalence of mesoscopic clusters.
{\it Preprint}, arXiv:1512.05335, 2015.


\bibitem[BR06]{Bollobas}
B\'ela Bollob\'as and Oliver Riordan. 
\newblock Sharp thresholds and percolation in the plane. 
\newblock {\it Random Structures Algorithms} 29(4): 524--548, 2006.

\bibitem[BCKS01]{BCKS} 
C. Borgs, J. T. Chayes, H. Kesten, and J. Spencer. 
The birth of the infinite cluster: finite-size scaling in percolation. 
{\it Comm. Math. Phys.}, 224(1):153--204, 2001. Dedicated to Joel L. Lebowitz. 

\bibitem[CFN06]{CFN}
Federico Camia, Luiz Renato Fontes and Charles~M. Newman.
The scaling limit geometry of near-critical 2D percolation.
{\it J. Stat. Phys.} 125, no. 5-6, 1159--1175, 2006.

\bibitem[CGN15]{CGN1}
Federico Camia, Christophe Garban and Charles~M. Newman.
Planar Ising magnetization field I. Uniqueness of the critical scaling limit. 
{\em Ann. Probab.}, 43, no2 , 528--571, 2015.

\bibitem[CGN16]{CGN2}
Federico Camia, Christophe Garban and Charles~M. Newman.
Planar Ising magnetization field II. Properties of the critical and near-critical scaling limits.
{\em Ann. Inst. H. Poincaré Probab. Statist.}, Volume 52, Number 1 , 146--161, 2016. 

\bibitem[CN06]{MR2249794}
Federico Camia and Charles~M. Newman.
\newblock Two-dimensional critical percolation: the full scaling limit.
\newblock {\em Comm. Math. Phys.}, 268(1):1--38, 2006.

\bibitem[Ca96]{Cardy}
John Cardy. 
{\it Scaling and renormalization in statistical physics}. 
Cambridge University Press, 1996.

\bibitem[CHI12]{CHI}
 Dmitry Chelkak, Cl\'ement Hongler, Konstantin Izyurov.
 Conformal invariance of spin correlations in the planar Ising model. {\em Annals of Mathematics}, 181, Issue 3, 1087--1138, 2015.

\bibitem[DSV09]{Damron}
Michael Damron, Art\"em Sapozhnikov and B\'alint V\'agv\"olgyi.
\newblock Relations between invasion percolation and critical percolation in two dimensions. 
\newblock {\it Ann. Probab.} 37, no. 6, 2297--2331, 2009

\bibitem[DC13]{Wulff}
Hugo Duminil-Copin.
Limit of the Wulff Crystal when approaching criticality for site percolation on the triangular lattice.  
{\em Electronic Communications in Probability}, \textbf{18}, 2013.

\bibitem[DGP14]{DGP}
Hugo Duminil-Copin, Christophe Garban and  G\'abor Pete.
The near-critical planar FK-Ising model. {\it Comm. Math. Phys.} Volume 326, Issue 1, 1--35, 2014

\bibitem[EK86]{MR838085}
Stewart~N. Ethier and Thomas~G. Kurtz.
\newblock {\em Markov processes}.
\newblock Wiley Series in Probability and Mathematical Statistics: Probability
  and Mathematical Statistics. John Wiley \& Sons Inc., New York, 1986.

\bibitem[GPS10]{arXiv:0803.3750}
Christophe Garban, G\'abor Pete, and Oded Schramm.
\newblock The {F}ourier spectrum of critical percolation.
{\it  Acta Math.} 205, no. 1, 19--104, 2010.

\bibitem[GPS10b]{MST}
Christophe Garban, G\'abor Pete, and Oded Schramm.
The scaling limit of the minimal spanning tree --- a preliminary report. 
{\it XVIth International Congress on Mathematical Physics}, 475--480, World Sci. Publ., Hackensack, NJ, 2010.

\bibitem[GPS13]{GPS2a}
Christophe Garban, G\'abor Pete, and Oded Schramm.
\newblock Pivotal, cluster and interface measures for critical planar percolation.
\newblock  {\it J. Amer. Math. Soc.}, 
26, 939--1024, 2013.

\bibitem[GPS13b]{MSTreal}
Christophe Garban, G\'abor Pete, and Oded Schramm.
Scaling limit of the Minimal Spanning Tree in the plane. {\it preprint},  arXiv:1309.0269, 2013.


\bibitem[GP]{SubM}
Christophe Garban and G\'abor Pete,
On the Loewner drift of massive $\SLE_6$. {\it In preparation}.

\bibitem[GS12]{Buzios}
Christophe Garban and Jeffrey E. Steif.
Lectures notes on noise sensitivity and percolation.
\emph{``Probability and Statistical Physics in Two and more Dimensions".}
Proceedings of the Clay Mathematics Institute Summer School and XIV Brazilian School of Probability (Buzios, Brazil), Clay Mathematics Proceedings 15, 49--154, 2012.

\bibitem[Gri99]{Grimmett}
Geoffrey Grimmett.
\newblock {\it Percolation. Second edition.} 
\newblock Grundlehren der Mathematischen Wissenschaften [Fundamental Principles of Mathematical Sciences], 321. Springer-Verlag, Berlin, 1999.

\bibitem[HPS97]{HPS}
Olle H\"aggstr\"om, Olle, Yuval  Peres and Jeffrey E. Steif.
\newblock Dynamical percolation. 
\newblock {\it Ann. Inst. H. Poincar\'e Probab. Statist.} 33, no. 4, 497--528, 1997.

\bibitem[HMP12]{dynexit} 
Alan Hammond, Elchanan Mossel and G\'abor Pete. 
Exit time tails from pairwise decorrelation in hidden Markov chains, with applications to dynamical percolation. {\it  Elect. J. Probab.}  {\bf 17} (2012), Article no. 68, 1--16.

\bibitem[HPS15]{HammPS}
Alan Hammond, G\'abor Pete, and Oded Schramm. 
Local time on the exceptional set of dynamical percolation, and the Incipient Infinite Cluster.
{\em Ann. Probab.} 43, no. 6, 2949--3005, 2015. 

\bibitem[Ka02]{Kallenberg}
Olav Kallenberg.
\newblock {\it Foundations of modern probability. Second edition.}
\newblock Probability and its Applications (New York). Springer-Verlag, New York, 2002.

\bibitem[Ke80]{KestenZ2}
Harry Kesten.
The critical probability of bond percolation on the square lattice equals 1/2.
{\it Comm. Math. Phys.} 74, no. 1, 41--59, 1980.

\bibitem[Ke87]{Kesten}
Harry Kesten.
\newblock Scaling relations for 2D-percolation. 
\newblock {\it Comm. Math. Phys.} 109, no. 1, 109--156, 1987.

\bibitem[KZ87]{KestenZhang}
Harry Kesten and Yu Zhang.
Strict inequalities for some critical exponents in two-dimensional percolation. 
{\it J. Statist. Phys.} 46, no. 5-6, 1031--1055, 1987. 

\bibitem[Ki15]{Kiss} 
Demeter Kiss.
Frozen percolation in two dimensions.
{\it Probability Theory and Related Fields}, 163, no. 3, 713--768, 2015.
 
\bibitem[LSW02]{LSW}
Gregory F. Lawler, Oded Schramm and Wendelin Werner.
One-arm exponent for critical 2D percolation. 
{\it Electron. J. Probab.} 7, no. 2, 2002.

\bibitem[MS10]{MS}
Nikolai Makarov and Stanislav Smirnov.
Off-critical lattice models and massive SLEs. 
{\it XVIth International Congress on Mathematical Physics}, 362--371, World Sci. Publ., Hackensack, NJ, 2010. 

\bibitem[N08a]{Nolin}
Pierre Nolin.
\newblock Near-critical percolation in two dimensions.
\newblock  {\it Electron. J. Probab.} 13, no. 55, 1562--1623, 2008.

\bibitem[N08b]{gradient}
Pierre Nolin.
\newblock Critical exponents of planar gradient percolation.
\newblock {\it Ann. Probab.} 36 no. 5, 1748--1776, 2008. 

\bibitem[NW09]{NolinWerner}
Pierre Nolin and Wendelin Werner.
\newblock Asymmetry of near-critical percolation interfaces.
\newblock {\em J. Amer. Math. Soc.}, 22(3):797--819, 2009.

\bibitem[On44]{Onsager}
Lars Onsager. 
Crystal statistics. I. A two-dimensional model with an order-disorder transition. 
{\it Phys. Rev. (2)} 65, 117--149, 1944. 

\bibitem[Po75]{Pommerenke} Christian Pommerenke. 
{\it Univalent functions.}
Vandenhoeck \& Ruprecht, G\"ottingen, 1975. 
With a chapter on quadratic differentials by Gerd Jensen.
Studia Mathematica/Mathematische Lehrb\"ucher, Band XXV. 

\bibitem[Pro56]{MR0084896}
Yu.~V. Prohorov.
\newblock Convergence of random processes and limit theorems in probability
  theory.
\newblock {\em Teor. Veroyatnost. i Primenen.}, 1:177--238, 1956.

\bibitem[RS05]{RohdeSchramm} 
Steffen Rohde and Oded Schramm. 
\newblock Basic properties of SLE. 
\newblock {\it Ann. of Math. (2)}, 161(2):883--924, 2005. 

\bibitem[Sch00]{Schramm}
Oded Schramm. 
Scaling limits of loop-erased random walks and uniform spanning trees. 
{\it Israel J. Math.} 118, 221--288, 2000. 

\bibitem[Sch07]{ICM}
Oded Schramm.
Conformally invariant scaling limits: an overview and a collection of problems.
{\it  International Congress of Mathematicians. Vol. I}, 513--543, Eur. Math. Soc., Z\"urich, 2007.

\bibitem[SS11]{SSblacknoise}
Oded Schramm and Stanislav Smirnov.
On the scaling limits of planar percolation. With an appendix by Christophe Garban. 
{\it Ann. Probab.} 39, no. 5, 1768--1814, 2011.

\bibitem[SS10]{SchrammSteif}
Oded Schramm and Jeffrey E. Steif.
Quantitative noise sensitivity and exceptional times for percolation. 
{\it Ann. of Math. (2)} 171, no. 2, 619--672, 2010.

\bibitem[Sm01]{Smirnov}
Stanislav Smirnov.
Critical percolation in the plane: conformal invariance, Cardy's formula, scaling limits.
{\it  C. R. Acad. Sci. Paris S\'er. I Math.} 333, no. 3, 239--244, 2001.

\bibitem[SW01]{MR1879816}
Stanislav Smirnov and Wendelin Werner.
\newblock Critical exponents for two-dimensional percolation.
\newblock {\em Math. Res. Lett.}, 8(5-6):729--744, 2001.

\bibitem[Tsi04]{MR2079671}
Boris Tsirelson.
\newblock Scaling limit, noise, stability.
\newblock In {\em Lectures on probability theory and statistics}, volume 1840
  of {\em Lecture Notes in Math.}, pages 1--106. Springer, Berlin, 2004.

\bibitem[Tu40]{Tukey}
John W. Tukey.
\newblock {\it Convergence and uniformity in topology}.
\newblock Annals of Mathematics Studies, no. 2. Princeton University Press, Princeton, N. J., 1940.

\bibitem[We09]{PC} Wendelin Werner.
Lectures on two-dimensional critical percolation. 
{\it Statistical mechanics}, 297--360, IAS/Park City Math. Ser., 16, Amer. Math. Soc., Providence, RI, 2009.

\bibitem[We15]{FKrenorm} Wendelin Werner.
A simple renormalization flow for FK-percolation models.
{\it Preprint}, arXiv:1503.08086, to appear in the Jean-Michel Bismut 65th anniversary volume, 2015.

\bibitem[Zi13]{ZJ} 
Jean Zinn-Justin.
{\it Phase transitions and renormalization group}.
Oxford University Press, 2013.

\end{thebibliography}

\ \\
{\bf Christophe Garban}\\
Universit\'e Lyon 1\\
\url{http://math.univ-lyon1.fr/~garban/}\\
Partially supported by the ANR grant MAC2 10-BLAN-0123.\\
\\
{\bf G\'abor Pete}\\
Alfr\'ed R\'enyi Institute of Mathematics, Hungarian Academy of Sciences, Budapest,\\
and Institute of Mathematics, Budapest University of Technology and Economics\\
\url{http://www.math.bme.hu/~gabor}\\
Partially supported by the Hungarian National Research, Development and Innovation Office, NKFIH grant K109684, and the MTA R\'enyi ``Lend\"ulet'' Limits of  Structures Research Group\\
\\
{\bf Oded Schramm} (December 10, 1961 -- September 1, 2008)\\
Microsoft Research\\ 
\url{http://research.microsoft.com/en-us/um/people/schramm/}\\

\end{document}